\newcommand{\End}{\mathop{\mathrm{End}}\nolimits}
\newcommand{\Ker}{\mathop{\mathrm{Ker}}\nolimits}
\newcommand{\Aut}{\mathop{\mathrm{Aut}}\nolimits}
\newcommand{\Hom}{\mathop{\mathrm{Hom}}\nolimits}
\newcommand{\Mat}{\mathop{\mathrm{Mat}}\nolimits}
\newcommand{\tr}{\mathop{\mathrm{tr}}\nolimits}
\newcommand{\id}{\mathop{\mathrm{id}}\nolimits}
\newcommand{\rk}{\mathop{\mathrm{rk}}\nolimits}
\newcommand{\Min}{\mathop{\mathrm{Min}}\nolimits}
\newcommand{\diag}{\mathop{\mathrm{diag}}\nolimits}
\newcommand{\Ob}{\mathop{\mathrm{Ob}}\nolimits}
\newcommand{\whotimes}{\mathbin{\wh\otimes}}
\newcommand{\isoright}{\xrightarrow{\smash{\raisebox{-0.65ex}{\ensuremath{\sim}}}}}
\numberwithin{equation}{section}
\renewcommand{\det}{\mathop{\mathrm{det}}\nolimits}
\newcommand{\perm}{\mathop{\mathrm{perm}}\nolimits}
\newcommand{\qdet}{\mathop{\mathrm{qdet}}\nolimits}
\newcommand{\inv}{\mathop{\mathrm{inv}}\nolimits}
\newcommand{\sgn}{\mathop{\mathrm{sgn}}\nolimits}
\renewcommand{\le}{\leqslant}
\renewcommand{\ge}{\geqslant}
\newcommand{\g}{\mathfrak{g}}
\newcommand{\la}{\langle}
\newcommand{\ra}{\rangle}
\newcommand{\A}{{\mathcal A}}
\newcommand{\B}{{\mathcal B}}
\newcommand{\C}{{\mathcal C}}
\newcommand{\D}{{\mathcal D}}
\newcommand{\G}{{\mathcal G}}
\newcommand{\He}{{\mathcal H}}
\newcommand{\Q}{{\mathcal Q}}
\newcommand{\M}{{\mathcal M}}
\newcommand{\N}{{\mathcal N}}
\newcommand{\K}{{\mathcal K}}
\newcommand{\U}{{\mathcal U}}
\newcommand{\gR}{{\mathfrak R}}
\newcommand{\gS}{{\mathfrak S}}
\newcommand{\gA}{{\mathfrak A}}
\newcommand{\gX}{{\mathfrak X}}
\newcommand{\gU}{{\mathfrak U}}
\newcommand{\wt}{\widetilde}
\newcommand{\wh}{\widehat}
\newtheorem{Th}{Theorem}[section]
\newtheorem{Lem}[Th]{Lemma}
\newtheorem{Prop}[Th]{Proposition}
\newtheorem{Cor}[Th]{Corollary}
\theoremstyle{definition}
\newtheorem{Def}[Th]{Definition}
\newtheorem{Rem}[Th]{Remark}}
\newcommand{\CC}{{\mathbb{C}}}
\newcommand{\ZZ}{{\mathbb{Z}}}
\newcommand{\NN}{{\mathbb{N}}}
\newcommand{\SSS}{{\mathbf{S}}}
\begin{document}
\allowdisplaybreaks

\newcommand{\arXivNumber}{2009.05993}

\renewcommand{\PaperNumber}{066}

\FirstPageHeading

\ShortArticleName{Manin Matrices for Quadratic Algebras}

\ArticleName{Manin Matrices for Quadratic Algebras}

\Author{Alexey SILANTYEV~$^{\rm ab}$}

\AuthorNameForHeading{A.~Silantyev}

\Address{$^{\rm a)}$~Joint Institute for Nuclear Research, 141980 Dubna, Moscow region, Russia}
\Address{$^{\rm b)}$~State University ``Dubna'', 141980 Dubna, Moscow region, Russia}
\EmailD{\href{mailto:aleksejsilantjev@gmail.com}{aleksejsilantjev@gmail.com}}

\ArticleDates{Received December 11, 2020, in final form June 26, 2021; Published online July 12, 2021}

\Abstract{We give a general definition of Manin matrices for arbitrary quadratic algebras in terms of idempotents. We establish their main properties and give their interpretation in~terms of the category theory. The notion of minors is generalised for a general Manin matrix. We give some examples of Manin matrices, their relations with Lax operators and~obtain the formulae for some minors. In particular, we consider Manin matrices of the types~$B$, $C$ and $D$ introduced by A.~Molev and their relation with Brauer algebras. Infinite-dimensional Manin matrices and their connection with Lax operators are also considered.}

\Keywords{Manin matrices; quantum groups; quadratic algebras}
\Classification{16T99; 15B99}

\section{Introduction}

In the second half of the 80s Yuri Manin proposed to consider non-commutative quadratic algebras as a generalisation of vector spaces and called them ``quantum linear spaces''~\cite{Manin87,Manin88,ManinBook91}. A~usual finite-dimensional vector space is presented by the algebra of polynomials. The~linear maps correspond to the homomorphisms of these algebras in the category of graded algebras. One can consider homomorphisms of a quadratic algebra over a non-commutative ring (algebra). Such ``non-commutative'' homomorphisms of finitely generated quadratic algebras (``finite-dimensional'' quantum linear spaces) are described by matrices with non-commutative entries satisfying some commutation relations. We call them Manin matrices.

The main example proposed by Manin is the ``quantum plane'' defined by the algebra with~$2$~gene\-ra\-tors $x$, $y$ and the commutation relation $yx=qxy$. He established the connection of the quantum plane with some quantum group (Hopf algebra). This quantum group gives a~``non-commutative'' endomorphism of the quantum plane.

This picture was also generalised to the case of $n$ generators with the commutation relations $x_jx_i=qx_ix_j$, $i<j$. The algebra describing all the ``non-commutative'' endomorphisms of the corresponding quantum linear space was called right quantum algebra in~\cite{GLZ}, where the authors proved a $q$-analogue of MacMahon master theorem. The ``non-commutative'' endomorphisms and homomorphisms of these quadratic algebras are described by square and rectangular matrices respectively. We call them $q$-Manin matrices.

`Non-commutative' endomorphisms and homomorphisms of a usual finite-dimensional vector space generalise the matrices with commutative entries to the matrices with non-commutative entries satisfying certain commutation relations (these are the commutation relations of the right quantum algebra for $q=1$). This type of matrices were observed in the Talalaev's formula~\cite{T} and called then Manin matrices~\cite{CF}. These Manin matrices have a lot of applications to integrable systems, Sugawara operators, affine Lie algebras and quantum groups~\cite{CF,CM,RST}. Since they are an immediate generalisation of the usual matrices to the non-commutative case, almost all the formulae of the matrix theory are valid for these Manin matrices~\cite{CFR}. Most of them are also generalised to the $q$-Manin matrices~\cite{qManin}.

The ``non-commutative'' homomorphisms between two quantum linear spaces are described by an algebra that can be constructed as internal hom~\cite{Manin88}. These ``non-commutative'' homo\-mor\-phisms give us a notion of a Manin matrix for a pair of finitely generated quadratic algeb\-ras~$(\mathfrak A,\mathfrak B)$. These are the matrices with non-commutative entries satisfying the commutation relations of the internal hom algebra. In previous works on Manin matrices the attention was concentrated on the case $\mathfrak A=\mathfrak B$ corresponding to the ``non-commutative'' endomorphisms. This general case allows us to consider more examples and answer some questions about $q$-Manin matrices, which was unclear before.

For instance, the column determinant is a natural generalisation of the usual determinant for the $q=1$ Manin matrices, but some properties of this determinant are proved by using a~part of commutation relations for the entries of these matrices. We show here that this part of~commutation relations define Manin matrices for some pair of different quadratic algebras.

In the case $q\ne1$ the column determinant is generalised to (column) $q$-determinant. It is a~natural generalisation for $q$-Manin matrices. Permutation of columns of $q$-Manin matrices gives a matrix, which is not a $q$-Manin matrix. However its natural analogue of the determinant is also $q$-determinant. In contrast, the $q$-determinant is not relevant operation for a $q$-Manin matrix with permuted rows. To consider the permuted $q$-Manin matrices as another type of~Manin matrices and to define natural determinants for them one needs to consider some pairs of different quadratic algebras. It is convenient to do it in a more universal case when the role of quadratic algebras is played by multi-parametric deformations of the polynomial algebras.

A multi-parametric deformation of super-vector spaces was considered by Manin in the article~\cite{Manin89}. The super-versions of Manin matrices and MacMahon master theorem for the non-deformed case were also considered in~\cite{MR}. Here we do not consider the super-case, but we plan to do it in future works.

The notion of Manin matrices for two different quadratic algebras includes Manin matrices of types $B$, $C$ and $D$ introduced by Molev in~\cite{MolevSO}.

In this work we use tensor notations to describe quadratic algebras and Manin matrices by~gene\-ralising the tensor approach given in~\cite{CFR,qManin}. In this frame a quadratic algebra is defined by an idempotent that gives commutation relations for this algebra. E.g., the commutative algebra of polynomials is defined by the anti-symmetrizer of the tensor product of two copies of a vector space. Two different idempotents may define the same quadratic algebra. This gives an equivalence relation between the idempotents.

The relations for Manin matrices can be written in tensor notations with the corresponding idempotents. In the case of two different quadratic algebras these commutation relations are given by a pair of idempotents $A$ and $B$, so we call the matrices, satisfying these relations, $(A,B)$-Manin matrices. In the case $A=B$ we call them $A$-Manin matrices.

The important notion in the matrix theory is the notion of minor, which is usually defined as a determinant of a square submatrix. The minors (defined via column determinant) play a~role of decomposition coefficients in the expression for a ``non-commutative'' homomorphism of the anti-commutative polynomial algebras (Grassmann algebras). The dual notion is permanent of~square submatrices (where some rows and columns may be repeated) that give coefficients in~the case of commutative polynomial algebras.

This is directly generalised to some kinds of Manin matrices including the $q$-Manin matrices and the multi-parametric case. However in the general situation an analogue of minors is not defined by an operation (determinant or permanent) on submatrices. The minors are defined immediately without such operations. To do it we introduce some auxiliary ``dual'' quadratic algebras for a given idempotent and define a non-degenerate pairing between quadratic algebras (if it exists). In the tensor notation this pairing is written by using some higher idempotents, which we call pairing operators. These operators allow to define minors as entries of some operators with non-commutative entries, we call them minor operators. For the main examples the pairing operators is related with the representation theory of the symmetric groups, Hecke algebras and Brauer algebras.

The paper is organised as follows.

In Section~\ref{secQAMM} we consider quadratic algebras and related Manin matrices. In Section~\ref{secQA} we consider the quadratic algebras in terms of idempotents. Section~\ref{secLRE} is devoted to the equivalence relations between idempotents. In Sections~\ref{secAMM} and \ref{secABmanin} we define $A$- and $(A,B)$-Manin matrices in~terms of matrix commutation relations and in terms of quadratic algebras, we give the main properties of these matrices. In Section~\ref{secCommCat} we interpret Manin matrices in terms of comma categories and define a general right quantum algebra via adjoint functors. Section~\ref{secProd} is devoted to multiplication of Manin matrices and to bialgebra structure on the right quantum algebra. A generalisation of Manin matrices to the infinite-dimensional case was done in Section~\ref{secMO}.

In Section~\ref{secPc} we consider the following particular cases: the Manin matrices of~\cite{CF,CFR}, $q$-Ma\-nin matrices~\cite{qManin} and multi-parametric case~\cite{Manin89} (the Sections~\ref{secMm}, \ref{secqM} and \ref{secwhqMM} respectively). We recall basic properties of these Manin matrices and their determinants. In Section~\ref{sec4p} we consider a~generalisation of the $n=3$ case from Section~\ref{secwhqMM}.

Section~\ref{secLO} is devoted to relationship between Manin matrices and Lax operators. In Section~\ref{secUq} we explain the known connection of $q$-Manin matrices with some Lax operators (without spectral parameter) via a decomposition of an $R$-matrix into idempotents. This gives an important example of the idempotent equivalence. In Section~\ref{secLOY} we interpret Lax operators (with a~spectral parameter) for the rational $R$-matrix as infinite-dimensional Manin matrices.

In Section~\ref{secMin} we generalise the notion of minors for Manin matrices. Section~\ref{secPerm} has a motivating role, where we consider minors for $q$-Manin matrices (defined via determinant and permanent) as some decomposition coefficients. In Sections~\ref{secDualQA} and \ref{secPO} we define ``dual'' quadratic algebras and the corresponding parings via pairing operators. In Sections~\ref{secMinO} and \ref{secPMinO} we define minors as entries of minor operators and give some properties of these operators. In Section~\ref{secMinEq} we investigate the relation of pairing and minor operators for equivalent idempotents. Section~\ref{secConstr} devoted to an algebraic approach to construction of pairing operators.

We consider examples of pairing and minor operators in Section~\ref{secExMO}. Section~\ref{secMinqp} is devoted to the multi-parametric case (which includes the case of $q$-Manin matrices). In Section~\ref{secHecke} we again consider the case of $q$-Manin matrices, but we construct the pairing operators by using another idempotent (equivalent to the idempotent used in Section~\ref{secMinqp}), which was found in Section~\ref{secUq}. We show how they give related minor operators. Section~\ref{secPO4p} is devoted to the case defined in Section~\ref{sec4p}.

Section~\ref{secBCD} is devoted to the Manin matrices of types $B$, $C$ and $D$. In Section~\ref{secMolev} we recall the Molev's definition of these matrices and interpret them as Manin matrices for pairs of~idem\-po\-tents. We consider the corresponding quadratic algebras. In Section~\ref{secBrauer} we construct the pairing operators by means of Brauer algebra.

In Appendix~\ref{appRw} we give a formulae for a set of inversions in the case of arbitrary Weyl group. In Appendix~\ref{appLie} the universal enveloping algebras of Lie algebras are represented as quadratic algebras.

\section{Quadratic algebras and Manin matrices}
\label{secQAMM}

As a basic field we choose the field of complex numbers $\CC$, however we note that all the statements are valid for any field of characteristic zero, unless the contrary is explicitly indicated. All the vector spaces are supposed to be defined over $\CC$. By an {\it algebra} we understand an associative unital algebras over $\CC$ (not necessary commutative). By a {\it graded algebra} we mean an $\NN_0$-graded associative unital algebra over $\CC$, where $\NN_0$ is the set of non-negative integers: the grading of~such algebra has the form $\gA=\bigoplus_{k\in\NN_0}\gA_k$ and the condition $\gA_k\gA_l\subset\gA_{k+l}$ is implied for all $k,l\in\NN_0$. By a {\it quadratic algebra}%
\footnote{There are more general quadratic algebras over $\gR$, but we do not consider them in this work.
}
we mean a graded algebra of the form $\gA=\gR\otimes(TV/I)$, where~$\gR$ is an arbitrary algebra, $TV=\bigoplus_{k\in\NN_0}V^{\otimes k}=\CC\oplus V\oplus(V\otimes V)\oplus\cdots$ is the tensor algebra of a~finite-dimensional vector space $V$ and $I\subset TV$ is its (two-sided) ideal generated by a subspace of $V\otimes V$; in particular, $\gA_0=\gR$, $\gA_1=\gR\otimes V$.

Let $\CC^n$ be the space of complex column vectors of the size $n$. Its dual $(\CC^n)^*=\Hom(\CC^n,\CC)$ is the space of complex row vectors. Their standard bases $(e_i)_{i=1}^n$ and $(e^i)_{i=1}^n$ are dual to each other: $e^ie_j=\delta^i_j$. Let $E_{i}{}^{j}$ be the $n\times m$ matrices with entries $\big(E_{i}{}^{j}\big)_{kl}=\delta^k_i\delta^j_l$. It acts on $\CC^m$ from the left and on $(\CC^n)^*$ from the right as $E_{i}{}^{j}e_k=\delta^j_ke_i$, $e^iE_{j}{}^{k}=\delta^i_je^k$. We have $E_{i}{}^{j}=e_ie^j$.

We use the following tensor notations. Let $M\in\gR\otimes\Hom(\CC^m,\CC^n)$ be an $n\times m$ matrix over an algebra $\gR$. It can be considered as an operator from $\CC^m$ to $\CC^n$ with entries $M_{ij}=M^i_j\in\gR$, that is $M=\sum_{i,j}M^i_{j}E_{i}{}^{j}$. Introduce the notation
\begin{align*}
 M^{(a)}=\sum_{i,j}M^i_{j}(1\otimes\cdots\otimes1\otimes E_{i}{}^{j}\otimes1\otimes\cdots\otimes1),
\end{align*}
where $E_{i}{}^{j}$ is placed to the $a$-th site and $1$ are identity matrices (of different sizes in general). This is an operator from $\CC^{m_1}\otimes\cdots\otimes\CC^{m_r}$ to $\CC^{n_1}\otimes\cdots\otimes\CC^{n_r}$, where $m_a=m$, $n_a=n$ and $m_b=n_b$ for all $b\ne a$. Analogously, for a matrix $T=\sum_{i,j,k,l}T^{ij}_{kl}E_{i}{}^{k}\otimes E_{j}{}^{l}\in\gR\otimes\Hom\big(\CC^m\otimes\CC^{m'},\CC^{n}\otimes\CC^{n'}\big)$ with entries $T^{ij}_{kl}\in\gR$ we denote
\begin{align*}
 T^{(ab)}=\sum_{i,j,k,l}T^{ij}_{kl}\big(1\otimes\cdots\otimes1\otimes E_{i}{}^{k}\otimes1\otimes\cdots\otimes1\otimes E_{j}{}^{l}\otimes1\otimes\cdots\otimes1\big),
\end{align*}
with $E_{i}{}^{k}$ in the $a$-th site and $E_{j}{}^{l}$ in the $b$-th site. The numbers $a$ and $b$ should be different but the both cases $a<b$ and $a>b$ are possible. In particular,
\begin{align*}
 T^{(21)}=\sum_{i,j,k,l}T^{ij}_{kl}\big(E_{j}{}^{l}\otimes E_{i}{}^{k}\big).
\end{align*}
In the same way the notations $M^{(a)}$ and $T^{(ab)}$ can be defined for any vector spaces $V$, $W$, $V'$, $W'$ and any operators $M\in\gR\otimes\Hom(V,W)$, $T\in\gR\otimes\Hom(V\otimes V',W\otimes W')$ (in the infinite-dimensional case the tensor product with $\gR$ may be completed in some way).

\subsection{Quadratic algebras}
\label{secQA}

Consider a quadratic algebra with $n$ generators, that is an algebra generated by the ele\-ments $x^1,\dots,x^n$ over $\CC$ with some quadratic commutation relations. Since the number of ele\-ments~$x^ix^j$ is equal to $n^2$ the number of independent quadratic relations is less or equal to $n^2$. It means that these relations can be presented in the form
\begin{gather}
 \sum_{i,j=1}^n A^{kl}_{ij}x^i x^j= 0, \qquad k,l=1,\dots,n, \label{Axx00}
\end{gather}
where $A^{kl}_{ij}\in\CC$. The quadratic algebra with these relations is the quotient $TV/I$, where $V=(\CC^n)^*$ and $I$ is the ideal generated by the elements $\sum_{i,j=1}^n A^{kl}_{ij}e^i\otimes e^j$. An element $x^i\in TV/I$ is the class of $e^i\in(\CC^n)^*\subset T(\CC^n)^*$.

The coefficients $A^{kl}_{ij}$ can be considered as entries of a matrix $A$ acting on $\CC^n\otimes\CC^n$. In~terms of~basis this action looks as follows: $A(e_i\otimes e_j)=\sum_{k,l=1}^n A^{kl}_{ij}(e_k\otimes e_l)$. Note that for any invertible $n^2\times n^2$ matrix $G$, the product $GA$ defines the same quadratic algebra, since the relations $\sum_{ij=1}^n (GA)^{kl}_{ij}x^i x^j=0$ is equivalent to~\eqref{Axx00}.

\begin{Prop} \label{PropAidem}
 For each quadratic algebra the matrix $A$ defining its relations can be chosen to be an idempotent:
\begin{align}
 A^2=A. \label{Aidem}
\end{align}
\end{Prop}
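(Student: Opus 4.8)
The plan is to exploit the freedom, noted just before the statement, of replacing $A$ by $GA$ for an invertible $n^2\times n^2$ matrix $G$ without changing the quadratic algebra, and to show that this left $GL$-orbit of $A$ always contains an idempotent. The first observation I would record is that the relations~\eqref{Axx00} depend on $A$ only through the span of its rows, i.e.\ through the subspace $W=\Im\big(\trans{A}\big)\subset\CC^{n^2}$ generated by the coefficient vectors $(A^{kl}_{ij})_{ij}$, or equivalently only through its kernel $\Ker A=W^{\perp}$. Indeed, left multiplication by an invertible $G$ replaces each row by a linear combination of rows, so it preserves $W$ (hence $\Ker A$) and yields an equivalent system of relations; this is exactly the content of the remark preceding the proposition. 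Thus it suffices to find an idempotent $B$ with $\Ker B=\Ker A$.

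Next I would characterise the orbit precisely: for invertible $G$ one has $\Ker(GA)=\Ker A$, and conversely any $B$ with $\Ker B=\Ker A$ has the form $B=GA$ for some invertible $G$. The converse is the only point that needs an argument. Writing $K=\Ker A$, both $A$ and $B$ factor as injections of $\CC^{n^2}/K$ onto $\Im A$ and $\Im B$ respectively, so the assignment $Ax\mapsto Bx$ is well defined and gives an isomorphism $\Im A\isoright\Im B$; since $\dim\Im A=\dim\Im B=\rk A$, the complementary subspaces have equal dimension, and this isomorphism extends to an invertible $G\in\End\big(\CC^{n^2}\big)$ with $GA=B$.

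Finally I would exhibit the idempotent directly: choose any subspace $U\subset\CC^{n^2}$ complementary to $K=\Ker A$, so that $\CC^{n^2}=U\oplus K$, and let $B$ be the projection onto $U$ along $K$. Then $B^2=B$ and $\Ker B=K=\Ker A$, so by the previous step $B=GA$ for an invertible $G$; hence $B$ defines the same quadratic algebra and satisfies~\eqref{Aidem}. The step I expect to require the most care is the bookkeeping of which subspace the idempotent must match: because the relations are read off from the \emph{rows} of $A$, the invariant to be preserved is $\Ker A$ (the orthogonal complement of the relation space), not $\Im A$. In particular one cannot in general take the orthogonal projector onto $W$, since over $\CC$ the standard bilinear form may be degenerate on $W$, which is precisely why an oblique projector with the prescribed kernel is used.
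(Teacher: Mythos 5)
Your proof is correct, but it takes a genuinely different route from the one in the paper. The paper observes that, for invertible $G$, the equation $(GA)^2=GA$ is equivalent to $AGA=A$, and then solves $AGA=A$ explicitly: it puts $A$ into Jordan form and builds $G$ block by block, inverting the Jordan cells with non-zero eigenvalue and pairing each nilpotent cell with a cyclic matrix — in effect constructing an invertible generalized inverse of $A$. You instead characterise the whole left orbit $\{GA \mid G\in\Aut(\CC^n\otimes\CC^n)\}$ as the set of matrices with the same kernel (equivalently the same row span) as $A$, and then exhibit an idempotent in that orbit directly, namely the oblique projection along $\Ker A$ onto any chosen complement. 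What the paper's computation buys is an explicit $G$; what your argument buys is independence from Jordan theory and hence from algebraic closure, so it works verbatim over any field. Indeed, your construction is precisely the one the paper sketches in the Remark immediately following the proposition (there phrased in the dual space: project $(\CC^n\otimes\CC^n)^*$ onto the relation span $R$ along a complement $R_c$, then transpose), so you have in effect supplied the proof of that Remark. Your closing caveat is also well taken: over $\CC$ the standard bilinear form may be degenerate on the relevant subspace, which is exactly why the projector must be oblique, with prescribed kernel, rather than orthogonal.
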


\begin{proof} Let the given quadratic algebra be defined by the relations with a matrix $A$. If one has proved that there exists an invertible $G\in\Aut(\CC^n\otimes\CC^n)$ such that $(GA)^2=GA$ then we can choose $GA$ as an idempotent matrix defining the quadratic relations for this algebra. Since $G$ should be invertible the equation $GAGA=GA$ is equivalent to $AGA=A$. Let us reduce the matrix $A$ to a Jordan form. In the corresponding basis of $\CC^n\otimes\CC^n$ it takes the form
\begin{align*}
A=\begin{pmatrix}
 \alpha_1 & 0 &\cdots & 0 \\
	 0 & \alpha_2 &\cdots & 0 \\
	 \cdots &\cdots &\cdots & \cdots\\
	 0 & 0 &\cdots &\alpha_r
 \end{pmatrix}\!,
\end{align*}
where $\alpha_k$ are Jordan cells. Let us find the solution in the form (in the same basis)
\begin{align*}
G=\begin{pmatrix}
 \beta_1 & 0 &\cdots & 0 \\
	 0 & \beta_2 &\cdots & 0 \\
	 \cdots & \cdots&\cdots &\cdots \\
	 0 & 0 &\cdots &\beta_r
 \end{pmatrix}\!,
\end{align*}
where each $\beta_k$ is an invertible square submatrix, which has the same dimension as the Jordan cell $\alpha_k$. Then, the equation $AGA=A$ is equivalent to system of $r$ equations $\alpha_k\beta_k\alpha_k=\alpha_k$. If~the Jordan cell $\alpha_k$ corresponds to the non-zero eigenvalue of the matrix $A$, then the matrix $\alpha$ is invertible and we can choose $\beta_k=\alpha_k^{-1}$. Otherwise, $\alpha_k$ has the form
\begin{align*}
\alpha_k=\begin{pmatrix}
 0 & 1 & 0 &\cdots & 0 \\
	 0 & 0 & 1 &\cdots & 0 \\
	 \cdots &\cdots &\cdots &\cdots &\cdots \\
	 0 & 0 & 0 &\cdots & 1 \\
	 0 & 0 & 0 &\cdots & 0
 \end{pmatrix}
\end{align*}
and $\beta_k$ can be chosen as
\begin{align*}
\beta_k=\begin{pmatrix}
 0 & 0 & \cdots & 0 & 1 \\
	 1 & 0 &\cdots & 0 & 0 \\
	 0 & 1 &\cdots & 0 & 0 \\
	 \cdots &\cdots &\cdots &\cdots & \cdots\\
	 0 & 0 &\cdots & 1 & 0
 \end{pmatrix}\!.
\end{align*}
For $\alpha_k=0$ we choose $\beta_k=1$. In all the cases such chosen matrices $\beta_k$ are invertible. \end{proof}

\begin{Rem}
 This construction of an idempotent $A$ is appropriate for algebraically closed fields only. In general case one needs to consider decomposition of the vector space $(\CC^n\otimes\CC^n)^*$ into a direct sum $R\oplus R_c$ such that $R$ is the subspace spanned by the quadratic relations. Then the matrix $A$ is the transposed to the idempotent projecting $(\CC^n\otimes\CC^n)^*$ onto $R$ in parallel to the complementary subspace $R_c$.
\end{Rem}

Henceforth we suppose that the matrix $A$ is an idempotent unless otherwise specified. Denote the corresponding quadratic algebra by $\gX_A(\CC)$.

More generally, for an algebra $\gR$ define the quadratic algebra ${\gX}_A(\gR)=\gR\otimes{\gX}_A(\CC)$. This is a graded algebra generated by the elements $x^1,\dots,x^n$ over $\gR$ with the quadratic commutation relations~\eqref{Axx00} and $rx^i=x^ir$, $r\in\gR$, $i=1,\dots,n$. The formulae $\deg x^i=1$, $\deg r=0$ $\;\forall\,r\in\gR$ give the grading on $\gX_A(\gR)$. An idempotent $A\in\End(\CC^n\otimes\CC^n)$ defines a functor ${\gX}_A$ from the category of algebras to the category of graded algebras: ${\gX}_A(f)(r x^{i_1}\cdots x^{i_k})=f(r)x^{i_1}\cdots x^{i_k}$, $\forall f\in\Hom(\gR,\gR')$, $\forall r\in\gR$. Due to Proposition~\ref{PropAidem} any quadratic algebra is isomorphic to~$\gX_A(\gR)$ for some $A$ and $\gR$.

The relations~\eqref{Axx00} can be rewritten in matrix notations as follows. Consider the column vector $X=\sum_{i=1}^n x^i e_i$. Then the relations~\eqref{Axx00} takes the form
\begin{align}
 A(X\otimes X)=0, \label{Axx0}
\end{align}
where $X\otimes X=\sum_{i,j=1}^n x^ix^j (e_i\otimes e_j)$.

The fact that there is no another independent relations for $x^i$ besides~\eqref{Axx00} can be reformulated as follows: if $T_{ij}x^ix^j=0$ for some $T_{ij}\in\gR$ then $T_{ij}=\sum_{k,l=1}^n G_{kl}A^{kl}_{ij}$ for some $G_{kl}\in\gR$.

\begin{Lem} \label{propTXX}
 The following equations for a matrix $T\in\gR\otimes\End(\CC^n\otimes\CC^n)$ are equivalent:
\begin{gather}
 T(X\otimes X)=0, \label{T_XX_0}
 \\
 T(1-A)=0. \label{T_Aorth_0}
\end{gather}
\end{Lem}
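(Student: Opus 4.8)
The plan is to prove the two implications separately after rewriting everything in components. Writing $T=\sum_{i,j,k,l}T^{ij}_{kl}\,E_{i}{}^{k}\otimes E_{j}{}^{l}$ with $T^{ij}_{kl}\in\gR$ and using $\big(E_{i}{}^{k}\otimes E_{j}{}^{l}\big)(e_p\otimes e_q)=\delta^k_p\delta^l_q\,e_i\otimes e_j$, the left-hand side of~\eqref{T_XX_0} becomes
\begin{align*}
 T(X\otimes X)=\sum_{i,j}\Big(\sum_{k,l}T^{ij}_{kl}\,x^kx^l\Big)e_i\otimes e_j,
\end{align*}
so that~\eqref{T_XX_0} is equivalent to the family of scalar relations $\sum_{k,l}T^{ij}_{kl}x^kx^l=0$, one for each pair $(i,j)$. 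The three inputs I would rely on are the defining identity $A(X\otimes X)=0$ from~\eqref{Axx0}, the completeness of the relations recalled just before the lemma, and the idempotency $A^2=A$ from Proposition~\ref{PropAidem}.

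The implication \eqref{T_Aorth_0}$\Rightarrow$\eqref{T_XX_0} is immediate. If $T(1-A)=0$, then $T=TA$, and since the action of matrices over $\gR$ on $X\otimes X$ is associative (it is just the associativity of multiplication in $\gX_A(\gR)$, the scalar entries of $A$ passing freely), we obtain $T(X\otimes X)=(TA)(X\otimes X)=T\big(A(X\otimes X)\big)$, which vanishes by~\eqref{Axx0}.

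The genuinely nontrivial direction is \eqref{T_XX_0}$\Rightarrow$\eqref{T_Aorth_0}. Here I would fix the upper pair $(i,j)$ and feed the vanishing combination $\sum_{k,l}T^{ij}_{kl}x^kx^l=0$ into the completeness statement, which produces coefficients $G^{ij}_{pq}\in\gR$ with $T^{ij}_{kl}=\sum_{p,q}G^{ij}_{pq}A^{pq}_{kl}$. Assembling these into the single matrix $G=\sum_{i,j,p,q}G^{ij}_{pq}\,E_{i}{}^{p}\otimes E_{j}{}^{q}$ turns the componentwise identities into the matrix identity $T=GA$. Idempotency then finishes the proof:
\begin{align*}
 T(1-A)=GA(1-A)=G\big(A-A^2\big)=0.
\end{align*}

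I expect the only real difficulty to be bookkeeping in this last direction: the completeness statement is invoked once for each upper index pair $(i,j)$ and delivers its own coefficients $G^{ij}_{pq}$, so I must check that collecting them into a single operator $G$ genuinely reproduces the matrix identity $T=GA$ and not merely an equality of individual entries. Once that is verified, the idempotency $A^2=A$ closes the argument at once, while the reverse implication uses nothing beyond~\eqref{Axx0}.
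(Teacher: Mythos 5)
Your proof is correct and follows essentially the same route as the paper: the easy direction by multiplying $T(1-A)=0$ against $X\otimes X$ and invoking $A(X\otimes X)=0$, and the converse by reading off the $n^2$ scalar relations, applying the completeness statement entrywise to produce $G$ with $T=GA$, and then using the idempotency $A^2=A$. The bookkeeping step you flag (assembling the $G^{ij}_{pq}$ into a single operator so that $T=GA$ as matrices) is exactly how the paper's proof proceeds and is indeed harmless.
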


\begin{proof}
 The equation~\eqref{T_XX_0} is derived from~\eqref{T_Aorth_0} by multiplying by $(X\otimes X)$ from the right. Conversely, if the equation~\eqref{T_XX_0} holds then we have $n^2$ relations $\sum_{i,j=1}^n T^{ab}_{ij}x^ix^j=0$ enumerated by $a,b=1,\dots,n$, where $T^{ab}_{ij}$ is entries of the matrix $T$. This implies $T^{ab}_{ij}=\sum_{k,l=1}^n G^{ab}_{kl}A^{kl}_{ij}$ for~some matrix $G=(G^{ab}_{kl})$ that is $T=GA$. By multiplying the last equality by $(1-A)$ on the right and by taking into account~\eqref{Aidem} one yields~\eqref{T_Aorth_0}.
\end{proof}

Let us consider the ``change of variables'' $y^i=\sum_{k=1}^mM^i_kx^k$, $i=1,\dots,n$, where $x^1,\dots,x^m$ are the generators of $\gX_A(\CC)$. In general, the transition matrix $M$ is an $n\times m$ matrix with non-commutative entries $M^i_k\in\gR$, so that $y^i\in\gX_A(\gR)$. In terms of $Y=\sum_{i=1}^n y^i e_i$ we have
\begin{align}
 Y=MX. \label{Y_MX_g}
\end{align}

\begin{Lem} \label{propAMMAorth}
The equation
\begin{align}
 A(Y\otimes Y)=0 \label{T_YY_0}
\end{align}
is equivalent to
\begin{align}
 AM^{(1)}M^{(2)}(1-A)=0. \label{AMMAorth0}
\end{align}
\end{Lem}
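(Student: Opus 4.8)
The plan is to reduce \eqref{T_YY_0} to \eqref{AMMAorth0} by first rewriting $Y\otimes Y$ entirely in terms of $X\otimes X$, and then invoking Lemma~\ref{propTXX}. The whole point is that the non-commutativity lives only in the entries of $M$, while the generators $x^k$ commute with $\gR$, so the quadratic relations of $\gX_A$ can be applied to $Y\otimes Y$ through $M^{(1)}M^{(2)}$.

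First I would establish the identity
\begin{align*}
 Y\otimes Y=M^{(1)}M^{(2)}(X\otimes X).
\end{align*}
Starting from $Y=MX$, that is $y^i=\sum_k M^i_k x^k$, I compute $Y\otimes Y=\sum_{i,j}y^iy^j(e_i\otimes e_j)=\sum_{i,j,k,l}M^i_k x^k M^j_l x^l(e_i\otimes e_j)$. The crucial step is the commutativity $x^k r=r x^k$ for $r\in\gR$, which is exactly the defining relation of $\gX_A(\gR)=\gR\otimes\gX_A(\CC)$; it lets me move $M^j_l$ past $x^k$, so that $y^iy^j=\sum_{k,l}M^i_kM^j_l\,x^kx^l$. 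On the other hand, expanding $M^{(1)}=\sum_{i,k}M^i_k(E_i{}^k\otimes1)$ and $M^{(2)}=\sum_{j,l}M^j_l(1\otimes E_j{}^l)$ and letting the composition act on $X\otimes X=\sum_{k,l}x^kx^l(e_k\otimes e_l)$ produces $\sum_{i,j,k,l}M^i_kM^j_l\,x^kx^l(e_i\otimes e_j)$, where the operator coefficients sit to the left of the monomials $x^kx^l$. The two expressions coincide, which proves the identity.

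Substituting this into \eqref{T_YY_0} gives $A(Y\otimes Y)=A M^{(1)}M^{(2)}(X\otimes X)$, so \eqref{T_YY_0} is precisely the equation $T(X\otimes X)=0$ for the matrix $T=A M^{(1)}M^{(2)}$. Applying Lemma~\ref{propTXX} to $T$ then converts this into $T(1-A)=0$, which is exactly \eqref{AMMAorth0}; since Lemma~\ref{propTXX} gives an equivalence, both implications of the statement follow at once. One should only note that $T$ is in general rectangular, mapping $\CC^m\otimes\CC^m$ to $\CC^n\otimes\CC^n$, with the $A$ on the right of \eqref{AMMAorth0} being the source idempotent on $\CC^m\otimes\CC^m$; the argument of Lemma~\ref{propTXX} concerns only the column (source) index and applies verbatim for any number of rows.

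I expect the only genuine obstacle to be the bookkeeping in the first step: carefully using $x^k r=r x^k$ to collect all entries $M^i_k$ to the left of the quadratic monomials, and matching the site placement in $M^{(1)}M^{(2)}$ with the tensor legs of $X\otimes X$. Once the identity $Y\otimes Y=M^{(1)}M^{(2)}(X\otimes X)$ is secured, the equivalence is an immediate consequence of Lemma~\ref{propTXX}.
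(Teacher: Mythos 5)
Your proof is correct and follows exactly the paper's argument: substitute $Y=MX$, use the commutativity $x^k r = r x^k$ to obtain $A(Y\otimes Y)=AM^{(1)}M^{(2)}(X\otimes X)$, and then apply Lemma~\ref{propTXX} to $T=AM^{(1)}M^{(2)}$. Your extra remarks (the explicit verification of $Y\otimes Y=M^{(1)}M^{(2)}(X\otimes X)$ and the note that Lemma~\ref{propTXX} applies verbatim to rectangular $T$) simply spell out details the paper leaves implicit.
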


\begin{proof} Note that the generators $x^i\in\gX_A(\CC)$ commute with $M^k_l\in\gR$ as elements of algebra $\gX_A(\gR)=\gR\otimes\gX_A(\CC)$. Then by substituting~\eqref{Y_MX_g} to~\eqref{T_YY_0} we obtain $AM^{(1)}M^{(2)}(X\otimes X)=0$, which in turn is equivalent to~\eqref{AMMAorth0} by Lemma~\ref{propTXX}. \end{proof}

Consider the quadratic algebra $\Xi_A(\gR)$ generated by the elements $\psi_1,\dots,\psi_n$ over $\gR$ with the commutation relations
\begin{align}
 \psi_i \psi_j=\sum_{k,l=1}^n A^{kl}_{ij} \psi_k \psi_l \label{psi_psi_A_psi_psi}
\end{align}
and $r\psi_i=\psi_ir$, $r\in\gR$.
Each idempotent $A\in\End(\CC^n\otimes\CC^n)$ defines a functor $\Xi_A$ from the category of algebras to the category of graded algebras: $\Xi_A(\gR)=\gR\otimes\Xi_A(\CC)$. Consider the row vector $\Psi=\sum_{i=1}^n\psi_i e^i=(\psi_1,\dots,\psi_n)$. Then the relations~\eqref{psi_psi_A_psi_psi} take the form
\begin{align}
 (\Psi\otimes\Psi)(1-A)=0. \label{PsiPsiA}
\end{align}

\begin{Rem}
 If $\gX_A(\CC)$ is a Koszul algebra, then the algebra $\Xi_A(\CC)$ is its Koszul dual algebra.
\end{Rem}

We will use the following conventions. We write $\gX_A(\gR)=\gX_{A'}(\gR)$ iff there exists an alge\-bra isomorphism $\gX_A(\gR)\isoright\gX_{A'}(\gR)$ identical on generators, that is $x^i\mapsto x^i$ $\forall i$ and $r\mapsto r$ \mbox{$\forall r\in\gR$}. We write $\Xi_A(\gR)=\Xi_{A'}(\gR)$ iff there exists an isomorphism $\Xi_A(\gR)\isoright\Xi_{A'}(\gR)$ identical on~generators $\psi^i$ and $r\in\gR$. We also write $\Xi_A(\gR)=\gX_{A'}(\gR)$ iff there exists an isomorphism $\gX_A(\gR)\isoright\Xi_{A'}(\gR)$ such that $\psi_i\mapsto x^i$ $\forall i$ and $r\mapsto r$ $\forall r\in\gR$.

\begin{Rem}
 We have $\gX_A(\gR)=\gX_{A'}(\gR)$, $\Xi_A(\gR)=\Xi_{A'}(\gR)$ or $\Xi_A(\gR)=\gX_{A'}(\gR)$ iff $\gX_A(\CC)=\gX_{A'}(\CC)$, $\Xi_A(\CC)=\Xi_{A'}(\CC)$ or $\Xi_A(\CC)=\gX_{A'}(\CC)$ respectively. The latter equalities are exactly the isomorphisms of quadratic algebras $TV/I$ as $TV$-modules (left, right of two-sided), where in the case $\Xi_A(\CC)=\gX_{A'}(\CC)$ we identify $\CC^n$ with $(\CC^n)^*$ via $e_i\mapsto e^i$.
\end{Rem}

Note that $S=1-A$ is an idempotent iff $A$ is idempotent. The transformation $S\to SG$ by an invertible matrix $G\in\Aut(\CC^n\otimes\CC^n)$ does not change the relation~\eqref{PsiPsiA}. By trans\-po\-sing the relation~\eqref{PsiPsiA} we see that it is equivalent to the relation~\eqref{Axx0} with $X$ replaced with~$\Psi^\top$ and~$A$ replaced with $S^\top=1-A^\top$, where $(-)^\top$ means the matrix transposition. Hence \mbox{$\Xi_A(\gR)=\gX_{S^\top}(\gR)$} the functor $\Xi_A$ can be identified with the functor ${\gX}_{S^\top}={\gX}_{1-A^\top}$.

\subsection{Left and right equivalence of idempotents}
\label{secLRE}

A fixed quadratic algebra can be defined by different idempotents. Here we give a condition when it happens. To do it we introduce two equivalence relations for idempotent endomorphisms acting on a vector space. We prove some useful properties of these idempotents and their equivalence relations.

Let $V$ be a finite-dimensional vector space (here we use that the field $\CC$ is algebraically closed). Note that the Jordan form of an idempotent $A\in\End(V)$ in an appropriate basis of $V$ is a diagonal matrix $\diag(1,\dots,1,0,\dots,0)$. It can be written in the block form as
\begin{align} \label{AJordan}
 A=\begin{pmatrix}
 1 & 0 \\
 0 & 0
 \end{pmatrix}
\end{align}
with the blocks of the sizes $r\times r$, $r\times d$, $d\times r$, $d\times d$, where $r$ is the rank of $A$ and $d=\dim V-r$. We obtain the first property of the idempotents.

\begin{Prop}
 The rank of an idempotent $A\in\End(V)$ equals to its trace: $\rk A=\tr A$.
\end{Prop}

Let us say that two idempotents $A,A'\in\End(V)$ are {\it left-equivalent} (to each other) if there exists an operator $G\in\Aut(V)$ such that $A'=GA$, and we call them {\it right-equivalent} if there exists $G\in\Aut(V)$ such that $A'=AG$. The both relations are true equivalence relations.

\begin{Prop} \label{PropLRE}
 Two idempotents $A,A'\in\End(V)$ are left-equivalent iff the dual idempotents $S=1-A$ and $S'=1-A'$ are right-equivalent.
\end{Prop}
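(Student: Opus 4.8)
The plan is to reduce each one-sided equivalence of idempotents to a condition on a single subspace, and then to observe that these two conditions coincide via the identity $\Im(1-A)=\Ker A$.

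First I would characterise left-equivalence by the claim that $A'=GA$ for some $G\in\Aut(V)$ if and only if $\Ker A=\Ker A'$. The forward implication is immediate: an invertible $G$ satisfies $GAv=0\iff Av=0$, so $\Ker(GA)=\Ker A$. For the converse I would use the idempotent decompositions $V=\Im A\oplus\Ker A=\Im A'\oplus\Ker A'$. Writing $K:=\Ker A=\Ker A'$, the common kernel forces $\rk A=\dim V-\dim K=\rk A'$. The restriction $A'|_{\Im A}\colon\Im A\to\Im A'$ is then injective, since its kernel is $\Im A\cap K=0$, and it runs between spaces of equal dimension, hence is an isomorphism. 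I would define $G$ to act as $A'|_{\Im A}$ on $\Im A$ and as the identity on $K$; because $\Im A'$ and $K$ are complementary in $V$, this $G$ is invertible, and $A'=GA$ holds since both sides vanish on $K$ and agree with $A'$ on $\Im A$.

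Next I would establish the dual statement by transposition: $S'=SG'$ for some $G'\in\Aut(V)$ if and only if $\Im S=\Im S'$. Transposing $S'=SG'$ turns right-equivalence of $S,S'$ into left-equivalence of the idempotents $S^\top,(S')^\top$, which by the previous step is governed by $\Ker S^\top=\Ker (S')^\top$; dualising the annihilators converts this back into $\Im S=\Im S'$.

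Finally I would glue the two characterisations together using the fact that for an idempotent $A$ one has $\Im(1-A)=\Ker A$: the inclusion $\Im(1-A)\subseteq\Ker A$ follows from $A(1-A)=A-A^2=0$, and the reverse from $(1-A)v=v$ for $v\in\Ker A$. Hence $\Im S=\Ker A$ and $\Im S'=\Ker A'$, so the condition $\Im S=\Im S'$ is literally the condition $\Ker A=\Ker A'$, and chaining the equivalences yields that $A,A'$ are left-equivalent precisely when $S,S'$ are right-equivalent. The main obstacle is the converse of the first step, namely producing an explicit invertible $G$; the delicate point is verifying that $A'|_{\Im A}$ really is an isomorphism and that extending by the identity on the common kernel is both well defined and compatible with $A'=GA$.
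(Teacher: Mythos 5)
Your proof is correct, but it takes a genuinely different route from the paper's. The paper fixes a basis in which $A$ is the standard block idempotent $\left(\begin{smallmatrix} 1 & 0 \\ 0 & 0 \end{smallmatrix}\right)$, writes $G$ in block form, uses idempotency of $A'=GA$ (equivalently $AGA=A$, hence $AA'=A$) to force $A'=\left(\begin{smallmatrix} 1 & 0 \\ \gamma & 0 \end{smallmatrix}\right)$, and then exhibits the explicit invertible matrix $G_{-\gamma}=\left(\begin{smallmatrix} 1 & 0 \\ -\gamma & 1 \end{smallmatrix}\right)$ with $S'=SG_{-\gamma}$, handling the converse by transposition. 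You instead prove invariant characterisations: left-equivalence of idempotents is equivalent to equality of kernels (your construction of $G$ from the isomorphism $A'|_{\Im A}\colon\Im A\to\Im A'$ glued with the identity on the common kernel is sound, since $V=\Im A\oplus K=\Im A'\oplus K$ gives invertibility and $A'=GA$ can be checked on each summand), right-equivalence is equivalent to equality of images (by transposition and annihilators), and the two conditions are matched by the identity $\Im(1-A)=\Ker A$. Each approach buys something: yours is basis-free, transparently valid over an arbitrary field, and in effect establishes the sharper subspace criteria that the paper only records later (the remark after Proposition~\ref{PropLEQA}, conditions $(e)$ and $(f)$: left-equivalence iff $V^*A=V^*A'$, right-equivalence iff $SV=S'V$); the paper's matrix computation, by contrast, produces the explicit normal forms $A_\gamma=G_\gamma A$ and $S_\gamma=SG_{-\gamma}$, which are reused immediately afterwards in the remark following the proposition and in the proof of Lemma~\ref{lemAS0}.
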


\begin{proof} Let us fix a basis of $V$ such that the idempotent $A$ has the form~\eqref{AJordan}. Suppose that~$A$ is left-equivalent to $A'$, then there exists an invertible matrix $G$ such that $A'=GA$. The~mat\-rices~$G$ and $A'$ have the form
\begin{gather*}
 G=\begin{pmatrix}
 \alpha & \beta \\
 \gamma & \delta
 \end{pmatrix}\!, \qquad
 A'=GA=
 \begin{pmatrix}
 \alpha & 0 \\
 \gamma & 0
 \end{pmatrix}\!,
\end{gather*}
where $\alpha$, $\beta$, $\gamma$ and $\delta$ are $d\times d$, $d\times\ell$, $\ell\times d$ and $\ell\times\ell$ matrices respectively. The idempotentness of~$A'$ implies $AA'=A$, that is $\alpha=1$. Hence $A'=A_\gamma$, where $A_\gamma:=\left(\begin{smallmatrix} 1 & 0 \\ \gamma & 0 \end{smallmatrix}\right)$. Since $S=1-A=\left(\begin{smallmatrix} 0 & 0 \\ 0 & 1 \end{smallmatrix}\right)$ and $S'=1-A'=\left(\begin{smallmatrix} 0 & 0 \\ -\gamma & 1 \end{smallmatrix}\right)$ we have $S'=SG_{-\gamma}$, where $G_\gamma:=\left(\begin{smallmatrix} 1 & 0 \\ \gamma & 1 \end{smallmatrix}\right)$. Further, note that two idempotents $S_1$ and $S_2$ are right-equivalent iff the idempotents $S_1^\top$ and $S_2^\top$ are left-equivalent. Thus the right equivalence of $S$ and $S'$ implies the left equivalence of $A=1-S$ and \mbox{$A'=1-S'$}. \end{proof}

\begin{Rem} \sloppy
 We see from the proof that any matrix left-equivalent to~\eqref{AJordan} has the form~\mbox{$A_\gamma=G_\gamma A$}. Analogously, any matrix right-equivalent to $S=1-A$ has the form \mbox{$S_\gamma=1-A_\gamma=SG_{-\gamma}$}.
\end{Rem}

\begin{Prop} \label{propSVequiv}
 If two idempotents $S,S'\in\End(V)$ are right-equivalent then $SV=S'V$.
 If~two idempotents $A,A'\in\End(V)$ are left-equivalent then $V^*A=V^*A'$,
 where $V^*=\Hom(V,\CC)$.
\end{Prop}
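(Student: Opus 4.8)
The plan is to exploit the single fact that an automorphism $G\in\Aut(V)$ is a bijection, so it maps $V$ onto itself and acts invertibly on the row space $V^*$ as well. Both halves of the statement then collapse to a one-line image computation, with the only care needed being the bookkeeping of whether $G$ multiplies from the left or the right.

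For the first assertion I would write out the definition of right equivalence: there exists $G\in\Aut(V)$ with $S'=SG$. Then
\begin{align*}
 S'V=(SG)V=S(GV).
\end{align*}
Since $G$ is invertible it is surjective, so $GV=V$, and hence $S'V=SV$. No idempotency of $S$ or $S'$ is even required for this direction; it is purely the invertibility of $G$ that does the work.

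For the second assertion the cleanest route is to mirror the transposition trick already used in the proof of Proposition~\ref{PropLRE}. Left equivalence of $A,A'$ means $A'=GA$ for some $G\in\Aut(V)$, and here the matrices act on $V^*=\Hom(V,\CC)$ from the right, so
\begin{align*}
 V^*A'=V^*(GA)=(V^*G)A.
\end{align*}
Because $G$ is invertible, right multiplication by $G$ is a bijection of $V^*$ onto itself, whence $V^*G=V^*$ and therefore $V^*A'=V^*A$. Equivalently, one may transpose: left equivalence $A'=GA$ gives $(A')^\top=A^\top G^\top$, i.e.\ right equivalence of $A^\top$ and $(A')^\top$, so applying the first part to these idempotents yields $A^\top V=(A')^\top V$, which is exactly $V^*A=V^*A'$ after reidentifying row vectors with their transposes.

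I do not expect any genuine obstacle here: the statement is essentially the observation that image (respectively row space) is unchanged under composition with an invertible map. The only point requiring attention is consistency of the left/right conventions for how $A$ and $S$ act and on which side $G$ appears, so that one does not inadvertently need $GV=V$ where instead $V^*G=V^*$ is the relevant identity.
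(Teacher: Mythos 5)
Your proof is correct and follows essentially the same route as the paper: the first half is the identical one-line computation $S'V=SGV=SV$ using $GV=V$, and for the second half the paper does exactly your transposition trick (applying the first part to $S=A^\top$, $S'=(A')^\top$), of which your direct computation $V^*A'=(V^*G)A=V^*A$ is just an untransposed rephrasing. Your side remark that idempotency is never used is also accurate.
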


\begin{proof} If $G\in\Aut(V)$ then $GV=V$, so from $S'=SG$ one yields $S'V=SGV=SV$. The~second statement follows from the first one for $S=A^\top$, $S'=(A')^\top$. \end{proof}

\begin{Lem} \label{lemAS0}
If two idempotents $A,A'\in\End(V)$ satisfy the relations $A'(1-A)=0$ and $A(1-A')=0$ then they are left-equivalent.
\end{Lem}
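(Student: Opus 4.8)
The plan is to exhibit a single explicit invertible operator $G\in\Aut(V)$ with $A'=GA$, rather than to argue abstractly. First I would rewrite the two hypotheses in more convenient equivalent forms: $A'(1-A)=0$ says $A'A=A'$, while $A(1-A')=0$ says $AA'=A$. These, together with the idempotency relations $A^2=A$ and $(A')^2=A'$, are the only facts I expect to use.

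Next I would propose the candidate
\begin{align*}
 G=1-A+A'.
\end{align*}
The verification that it does the required job on $A$ is immediate: using $A^2=A$ and then $A'A=A'$,
\begin{align*}
 GA=(1-A+A')A=A-A^2+A'A=A'A=A'.
\end{align*}
So the only real content is to show that this particular $G$ is invertible.

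The main obstacle — and essentially the whole proof — is therefore the invertibility of $G$. Here I would guess the symmetric operator $G'=1-A'+A$ as a candidate inverse and verify $GG'=1$ by direct expansion. Every quadratic term in $(1-A+A')(1-A'+A)$ simplifies through one of the four relations $A^2=A$, $(A')^2=A'$, $A'A=A'$, $AA'=A$; I expect the $A$-terms and the $A'$-terms each to cancel in pairs, leaving exactly the identity. Since $V$ is finite-dimensional a one-sided inverse suffices (alternatively $G'G=1$ follows from the symmetry of the hypotheses under $A\leftrightarrow A'$). This produces $G\in\Aut(V)$ with $A'=GA$, which is precisely left-equivalence.

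As a sanity check and an alternative route, I would note that the two hypotheses force $\Ker A=\Ker A'$: if $Av=0$ then $A'v=A'Av=0$, and symmetrically, so each kernel is contained in the other; this also yields $\rk A=\rk A'$. One could then build $G$ geometrically from the decompositions $V=\Im A\oplus\Ker A=\Im A'\oplus\Ker A'$, using that $A'$ restricts to an isomorphism $\Im A\isoright\Im A'$ while $G$ is taken to be the identity on the common kernel. However, the explicit $G=1-A+A'$ is cleaner, avoids choosing bases, and makes the finite-dimensionality hypothesis visibly inessential for the identity $GA=A'$ itself.
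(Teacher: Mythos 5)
Your proof is correct, and it takes a genuinely different route from the paper's. The paper argues in an adapted basis: it puts $A$ in the block form $\left(\begin{smallmatrix} 1 & 0 \\ 0 & 0 \end{smallmatrix}\right)$, writes $A'=\left(\begin{smallmatrix} \alpha & \beta \\ \gamma & \delta \end{smallmatrix}\right)$, and extracts from the two hypotheses that $\beta=0$, $\delta=0$, $\alpha=1$, whence $A'=\left(\begin{smallmatrix} 1 & 0 \\ \gamma & 0 \end{smallmatrix}\right)=G_\gamma A$ with $G_\gamma=\left(\begin{smallmatrix} 1 & 0 \\ \gamma & 1 \end{smallmatrix}\right)$ invertible, thereby reusing the block machinery from the proof of Proposition~\ref{PropLRE}. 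You instead exhibit the conjugating operator in closed form, $G=1-A+A'$, verify $GA=A'$ in one line from $A^2=A$ and $A'A=A'$, and establish invertibility by checking that $G'=1-A'+A$ is an inverse: expanding $GG'$ and reducing each quadratic term via $A^2=A$, $(A')^2=A'$, $AA'=A$, $A'A=A'$ indeed collapses everything to $1$, and $G'G=1$ then follows from the symmetry of the hypotheses under $A\leftrightarrow A'$, so not even the one-sided-inverse shortcut (hence finite-dimensionality) is truly needed. What each approach buys: yours is basis-free, shorter, and valid in any dimension, over any field, indeed over any ring, which fits the paper's own closing remark that the statements of that subsection hold over an arbitrary field; the paper's block computation, on the other hand, produces the explicit normal form $A_\gamma=G_\gamma A$ of every idempotent left-equivalent to a given one, which the surrounding Remark and Proposition~\ref{PropLRE} go on to exploit.
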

\begin{proof} By substituting~\eqref{AJordan} and $A'=\left(\begin{smallmatrix} \alpha & \beta \\ \gamma & \delta \end{smallmatrix}\right)$ to these relations we obtain $\beta=0$, $\delta=0$ and~$\alpha=1$, so that $A'=A_\gamma=G_\gamma A$ (see the proof of Proposition~\ref{PropLRE}). \end{proof}

Consider the case $V=\CC^n\otimes\CC^n$. Recall that any idempotent $A\in\End(\CC^n\otimes\CC^n)$ defines quadratic algebras $\gX_A(\CC)$ and $\Xi_A(\CC)$. % with grading components $\gX_A(\CC)_k$ and $\Xi_A(\CC)_k$, $k\in\NN_0$.

\begin{Prop} \label{PropLEQA}
Let $A,A'\in\End(\CC^n\otimes\CC^n)$ be idempotents. Then the following conditions are equivalent.
\begin{itemize}\itemsep=0pt
 \item[$(a)$] $A$ is left-equivalent to $A'$.
 \item[$(b)$] $S=1-A$ is right-equivalent to $S'=1-A'$.
 \item[$(c)$] $\gX_A(\CC)=\gX_{A'}(\CC)$.
 \item[$(d)$] $\Xi_A(\CC)=\Xi_{A'}(\CC)$.
\end{itemize}
\end{Prop}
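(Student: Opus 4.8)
The plan is to treat $(a)\Leftrightarrow(b)$ as already known --- it is exactly Proposition~\ref{PropLRE} --- and to organise the rest around the equivalence $(a)\Leftrightarrow(c)$, from which $(d)$ will follow by transposition. The key preliminary observation is that $\gX_A(\CC)$ depends only on the subspace $R\subset V^*$ (with $V=\CC^n\otimes\CC^n$) spanned by the relation vectors $\sum_{ij}A^{kl}_{ij}\,e^i\otimes e^j$ of~\eqref{Axx00}: since the degree-two part of the ideal generated by $R$ is $R$ itself, two idempotents satisfy $\gX_A(\CC)=\gX_{A'}(\CC)$ (with the convention of identity on generators) exactly when $R=R'$. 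First I would note that the $(k,l)$-th relation vector is precisely the $(k,l)$-th row of $A$, so that $R$ coincides with the row space $V^*A$. Hence condition $(c)$ is the equality of row spaces $V^*A=V^*A'$.

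For the direction $(a)\Rightarrow(c)$ I would simply quote Proposition~\ref{propSVequiv}, which produces $V^*A=V^*A'$ from left-equivalence. The content is in $(c)\Rightarrow(a)$. Here I would use idempotency: any $u\in V^*A$ is fixed by $A$, because $u=vA$ gives $uA=vA^2=vA=u$. Assuming $V^*A=V^*A'$, every row of $A'$ lies in $V^*A'=V^*A$ and is thus fixed by $A$, which is the matrix statement $A'A=A'$, i.e.\ $A'(1-A)=0$; symmetrically, every row of $A$ lies in $V^*A=V^*A'$ and is fixed by $A'$, giving $A(1-A')=0$. Lemma~\ref{lemAS0} then delivers the left-equivalence of $A$ and $A'$, completing $(a)\Leftrightarrow(c)$.

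To obtain $(d)$ I would invoke the identification $\Xi_A=\gX_{1-A^\top}$ established earlier, rewriting $1-A^\top=(1-A)^\top=S^\top$. Under it, condition $(d)$ reads $\gX_{S^\top}(\CC)=\gX_{(S')^\top}(\CC)$, and applying the already-proved equivalence $(a)\Leftrightarrow(c)$ to the idempotents $S^\top$ and $(S')^\top$ turns this into the left-equivalence of $S^\top$ and $(S')^\top$. Since transposition interchanges left- and right-equivalence (as observed in the proof of Proposition~\ref{PropLRE}), this is exactly the right-equivalence of $S$ and $S'$, which is $(b)$. Combined with $(a)\Leftrightarrow(b)$, all four conditions are equivalent.

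I expect the main obstacle to be the careful bookkeeping in $(c)\Rightarrow(a)$: one must verify that ``the same defining relations'' genuinely amounts to the equality of row spaces $V^*A=V^*A'$, and that idempotency is precisely what converts membership in these row spaces into the vanishing relations $A'(1-A)=0$ and $A(1-A')=0$ required by Lemma~\ref{lemAS0}. Everything else is either cited directly or reduces to the transposition identities $\Xi_A=\gX_{1-A^\top}$ and the left/right-transpose duality already recorded in the text.
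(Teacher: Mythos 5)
Your proof is correct and follows essentially the same route as the paper's: $(a)\Leftrightarrow(b)$ by Proposition~\ref{PropLRE}, the core equivalence $(a)\Leftrightarrow(c)$ culminating in the pair of identities $A'(1-A)=0$, $A(1-A')=0$ fed into Lemma~\ref{lemAS0}, and $(d)$ handled by transposition via $\Xi_A(\CC)=\gX_{S^\top}(\CC)$. The only difference is cosmetic: where the paper invokes Lemma~\ref{propTXX} to extract those identities from $\gX_A(\CC)=\gX_{A'}(\CC)$, you re-derive them by recasting $(c)$ as the row-space equality $(\CC^n\otimes\CC^n)^*A=(\CC^n\otimes\CC^n)^*A'$ and using idempotency directly --- exactly the reformulation the paper itself records in the Remark following the proposition.
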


\begin{proof} The conditions $(a)$ and $(b)$ are equivalent due to Proposition~\ref{PropLRE}. Two left-equivalent idempotents $A$ and $A'$ define the same commutation relations~\eqref{Axx0}. This means that $(a)$ imp\-lies~$(c)$. Let us prove the converse implication. If $\gX_A(\CC)=\gX_{A'}(\CC)$ then $A'(X\otimes X)=0$ and~$A(X\otimes X)=0$. By Lemma~\ref{propTXX} we obtain the relations $A'(1-A)=0$ and $A(1-A')=0$. Due to Lemma~\ref{lemAS0} these relations imply the left equivalence of $A$ and $A'$. Since $\Xi_A(\CC)=\gX_{S^\top}(\CC)$ the equivalence of $(b)$ and $(d)$ is derived by considering the transposed matrices. \end{proof}

\begin{Rem}
Note that the vanishing of $A'(1-A)$ and $A(1-A')$ is equivalent to \mbox{$V^*A=V^*A'$}. Due to Proposition~\ref{PropLRE} and~Lemma~\ref{lemAS0} can be reformulated in the following way: idempotents~$A$ and~$A'$ are left equivalent iff $V^*A=V^*A'$.
Analogously, idempotents $S$ and $S'$ are right equivalent iff $SV=S'V$.
For the case $V=\CC^n\otimes\CC^n$ we can add two more equivalent conditions to the list of~Proposition~\ref{PropLEQA}:
\begin{itemize}\itemsep=0pt
 \item[$(e)$] $(\CC^n\otimes\CC^n)^*A=(\CC^n\otimes\CC^n)^*A'$ and

 \item[$(f)$] $S(\CC^n\otimes\CC^n)=S'(\CC^n\otimes\CC^n)$.
\end{itemize}
Thus, the choice of an idempotent $A$ from a class of the left equivalence corresponds to the choice of~complementary subspace $R_c$ in the decomposition $V^*=R\oplus R_c$, where $R$ is the fixed subspace defining this class.
Note also that all the statements of this subsection are valid for any field $\CC$.
\end{Rem}

\subsection[A-Manin matrices]{$\boldsymbol A$-Manin matrices}
\label{secAMM}

\begin{Def}
 An $n\times n$ matrix $M$ over some algebra $\gR$ satisfying the equation
\begin{align}
 AM^{(1)}M^{(2)}(1-A)=0 \label{AMMdef}
\end{align}
is called {\it Manin matrix corresponding to the idempotent $A$} or simply {\it $A$-Manin matrix}.
\end{Def}

The definition~\eqref{AMMdef} can be rewritten in the following form
\begin{align}
 AM^{(1)}M^{(2)}&=AM^{(1)}M^{(2)}A. \label{AMMAMMA}
\end{align}
This relation means that the expression $AM^{(1)}M^{(2)}$ is invariant with respect to multiplication by $A$ from the right.

\begin{Prop}\label{propA}
 Let $M$ be an $n\times n$ matrix over $\gR$. Let $x^i$ and $\psi_i$ be the generators of $\gX_A(\CC)$ and $\Xi_A(\CC)$ respectively. Consider the elements $y^i\in\gX_A(\gR)$ and $\phi_i\in\Xi_A(\gR)$ defined as follows:
\begin{gather}
\begin{pmatrix}
y^1 \\ \cdots \\ y^n
\end{pmatrix}
=
\begin{pmatrix}
 M^1_1 & \cdots & M^1_n \\
\cdots &\cdots & \cdots \\
 M^n_1 & \cdots & M^n_n
\end{pmatrix}
\begin{pmatrix}
 x^1 \\ \cdots \\ x^n
\end{pmatrix}\!, \label{PropYMX}
\\[1ex]
(\phi_1,\dots,\phi_n) =
(\psi_1,\dots,\psi_n)
\begin{pmatrix}
 M^1_1 & \cdots & M^1_n \\
\cdots &\cdots & \cdots \\
 M^n_1 & \cdots & M^n_n
\end{pmatrix}\!. \label{PropPhiPsiM}
\end{gather}
Then the following three conditions are equivalent:
\begin{itemize}\itemsep=0pt
\item The matrix $M$ is an $A$-Manin matrix.
\item The elements $y^i$ satisfy the relations~\eqref{Axx00}, i.e., $A(Y\otimes Y)=0$, where $Y=\sum_{i=1}^n y^i e_i$.
\item The elements $\phi_i$ satisfy the relations~\eqref{psi_psi_A_psi_psi}, i.e., $(\Phi\otimes\Phi)(1-A)=0$, where $\Phi=\sum_{i=1}^n \phi_i e^i$.
\end{itemize}
\end{Prop}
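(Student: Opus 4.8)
The plan is to establish the two equivalences ``$A$-Manin $\Leftrightarrow$ second condition'' and ``$A$-Manin $\Leftrightarrow$ third condition'' separately, in each case converting the statement about the transformed generators into the matrix relation~\eqref{AMMdef} by means of the ``no extra relations'' lemmas. The first of these equivalences is essentially already available: since $Y=MX$ by~\eqref{PropYMX}, Lemma~\ref{propAMMAorth} asserts precisely that $A(Y\otimes Y)=0$ is equivalent to $AM^{(1)}M^{(2)}(1-A)=0$, i.e., to $M$ being an $A$-Manin matrix. So for that half I would simply invoke Lemma~\ref{propAMMAorth}.

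For the third condition I would first rewrite $\Phi\otimes\Phi$ in tensor form. Using $\phi_i=\sum_k\psi_kM^k_i$ together with the fact that the generators $\psi_k$ commute with the entries $M^l_j\in\gR$ in $\Xi_A(\gR)=\gR\otimes\Xi_A(\CC)$, I compute
\begin{align*}
 \phi_i\phi_j=\sum_{k,l}\psi_k\psi_l\,M^k_iM^l_j,
\end{align*}
which in the tensor notation of Section~\ref{secQAMM} reads $\Phi\otimes\Phi=(\Psi\otimes\Psi)M^{(1)}M^{(2)}$. Consequently the third condition, namely $(\Phi\otimes\Phi)(1-A)=0$, becomes $(\Psi\otimes\Psi)\,M^{(1)}M^{(2)}(1-A)=0$, and it remains to match this with~\eqref{AMMdef}.

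For this I need the analogue of Lemma~\ref{propTXX} on the dual side: for any $T\in\gR\otimes\End(\CC^n\otimes\CC^n)$ the relation $(\Psi\otimes\Psi)T=0$ is equivalent to $AT=0$. I would deduce this from Lemma~\ref{propTXX} itself through the identification $\Xi_A=\gX_{S^\top}$ with $S=1-A$ recorded above: transposing $(\Psi\otimes\Psi)T=0$ turns it into $T^\top(X\otimes X)=0$, where $X=\Psi^\top$ is the generator column of $\gX_{S^\top}(\gR)$ (the scalars $T^{kl}_{ij}\in\gR$ may be moved freely past the $\psi$'s since they commute). Lemma~\ref{propTXX} applied to the idempotent $S^\top$ then yields $T^\top(1-S^\top)=0$, and since $1-S^\top=A^\top$ this is $T^\top A^\top=0$, i.e., $AT=0$. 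Applying this with $T=M^{(1)}M^{(2)}(1-A)$ converts the third condition into~\eqref{AMMdef}, completing the chain of equivalences.

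The routine part consists of the two bookkeeping computations (the tensor form of $\Phi\otimes\Phi$ and the transposition), which only use the commutativity of $\gR$ with the generators and the definitions of the tensor notation. The one genuine step is the dual lemma $(\Psi\otimes\Psi)T=0\Leftrightarrow AT=0$; the mild obstacle there is to carry out the transposition consistently — in particular to track the position of the $\gR$-valued coefficients relative to the quadratic monomials — so that Lemma~\ref{propTXX} can be quoted for the idempotent $S^\top$ rather than reproved from scratch.
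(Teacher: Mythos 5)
Your proposal is correct and takes essentially the same route as the paper: the second condition is dispatched by Lemma~\ref{propAMMAorth} exactly as in the text, and the third is handled by the same transposition trick, using that $1-A^\top$ is again an idempotent. The only difference is organizational: the paper transposes $(\Phi\otimes\Phi)(1-A)=0$ and cites Lemma~\ref{propAMMAorth} for the matrix $M^\top$, whereas you expand $\Phi\otimes\Phi=(\Psi\otimes\Psi)M^{(1)}M^{(2)}$ first and then prove a transposed form of Lemma~\ref{propTXX}; since Lemma~\ref{propAMMAorth} is precisely that expansion followed by Lemma~\ref{propTXX}, the two arguments coincide step for step.
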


\begin{proof} Note that the formulae~\eqref{PropYMX} and \eqref{PropPhiPsiM} have the form $Y=MX$ and $\Phi=\Psi M$. The~second condition is equivalent to the first one due to Lemma~\ref{propAMMAorth}. The third condition can be written as $\big(1-A^\top\big)\big(\Phi^\top\otimes\Phi^\top\big)=0$. Since the operator $1-A^\top$ is also idempotent we can again apply Lemma~\ref{propAMMAorth}, so the third condition is equivalent to the equation
\begin{align*}
 \big(1-A^\top\big)\big(M^\top\big)^{(1)}\big(M^\top\big)^{(2)}A^\top=0. %\label{Atorth_MM_At}
\end{align*}
Transposition of this equation and the formula $\big(\big(M^\top\big)^{(1)}\big(M^\top\big)^{(2)}\big)^\top=M^{(1)}M^{(2)}$ gives exa\-c\-tly~\eqref{AMMdef}, which is the first condition. \end{proof}

Note that the relation~\eqref{AMMdef} has the form $AM^{(1)}M^{(2)}S=0$, where $S=1-A$. It is equiva\-lent~to
\begin{align} \label{SMMS}
 M^{(1)}M^{(2)}S=SM^{(1)}M^{(2)}S.
\end{align}

Let $P=1-2A$. Then $P^2=1$. We have $A=\frac{1-P}2$ and $S=\frac{1+P}2$. The idempotents $A$ and $S$ are often given by the matrix $P$ satisfying $P^2=1$. The relation~\eqref{AMMdef} in terms of $P$ takes the form
\begin{align} \label{PMMP}
 M^{(1)}M^{(2)}-PM^{(1)}M^{(2)}+M^{(1)}M^{(2)}P-PM^{(1)}M^{(2)}P=0.
\end{align}

\begin{Rem}
 The notations $A$, $S$, $P$ and the sign of $P$ is chosen according to the basic case: the algebra of commutative polynomials. In this case the roles of $A$, $S$ and $P$ are played by antisymmetrizer, symmetrizer and permutation matrix respectively. See Section~\ref{secMm} for details.
\end{Rem}

Consider some trivial examples. For $A=0$ we have $P=S=1$. The algebra $\gX_0(\CC)=\CC\langle x^1,\dots,x^n\rangle$ is the algebra of all the non-commutative polynomials of the variables $x^1,\dots,x^n$ (without any relations). The algebra $\Xi_0(\CC)$ is defined by the relations $\psi_i\psi_j=0$, $i,j=1,\dots,n$. As a linear space it has the from $\Xi_0(\CC)=\CC\oplus\CC\psi_1\oplus\dots\oplus\CC\psi_n$. For $A=1$ we have $P=-1$, $S=0$, $\gX_1(\CC)=\CC\oplus\CC x^1\oplus\dots\oplus\CC x^n\cong\Xi_0(\CC)$, $\Xi_1(\CC)=\CC\langle\psi_1,\dots,\psi_n\rangle\cong\gX_0(\CC)$. In the both cases the relation defining Manin matrices has the form $0=0$, so that any $n\times n$ matrix $M$ is a~Manin matrix for the idempotent $A=0$ as well as for the idempotent $A=1$.

Due to Propositions~\ref{PropLEQA} and \ref{propA} the notion of $A$-Manin matrix does not change if we substitute $A$ by a left-equivalent idempotent $A'$. This means that $M$ is an $A$-Manin matrix iff it is an $A'$-Manin matrix. This implies that $A$-Manin matrices are associated to the quadratic algebras $\gX_A(\CC)$ and $\Xi_A(\CC)$ (with fixed generators).

More generally, the definition of $A$-Manin matrix can be written in the form
\begin{align*}
 \wt A M^{(1)}M^{(2)}(1-A')=0,
\end{align*}
where $\wt A=\wt G A$, $A'=GA$ and $\wt G,G\in\Aut(\CC^n\otimes\CC^n)$ are such that $A'$ is an idempotent (i.e., $AGA=A$). The most general form is
\begin{align*}
 \wt A M^{(1)}M^{(2)}\wt S=0,
\end{align*}
where $\wt A=\wt G A$ and $\wt S=SG$ for any $\wt G,G\in\Aut(\CC^n\otimes\CC^n)$.

\subsection[(A,B)-Manin matrices]
{$\boldsymbol{(A,B)}$-Manin matrices}\label{secABmanin}

Let $A\in\End(\CC^n\otimes\CC^n)$ and $B\in\End(\CC^m\otimes\CC^m)$ be two idempotents and $\gR$ be an algebra. Let~$x^1,\dots,x^m$ be generators of $\gX_B(\CC)$ and $\psi_1,\dots,\psi_n$ be generators of $\Xi_A(\CC)$. They satisfy the relations $B(X\otimes X)=0$ and $(\Psi\otimes\Psi)(1-A)=0$, where $X=\sum_{j=1}^mx^je_j$ and $\Psi=\sum_{k=1}^n\psi_ke^k$. Let~$M$ be an $n\times m$ matrix with entries $M^i_j\in\gR$. Consider a ``change of variables''
\begin{gather}
y^i=\sum_{j=1}^mM^i_jx^j\in\gX_B(\gR), \qquad i=1,\dots,n, \label{yMx}
\\
\phi_j=\sum_{i=1}^n\psi_iM^i_j\in\Xi_A(\gR), \qquad j=1,\dots,m. \label{phipsiM}
\end{gather}
Consider $n\times1$ and $1\times m$ matrices $Y=\sum_{i=1}^ny^ie_i$ and $\Phi=\sum_{j=1}^m\phi_je^j$. Then one can rewrite~\eqref{yMx}, \eqref{phipsiM} in the matrix form: $Y=MX$ and $\Phi=\Psi M$. Now let us generalise Proposition~\ref{propA} to this case.

\begin{Prop}\label{propAB}
The following three conditions are equivalent:
\begin{gather}
 AM^{(1)}M^{(2)}(1-B)=0,\label{AMMB}
 \\[1ex]
 A(Y\otimes Y)=0,\nonumber %\label{AYY0}
 \\[1ex]
 (\Phi\otimes\Phi)(1-B)=0.\nonumber
\end{gather}
\end{Prop}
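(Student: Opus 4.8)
The plan is to treat this as the two-idempotent, possibly non-square generalisation of Proposition~\ref{propA}, which is recovered when $A=B$ and $n=m$. Accordingly I would prove the chain of equivalences in two stages: first show that \eqref{AMMB} is equivalent to $A(Y\otimes Y)=0$ by substituting $Y=MX$, and then obtain the equivalence with $(\Phi\otimes\Phi)(1-B)=0$ by transposing and reusing the first stage applied to the transposed data. The only preparatory remark needed is that Lemmas~\ref{propTXX} and~\ref{propAMMAorth} remain valid for a rectangular $M$ and for a right-hand algebra governed by a second idempotent, since their proofs use only the defining property of the relevant quadratic algebra.

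For the first stage, recall that in $\gX_B(\gR)=\gR\otimes\gX_B(\CC)$ the generators $x^j$ commute with the entries $M^i_k\in\gR$. Substituting $Y=MX$ into $A(Y\otimes Y)=0$ therefore gives $AM^{(1)}M^{(2)}(X\otimes X)=0$. Now set $T=AM^{(1)}M^{(2)}\in\gR\otimes\Hom(\CC^m\otimes\CC^m,\CC^n\otimes\CC^n)$ and apply the version of Lemma~\ref{propTXX} attached to the algebra $\gX_B$: its proof invokes only the fact that the sole relations among the $x^ix^j$ are those generated by $B$, and it goes through verbatim for a rectangular $T$, yielding $T(X\otimes X)=0\iff T(1-B)=0$. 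The latter is precisely \eqref{AMMB}, so \eqref{AMMB} $\iff A(Y\otimes Y)=0$.

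For the second stage, transpose the relation $(\Phi\otimes\Phi)(1-B)=0$ to the equivalent form $(1-B^\top)(\Phi^\top\otimes\Phi^\top)=0$, noting that $\Phi^\top=M^\top\Psi^\top$. Since the generators $\psi_i$ of $\Xi_A(\CC)$ satisfy $(\Psi\otimes\Psi)(1-A)=0$, transposition gives $(1-A^\top)(\Psi^\top\otimes\Psi^\top)=0$; that is, $\Psi^\top$ is the generating column of $\gX_{1-A^\top}(\CC)$. The idempotent $1-B^\top$ now plays the role of the left-hand idempotent, so applying the first-stage equivalence to the data $(1-B^\top,\,1-A^\top,\,M^\top)$ shows that $(1-B^\top)(\Phi^\top\otimes\Phi^\top)=0$ is equivalent to $(1-B^\top)(M^\top)^{(1)}(M^\top)^{(2)}A^\top=0$. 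Transposing this last equation and using $\big((M^\top)^{(1)}(M^\top)^{(2)}\big)^\top=M^{(1)}M^{(2)}$ returns exactly $AM^{(1)}M^{(2)}(1-B)=0$, i.e.\ \eqref{AMMB}.

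I expect the one genuine point requiring care to be the bookkeeping with the two different index ranges and spaces: $M$ is $n\times m$, so $M^{(1)}M^{(2)}$ maps $\CC^m\otimes\CC^m$ to $\CC^n\otimes\CC^n$, while $A$ acts on $\CC^n\otimes\CC^n$ and $B$ on $\CC^m\otimes\CC^m$. The substantive content is simply that the ``no other relations'' property used in Lemma~\ref{propTXX} belongs to the right-hand algebra $\gX_B$ alone, so that left multiplication by $A$ (respectively by $1-B^\top$ in the transposed step) never interferes; once this is observed the remaining manipulations are routine and identical in form to the proof of Proposition~\ref{propA}.
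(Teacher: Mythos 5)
Your proof is correct and takes essentially the same route as the paper: the paper proves Proposition~\ref{propAB} by repeating the proof of Proposition~\ref{propA} with $B$ in place of $A$ where appropriate, i.e., exactly your two stages — substitution $Y=MX$ together with the (rectangular, two-idempotent) version of Lemmas~\ref{propTXX}/\ref{propAMMAorth} for the equivalence with \eqref{AMMB}, then transposition with the data $\big(1-B^\top,\,1-A^\top,\,M^\top\big)$ to handle the $\Phi$-condition. Your preparatory remark that Lemma~\ref{propTXX} extends verbatim to rectangular $T$ with the relations coming from the right-hand algebra $\gX_B$ is precisely the observation the paper leaves implicit in the phrase ``with $B$ instead of $A$ somewhere.''
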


The proof of this proposition repeats the proof of Proposition~\ref{propA} with $B$ instead of $A$ somewhere.

\begin{Def} \label{defABManin}
 An $n\times m$ matrix $M$ over an algebra $\gR$ satisfying the equation~\eqref{AMMB} is called {\it $(A,B)$-Manin matrix} or {\it Manin matrix corresponding to the pair of idempotents $(A,B)$}.
\end{Def}

The defining relation for the $(A,B)$-Manin matrices can be also written in terms of dual idempotents $S$ and matrices $P$. For instance, the relation~\eqref{AMMB} is equivalent to~\eqref{PMMP}, where we should understand the matrices $P$ placing to the left from the matrices $M$ as $1-2A$ and $P$ placing to the right as $1-2B$. In the similar way we can generalise the relation~\eqref{SMMS}.

 {\sloppy\begin{Prop} \label{propABManinxy}
 Let $\gA$ be an algebra and $M$ be an $(A,B)$-Manin matrix with entries $M^i_j\in\gA$. Let $x^1,\dots,x^m\in\gA$ be elements commuting with all $M^i_j$ and satisfying $\sum_{i,j=1}^mB^{kl}_{ij}x^ix^j=0$, \mbox{$k,l=1,\dots,m$}, $($in particular, it is valid for the algebra $\gA=\gX_B(\gR)$, the generators \mbox{$x^i\in\gX_B(\CC)$} and an~$(A,B)$-Manin matrix $M$ over $\gR)$. Then the elements $y^i=\sum_{j=1}^mM^i_jx^j$ satisfy $\sum_{i,j=1}^nA^{kl}_{ij}y^iy^j=0$, $k,l=1,\dots,n$.
\end{Prop}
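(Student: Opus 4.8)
The plan is to reduce the statement directly to the defining relation~\eqref{AMMB} of an $(A,B)$-Manin matrix, using only the two hypotheses at hand: that the $x^j$ commute with the entries $M^i_j$, and that $B(X\otimes X)=0$. As in Proposition~\ref{propAB}, introduce the column vectors $X=\sum_{j=1}^m x^j e_j$ and $Y=\sum_{i=1}^n y^i e_i$, so that $Y=MX$. Reading off the coefficient of $e_k\otimes e_l$, the hypothesis $\sum_{i,j}B^{kl}_{ij}x^ix^j=0$ for all $k,l$ is exactly $B(X\otimes X)=0$, and the desired conclusion $\sum_{i,j}A^{kl}_{ij}y^iy^j=0$ for all $k,l$ is exactly $A(Y\otimes Y)=0$.

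First I would establish the identity
\begin{align*}
 Y\otimes Y=M^{(1)}M^{(2)}(X\otimes X).
\end{align*}
Expanding, $Y\otimes Y=\sum_{i,i',j,j'}M^i_j\,x^j\,M^{i'}_{j'}\,x^{j'}\,(e_i\otimes e_{i'})$; since each $x^j$ commutes with every $M^{i'}_{j'}$, moving these elements to the right gives $\sum M^i_j M^{i'}_{j'}\,x^jx^{j'}\,(e_i\otimes e_{i'})$, which is precisely the action of $M^{(1)}M^{(2)}$ on $X\otimes X=\sum_{j,j'}x^jx^{j'}(e_j\otimes e_{j'})$. This is the only step that uses the commutativity hypothesis.

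Then I would simply chain three equalities,
\begin{align*}
 A(Y\otimes Y)=AM^{(1)}M^{(2)}(X\otimes X)=AM^{(1)}M^{(2)}B(X\otimes X)=0,
\end{align*}
where the first is the identity just proved, the second rewrites~\eqref{AMMB} in the form $AM^{(1)}M^{(2)}=AM^{(1)}M^{(2)}B$, and the third uses $B(X\otimes X)=0$. This gives $A(Y\otimes Y)=0$, as required.

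There is no real obstacle here, only one point of care that explains why this needs a separate statement rather than being an instance of Proposition~\ref{propAB}: since $\gA$ is now an arbitrary algebra, the elements $x^i$ may satisfy relations beyond those imposed by $B$, so the equivalence of Lemma~\ref{propTXX} is no longer available in full. Only the easy implication survives --- the one obtained by multiplying the defining relation by $(X\otimes X)$ on the right --- and it is exactly this direction that the chain above uses. Thus the whole content is to invoke the $(A,B)$-Manin condition in the correct direction and resist appealing to the full equivalence of Lemma~\ref{propAMMAorth}.
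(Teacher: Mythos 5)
Your proof is correct and is essentially the paper's own argument: the paper likewise sets $Y=MX$, writes $A(Y\otimes Y)=AM^{(1)}M^{(2)}(X\otimes X)=AM^{(1)}M^{(2)}B(X\otimes X)$ using \eqref{AMMB} in the form $AM^{(1)}M^{(2)}=AM^{(1)}M^{(2)}B$, and concludes from $B(X\otimes X)=0$. Your explicit verification of $Y\otimes Y=M^{(1)}M^{(2)}(X\otimes X)$ via the commutativity hypothesis, and the closing remark on why only the easy direction of Lemma~\ref{propTXX} is needed, are sound elaborations of steps the paper leaves implicit.
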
}

\begin{proof} We use the matrix notations introduced below. From $Y=MX$ and~\eqref{AMMB} we obtain
\begin{gather*}
A(Y\otimes Y)=AM^{(1)}M^{(2)}(X\otimes X)=AM^{(1)}M^{(2)}B(X\otimes X).
\end{gather*}
The right hand side vanishes since $B(X\otimes X)=0$. \end{proof}

Note if $M$ is an $(A,B)$-Manin matrix then $M^\top$ is $\big(1-B^\top,1-A^\top\big)$-Manin matrix. By~means of the identification of the functors $\Xi_A$ and $\gX_{1-A^\top}$ we can write Proposition~\ref{propABManinxy} in the follo\-wing~form.

{\sloppy \begin{Prop} \label{propABManinpsiphi}
Let $\gA$ be a algebra and $M$ be an $(A,B)$-Manin matrix with entries \mbox{$M^i_j\in\gA$}. Let $\psi_1,\dots,\psi_n\in\gA$ be elements commuting with all $M^i_j$ and satisfying $\sum_{i,j=1}^mA^{ij}_{kl}\psi_i\psi_j=\psi_k\psi_l$, $k,l=1,\dots,n$, $($in particular, it is valid for the algebra $\gA=\Xi_A(\gR)$, the generators \mbox{$\psi^i\in\Xi_A(\CC)$} and an $(A,B)$-Manin matrix $M$ over $\gR)$. Then the elements $\phi_j=\sum_{i=1}^nM^i_j\psi_i$ satisfy $\sum_{i,j=1}^mB^{ij}_{kl}\phi_i\phi_j=\phi_k\phi_l$, $k,l=1,\dots,m$.
\end{Prop}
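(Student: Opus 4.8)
The statement is the row-vector (``$\Xi$-side'') analogue of Proposition~\ref{propABManinxy}, and there are two natural routes. One can transpose everything: since $M^\top$ is a $\big(1-B^\top,1-A^\top\big)$-Manin matrix and, under the identification $\Xi_A=\gX_{1-A^\top}$, the hypothesis on the $\psi_i$ becomes the defining relation $\big(1-A^\top\big)\big(\Psi^\top\otimes\Psi^\top\big)=0$ of the $x$-variables for the idempotent $1-A^\top$, one would apply Proposition~\ref{propABManinxy} to $M^\top$ and transpose the conclusion back. I would instead run the short direct computation that mirrors the proof of Proposition~\ref{propABManinxy}, using $\Phi=\Psi M$ in place of $Y=MX$; this avoids keeping track of the transposes.

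First I would note that the hypothesis $\sum_{i,j}A^{ij}_{kl}\psi_i\psi_j=\psi_k\psi_l$ is exactly relation~\eqref{PsiPsiA}, that is
\begin{align*}
(\Psi\otimes\Psi)(1-A)=0,\qquad\text{equivalently}\qquad(\Psi\otimes\Psi)=(\Psi\otimes\Psi)A.
\end{align*}
Next, writing $\Phi=\Psi M$ and using that each $\psi_i$ commutes with every entry $M^k_l$, I would establish the tensor identity
\begin{align*}
\Phi\otimes\Phi=(\Psi\otimes\Psi)M^{(1)}M^{(2)}.
\end{align*}
This is checked by expanding both sides in the basis $(e^j\otimes e^l)$: on the left $\phi_j\phi_l=\sum_{i,k}\psi_i\psi_k M^i_jM^k_l$ after moving the $\psi$'s past the entries of $M$, and on the right $M^{(1)}$ and $M^{(2)}$ act on the first and second tensor legs respectively, reproducing the same coefficient $\sum_{i,k}\psi_i\psi_k M^i_jM^k_l$ with the first-leg factor $M^i_j$ standing to the left of the second-leg factor $M^k_l$.

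With this identity in hand the conclusion is one line: multiplying on the right by $1-B$ and inserting the $\Psi$-relation gives
\begin{align*}
(\Phi\otimes\Phi)(1-B)=(\Psi\otimes\Psi)M^{(1)}M^{(2)}(1-B)=(\Psi\otimes\Psi)\,AM^{(1)}M^{(2)}(1-B)=0,
\end{align*}
where the last equality is the defining relation~\eqref{AMMB} of an $(A,B)$-Manin matrix. Reading $(\Phi\otimes\Phi)(1-B)=0$ componentwise yields $\phi_k\phi_l=\sum_{i,j=1}^m B^{ij}_{kl}\phi_i\phi_j$ for all $k,l$, as required.

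The only genuinely delicate point is the tensor identity $\Phi\otimes\Phi=(\Psi\otimes\Psi)M^{(1)}M^{(2)}$: it relies on the commutativity of the $\psi_i$ with the $M^k_l$ and, critically, on matching the two tensor legs to $M^{(1)}$ and $M^{(2)}$ in the correct order (the first-leg entries to the left), since the entries of $M$ need not commute among themselves. Everything after that is formal and uses only the hypotheses together with the earlier lemmas.
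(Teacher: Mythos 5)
Your proof is correct, but it takes a different route from the paper's. The paper gives no direct computation for this proposition: it observes that $M^\top$ is a $\big(1-B^\top,1-A^\top\big)$-Manin matrix and that, under the identification of the functors $\Xi_A$ and $\gX_{1-A^\top}$, the statement is simply Proposition~\ref{propABManinxy} rewritten --- that is, the paper uses precisely the transposition argument you mention in your opening paragraph and then set aside. Your alternative --- establishing $\Phi\otimes\Phi=(\Psi\otimes\Psi)M^{(1)}M^{(2)}$ from the commutativity of the $\psi_i$ with the entries of $M$, then computing $(\Phi\otimes\Phi)(1-B)=(\Psi\otimes\Psi)AM^{(1)}M^{(2)}(1-B)=0$ --- is the exact $\Xi$-side mirror of the paper's proof of Proposition~\ref{propABManinxy}, and it is sound: the ordering of the factors $M^i_j$, $M^k_l$ matches the action of $M^{(1)}M^{(2)}$ on row vectors, the insertion of $A$ via $(\Psi\otimes\Psi)=(\Psi\otimes\Psi)A$ involves only scalar entries, and the final step is the defining relation~\eqref{AMMB}. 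What the paper's route buys is economy and an illustration of the $\gX$/$\Xi$ duality, reusing an already proved result instead of recomputing; what your route buys is a self-contained argument that avoids the transpose bookkeeping (such as $\big(\big(M^\top\big)^{(1)}\big(M^\top\big)^{(2)}\big)^\top=M^{(1)}M^{(2)}$ and the reversal of the pair of idempotents) and makes explicit exactly where the hypothesis $\big[\psi_i,M^k_l\big]=0$ enters.
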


}

For arbitrary operators $A'\in\End(\CC^n\otimes\CC^n)$ and $S'\in\End(\CC^m\otimes\CC^m)$ (not necessarily idempotents) the relation $A'M^{(1)}M^{(2)}S'=0$ is equivalent to $(G_1A')M^{(1)}M^{(2)}(S'G_2)=0$, where $G_1\in\Aut(\CC^n\otimes\CC^n)$ and $G_2\in\Aut(\CC^n\otimes\CC^n)$. Hence it is equivalent to~\eqref{AMMB} for some idempotents $A$ and $B$, which have the forms $A=G_1A'$ and $B=1-S'G_2$. In particular, if $A'$ is an~idempotent it is left-equivalent to the idempotent $A$; if $S'$ is an idempotent it is right-equivalent to $1-B$, which means that $1-S$ is left-equivalent to $B$ (see Proposition~\ref{PropLRE}). Thus we obtain the following statement.

\begin{Prop} \label{propMMlequiv}
 Let $M$ be an $n\times m$ matrix over $\gR$. Let $A,A'\in\End(\CC^n\otimes\CC^n)$ and $B,B'\in\End(\CC^m\otimes\CC^m)$ be idempotents. Suppose $A$ is left-equivalent to $A'$ and $B$ is left-equivalent to~$B'$ $($equivalently, $1-A$ is right-equivalent to $1-A'$ and $1-B$ is right-equivalent to $1-B')$. Then~$M$ is an $(A,B)$-Manin matrix iff it is an $(A',B')$-Manin matrix.
\end{Prop}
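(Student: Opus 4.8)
The plan is to reduce the claim to a single multiplication of the defining relation~\eqref{AMMB} on each side by an invertible operator, using that left- and right-equivalence are by definition such multiplications. First I would extract the two factorisations provided by the hypotheses. The left-equivalence of $A$ and $A'$ yields an invertible $G_1\in\Aut(\CC^n\otimes\CC^n)$ with $A'=G_1A$. For the other factor I would invoke Proposition~\ref{PropLRE}: since $B$ and $B'$ are left-equivalent, the dual idempotents $1-B$ and $1-B'$ are right-equivalent, so there is an invertible $G_2\in\Aut(\CC^m\otimes\CC^m)$ with $1-B'=(1-B)G_2$.

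The computation is then one line. Assume $M$ is an $(A,B)$-Manin matrix, i.e.\ $AM^{(1)}M^{(2)}(1-B)=0$. Multiplying on the left by $G_1$ and on the right by $G_2$ and using associativity gives $(G_1A)M^{(1)}M^{(2)}\big((1-B)G_2\big)=A'M^{(1)}M^{(2)}(1-B')=0$, which is the $(A',B')$-Manin condition. Here $G_1$ acts only on the two target copies of $\CC^n$ and $G_2$ only on the two source copies of $\CC^m$, so they attach to the left and right ends of $M^{(1)}M^{(2)}$ without interfering with one another. Since $G_1$ and $G_2$ are invertible, multiplying the $(A',B')$-relation by $G_1^{-1}$ and $G_2^{-1}$ and using $A=G_1^{-1}A'$, $1-B=(1-B')G_2^{-1}$ recovers $AM^{(1)}M^{(2)}(1-B)=0$; this gives the converse and hence the asserted equivalence.

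I expect there to be no genuine obstacle here: the entire content is carried by the two factorisations, which come directly from the definition of left-equivalence and from Proposition~\ref{PropLRE}, together with the elementary fact (already noted in the paragraph preceding the statement) that multiplying a matrix identity by invertible operators on the left and right preserves its vanishing. In effect the proposition is the specialisation of that remark to the case in which $A'=G_1A$ and $1-B'=(1-B)G_2$ happen to be idempotents. The only subtlety worth flagging explicitly is that the left-hand operator acts on the target tensor factors and the right-hand operator on the source tensor factors, so that the two adjustments $G_1A=A'$ and $(1-B)G_2=1-B'$ are carried out on disjoint slots and require no commutation argument.
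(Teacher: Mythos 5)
Your proof is correct and takes essentially the same route as the paper: the paper establishes the proposition via exactly this observation, namely that $A'M^{(1)}M^{(2)}S'=0$ is equivalent to $(G_1A')M^{(1)}M^{(2)}(S'G_2)=0$ for invertible $G_1$, $G_2$, applied with the factorisations $A'=G_1A$ and $1-B'=(1-B)G_2$ coming from left-equivalence and Proposition~\ref{PropLRE}. Nothing is missing.
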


We see from this proposition that the property of the matrix $M$ to be an $(A,B)$-Manin matrix effectively depends on the algebras $\gX_A(\CC)=\gX_{A'}(\CC)$ and $\gX_B(\CC)=\gX_{B'}(\CC)$ (with fixed generators). So the notion of $(A,B)$-Manin matrix can be associated with the pair of $\gX$-quadratic algebras $\gX_A(\CC)$ and $\gX_B(\CC)$. Alternatively it can be associated with the pair of $\Xi$-quadratic algebras $\Xi_A(\CC)$ and $\Xi_B(\CC)$. We will see this in Section~\ref{secCommCat} more explicitly.

Consider the question of permutation of rows and columns of a Manin matrix. Let $\SSS_n$ be the $n$-th symmetric group. For a permutation $\sigma\in\SSS_n$ let us permute the rows of an $n\times m$ matrix $M$ in the following way: we put the $i$-th row to the place of the $\sigma(i)$-th row. We denote the obtained matrix by $^\sigma M$. By permuting columns of $M$ with a permutation $\sigma\in\SSS_m$ in the same way we obtain a matrix denoted by $\leftidx{_\sigma} M$. More explicitly we have $(^\sigma M)^{\sigma(i)}_j=M^i_j$ and $(_\sigma M)^i_{\sigma(j)}=M^i_j$. We write the permutation-index from the left since $_\tau(_\sigma M)=\leftidx{_{\tau\sigma}}{M}$ and $^\tau(^\sigma M)=\leftidx{^{\tau\sigma}}{M}$ for any~$\tau$,~$\sigma$ from $\SSS_n$ and $\SSS_m$ respectively. Note that the space $\CC^n$ has a structure of $\SSS_n$-module: a~permutation $\sigma\in\SSS_n$ acts by the formula $\sigma e_i=e_{\sigma(i)}$, and we denote the corresponding operator $\CC^n\to\CC^n$ by the same letter $\sigma$. In this notation we have $^\sigma M=\sigma M$ and $_\sigma M=M\sigma^{-1}$.

\begin{Prop} \label{PropPerm}
 Let $M$ be an $n\times m$ matrix over $\gR$. Let $\sigma\in\SSS_n$ and $\tau\in\SSS_m$ or, more generally, $\sigma\in {\rm GL}(n,\CC)$, $\tau\in {\rm GL}(m,\CC)$. Then the following statements are equivalent:
\begin{itemize}
\itemsep=0pt
 \item $M$ is an $(A,B)$-Manin matrix.
 \item $^\sigma M=\sigma M$ is a $\big((\sigma\otimes\sigma)A\big(\sigma^{-1}\otimes\sigma^{-1}\big),B\big)$-Manin matrix.
 \item $_\tau M=M\tau^{-1}$ is a $\big(A,(\tau\otimes\tau)B\big(\tau^{-1}\otimes\tau^{-1}\big)\big)$-Manin matrix.
\end{itemize}
\end{Prop}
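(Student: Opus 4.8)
The plan is to reduce each of the two nontrivial equivalences to the single defining relation \eqref{AMMB}, namely $AM^{(1)}M^{(2)}(1-B)=0$, by exploiting two elementary features of the auxiliary matrices $\sigma$ and $\tau$: they have scalar (complex) entries, so $\sigma^{(2)}$ commutes with $M^{(1)}$ and $(\tau^{-1})^{(1)}$ commutes with $M^{(2)}$; and they are invertible, so conjugating the defining relation by $\sigma\otimes\sigma$ or $\tau\otimes\tau$ cannot change whether it vanishes. Since the second statement conjugates $A$ on the left while the third conjugates $B$ on the right, the two computations are mirror images, and it suffices to prove each equivalence against the first statement separately.

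First I would record the tensor bookkeeping that makes everything run. For any matrices $N_1,N_2$ one has $(N_1N_2)^{(a)}=N_1^{(a)}N_2^{(a)}$ (this follows from $E_i{}^jE_k{}^l=\delta^j_kE_i{}^l$ and respects the order of entry multiplication), and $\sigma^{(1)}\sigma^{(2)}=\sigma\otimes\sigma$ as an operator on $\CC^n\otimes\CC^n$. I would also observe that $A':=(\sigma\otimes\sigma)A(\sigma^{-1}\otimes\sigma^{-1})$ is again idempotent, since the inner factor $(\sigma^{-1}\otimes\sigma^{-1})(\sigma\otimes\sigma)$ cancels in $(A')^2$; likewise $B':=(\tau\otimes\tau)B(\tau^{-1}\otimes\tau^{-1})$ is idempotent. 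This is needed so that Definition~\ref{defABManin} genuinely applies to the target matrices.

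For the equivalence of the first and second statements, I compute $(\sigma M)^{(1)}(\sigma M)^{(2)}=\sigma^{(1)}M^{(1)}\sigma^{(2)}M^{(2)}$, and then use that $\sigma^{(2)}$ commutes with $M^{(1)}$ (scalar entries) to rewrite this as $\sigma^{(1)}\sigma^{(2)}M^{(1)}M^{(2)}=(\sigma\otimes\sigma)M^{(1)}M^{(2)}$. Substituting into the $(A',B)$-Manin expression $A'(\sigma M)^{(1)}(\sigma M)^{(2)}(1-B)$, the factor $(\sigma^{-1}\otimes\sigma^{-1})(\sigma\otimes\sigma)$ cancels and one is left with $(\sigma\otimes\sigma)\,AM^{(1)}M^{(2)}(1-B)$. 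Because $\sigma\otimes\sigma$ is invertible, this vanishes precisely when $AM^{(1)}M^{(2)}(1-B)=0$, i.e.\ precisely when $M$ is an $(A,B)$-Manin matrix.

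For the first and third statements I would run the symmetric argument on the right: with $N=M\tau^{-1}$, I get $N^{(1)}N^{(2)}=M^{(1)}M^{(2)}(\tau^{-1}\otimes\tau^{-1})$ (now invoking $[(\tau^{-1})^{(1)},M^{(2)}]=0$), and I rewrite $1-B'=(\tau\otimes\tau)(1-B)(\tau^{-1}\otimes\tau^{-1})$. Then $A\,N^{(1)}N^{(2)}(1-B')=AM^{(1)}M^{(2)}(1-B)(\tau^{-1}\otimes\tau^{-1})$, which by invertibility of $\tau^{-1}\otimes\tau^{-1}$ vanishes iff the $(A,B)$-Manin relation holds. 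There is no genuine obstacle here; the only point demanding care is that every auxiliary operator be placed in the correct tensor slot and that the scalar-entry commutation $[\sigma^{(2)},M^{(1)}]=0$ (respectively $[(\tau^{-1})^{(1)},M^{(2)}]=0$) is invoked exactly once in each computation. That commutation is precisely where the hypothesis that $\sigma,\tau$ have complex entries is essential, and it is what would fail for matrices with noncommutative entries — so it is the conceptual crux rather than a technical difficulty.
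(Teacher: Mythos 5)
Your proposal is correct and follows essentially the same route as the paper: the paper simply multiplies the defining relation $AM^{(1)}M^{(2)}(1-B)=0$ by $\sigma\otimes\sigma$ on the left (respectively $\tau\otimes\tau$ on the right) and regroups, which is your computation read in the opposite direction, with the same two ingredients — the scalar-entry commutation $\sigma^{(2)}M^{(1)}=M^{(1)}\sigma^{(2)}$ and invertibility of $\sigma\otimes\sigma$, $\tau\otimes\tau$ giving the equivalence. Your explicit remarks that $(\sigma\otimes\sigma)A(\sigma^{-1}\otimes\sigma^{-1})$ and $(\tau\otimes\tau)B(\tau^{-1}\otimes\tau^{-1})$ remain idempotents are a fine (implicit in the paper) addition but not a departure.
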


\begin{proof} Multiplication of the condition~\eqref{AMMB} by the operator $\sigma\otimes\sigma$ from the left gives the relation $(\sigma\otimes\sigma)A\big(\sigma^{-1}\otimes\sigma^{-1}\big)(\sigma M)^{(1)}(\sigma M)^{(2)}(1-B)=0$. By multiplying~\eqref{AMMB} by $\tau\otimes\tau$ from the right we obtain $A\big(M\tau^{-1}\big)^{(1)}\big(M\tau^{-1}\big)^{(2)}
(\tau\otimes\tau)(1-B)\big(\tau^{-1}\otimes\tau^{-1}\big)=0$.
\end{proof}

Proposition~\ref{PropPerm} can be interpreted in terms of linear change of generators of the corresponding quadratic algebras. Indeed, consider another generators $\tilde\psi_i=\sum_{j=1}^n\alpha^j_i\psi_j$ of the alge\-bra $\Xi_A(\CC)$, where $\alpha^j_i$ are entries of an invertible matrix $\alpha\in {\rm GL}(n,\CC)$. Let $\sigma\in {\rm GL}(n,\CC)$ be the inverse matrix: $\sum_{i=1}^n\alpha^j_i\sigma^i_k=\delta^j_k$. By substituting $\psi_k=\sum_{i=1}^n\sigma^i_k\tilde\psi_i$ to~\eqref{phipsiM} we obtain $\phi_j=\sum_{i=1}^n\tilde\psi_i(\sigma M)^i_j$. The quadratic commutation relation for the generators $\tilde\psi_i$ are $\tilde A^{ij}_{kl}\tilde\psi_i\tilde\psi_j=\tilde\psi_k\tilde\psi_l$, where $\tilde A^{ij}_{kl}$ are entries of the idempotent matrix $\tilde A=(\sigma\otimes\sigma)A\big(\sigma^{-1}\otimes\sigma^{-1}\big)$. The formulae $\tilde\psi_i=\sum_{j=1}^n\alpha^j_i\psi_j$, $\psi_k=\sum_{i=1}^n\sigma^i_k\tilde\psi_i$ describe the isomorphism $\Xi_A(\CC)\cong\Xi_{\tilde A}(\CC)$. It corresponds to the change of basis in $(\CC^n)^*$, namely $\Psi=\sum_{i=1}^n\psi_ie^i=\sum_{k=1}^n\tilde\psi_k\tilde e^k$, where $\tilde e^k=\sum_{i=1}^n\sigma^k_ie^i$. Since $\gX_A(\CC)=\Xi_{1-A^\top}(\CC)$ we have an isomorphism $\gX_A(\CC)\cong\gX_{\tilde A}(\CC)$, where $A$ is an arbitrary idempotent and $\tilde A=(\sigma\otimes\sigma)A\big(\sigma^{-1}\otimes\sigma^{-1}\big)$.

Analogously, the matrix $M\tau^{-1}$ corresponds to the new generators $\tilde x^i=\sum_{j=1}^m\tau^i_jx^j$ of the algebra $\gX_B(\CC)$, where $\tau^j_i$ are entries of $\tau\in {\rm GL}(m,\CC)$. The transition to these generators corresponds to the change of basis in $\CC^n$. The new basis elements are $\tilde e_j =\sum_{i=1}^m\beta^i_je_i$, where~$\beta^i_j$ are such that $\sum_{j=1}^m\beta^i_j\tau^j_k=\delta^i_k$. The formulae $\tilde x^i=\sum_{j=1}^m\tau^i_jx^j$, $x^k=\sum_{i=1}^m\beta^k_i\tilde x^i$ define an~isomorphism $\gX_B(\CC)\cong\gX_{\tilde B}(\CC)$, where $\tilde B=(\tau\otimes\tau)B\big(\tau^{-1}\otimes\tau^{-1}\big)$.

Thus the notion of $(A,B)$-Manin matrix is not associated with a pair of isomorphism classes of quadratic algebras. However, isomorphisms of quadratic algebras gives a transformation of~the Manin matrix in the same way as the change of bases of vector spaces $V$ and $W$ transform a~matrix of a linear operator $V\to W$. The notion associated with isomorphism classes of~quadratic algebras is a notion of Manin operator, which we will introduce in Section~\ref{secMO}.

Finally, consider the case when $A=0$ or $B=1$ in the both cases the defining relation~\eqref{AMMB} is $0=0$. This means that any $n\times m$ matrix is a $(0,B)$-Manin matrix as well as an $(A,1)$-Manin matrix, where $0\in\End(\CC^n\otimes\CC^n)$ and $1\in\End(\CC^m\otimes\CC^m)$. Further, the definition~\eqref{AMMB} for~$B=0\in\End(\CC^m\otimes\CC^m)$ has the from $AM^{(1)}M^{(2)}=0$. By multiplying if by $(1-B)$ from the right with an arbitrary idempotent $B\in\End(\CC^m\otimes\CC^m)$ we obtain~\eqref{AMMB}. Thus any $(A,0)$-Manin matrix is in particular an $(A,B)$-Manin matrix. Analogously, any $(1,B)$-Manin matrix is also an $(A,B)$-Manin matrix, where $1\in\End(\CC^n\otimes\CC^n)$.

\subsection{Comma categories and Manin matrices}
\label{secCommCat}

Consider first the set $\Hom\big(\gX_A(\CC),\gX_B(\gR)\big)$ with $A$ and $B$ as above. It consists of algebra homomorphisms $f\colon\gX_A(\CC)\to\gX_B(\gR)$ preserving the grading. Let $x^1_A,\dots,x^n_A$ and $x^1_B,\dots,x^m_B$ be the generators of $\gX_A(\CC)$ and $\gX_B(\CC)$ respectively, they satisfy $\sum_{i,j=1}^nA^{kl}_{ij}x^i_Ax^j_A=0$ and $\sum_{i,j=1}^m B^{kl}_{ij}x^i_Bx^j_B=0$. To give a homomorphism $f\in\Hom\big(\gX_A(\CC),\gX_B(\gR)\big)$ it is enough to give its value on the generators $x^i_A$; since $f(x^i_A)$ has degree $1$ in $\gX_B(\CC)$ it has the form
\begin{align}
 f(x^i_A)=\sum_{j=1}^m M^i_jx^j_B, \label{whMxMx}
\end{align}
where $M^i_j\in\gR$. This means that each $f$ is given by a matrix $M=\big(M^i_j\big)$ over $\gR$. Proposition~\ref{propA} implies that the formula~\eqref{whMxMx} defines a homomorphism $f$ iff the matrix $M=\big(M^i_j\big)$ is an~$(A,B)$-Manin matrix. Denote this homomorphism $f\colon\gX_A(\CC)\to\gX_B(\gR)$ by $f_M$. Thus we obtain a~bijection $f_M\leftrightarrow M$ between $\Hom\big(\gX_A(\CC),\gX_B(\gR)\big)$ and the set of all $(A,B)$-Manin matrices over the algebra $\gR$. Note that this bijection depends on the choice of generators of the algeb\-ras~$\gX_A(\CC)$ and $\gX_B(\CC)$.

More generally, consider the set $\Hom\big(\gX_A(\gS),\gX_B(\gR)\big)$ for two algebras $\gS$ and $\gR$. It can be identified with a subset of $\Hom(\gS,\gR)\times\Hom\big(\gX_A(\CC),\gX_B(\gR)\big)$, since each graded homomorphism $f\colon\gX_A(\gS)\to\gX_B(\gR)$ is given on the zero-degree elements $s\in\gS$ and the generators $x^i_A\in\gX_A(\CC)$. Let $\alpha\colon\gS\to\gR$ be an algebra homomorphism and $f_M\in\Hom\big(\gX_A(\CC),\gX_B(\gR)\big)$ be a graded algebra homomorphism defined by an $(A,B)$-Manin matrix $M=\big(M^i_j\big)$, then the formulae
\begin{gather} \label{falphaM}
 f(s)=\alpha(s), \qquad s\in\gS, \qquad f(x^i_A)=f_M(x^i_A)=\sum_{j=1}^mM^i_jx^j_B,\qquad i=1,\dots,n,
\end{gather}
define a homomorphism $f\colon\gX_A(\gS)\to\gX_B(\gR)$ iff $\alpha(s)M^i_j=M^i_j\alpha(s)$ for all $s\in\gS$, $i=1,\dots,n$ and $j=1,\dots,m$. We write $f=(\alpha,f_M)$ in this case. We have
\begin{gather}
\Hom\big(\gX_A(\gS),\gX_B(\gR)\big)\nonumber
\\ \qquad
{}= \Big\{(\alpha,f_M)\in\Hom(\gS,\gR)\times\Hom\big(\gX_A(\CC),\gX_B(\gR)\big)
 \mid \big[\alpha(s),M^i_j\big]=0\;\forall\,s,i,j\Big\}.\label{falphaMset}
\end{gather}

Analogously, we obtain that the set $\Hom\big(\Xi_B(\CC),\Xi_A(\gR)\big)$ consists of the algebra homomorphisms $f^M\colon\Xi_B(\CC)\to\Xi_A(\gR)$ defined on generators as $f^M\big(\psi^B_j\big)=\sum_{i=1}^nM^i_j\psi^A_i$, where $M^i_j\in\gR$ are entries of an $(A,B)$-Manin matrix $M=\big(M^i_j\big)$. More generally,
\begin{gather*}
 \Hom\big(\Xi_B(\gS),\Xi_A(\gR)\big)
 \\ \qquad
 {}= \Big\{\big(\alpha,f^M\big)\in\Hom(\gS,\gR)\times\Hom\big(\Xi_B(\CC),\Xi_A(\gR)\big)\mid \big[\alpha(s),M^i_j\big]=0\;\forall\,s,i,j\Big\}.
\end{gather*}

\begin{Def}[{\cite{Mcl}}]\label{defMcl}
Let $\C$ and $\D$ be categories, $c$ be an object of $\C$ and $G\colon\D\to\C$ be a~functor. The {\it comma category} $(c\downarrow G)$ consists of the pairs $(d,f)$, where $d$ is an object of~$\D$ and $f\in\Hom_{\C}(c,Gd)$ is a morphism in $\C$. A morphism in $(c\downarrow G)$ from $(d,f)$ to $(d',f')$ is a~morphism $h\in\Hom_{\D}(d,d')$ such that the diagram
\begin{align*}
 \xymatrix{& c\ar@{->}[dl]_{f}\ar@{->}[dr]^{f'} \\
 Gd\ar@{->}[rr]_{Gh} & & Gd'}
\end{align*}
is commutative.
\end{Def}

Let $\A$ be the category of associative unital algebras over $\CC$ and $\G$ be the category of associative unital $\NN_0$-graded algebras over $\CC$. By setting $\C=\G$, $\D=\A$, $c=\gX_A(\CC)$ and $G=\gX_B$ in Definition~\ref{defMcl} we obtain the comma category $\big(\gX_A(\CC)\downarrow\gX_B\big)$. It consists of the pairs $(\gR,f_M)$, where~$\gR$ is an algebra and $f_M\in\Hom\big(\gX_A(\CC),\gX_B(\gR)\big)$ is a homomorphism corresponding to~some $(A,B)$-Manin matrix $M$ over~$\gR$. Thus we can interpret the $(A,B)$-Manin matrices as~objects of the comma category $\big(\gX_A(\CC)\downarrow\gX_B\big)$.

The morphisms in $\big(\gX_A(\CC)\downarrow\gX_B\big)$ from $(\gR,f_M)$ to $(\gR',f_{M'})$ are all the homomorphisms $h\colon\gR\to\gR'$ such that $h\big(M^i_j\big)=(M')^i_j$, where $M^i_j$ and $(M')^i_j$ are entries of the corresponding $(A,B)$-Manin matrices $M$ and $M'$. Two $(A,B)$-Manin matrices $M\in\gR\otimes\Hom(\CC^m,\CC^n)$ and $M'\in\gR'\otimes\Hom(\CC^m,\CC^n)$ are isomorphic to each other iff there exists an isomorphism $h\colon\gR\isoright\gR'$ such that $h\big(M^i_j\big)=(M')^i_j$ (note that $(A,B)$-Manin matrices which have the same entries but are formally considered over different algebras may be non-isomorphic).

In the same way the $(A,B)$-Manin matrices can be interpreted as objects of the comma category $\big(\Xi_B(\CC)\downarrow\Xi_A\big)$. So that the last one is equivalent to $\big(\gX_A(\CC)\downarrow\gX_B\big)$.

One of the application of the comma categories in the category theory is the universal morphism (universal arrow).

\begin{Def}[{\cite{Mcl}}]
Let ${\C}$ and ${\D}$ be categories, $c$ be an object of ${\C}$ and $G\colon{\D}\to{\C}$ be a~functor. The {\it universal morphism} from $c$ to $G$ is the initial object in the comma category $(c\downarrow G)$, that is a pair $(r,u)$ of an object $r\in{\D}$ and a morphism $u\in\Hom_{\C}(c,Gr)$ such that for any object $d\in{\D}$ and any morphism $f\in\Hom_{\C}(c,Gd)$ there is a unique morphism $h\in\Hom_{\D}(r,d)$ making the diagram
\begin{align*}
 \xymatrix{c\ar@{->}[r]^{u}\ar@{->}[dr]_{f}& Gr\ar@{->}[d]^{Gh} \\
 & Gd}
\end{align*}
commutative.
\end{Def}

As an initial object the universal morphism is unique up to an isomorphism. Let us des\-cribe the universal morphism $(\gU_{A,B},u_{A,B})$ from the object $\gX_A(\CC)$ to the functor $\gX_B$. It consists of the algebra $\gU_{A,B}$ generated by the elements $\M^i_j$, $i=1,\dots,n$, $j=1,\dots,m$, with the commutation relation $A\M^{(1)}\M^{(2)}(1-B)=0$, where $\M$ is the $n\times m$ matrix with ent\-ries~$\M^i_j$. Due~to~Defi\-ni\-tion~\ref{defABManin} it is an $(A,B)$-Manin matrix. It defines a homomorphism $u_{A,B}= f_\M\colon\gX_A(\CC)\to\gX_B(\gU_{A,B})$. On can check that $(\gU_{A,B},u_{A,B})$ is the universal morphism from $\gX_A(\CC)$ to $\gX_B$: for~any~morphism $f=f_M\colon\gX_A(\CC)\to\gX_B(\gR)$ corresponding to the $(A,B)$-Manin matrix $M$ with entries~$M^i_j\in\gR$ there is a unique homomorphism $h\colon\gU_{A,B}\to\gR$ such that $f=\gX_B(h)u_{A,B}$, namely, $h(\mathcal M^i_j)=M^i_j$. Via the equivalence of comma categories $\big(\gX_A(\CC)\downarrow\gX_B\big)$ and~\mbox{$\big(\Xi_B(\CC)\downarrow\Xi_A\big)$} we obtain the universal morphism from the object $\Xi_B(\CC)$ to the functor $\Xi_A$, this is the pair $(\gU_{A,B},\wt u_{A,B})$, where $\wt u_{A,B}=f^\M\colon\Xi_B(\CC)\to\Xi_A(\gU_{A,B})$.

Let us call the matrix $\M$ {\it the universal $(A,B)$-Manin matrix}.
The algebra $\gU_{A,B}$ is a genera\-lisation of the right quantum algebra~\cite{GLZ}, so we call it {\it the right quantum algebra for the pair of~idempotents $(A,B)$}.

Consider more general comma category $\big(\gX_A(\gS)\downarrow\gX_B\big)$. It consists of pairs $(\gR,f)$ of an~alge\-bra $\gR$ and a homomorphism $f\in\Hom\big(\gX_A(\gS),\gX_B(\gR)\big)$. Such a homomorphism $f$ can be identified with a pair of homomorphisms in the sense of the formulae~\eqref{falphaM}, \eqref{falphaMset}. If~$f=(\alpha,f_M)$ we write also $(\gR,f)=(\gR,\alpha,f_M)$. A morphism $h\colon(\gR,\alpha,f_M)\to(\gR',\alpha',f_{M'})$ is a~homomorphism $h\colon\gR\to\gR'$ such that $h\alpha=\alpha'$ and $h\big(M^i_j\big)=(M')^i_j$.

The initial object in $\big(\gX_A(\gS)\downarrow\gX_B\big)$ is the universal morphism $(\gS\otimes\gU_{A,B},\iota,u_{A,B})$, where $\iota\colon\gS\hookrightarrow\gS\otimes\gU_{A,B}$ is the inclusion. A homomorphism $f=(\alpha,f_M)\colon\gX_A(\gS)\to\gX_B(\gR)$ gives the unique homomorphism $h\colon\gS\otimes\gU_{A,B}\to\gR$ defined by the formulae $h(s)=\alpha(s)$ and $h(\mathcal M^i_j)=M^i_j$ and making the diagram
\begin{gather} \label{diagXAS}\begin{split}&
 \xymatrix{\gX_A(\gS)\ar@{->}[rr]^{(\iota,u_{A,B})}\ar@{->}[drr]_{f}&& \gX_B(\gS\otimes\gU_{A,B})\ar@{->}[d]^{\gX_B(h)} \\
 && \gX_B(\gR)}
\end{split}
\end{gather}
commutative.

Now let us calculate a composition
\begin{align*}
 \gX_A(\gS)\xrightarrow{(\alpha,f_M)}\gX_B(\gR)
\xrightarrow{(\beta,f_N)}\gX_C(\mathfrak T),
\end{align*}
where $C\in\End(\CC^k\otimes\CC^k)$ is an idempotent, $\mathfrak T$ is an algebra, $\beta\colon\gR\to\mathfrak T$ is a homomorphism and $f_N\colon\gX_B(\CC)\to\gX_C(\mathfrak T)$ is defined by $(B,C)$-Manin matrix $N=(N^j_l)$. On the ele\-ments~$s\in\gS$ we have $(\beta,f_N)(\alpha,f_M)(s)=\beta(\alpha(s))$. On the generators $x^i_A\in\gX_A(\CC)$ we obtain $(\beta,f_N)(\alpha,f_M)(x^i_A)=(\beta,f_N)(\sum_{j=1}^mM^i_jx^j_B)=\sum_{j=1}^m\sum_{l=1}^k\beta\big(M^i_j\big)N^j_lx^l_C$. Thus,
\begin{align}
(\beta,f_N)(\alpha,f_M)=\big(\beta\alpha,f_K\big), \label{whK}
\end{align}
where $K=\beta(M)N$ is the $n\times k$ matrix with entries $K^i_l=\sum_{j=1}^m\beta\big(M^i_j\big)N^j_l\in\mathfrak T$. Since~\eqref{whK} is a morphism the matrix $K$ is an $(A,C)$-Manin matrix.

Recall also (see~\cite{Mcl}) that the functor $F\colon\C\to\D$ is called {\it left adjoint} to the functor $G\colon\D\to\C$ (while the functor $G\colon\D\to\C$ is called {\it right adjoint} to $F\colon\C\to\D$), iff there is an isomorphism $\Hom_\C(c,Gd)\cong\Hom_\D(Fc,d)$ natural in $c$ and $d$. To construct a left adjoint functor to a functor $G\colon\D\to\C$ it is enough to construct a universal morphism $(r_c,u_c)$ from each $c\in\C$ to $G$. Then the left adjoint functor on objects is defined as $Fc=r_c$. For a morphism $\alpha\colon c\to c'$ the morphism $F\alpha\colon Fc\to Fc'$ is the unique morphism $h\colon Fc\to Fc'$ such that the diagram
\begin{align*}
 \xymatrix{c\ar@{->}[r]^{u_c}\ar@{->}[d]_{\alpha}& GFc\ar@{->}[d]^{Gh} \\
 c'\ar@{->}[r]^{u_{c'}} & GFc'}
\end{align*}
is commutative (as consequence, we obtain a natural transformation $u\colon\id_\C\to GF$ with components $u_c\colon c\to GFc$).

Let $\Q$ be the full subcategory of $\G$ consisting of the graded algebras of the form $\gX_A(\gS)$. This is the category of quadratic algebras. We can consider the functors $\gX_A$ and $\Xi_A$ as functors from~$\A$ to~$\Q$. This means that we can substitute $\G$ by $\Q$ in the previous considerations.

From the diagram~\eqref{diagXAS} we see that the functor $G=\gX_B\colon\A\to\Q$ has a left adjoint functor $F=F_B\colon\Q\to\A$. It is defined on objects as $F_B\big(\gX_A(\gS)\big)=\gS\otimes\gU_{A,B}$. In particular, on the quadratic algebras $\gX_A(\CC)\in\Q$ the functor $F_B$ gives the right quantum algebra $\gU_{A,B}$.

Let us calculate the functor $F_B$ on a morphism $\gX_A(\gS)\xrightarrow{(\alpha,f_N)}\gX_{A'}(\gS')$, where $\alpha\colon\gS\to\gS'$ is a~homomorphism and $f_N$ is defined by an $(A,A')$-Manin matrix $N=(N^i_j)$. We need to construct a~commutative diagram
\begin{align*}
 \xymatrix{\gX_A(\gS)\ar@{->}[rr]^{(\iota,u_{A,B})}\ar@{->}[d]_{(\alpha,f_N)}\ar@{->}[rrd]^{f}&& \gX_B(\gS\otimes\gU_{A,B})\ar@{->}[d]^{\gX_B(h)} \\
 \gX_{A'}(\gS')\ar@{->}[rr]^{(\iota',u_{A',B})} && \gX_B(\gS'\otimes\gU_{A',B}).}
\end{align*}
Remind that $u_{A,B}=f_\M$ and $u_{A',B}=f_{\M'}$, where $\M'$ is the universal $(A',B)$-Manin matrix. By~the formula~\eqref{whK} we have $f=(\iota\alpha,f_K)$, where $K=N\mathcal M'$. Thus we obtain the homomorphism $F_B(\alpha,f_N)=h\colon\gS\otimes\gU_{A,B}\to\gS'\otimes\gU_{A',B}$ defined by the formulae
\begin{gather*}
 h(s)=\alpha(s),\qquad s\in\gS,\qquad h(\mathcal M^i_j)=\sum_l N^i_l(\mathcal M')^l_j.
\end{gather*}

The left adjoint to the functor $\Xi_A\colon\A\to\Q$ is the functor $\wt F_A\colon\Q\to\A$ defined on objects as $\wt F_A\big(\Xi_B(\gS)\big)=\gS\otimes\gU_{A,B}$. The value of $\wt F_A$ on $(\alpha,f^N)\in\Hom\big(\Xi_B(\gS),\Xi_{B'}(\gS')\big)$ is the homomorphism $h\colon\gS\otimes\gU_{A,B}\to\gS'\otimes\gU_{A,B'}$ such that $h(s)=\alpha(s)$ and $h(\M^i_j)=\sum_l(\M')^i_lN^l_j$, where $\M=(\M^i_j)$ and $\M'=\big((\M')^i_l\big)$ are the universal $(A,B)$- and $(A,B')$-Manin matrices.

In particular, for $\gS=\CC$ we have the natural bijections
\begin{align}
 \Hom\big(\gX_A(\CC),\gX_B(\gR)\big)=\Hom\big(\Xi_B(\CC),\Xi_A(\gR)\big)=\Hom(\gU_{A,B},\gR). \label{HomgXHomXi}
\end{align}
This means that the set of $(A,B)$-Manin matrices over $\gR$ is identified with the set of algebra homomorphisms $f\colon\gU_{A,B}\to\gR$. Namely, each $(A,B)$-Manin matrix $M=\big(M^i_j\big)$ has the form $M^i_j=f(\M^i_j)$ for some homomorphism $f\colon\gU_{A,B}\to\gR$ and any homomorphism $f\colon\gU_{A,B}\to\gR$ gives an $(A,B)$-Manin matrix $M$. In terms of non-commutative geometry~\cite{ManinBook91} this means that an $(A,B)$-Manin matrix over $\gR$ is an $\gR$-point of the algebra $\gU_{A,B}$.

\begin{Prop}
 Let $A\in\End(\CC^n\otimes\CC^n)$ and $A'\in\End(\CC^m\otimes\CC^m)$ be idempotents. Then the following three statements are equivalent:
\begin{itemize}
\itemsep=0pt
 \item $\gX_A(\CC)\cong\gX_{A'}(\CC)$ as graded algebras.
 \item $\Xi_A(\CC)\cong\Xi_{A'}(\CC)$ as graded algebras.
 \item $n=m$ and there exists $\sigma\in {\rm GL}(n,\CC)$ such that $A$ is left-equivalent to the idempotent $(\sigma\otimes\sigma)A'\big(\sigma^{-1}\otimes\sigma^{-1}\big)$ $($that is $A'=GA(\sigma\otimes\sigma)$ for some operators $G\in\Aut(\CC^n\otimes\CC^n)$ and~$\sigma\in\Aut(\CC^n))$.
\end{itemize}
\end{Prop}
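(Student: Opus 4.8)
The plan is to prove the equivalences $(1)\Leftrightarrow(3)$ and $(2)\Leftrightarrow(3)$ by parallel arguments, where $(1)$, $(2)$, $(3)$ denote the three listed statements in order; this suffices, since then $(1)\Leftrightarrow(3)\Leftrightarrow(2)$. The key preliminary observation is that for both $\gX_A(\CC)$ and $\Xi_A(\CC)$ the defining relations are homogeneous of degree two, so the degree-$1$ component is the full generating space, of dimension $n$. Consequently any graded algebra isomorphism restricts to a linear isomorphism of degree-$1$ parts; in particular $n=m$ is forced, and the isomorphism is completely determined by this restriction because the degree-$1$ elements generate the whole algebra.

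For $(3)\Rightarrow(1)$ I would argue as follows. Write $\hat A'=(\sigma\otimes\sigma)A'\big(\sigma^{-1}\otimes\sigma^{-1}\big)$, which is again an idempotent. The linear change of generators with matrix $\sigma$ furnishes, via the change-of-generators isomorphisms established after Proposition~\ref{PropPerm}, a graded algebra isomorphism $\gX_{\hat A'}(\CC)\cong\gX_{A'}(\CC)$. On the other hand, $(3)$ says $A$ is left-equivalent to $\hat A'$, so Proposition~\ref{PropLEQA} (equivalence of $(a)$ and $(c)$) gives $\gX_A(\CC)=\gX_{\hat A'}(\CC)$, an isomorphism identical on generators. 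Composing yields $\gX_A(\CC)\cong\gX_{A'}(\CC)$, which is $(1)$.

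Conversely, for $(1)\Rightarrow(3)$, let $f\colon\gX_A(\CC)\isoright\gX_{A'}(\CC)$ be a graded isomorphism. By the degree-$1$ observation $n=m$ and $f(x^i_A)=\sum_{j}\sigma^i_j x^j_{A'}$ for an invertible matrix $\sigma$. Let $g\colon\gX_{\hat A'}(\CC)\isoright\gX_{A'}(\CC)$ be the change-of-generators isomorphism sending $x^i_{\hat A'}\mapsto\sum_j\sigma^i_j x^j_{A'}$. Then $g^{-1}\circ f$ is a graded isomorphism $\gX_A(\CC)\to\gX_{\hat A'}(\CC)$ that is identical on generators, so $\gX_A(\CC)=\gX_{\hat A'}(\CC)$ in the sense of the paper's conventions, and Proposition~\ref{PropLEQA} converts this into the left-equivalence of $A$ and $\hat A'=(\sigma\otimes\sigma)A'\big(\sigma^{-1}\otimes\sigma^{-1}\big)$, which is exactly $(3)$. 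The equivalence $(2)\Leftrightarrow(3)$ is obtained by repeating this argument verbatim with $\Xi$ in place of $\gX$, using the $\Xi$ change-of-generators isomorphism and the equivalence of $(a)$ and $(d)$ in Proposition~\ref{PropLEQA}.

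The main obstacle is the implication $(1)\Rightarrow(3)$, and within it the reduction of an abstract graded isomorphism to a linear datum on the generators. Once one knows, from the purely quadratic nature of the relations, that the degree-$1$ part carries all the information and that the isomorphism is linear there, the rest is the routine bookkeeping of matching the index conventions in the change-of-generators formulae and invoking Proposition~\ref{PropLEQA}; I would keep a careful eye on transposes and on the placement of $\sigma^{\pm1}$, but expect no genuine difficulty there.
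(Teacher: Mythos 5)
Your proof is correct and is essentially the paper's argument in mild repackaging: the direction from the third statement to the first two is identical (left-equivalence plus the change-of-generators isomorphisms of Section~\ref{secABmanin}), and for the converse both proofs reduce the graded isomorphism $f$ to an invertible complex matrix $\sigma$ acting on the generators, force $n=m$ from invertibility, and conclude by conjugating with $\sigma$ and applying Lemma~\ref{lemAS0}. The only difference is bookkeeping: the paper obtains $\sigma$ via the homomorphism--Manin-matrix correspondence of Section~\ref{secCommCat}, so that $f$ and $f^{-1}$ give the relations $A(\sigma\otimes\sigma)(1-A')=0$ and $A'\big(\sigma^{-1}\otimes\sigma^{-1}\big)(1-A)=0$ which are conjugated directly into the hypotheses of Lemma~\ref{lemAS0}, whereas you obtain $\sigma$ from the degree-$1$ restriction of $f$, compose with a change-of-generators isomorphism to reach the ``identical on generators'' situation, and quote Proposition~\ref{PropLEQA} --- which is the same Lemma~\ref{lemAS0} (plus Lemma~\ref{propTXX}) in disguise.
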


\begin{proof} If $A$ is left-equivalent to $\wt A= (\sigma\otimes\sigma)A'\big(\sigma^{-1}\otimes\sigma^{-1}\big)$ then $\gX_A(\CC)=\gX_{\wt A}(\CC)\cong\gX_{A'}(\CC)$ and $\Xi_A(\CC)=\Xi_{\wt A}(\CC)\cong\Xi_{A'}(\CC)$ (see Section~\ref{secABmanin}). Conversely let $f\colon\gX_A(\CC)\to\gX_{A'}(\CC)$ be an~isomorphism and $f^{-1}\colon\gX_{A'}(\CC)\to\gX_A(\CC)$ be its inverse. They correspond to $(A,A')$-Manin matrices $\sigma\in\Hom(\CC^m,\CC^n)$ and $\sigma^{-1}\in\Hom(\CC^n,\CC^m)$ respectively. Due to invertibility of these matrices we must have $n=m$. The relations $A(\sigma\otimes\sigma)(1-A')=0$ and $A'\big(\sigma^{-1}\otimes\sigma^{-1}\big)(1-A)=0$ imply $A(1-\wt A)=0$ and $\wt A(1-A)=0$. By virtue of Lemma~\ref{lemAS0} the idempotents $A$ and $\wt A$ are left-equivalent. One can similarly prove that the second statement implies the third one. \end{proof}

\begin{Rem} \label{RemIsom}
 In the same way one can prove that $\gX_A(\gS)$ and $\gX_{A'}(\gR)$ are isomorphic as graded algebras iff $n=m$, $\gS\cong\gR$ and there exist invertible matrices $M\in\mathcal Z(\gR)\otimes\End(\CC^n)$, $G\in\gR\otimes\End(\CC^n\otimes\CC^n)$ such that $A'=GA(M\otimes M)$, where $\mathcal Z(\gR)=\{z\in\gR\mid [z,r]=0\;\forall\,r\in\gR\}$ is the centre of $\gR$. The isomorphism and its inverse have the form $(\alpha,f_M)\colon\gX_A(\gS)\to\gX_{A'}(\gR)$ and $\big(\alpha^{-1},f_N\big)\colon\gX_{A'}(\gR)\to\gX_A(\gS)$, where $\alpha\colon\gS\to\gR$ is an isomorphism in $\A$ and $N=\alpha^{-1}\big(M^{-1}\big)\in\mathcal Z(\gS)\otimes\End(\CC^n)$.
\end{Rem}

\subsection{Products of Manin matrices and co-algebra structure}
\label{secProd}

The main property of Manin matrices is that their product is also a Manin matrix under some condition.

\begin{Prop} \label{propProd}
 Let $\gR$ be an algebra. Let $A\in\End(\CC^n\otimes\CC^n)$, $B\in\End(\CC^m\otimes\CC^m)$ and $C\in\End\big(\CC^k\otimes\CC^k\big)$ be idempotents.
 Let $M$ and $N$ be $n\times m$ and $m\times k$ matrices over $\gR$. Suppose that $M$ and $N$ are $(A,B)$- and $(B,C)$-Manin matrices respectively and that
\begin{gather} \label{MijNjl}
\big[M^i_j,N^{j'}_l\big]=0 \qquad \forall i=1,\dots,n,\quad j,j'=1,\dots,m,\quad l=1,\dots,k.
\end{gather}
Then the $n\times k$ matrix $K=MN$ is an $(A,C)$-Manin matrix over $\gR$.
\end{Prop}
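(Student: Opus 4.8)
The plan is to reduce the claim to the defining relation \eqref{AMMB} for $M$ and for $N$ by exploiting the compatibility of the leg notation with matrix multiplication. First I would record that, because $K=MN$ holds as an identity of operators over $\gR$, the leg-placed operators factorise as $K^{(1)}=M^{(1)}N^{(1)}$ and $K^{(2)}=M^{(2)}N^{(2)}$. Indeed, expanding $M^{(1)}N^{(1)}=\sum_{i,j,j',l}M^i_jN^{j'}_l\big(E_i{}^jE_{j'}{}^l\otimes1\big)$ and contracting the factor $E_i{}^jE_{j'}{}^l=\delta^j_{j'}E_i{}^l$ yields $\sum_{i,l}K^i_l\big(E_i{}^l\otimes1\big)=K^{(1)}$, and the same computation applies at the second site.

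Next I would use the commutativity hypothesis \eqref{MijNjl}. The matrix units occurring in $N^{(1)}$ sit in the first tensor factor and those in $M^{(2)}$ in the second, so these matrix parts commute automatically; combined with $\big[M^i_j,N^{j'}_l\big]=0$ in $\gR$, this gives $N^{(1)}M^{(2)}=M^{(2)}N^{(1)}$. Hence
\[
K^{(1)}K^{(2)}=M^{(1)}N^{(1)}M^{(2)}N^{(2)}=M^{(1)}M^{(2)}N^{(1)}N^{(2)}.
\]
I expect this reordering to be the only delicate point of the argument: it is precisely where \eqref{MijNjl} is needed, and all that is required is to keep careful track of which entries act on which site and therefore commute.

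Finally I would feed in the two Manin relations in succession, using the equivalent ``invariance'' form of \eqref{AMMB}, namely $AM^{(1)}M^{(2)}=AM^{(1)}M^{(2)}B$ for $M$ and $BN^{(1)}N^{(2)}=BN^{(1)}N^{(2)}C$ for $N$. Starting from $AK^{(1)}K^{(2)}=AM^{(1)}M^{(2)}N^{(1)}N^{(2)}$, inserting $B$ via the first relation, then inserting $C$ via the second, and finally removing the auxiliary $B$ with the first relation once more, I obtain
\[
AK^{(1)}K^{(2)}=AM^{(1)}M^{(2)}BN^{(1)}N^{(2)}C=AM^{(1)}M^{(2)}N^{(1)}N^{(2)}C=AK^{(1)}K^{(2)}C,
\]
where the last equality reverses the factorisation of $K^{(1)}K^{(2)}$. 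Thus $AK^{(1)}K^{(2)}(1-C)=0$, which by Definition~\ref{defABManin} says exactly that $K=MN$ is an $(A,C)$-Manin matrix.
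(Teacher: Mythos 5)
Your proof is correct and follows essentially the same route as the paper's: factorise $K^{(1)}K^{(2)}=M^{(1)}N^{(1)}M^{(2)}N^{(2)}$, use \eqref{MijNjl} to swap $N^{(1)}M^{(2)}=M^{(2)}N^{(1)}$, and then apply the two Manin relations in succession. The only cosmetic difference is bookkeeping: the paper carries the factor $(1-C)$ along and kills it at the end via $BN^{(1)}N^{(2)}(1-C)=0$, whereas you insert $C$ and remove the auxiliary $B$ to reach the invariance form $AK^{(1)}K^{(2)}=AK^{(1)}K^{(2)}C$, which is equivalent.
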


\begin{proof} Since $M$ is an $(A,B)$-Manin matrix we have $AM^{(1)}M^{(2)}=AM^{(1)}M^{(2)}B$. The commutativity condition~\eqref{MijNjl} implies that $N^{(1)}M^{(2)}=M^{(2)}N^{(1)}$, hence
\begin{align*}
 AK^{(1)}K^{(2)}(1-C)&=AM^{(1)}N^{(1)}M^{(2)}N^{(2)}(1-C)=
 AM^{(1)}M^{(2)}N^{(1)}N^{(2)}(1-C)
 \\
 &=AM^{(1)}M^{(2)}BN^{(1)}N^{(2)}(1-C).
\end{align*}
The right hand side vanishes since $N$ is a $(B,C)$-Manin matrix. \end{proof}

\begin{Rem}
 Proposition~\ref{propProd} can be proved by using the functors $\gX_A$ or $\Xi_A$. In the case of $\gX_A$ one needs to consider the elements $y^j=\sum_lN^j_lx^l$ and $z^i=\sum_jM^i_jy^j$ of the algebra $\gA=\gX_C(\gR)$, where $x^l=x^l_C$. By applying Proposition~\ref{propABManinxy} twice we obtain the relations $\sum_{k,l}A^{ij}_{kl}z^kz^l=0$. Since $z^i=\sum_l K^i_lx^l$ the matrix $K$ is an $(A,C)$-Manin matrix by virtue of Proposition~\ref{propAB}. Analogously, one can apply Propositions~\ref{propABManinpsiphi} and \ref{propAB} to the elements $\phi_j=\sum_iM^i_j\psi_i$ and $\chi_l=\sum_jN^j_l\phi_j$ of the algebra $\Xi_A(\gR)$.
\end{Rem}

In some particular case the property claimed in Proposition~\ref{propProd} was deduced in Section~\ref{secCommCat} (see the formula~\eqref{whK} and the text after it). The role of $\gR$ is played by the algebra $\mathfrak T$. The~homo\-morphism $\beta\colon\gR\to\mathfrak T$ gives a $\gR$-algebra structure on $\mathfrak T$ and maps entry-wise the $(A,B)$-Manin matrix $M=\big(M^i_j\big)$ over $\gR$ to a $(A,B)$-Manin matrix $\beta(M)=\big(\beta\big(M^i_j\big)\big)$ over $\mathfrak T$. The entries of this matrix commute with the entries of $N$ since $[\beta(r),N^i_j]=0$ for any $r\in\gR$.

The typical case of Proposition~\ref{propProd} is $\gR=\gS\otimes\gS'$, $M^i_j\in\gS$, $N^j_l\in\gS'$. In this case it can be formulated in terms of comma categories as follows. Let $(\gS,f_M)$ and $(\gS',f_N)$ be objects of the categories $\big(\gX_A(\CC)\downarrow\gX_B\big)$ and $\big(\gX_B(\CC)\downarrow\gX_C\big)$ corresponding to $(A,B)$- and $(B,C)$-Manin matrices $M$ and $N$. Then $(\gS\otimes\gS',f_K)$ is an object of $\big(\gX_A(\CC)\downarrow\gX_C\big)$, where $K=MN$. In~particular, for $A=B=C$ we obtain a structure of tensor category on the comma category $\big(\gX_A(\CC)\downarrow\gX_A\big)$ with the unit object $\big(\CC,\id_{\gX_A(\CC)}\big)$ corresponding to the unit matrix.

Proposition~\ref{propProd} can be formulated in terms of right quantum algebras. This formulation was described in the works~\cite{Manin87,Manin88,ManinBook91}.

\begin{Prop} \label{DeltaABC}
 Let $\M$, $\N$ and $\K$ be universal $(A,B)$-, $(B,C)$- and $(A,C)$-Manin matrices respectively. They have entries $\M^i_j\in\gU_{A,B}$, $\N^i_j\in\gU_{B,C}$ and $\K^i_j\in\gU_{A,C}$. Then the formula
\begin{align*}
 \Delta_{A,B,C}(\K^i_l)=\sum_{j=1}^m\M^i_j\otimes\N^j_l
\end{align*}
defines a homomorphism $\Delta_{A,B,C}\colon\gU_{A,C}\to\gU_{A,B}\otimes\gU_{B,C}$.
\end{Prop}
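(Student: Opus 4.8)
The plan is to reduce the statement to the universal property of the right quantum algebra $\gU_{A,C}$ recorded in~\eqref{HomgXHomXi}: an algebra homomorphism out of $\gU_{A,C}$ into any algebra $\gR$ is the same datum as an $(A,C)$-Manin matrix over $\gR$, the correspondence sending $f$ to the matrix $\big(f(\K^i_l)\big)$. Thus it suffices to exhibit an $(A,C)$-Manin matrix $K$ over the target algebra $\gR=\gU_{A,B}\otimes\gU_{B,C}$ whose entries are exactly the prescribed elements $K^i_l=\sum_{j=1}^m\M^i_j\otimes\N^j_l$; the homomorphism $\Delta_{A,B,C}$ is then the unique one corresponding to $K$, and it automatically respects the defining relations of $\gU_{A,C}$.

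First I would produce this $K$ as a product of Manin matrices using Proposition~\ref{propProd}. Consider the two matrices $\widetilde\M=\M\otimes1$ and $\widetilde\N=1\otimes\N$ over $\gR=\gU_{A,B}\otimes\gU_{B,C}$, with entries $\widetilde\M{}^i_j=\M^i_j\otimes1$ and $\widetilde\N{}^j_l=1\otimes\N^j_l$. These are the entrywise images of $\M$ and $\N$ under the canonical inclusions $\gU_{A,B}\hookrightarrow\gR$, $u\mapsto u\otimes1$, and $\gU_{B,C}\hookrightarrow\gR$, $v\mapsto1\otimes v$. Since an algebra homomorphism applied entrywise carries a Manin matrix to a Manin matrix (the defining relation~\eqref{AMMB} is a collection of equations of the form ``a $\CC$-linear combination of products of two entries equals $0$'', and a homomorphism preserves sums, products and the scalar coefficients of the idempotents), the matrix $\widetilde\M$ is an $(A,B)$-Manin matrix and $\widetilde\N$ is a $(B,C)$-Manin matrix over $\gR$.

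Next I would check the commutativity hypothesis~\eqref{MijNjl} of Proposition~\ref{propProd}: the entries $\widetilde\M{}^i_j=\M^i_j\otimes1$ and $\widetilde\N{}^{j'}_l=1\otimes\N^{j'}_l$ lie in the mutually commuting subalgebras $\gU_{A,B}\otimes1$ and $1\otimes\gU_{B,C}$ of the tensor product, so they commute. Proposition~\ref{propProd} then guarantees that $K:=\widetilde\M\,\widetilde\N$ is an $(A,C)$-Manin matrix over $\gR$, and its entries are $K^i_l=\sum_{j=1}^m(\M^i_j\otimes1)(1\otimes\N^j_l)=\sum_{j=1}^m\M^i_j\otimes\N^j_l$, precisely the elements appearing in the proposition. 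Invoking the bijection~\eqref{HomgXHomXi} for this $K$ yields the desired homomorphism $\Delta_{A,B,C}\colon\gU_{A,C}\to\gU_{A,B}\otimes\gU_{B,C}$ with $\Delta_{A,B,C}(\K^i_l)=K^i_l$.

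Since every step is a direct application of results already established, I do not anticipate a genuine obstacle; the only points requiring care are the two routine verifications highlighted above — that the entrywise image of a Manin matrix under an algebra homomorphism is again a Manin matrix, and that entries coming from the two tensor factors of $\gU_{A,B}\otimes\gU_{B,C}$ commute — both of which are exactly the hypotheses needed to legitimately apply Proposition~\ref{propProd}.
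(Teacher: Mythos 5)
Your proof is correct and takes essentially the same route as the paper: both realize the prescribed entries as the product of the commuting Manin matrices $\M\otimes1$ and $1\otimes\N$ over $\gR=\gU_{A,B}\otimes\gU_{B,C}$, invoke Proposition~\ref{propProd} to conclude that this product is an $(A,C)$-Manin matrix, and then obtain the homomorphism from the universal property of $\gU_{A,C}$ (the bijection~\eqref{HomgXHomXi}). The paper's proof is merely terser, leaving implicit the two routine verifications you spell out (that entrywise algebra homomorphisms preserve the Manin property, and that entries from the two tensor factors commute).
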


\begin{proof} Let $\gR=\gU_{A,B}\otimes\gU_{B,C}$. The matrix $\wt\K$ with entries $\wt\K^i_l=\Delta_{A,B,C}\big(\K^i_l\big)\in\gR$ is the product of the matrices $\M$ and $\N$. We need to check that $A\wt\K^{(1)}\wt\K^{(2)}(1-C)=0$. The last equality means that $\wt\K$ is an $(A,C)$-Manin matrix, but this follows from Proposition~\ref{propProd}. \end{proof}

Note that in terms of functors $F_A,\wt F_A\colon\Q\to\A$ constructed in Section~\ref{secCommCat} (as left adjoint functors to the functors $\gX_A,\Xi_A\colon\A\to\Q$) we have $\Delta_{A,B,C}=F_C(u_{A,B})=\wt F_A(\wt u_{B,C})$, where $u_{A,B}=f_\M\colon\gX_A(\CC)\to\gX_B(\gU_{A,B})$ and $\wt u_{B,C}=f^{\N}\colon\Xi_C(\CC)\to\Xi_B(\gU_{B,C})$ are the corresponding universal homomorphisms.

Consider the algebra $\gU_A=\gU_{A,A}$. Proposition~\ref{DeltaABC} implies that the map $\Delta_A=\Delta_{A,A,A}$ is a~homomorphism $\gU_A\to\gU_A\otimes\gU_A$. Moreover, it is easy to check that $\Delta_A$ is a comultiplication, that is $(\id\otimes\Delta_A)\Delta_A=(\Delta_A\otimes\id)\Delta_A$. More generally, we have the following commutative diagram:
\begin{align*}
 \xymatrix{\gU_{A,D}\ar@{->}[rrr]^{\Delta_{A,B,D}}\ar@{->}[d]_{\Delta_{A,C,D}}&&& \gU_{A,B}\otimes\gU_{B,D}\ar@{->}[d]^{\id\otimes\Delta_{B,C,D}} \\
 \gU_{A,C}\otimes\gU_{C,D}\ar@{->}[rrr]^{\Delta_{A,B,C}\otimes\id} &&& \gU_{A,B}\otimes\gU_{B,C}\otimes\gU_{C,D},
}
\end{align*}
which reflects the associativity of the matrix multiplication.

\begin{Prop}
 The algebra $\gU_A$ has a bialgebra structure defined by the following comultiplication $\Delta_A\colon\gU_A\to\gU_A\otimes\gU_A$ and counit $\varepsilon_A\colon\gU_A\to\CC$:
\begin{gather} \label{DeltaA}
 \Delta_A(\M^i_k)=\sum_{j=1}^n \M^i_j\otimes\M^j_k, \qquad
 \varepsilon_A\big(\M^i_j\big)=\delta^i_j.
\end{gather}
\end{Prop}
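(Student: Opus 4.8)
The plan is to verify the two families of bialgebra axioms: that $(\gU_A,\Delta_A,\varepsilon_A)$ is a coassociative counital coalgebra, and that both structure maps $\Delta_A$ and $\varepsilon_A$ are homomorphisms of algebras (which is exactly the compatibility condition between the algebra and coalgebra structures). The fact that $\Delta_A=\Delta_{A,A,A}$ is an algebra homomorphism $\gU_A\to\gU_A\otimes\gU_A$ is already supplied by Proposition~\ref{DeltaABC} specialised to $B=C=A$, so this part needs no further work. Throughout I would exploit the universal property of $\gU_A$: since this algebra is generated by the entries $\M^i_j$, any two algebra homomorphisms out of $\gU_A$ that agree on all $\M^i_j$ coincide.

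First I would check that $\varepsilon_A$ is a well-defined algebra homomorphism. By the natural bijection~\eqref{HomgXHomXi} with $\gR=\CC$, algebra homomorphisms $\gU_A\to\CC$ correspond to $A$-Manin matrices over $\CC$, and the candidate for $\varepsilon_A$ is the one attached to the identity matrix $I$ with entries $\delta^i_j$. So it suffices to observe that $I$ is an $A$-Manin matrix: since $I^{(1)}=I^{(2)}=1$, we get $AI^{(1)}I^{(2)}(1-A)=A(1-A)=A-A^2=0$ by the idempotency~\eqref{Aidem} of $A$. Hence the assignment $\varepsilon_A(\M^i_j)=\delta^i_j$ extends to an algebra homomorphism $\gU_A\to\CC$.

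Next I would establish coassociativity and the counit axioms by evaluating on the generators $\M^i_k$. Both $(\id\otimes\Delta_A)\Delta_A$ and $(\Delta_A\otimes\id)\Delta_A$ are algebra homomorphisms $\gU_A\to\gU_A\otimes\gU_A\otimes\gU_A$, being composites of algebra homomorphisms; since each sends $\M^i_l$ to $\sum_{j,k=1}^n\M^i_j\otimes\M^j_k\otimes\M^k_l$, they agree on generators and hence coincide, giving $(\id\otimes\Delta_A)\Delta_A=(\Delta_A\otimes\id)\Delta_A$. For the counit axioms, both composites $(\varepsilon_A\otimes\id)\Delta_A$ and $(\id\otimes\varepsilon_A)\Delta_A$ are algebra endomorphisms of $\gU_A$ after the identifications $\CC\otimes\gU_A\cong\gU_A\cong\gU_A\otimes\CC$, and on generators
\begin{align*}
(\varepsilon_A\otimes\id)\Delta_A(\M^i_k)=\sum_{j=1}^n\delta^i_j\M^j_k=\M^i_k,\qquad
(\id\otimes\varepsilon_A)\Delta_A(\M^i_k)=\sum_{j=1}^n\M^i_j\delta^j_k=\M^i_k,
\end{align*}
so both composites equal $\id_{\gU_A}$.

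There is no serious obstacle here: the only place where a genuine property of $A$ enters is the verification that the identity matrix is an $A$-Manin matrix, an immediate consequence of idempotency, while the algebra-homomorphism property of $\Delta_A$ is imported wholesale from Proposition~\ref{DeltaABC}. Everything else reduces to checking equalities of algebra homomorphisms on the generating set $\{\M^i_j\}$, which is legitimate precisely because the universal property of $\gU_A$ guarantees that such homomorphisms are determined by their values on these generators. The only mild care required is in handling the identifications $\CC\otimes\gU_A\cong\gU_A$ so that the counit axioms are read as equalities between endomorphisms of the same algebra.
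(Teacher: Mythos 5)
Your proof is correct and follows essentially the same route as the paper, which in fact states this proposition without a formal proof: it imports the homomorphism property of $\Delta_A=\Delta_{A,A,A}$ from Proposition~\ref{DeltaABC} and dismisses coassociativity as "easy to check" in the preceding text. Your write-up supplies the details the paper omits — notably that $\varepsilon_A$ is well defined because the identity matrix is an $A$-Manin matrix (via \eqref{HomgXHomXi} and idempotency of $A$), and the verification of the counit axioms on generators — all of which are accurate.
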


The formula~\eqref{whMxMx} for the universal $A$-Manin matrix $\M$ gives a coaction of the bialgebra $\gU_A$ on the algebra $\gX_A(\CC)$. This is a homomorphism $\delta=f_\M\colon\gX_A(\CC)\to\gU_A\otimes\gX_A(\CC)$ defined as
\begin{align*}
 \delta(x^i)=\sum_{j=1}^n\M^i_j\otimes x^j.
\end{align*}
It satisfies the coaction axiom $(\id\otimes\delta)\delta=(\Delta\otimes\id)\delta$. In terms of non-commutative geometry the algebra $\gX_A(\CC)$ is interpreted as an algebra of functions on a non-commutative space and~the coaction $\delta$ as an action on this space. Thus the bialgebra $\gU_A$ (or its dual) plays the role of~algebra of endomorphisms of a non-commutative space corresponding to the algebra $\gX_A(\CC)$.

More generally, for arbitrary $A,B,C$ we have $(\id\otimes u_{B,C})u_{A,B}=(\Delta_{A,B,C}\otimes\id)u_{A,C}$ and $(\id\otimes\wt u_{A,B})\wt u_{B,C}=\big(\Delta^{\rm op}_{A,B,C}\otimes\id\big)\wt u_{A,C}$, where $\Delta^{\rm op}_{A,B,C}\colon\gU_{A,C}\to\gU_{B,C}\otimes\gU_{A,B}$ is an algebra homo\-morphism defined as $\Delta^{\rm op}_{A,B,C}\big(\K^i_l\big)=\sum_{j=1}^m\N^j_l\otimes\M^i_j$. In particular, the homomorphism $\wt u_{A,A}$: $\Xi_A(\CC)\to\gU_A\otimes\Xi_A(\CC)$ is a coaction on the algebra $\Xi_A(\CC)$ with respect to the comultiplication $\Delta^{\rm op}_A=\Delta^{\rm op}_{A,A,A}\colon\gU_{A}\to\gU_{A}\otimes\gU_{A}$.

\begin{Rem}
 The bialgebra $\gU_A$ is not a Hopf algebra: an antipode $S$ for the bialgebra structure~\eqref{DeltaA} should have the form $S(\M)=\M^{-1}$, but the matrix $\M$ is not invertible over $\gU_A$. One can extend the algebra $\gA_A$ by adding new generators $\wt\M^i_j$ being elements of the formal inverse matrix $\wt\M=\M^{-1}$. Then the matrix $S(\M)^\top$ should be inverse to $\wt M^\top$, however its invertibility is not guaranteed in this extended algebra. The universal construction of a Hopf algebra extending the bialgebra $\gU_A$ is {\it Hopf envelope}~\cite{ManinBook91}. This is the algebra $\mathcal H_A$ generated by~the entries of the infinite series of matrices $\M_k$, $k\in\NN_0$, with the relations
\begin{gather*}% \label{wtUa}
 \M_k^\top\M_{k+1}=\M_{k+1}\M_k^\top=1, \qquad
 A\M_{2k}^{(1)}\M_{2k}^{(2)}(1-A)=0, \\
 \big(1-A^\top\big)\M_{2k+1}^{(2)}\M_{2k+1}^{(1)}A^\top=0,
\end{gather*}
$k\in\NN_0$. Its Hopf structure is given by the formulae
\begin{gather*} %\label{DeltaAex}
 \Delta_A\big((\M_k)^i_l\big)=\sum_{j=1}^n(\M_k)^i_j\otimes(\M_k)^j_l, \qquad
 \varepsilon_A(\M_k)=1, \qquad
 S(\M_k)=\M_{k+1}^\top.
\end{gather*}
The algebra $\gU_A$ is mapped to $\mathcal H_A$ by the formula $\M\mapsto\M_0$. Note also that the matrix $\wt\M=\M_0^{-1}=\M_1$ satisfies the relation
\begin{align*}
A\wt\M^{(2)}\wt\M^{(1)}(1-A)=0. %\label{AMMex}
\end{align*}
\end{Rem}

 The complex square and rectangular matrices are often interpreted as homomorphisms in a~category with objects $n\in\NN_0$ and homomorphisms $\Hom(m,n)=\Mat_{n\times m}(\CC)$ (it is equi\-va\-lent to the category of finite-dimensional vector spaces). One can interpret the Manin mat\-ri\-ces in a similar way. Let $\A'\subset\A$ be a small full tensor subcategory, this means that $\A'$ is a full subcategory of~$\A$ such that $\CC\in\A'$, $\gR\otimes\gS\in\A'$ $\;\forall\,\gR,\gS\in\A'$ and $\Ob(\A')$ is a~small set (in practice one needs only a small set of algebras, so we can take the full tensor subcategory generated by this set as $\A'$). Define the following category $\M_{\A'}$. The objects of $\M_{\A'}$ are all the idempotents $A\in\End(\CC^n\otimes\CC^n)$, $n\in\NN_0$. The homomorphisms $A\to B$ are all the $(A,B)$-Manin matrices $M$ over algebras $\gR\in\A'$.
 In other words, we define $\Hom(A,B)=\bigsqcup_{\gR\in\A'}\Hom\big(\gX_A(\CC),\gX_B(\gR)\big)$, these sets are small since $\A'$ is small. The composition of homomorphisms $(M,\gR)\in\Hom\big(\gX_A(\CC),\gX_B(\gR)\big)$ and $(N,\gS)\in\Hom\big(\gX_B(\CC),\gX_C(\gR)\big)$ we define as $(NM,\gR\otimes\gS)$. Due to Proposition~\ref{propProd} the product $NM$ is an $(A,C)$-Manin matrix over $\gR\otimes\gS$.

If two idempotents are left-equivalent, then they are isomorphic as objects of $\M_{\A'}$, so it is enough to take the quadratic algebras $\gX_A(\CC)$ as objects of $\M_{\A'}$ instead of the idempotents. Moreover, one can prove that $\gX_A(\CC)$ and $\gX_{A'}(\CC)$ are isomorphic as objects of $\M_{\A'}$ iff they are isomorphic as objects of $\Q$. Indeed, if $(\gR,f_M)\in\Hom\big(\gX_A(\CC),\gX_B(\gR)\big)$ is an isomorphism $\M_{\A'}$ and $(\gS,f_N)\in\Hom\big(\gX_B(\CC),\gX_A(\gS)\big)$ is its inverse then $\gR\otimes\gS=\CC$, hence $\gR=\gS=\CC$ and $f_M\colon\gX_A(\CC)\to\gX_B(\CC)$, $f_N\colon\gX_A(\CC)\to\gX_B(\CC)$ are isomorphisms in $\Q$.

\begin{Rem} \label{remFunctorDual}
 Due to Proposition~\ref{PropLEQA} the formula $\gX_A(\CC)\mapsto\Xi_A(\CC)$ correctly define an ope\-ra\-tion on the quadratic algebras $\gA\in\M_{\A'}$. It was denoted in the works of Manin~\cite{Manin87,Manin88,ManinBook91} by~$\gA\mapsto\gA^!$. The first equality~\eqref{HomgXHomXi} gives a contravariant fully faithful functor $(-)^!\colon\M_{\A'}\to\M_{\A'}$. Since any object of $\M_{\A'}$ isomorphic to $\Xi_A(\CC)$ for some idempotent $A$ this functor is an anti-autoequivalence of the category $\M_{\A'}$. The (quasi-)inverse of $(-)^!$ is $(-)^!$ itself.
\end{Rem}

\begin{Rem}
 One can extend the category $\M_{\A'}$ up to a category $\wh\M_{\A'}$ by taking all the algebras $\gX_A(\gS)\in\Q$ as objects of $\wh\M_{\A'}$. The sets of homomorphisms in $\wh\M_{\A'}$ can be defined as
\begin{align*}
 \Hom(\gX_A(\gS),\gX_B(\gS'))={}&\bigsqcup_{\gR\in\A'}\Big\{(\alpha,M)\mid\alpha\in\Hom(\gS,\gS'),
 \\
 &(M,\gR)\in\Hom\big(\gX_A(\CC),\gX_B(\gS'\otimes\gR)\big),\;
 \big[\alpha(s),M^i_j\big]=0\;\forall\,s\in\gS\Big\}
\end{align*}
with some natural composition rule. In these settings $\gX_A(\gS)$ and $\gX_{A'}(\gS')$ are isomorphic as objects of $\wh\M_{\A'}$ iff they are isomorphic as objects of $\Q$ (see Remark~\ref{RemIsom}).
\end{Rem}

\subsection{Infinite-dimensional case: Manin operators}
\label{secMO}

Let $V$ and $W$ be two vector spaces (may be infinite-dimensional). Let $A\in\End(V\otimes V)$ and $B\in\End(W\otimes W)$ be idempotents and $\gR$ be an algebra. Instead of a matrix over $\gR$ we need to take an element $M$ of the space $\gR\otimes\Hom(W,V)$ or of some completion of this space. We~con\-sider~$M$ as an operator with entries in the algebra $\gR$: this means that $\lambda Mw\in\gR$, where $\lambda\in V^*$ and $w\in W$ (in the case of completion the covector $\lambda$ runs over a subset of the dual space $V^*$ such that $\lambda Mw$ is well defined).

\begin{Def}
 The operator $M$ (from the space $\gR\otimes\Hom(W,V)$ or its completion) is called an~{\it $(A,B)$-Manin operator} if it satisfies the relation $AM^{(1)}M^{(2)}(1-B)=0$, where $M^{(1)}=M\otimes\id_V$ and $M^{(2)}=\id_W\otimes M$. In the case $V=W$, $A=B$ it is called {\it $A$-Manin operator}.
\end{Def}

Let $(v_i)$ be a basis of the space $V$, so that any vector $v\in V$ has the from $v=\sum_i\alpha^i v_i$ for unique coefficients $\alpha^i\in\CC$ and the sum is finite. Then the action of the operator $A$ can be written as $A(v_k\otimes v_l)=\sum_{i,j}A^{ij}_{kl}v_j\otimes v_j$ for unique coefficients $A^{ij}_{kl}\in\CC$. Since the sum~$\sum_{i,j}$ should be finite there are only finitely many non-zero coefficients $A^{ij}_{kl}$ for any fixed $k$, $l$, we call it {\it column finiteness condition} for the matrix $(A^{ij}_{kl})$. This condition allows as to define the quadratic algebra~$\Xi_A(\CC)$ generated by $\psi_i$ with commutation relations $\psi_k\psi_l=\sum_{i,j}A^{ij}_{kl}\psi_i\psi_j$. Formally this is the quotient $\Xi_A(\CC)=\mathfrak T/\mathfrak I$, where $\mathfrak T$ is the algebra of all the non-commutative polynomials of the formal variables $\psi_i$ while $\mathfrak I$ is the ideal of $\mathfrak T$ generated by the elements $\psi_k\psi_l-\sum_{i,j}A^{ij}_{kl}\psi_i\psi_j\in\mathfrak T$. Another choice of the basis $(v_i)$ leads to an isomorphic quadratic algebra, so it essentially depends on the operator $A$ only.

To interpret an $(A,B)$-Manin operator $M$ as an object of a comma category as in Section~\ref{secCommCat} we need to define also an algebra $\Xi_A(\gR)$. Let $(v_i)$ and $(w_i)$ be bases of $V$ and $W$. The formula $Mw_j=\sum_iM^i_jv_i$ defines the entries $M^i_j\in\gR$ of an $(A,B)$-Manin operator $M$ in these bases. If~$M\in\gR\otimes\Hom(W,V)$ (without completion), then there are only finitely many non-zero entries~$M^i_j$ for any fixed $j$ $\big($column finiteness condition for the matrix $\big(M^i_j\big)\big)$ and hence the sum $\sum_iM^i_j\psi_i$ is finite.
 Define $\Xi_A(\gR)= \gR\otimes\Xi_A(\CC)$ (non-completed). Then the set of $(A,B)$-Manin operators $M\in\gR\otimes\Hom(W,V)$ bijectively corresponds to the set $\Hom\big(\Xi_B(\CC),\Xi_A(\gR)\big)$.

To consider the case of arbitrary infinite matrix $\big(M^i_j\big)$ without any finiteness condition we define a completion of the space $\gR\otimes\Hom(W,V)$ as the set of all the infinite formal sums $\sum_{i,j}M^i_jE_{i}{}^{j}$, where $M^i_j\in\gR$ and $E_{i}{}^{j}\in\Hom(W,V)$ are operators acting as $E_{i}{}^{j}w_k=\delta^j_kv_i$. Denote this completion by $\gR\whotimes\Hom(W,V)$. It can be identified with $\Hom\big(W,\gR\whotimes V\big)$, where $\gR\whotimes V$ is the completion of $\gR\otimes V$ consisting of all the infinite formal sums $\sum_{i}r_iv_i$, $r_i\in\gR$. The operator $M=\sum_{i,j}M^i_jE_{i}{}^{j}$ acts by the formula $Mw_j=\sum_i M^i_jv_i$.

Note that the completion $\gR\whotimes V$ \big(and hence $\gR\whotimes\Hom(W,V)=\Hom\big(W,\gR\whotimes V\big)$\big) does not depend on the choice of the basis $(w_i)$, but it does depend on the choice of the basis $(v_i)$. Namely, a basis $(v_i)$ defines the following topology in the $\gR$-module $\gR\otimes V$: neighbourhoods of $0$ are the $\gR$-submodules generated by all $v_i$ except finitely many of them. The module $\gR\whotimes V$ is the completion of $\gR\otimes V$ with respect to this topology. Any two bases $(v_i)$ and $(v'_i)$ are related by~$v_i=\sum_k\alpha^k_iv'_k$, $v'_j=\sum_k\beta^k_jv_k$, they define the same topology of $\gR\otimes V$ iff for any $k$ there are only finitely many non-zero $\alpha^k_i$ and $\beta^k_j$.

Suppose $A$ satisfy {\it row finiteness condition}: there are only finitely many non-zero $A^{ij}_{kl}$ for fixed $i$, $j$. This condition means exactly that the operator $A\colon V\otimes V\to V\otimes V$ is continuous with respect to the topology corresponding to the basis $(v_i\otimes v_j)$. Define $\wh\Xi_A(\gR)=\wh{\mathfrak T}/\wh{\mathfrak I}$, where $\wh{\mathfrak T}=\bigoplus_{k\in\NN_0}\wh{\mathfrak T}_k$ is the graded algebra with grading component $\wh{\mathfrak T}_k$ consisting of the infinite formal sums
\begin{align*}
 \sum\limits_{i_1,\dots,i_k}r_{i_1\dots i_k}\psi_{i_1}\cdots\psi_{i_k},\qquad r_{i_1\dots i_k}\in\gR,
\end{align*}
and $\wh{\mathfrak I}=\Big\{\sum_{k,l}t_{kl}(\psi_k\psi_l-\sum_{i,j}A^{ij}_{kl}\psi_i\psi_j)t'_{kl}\mid t_{kl},t'_{kl}\in\wh{\mathfrak T}\Big\}$ (due to the row finiteness condition the sum over $k$ and $l$ is correctly defined). The algebra $\wh\Xi_A(\gR)$ is a completion of $\Xi_A(\gR)$, it depends on the choice of the basis $(v_i)$ due to the identification $\Xi_A(\CC)_1=V$ via $\psi_i\leftrightarrow v_i$.

The proof of Lemma~\ref{propTXX} holds for the completed algebra $\wh\Xi_A(\gR)$, so that the system of~equations $\sum_{i,j} \psi_i\psi_jT^{ij}_{ab}=0$ for $T^{ij}_{ab}\in\gR$ is equivalent to the system $\sum_{k,l}A^{ij}_{kl}T^{kl}_{ab}=0$. Hence we have a bijection between the set of $(A,B)$-operators $M\in\gR\whotimes\Hom(W,V)$ and the set $\Hom\big(\Xi_B(\CC),\wh\Xi_A(\gR)\big)$.

Let $V\whotimes V$ be the completion of $V\otimes V$ with respect to the topology corresponding to the basis $(v_i\otimes v_j)$. It consists of all the infinite formal sums $\sum_{i,j}\alpha_{ij}v_i\otimes v_j$, $\alpha_{ij}\in\CC$. Any matrix~$\big(A^{ij}_{kl}\big)$ satisfying row finiteness condition define a continuous operator $A\colon V\whotimes V\to V\whotimes V$ by the formula $Av_k\otimes v_l=\sum_{i,j}A^{ij}_{kl}v_i\otimes v_j$. Denote by $\End\big(V\whotimes V\big)$ the space of all the continuous operators $V\whotimes V\to V\whotimes V$. Note that we do not need the column finiteness condition to define the algebra $\wh\Xi_A(\gR)$. Hence the $(A,B)$-Manin operators $M\in\gR\whotimes\Hom(W,V)$ for arbitrary idempotents $A\in\End\big(V\whotimes V\big)$ and $B\in\End(W\otimes W)$ can be identified with the elements of~the set $\Hom\big(\Xi_B(\CC),\wh\Xi_A(\gR)\big)$. Explicitly, the relations $\sum_{k,l,a,b}A^{ij}_{kl}M^k_aM^l_b\big(\delta^a_r\delta^b_s-B^{ab}_{rs}\big)=0$ are~correctly defined in this case since all the sums in these relations are finite.

Thus the $(A,B)$-Manin operators in the non-completed or completed case are objects of the comma category $\big(\Xi_B(\CC)\downarrow\Xi_A\big)$ or $\big(\Xi_B(\CC)\downarrow\wh\Xi_A\big)$ for the functor $\Xi_A\colon\A\to\G$ or $\wh\Xi_A\colon\A\to\G$ respectively.

One can also generalise the graded algebras $\gX_A(\CC)$ to the case of infinite-dimensional mat\-ri\-ces~$(A^{ij}_{kl})$. To do it one needs to require the row finiteness condition, which means that this matrix defines a continuous operator $A\colon V\whotimes V\to V\whotimes V$. For any idempotent $A\in\End\big(V\whotimes V\big)$ define the quadratic algebra $\gX_A(\CC)$ as the algebra generated by $x^i$ with the commutation relations $\sum_{k,l}A^{ij}_{kl}x^kx^l=0$. That is $\gX_A(\CC)=\Xi_{\id-A^\top}(\CC)$, where $A^\top$ is the operator $V\otimes V\to V\otimes V$ defined as $A^\top v_i\otimes v_j=\sum_{k,l}A^{ij}_{kl}v_k\otimes v_l$. Let $W\whotimes W$ be the completion of $W\otimes W$ with respect to the basis $(w_i\otimes w_j)$ and $B\in\End(W\otimes W)$ be an idempotent. The corresponding matrix $(B^{ij}_{kl})$ satisfy the column finiteness condition. Define $\wh\gX_B(\gR)=\wh\Xi_{\id-B^\top}(\gR)$, where $B^\top\in\End\big(W\whotimes W\big)$ is the continuous operator $W\whotimes W\to W\whotimes W$ acting as $B^\top w_i\otimes w_j=\sum_{k,l}B^{ij}_{kl}w_k\otimes w_l$. Then we have the comma category $(\gX_A(\CC)\downarrow\wh\gX_B)$ equivalent to $\big(\Xi_B(\CC)\downarrow\wh\Xi_A\big)$.

In particular, the completions allow us to consider the {\it universal $(A,B)$-Manin operator}. It has the form $\M=\sum_{i,j}\M^i_jE_{i}{}^{j}$, where $\M^i_j$ are generators of the algebra $\gU_{A,B}$ with the commutation relations $\sum_{k,l,a,b}A^{ij}_{kl}\M^k_a\M^l_b\big(\delta^a_r\delta^b_s-B^{ab}_{rs}\big)=0$. These relations are correctly defined iff the matrices $\big(A^{ij}_{kl}\big)$ and $\big(B^{ij}_{kl}\big)$ satisfy the row and column finiteness conditions respectively.

Proposition~\ref{propProd} can be generalised if we additionally suppose that the sums $\sum_j M^i_jN^j_l$ are well defined: if nether $M$ nor $N$ satisfies a needed finiteness condition, then we need to complete~$\gR$ in order to include all these sums. In particular, the map $\Delta_{A,B,C}$ is a homomorphism from~$\gU_{A,C}$ to a completion of $\gU_{A,B}\otimes\gU_{B,C}$ which contains the sums $\sum_j\M^i_j\N^j_l$ (we need to suppose that $A$, $B$ and $C$ satisfy the row, both and column finiteness conditions respectively). In~the case $A=B=C$ the map $\Delta_A$ is a ``completed'' comultiplication for $\gU_A=\gU_{A,B}$. In~terms of representations this means that tensor product of $\gU_A$-modules are not always defined: we need to impose some finiteness condition on the modules to guarantee the existence of their tensor product.

Finally, we consider some examples of completions used below. The simplest example of the infinite-dimensional space is the space of polynomials $V=\CC[u]$. It has the basis \mbox{$\big(v_i=u^{i-1}\big)_{i\ge1}$}. The completion of $\gR[u]=\gR\otimes\CC[u]$ is the algebra of formal series $\gR[[u]]$. It consists of all the formal infinite sums $\sum_{k=0}^\infty r_ku^k$, $r_k\in\gR$. The algebra of finite Laurent polynomials $\gR\big[u,u^{-1}\big]=\gR\otimes\CC\big[u,u^{-1}\big]$ can be completed to the algebra $\gR((u))$ consisting of the series $\sum_{k=N}^\infty r_ku^k$, $N\in\ZZ$, $r_k\in\gR$. It has another completion -- the algebra $\gR((u^{-1}))$ consisting of $\sum_{k=-\infty}^N r_ku^k$, $N\in\ZZ$, $r_k\in\gR$. Note that the space $\gR[[u,u^{-1}]]$ consisting of $\sum_{k=-\infty}^\infty r_ku^k$ is also a completion of~$\gR\big[u,u^{-1}\big]$, but it is not an algebra in the usual sense.

\section{Particular cases}
\label{secPc}

Here we consider the main examples corresponding to the polynomial and Grassmann algebras and their deformations. We also consider a generalisation of a deformed polynomial algebra with three variables. More examples will appear in Sections~\ref{secLO},~\ref{secExMO}, \ref{secBCD} and Appendix~\ref{appLie}.

\subsection{Manin matrices for the polynomial algebras}
\label{secMm}

Let $P_n\in\End(\CC^n\otimes\CC^n)$ be the permutation operator acting as
\begin{align} \label{PnDef}
 P_n(v\otimes w)=w\otimes v,\qquad v,w\in\CC^n.
\end{align}
Substituting basis elements $v=e_k$ and $w=e_l$ we obtain the entries $(P_n)^{ij}_{kl}=\delta^i_l\delta^j_k$. Since $P_n^2=1$ the operators
\begin{align}
 A_n=\frac12\big(1-P_n\big),\qquad S_n=\frac12\big(1+P_n\big)=1-A_n
\end{align}
are idempotents: $A_n^2=A_n$, $S_n^2=S_n$. These are anti-symmetrizer and symmetrizer for two tensor factors respectively. Note that the permutation operator satisfies the braid relation
\begin{align}
 P_n^{(23)} P_n^{(12)}P_n^{(23)}= P_n^{(12)} P_n^{(23)} P_n^{(12)} \label{PBR}
\end{align}
(this is an equality of operators acting on the space $\CC^n\otimes\CC^n\otimes\CC^n$).

The commutation relations~\eqref{Axx00} and \eqref{psi_psi_A_psi_psi} for $A=A_n$ have the form $x^ix^j-x^jx^i=0$ and~$\psi_i\psi_j+\psi_j\psi_i=0$. Hence the algebra $\gX_{A_n}(\CC)$ is the polynomial algebra $\CC[x^1,\dots,x^n]$ and~$\Xi_{A_n}(\CC)$ is the Grassmann algebra with Grassmann variables $\psi_1,\dots,\psi_n$.

The $(A_n,A_m)$-Manin matrices are $n\times m$ matrices $M$ over an algebra $\gR$ satisfying the relation
\begin{align}
 A_nM^{(1)}M^{(2)}S_m=0. \label{AMMS}
\end{align}
These matrices were called {\it Manin matrices} in~\cite{CF}. In this subsection we call them just Manin matrices if there is no confusion with the general notion of Manin matrix.

To write the matrix relation~\eqref{AMMS} in terms of entries one can substitute $P^{ij}_{kl}=\delta^i_l\delta^j_k$ to the formula~\eqref{PMMP} written in entries. We obtain the following system of commutation relations:
\begin{gather}
 \big[M^i_k,M^j_k\big]=0, \qquad i<j, \label{Mikjk}
 \\
 \big[M^i_k,M^j_l\big]+\big[M^i_l,M^j_k\big]=0, \qquad i<j,\quad k<l, \label{Mikjl}
\end{gather}
(the relations~\eqref{Mikjk} is in fact the relations~\eqref{Mikjl} for $k=l$).
The commutation relations~\eqref{Mikjk} mean that any two entries of the same column commute. The formula~\eqref{Mikjl} is so-called {\it cross-relation} for the $2\times2$ submatrix with rows $i$, $j$ and columns $k$, $l$.

For example, consider a $2\times2$ matrix
\begin{align} \label{Mabcd}
 M=\begin{pmatrix} a & b \\ c & d \end{pmatrix}\!.
\end{align}
It is a Manin matrix iff its entries $a,b,c,d\in\gR$ satisfy
\begin{align} \label{abcdManin}
[a,c]=0, \qquad [b,d]=0, \qquad [a,d]+[b,c]=0.
\end{align}
In the case $n\ge2$ an $n\times m$ matrix is a Manin matrix iff any $2\times2$ submatrix of this matrix is a~Manin matrix. In particular, any matrix over a commutative algebra is a Manin matrix.

The notion of $(A_n,A_m)$-Manin matrix is such a non-commutative generalisation of matrix that the most of the properties of the usual matrices are inherited (with some generalisation of~the notion of determinant). These properties are described in the works~\cite{CF} and~\cite{CFR} in details.

It is clear, that a Manin matrix keeps to be a Manin matrix after the following operations: taking a submatrix, permutation of rows or columns, doubling of a row or a column. In other words, if $M=\big(M^i_j\big)$ is an $n\times m$ Manin matrix and $i_1,\dots,i_k\in\{1,\dots,n\}$, $j_1,\dots,j_l\in\{1,\dots,m\}$ then the new $k\times l$ matrix $N$ with the entries $N^s_t=M^{i_s}_{j_t}$ is also a Manin matrix. Note that in the case of a permutation this fact follows from Proposition~\ref{PropPerm} and $(\sigma\otimes\sigma)A_n\big(\sigma^{-1}\otimes\sigma^{-1}\big)=A_n$ $\forall\,\sigma\in \SSS_n$.

Let us recall one more important fact on the Manin matrices~\cite{CFR}.

\begin{Prop} \label{propManinComm}
 A matrix $M\in\gR\otimes\Hom(\CC^m,\CC^n)$ has pairwise commuting entries $($i.e., $[M^i_j,M^k_l]=0$ for any $i$, $j$, $k$, $l)$ iff $M$ and its transposed $M^\top$ are both Manin matrices.
\end{Prop}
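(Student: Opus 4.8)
The plan is to work entirely with the entry form of the defining relation \eqref{AMMS}, which for $M$ is already recorded in \eqref{Mikjk}--\eqref{Mikjl}: the entries of each column of $M$ commute, and for $i<j$, $k<l$ one has the cross-relation $[M^i_k,M^j_l]+[M^i_l,M^j_k]=0$. The forward direction is then immediate. If all entries of $M$ commute, every commutator vanishes, so \eqref{Mikjk}--\eqref{Mikjl} hold for $M$; and since $M^\top$ has the same entries merely relabelled, $(M^\top)^a_b=M^b_a$, it too satisfies these relations. Hence both $M$ and $M^\top$ are Manin matrices.

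For the converse I would first transcribe the Manin condition for $M^\top$ into relations among the entries of $M$. Because the columns of $M^\top$ are the rows of $M$, relation \eqref{Mikjk} applied to $M^\top$ says that the entries of each \emph{row} of $M$ commute, $[M^i_k,M^i_l]=0$; and the cross-relation \eqref{Mikjl} for $M^\top$, after substituting $(M^\top)^a_b=M^b_a$ and renaming the indices so that $i<j$ label rows and $k<l$ label columns of $M$, becomes $[M^i_k,M^j_l]+[M^j_k,M^i_l]=0$. Thus for all $i<j$, $k<l$ I obtain the pair of cross-relations
\begin{align}
 [M^i_k,M^j_l]+[M^i_l,M^j_k]=0,\qquad [M^i_k,M^j_l]+[M^j_k,M^i_l]=0. \label{twocross}
\end{align}

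Subtracting the two equations in \eqref{twocross} and using $[M^j_k,M^i_l]=-[M^i_l,M^j_k]$ collapses everything to $2[M^i_l,M^j_k]=0$, whence $[M^i_l,M^j_k]=0$; feeding this back into either relation gives $[M^i_k,M^j_l]=0$ as well. Together with the commuting columns (from $M$) and the commuting rows (from $M^\top$), every pair of entries of $M$ is seen to commute, which is the claim. The only real obstacle is careful index bookkeeping — transcribing the $M^\top$ relation into a statement about $M$ with the row and column roles exchanged — after which the whole argument is the single subtraction that isolates one commutator. One could equally run the computation at the tensor level: the $(ij,kl)$-entries of $A_nM^{(1)}M^{(2)}S_m$ and of the transpose of $A_m\big(M^\top\big)^{(1)}\big(M^\top\big)^{(2)}S_n$ differ precisely by the antisymmetric term $2[M^i_l,M^j_k]$, so the simultaneous vanishing of both forces all entries to commute.
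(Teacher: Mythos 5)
Your proof is correct and takes essentially the same route as the paper: both settle the converse by pairing the cross-relation of $M$, $[M^i_k,M^j_l]+[M^i_l,M^j_k]=0$, with the cross-relation of $M^\top$ rewritten in the entries of $M$, which amounts to $[M^i_k,M^j_l]-[M^i_l,M^j_k]=0$, so that the two together force each commutator to vanish, and then the column relations~\eqref{Mikjk} for $M$ and for $M^\top$ supply the remaining pairs. The only cosmetic difference is that the paper first reduces to a $2\times2$ submatrix (treating $n=1$ or $m=1$ separately), whereas you keep general indices throughout, which absorbs those degenerate cases automatically.
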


\begin{proof} If all entries of $M$ commute with each other then $M$ as well as $M^\top$ is a Manin matrix. In the converse direction it is enough to prove the statement for the case of $2\times2$ matrix, since any two entries are contained in some $2\times2$ submatrix (if $n=1$ or $m=1$ there are no $2\times2$ submatrices -- in this case the commutativity of entries follows from the relations~\eqref{Mikjk} for $M^\top$ or $M$ respectively).

The condition that the matrix $M^\top=\left(\begin{smallmatrix} a & c \\ b & d \end{smallmatrix}\right)$ is a Manin matrix is equivalent to the relations $[a,b]=0$, $[c,d]=0$, $[a,d]-[b,c]=0$. Together with the relations~\eqref{abcdManin} they imply that all the entries $a$, $b$, $c$, $d$ pairwise commute. \end{proof}

For a general Manin matrix the entries from different columns do not commute, so the notion of determinant should be generalised in a special way. It turns out that the natural generalisation is so-called {\it column determinant}. For a $n\times n$ matrix $M$ over an algebra (or a ring)~$\gR$ it is defi\-ned~as
\begin{align*}
 \det M=\det^{\rm col}M=\sum_{\sigma\in\SSS_n}(-1)^\sigma M^{\sigma(1)}_{1} M^{\sigma(2)}_{2} \cdots M^{\sigma(n)}_{n},
\end{align*}
where $(-1)^\sigma$ is the sign of the permutation~$\sigma$. This is the usual expression for the determinant, but with the specified order of entries in each term: they are ordered in accordance with the order of columns. If $M$ is a Manin matrix then the order of columns can be chosen in a different way and this leads to the same result (see~\cite{CF,CFR}):
\begin{align} \label{detPerm}
 \sum_{\sigma\in \SSS_n}(-1)^\sigma M^{\sigma(i_1)}_{i_1}M^{\sigma(i_2)}_{i_2} \cdots M^{\sigma(i_n)}_{i_n}=\det^{\rm col}M, \qquad
\begin{pmatrix} 1& 2&\cdots& n \\ i_1 & i_2 & \cdots& i_n \end{pmatrix} \in \SSS_n.
\end{align}
However, we can not take different order for different terms. For example, the determinant of~the~$2\times2$ matrix~\eqref{Mabcd} is $\det(M)=ad-cb$. If it is a Manin matrix then due to the last relation~\eqref{abcdManin} we have $\det(M)=da-bc$, but in general $\det(M)$ does not equal to $ad-bc$ or to~$da-cb$ even for a Manin matrix.

An important property of the column determinant of Manin matrices is its behaviour \mbox{under} the permutation of rows and columns. The determinant of an $n\times n$ Manin matrix $M$ changes the sign under the transposition of two columns or rows. In the notations of Section~\ref{secABmanin} we~have $\det(^\tau M)=(-1)^\tau\det M$ and $\det(_\tau M)=(-1)^\tau\det M$ for any $\tau\in \SSS_n$. The first formula is~deduced as
\begin{align*}
 \det(_\tau M)=\sum_{\sigma\in \SSS_n}(-1)^\sigma M^{\tau^{-1}\sigma(1)}_{1} \cdots M^{\tau^{-1}\sigma(n)}_{n}=(-1)^\tau\det M,
\end{align*}
where we made the substitution $\sigma\to\tau\sigma$ and used $(-1)^{\tau\sigma}=(-1)^\tau(-1)^\sigma$. The second formula follows from~\eqref{detPerm} in a similar way.

Since any submatrix of a Manin matrix $M$ is also a Manin matrix, it is natural to define $k\times k$ minors of $M$ to be the column determinants of $k\times k$ submatrices. We say that a Manin matrix has {\it rank} $r$ if there is non-zero $r\times r$ minor and all the $k\times k$ minors vanish for all $k>r$. In fact, it is enough to check it for $k=r+1$ (see~\cite{CF,CFR}). Many important properties of the Manin matrices are formulated in terms of column determinants and minors. In particular, one can construct ``spectral invariants'' of square Manin matrices~\cite{CF,CFR}.

The theory of Manin matrices are applies to the Yangians $Y(\mathfrak{gl}_n)$, affine Lie algebras $\wh{\mathfrak{gl}}_n$, Heisenberg $\mathfrak{gl}_n$ $XXX$-chain, Gaudin $\mathfrak{gl}_n$ model~\cite{CF}, to elliptic versions of these models~\cite{RST} etc.

Let us, for example, present the connection of the notion of $A_n$-Manin matrix with the Yangian $Y(\mathfrak{gl}_n)$. Consider the {\it rational $R$-matrix} $R(u)=u-P_n$. The Yangian $Y(\mathfrak{gl}_n)$ is defined as the algebra generated by $t_{ij}^{(r)}$, $i,j=1,\dots,n$, $r\in\ZZ_{\ge1}$, with the commutation relation
\begin{align}
 R(u-v)T^{(1)}(u)T^{(2)}(v)=T^{(2)}(v)T^{(1)}(u)R(u-v), \label{RTT}
\end{align}
where $u$ and $v$ are formal variables and
$T(u)=1+\sum_{r=1}^\infty\!\left(\begin{smallmatrix}t_{11}^{(r)}\cdots t_{1n}^{(r)} \\ \vdots \ddots \vdots \\ t_{n1}^{(r)}\cdots t_{nn}^{(r)}\end{smallmatrix}\right)\!u^{-r}$. Note that for $u=1$ we have $R(u)=2A_n$. Substituting $v=u-1$ to~\eqref{RTT} and multiplying by $S_n=1-A_n$ from the right we obtain $A_nT^{(1)}(u)T^{(2)}(u-1)S_n=0$ (we understand $(u-1)^{-r}$ as a series in~$u^{-1}$). This relation means that the matrix $M=T(u)e^{-\frac{\partial}{\partial u}}$ is a Manin matrix over the algebra $\gR=Y(\mathfrak{gl}_n)\big[\big[u^{-1}\big]\big]\big[e^{-\frac{\partial}{\partial u}}\big]$. See details in~\cite{CF}.

The column determinant of $M=T(u)e^{-\frac{\partial}{\partial u}}$ is related with the notion of quantum determinant important in the theory of Yangians. Namely, we have $\det M=\left(\qdet T(u)\right)e^{-n\frac{\partial}{\partial u}}$, where
\begin{align*}
 \qdet T(u)=\sum_{\sigma\in \SSS_n}(-1)^\sigma T^{\sigma(1)}_{1}(u)T^{\sigma(2)}_{2}(u-1) \cdots T^{\sigma(n)}_{n}(u-n+1) %\label{qdet}
\end{align*}
is the {\it quantum determinant}. The coefficients of the series $\qdet T(u)\in Y(\mathfrak{gl}_n)\big[\big[u^{-1}\big]\big]$ generate the centre of the Yangian $Y(\mathfrak{gl}_n)$ (see~\cite{MNO}).

Now let us consider $(A_n,0)$-Manin matrices $M=\big(M^i_j\big)$, where $0\in\End(\CC^m\otimes\CC^m)$. They are defined by the relation $M^i_kM^j_l=M^j_kM^i_l$, where $i,j=1,\dots,n$, $k,l=1\dots,m$. The $2\times2$ matrix of the form~\eqref{Mabcd} is an $(A_2,0)$-Manin matrix iff $ad=cb$, $bc=da$, $ac=ca$, $bd=db$. These are the relations~\eqref{abcdManin} plus the condition $\det M=0$. Again one can see that an~$n\times m$ matrix is an~$(A_n,0)$-Manin matrix iff any $2\times2$ submatrix of this matrix is an~$(A_2,0)$-Manin matrix. Thus the set of an $(A_n,0)$-Manin matrices over $\gR$ coincides with the set of rank $\le1$ Manin matrices over $\gR$ of the size $n\times m$.

Recall that the formula~\eqref{detPerm} is valid for all matrices satisfying the cross-relations~\eqref{Mikjl}. This is more general case than the case of Manin matrices. To describe it we first consider a $2\times2$ matrix of the form~\eqref{Mabcd} and the ``transformation'' \eqref{phipsiM}, that is
\begin{align*}
\phi_1=a\psi_1+c\psi_2, \qquad
\phi_2=b\psi_1+d\psi_2.
\end{align*}
Then we have $\phi_1\phi_2+\phi_2\phi_1=(ab+ba)\psi_1^2+(ad+bc)\psi_1\psi_2+(da+cb)\psi_2\psi_1+(cd+dc)\psi_2^2$. The relation $[a,d]+[b,c]=0$ follows from the relations $\psi_1\psi_2+\psi_2\psi_1=0$ and $\phi_1\phi_2+\phi_2\phi_1=0$ only. Hence by refusing the relations $\phi_i^2=0$ we have only the cross-relations~\eqref{Mikjl} without commutation in columns~\eqref{Mikjk}. If we refuse from $\psi_i^2=0$, then we obtain the anti-commutation in rows:
\begin{align}
 M^i_kM^i_l+M^i_lM^i_k=0, \qquad k<l. \label{Mikil}
\end{align}
Let $\wt P_n\in\End(\CC^n\otimes\CC^n)$ be the matrix with the entries $\big(\wt P_n\big)^{ij}_{kl}=(-1)^{\delta_{ij}}\delta^i_l\delta^j_k=(-1)^{\delta_{kl}}\delta^i_l\delta^j_k$. Then the commutation relations $\psi_i\psi_j+\psi_j\psi_i=0$, $i<j$ (without $\psi_i^2=0$) can be written as $(\Psi\otimes\Psi)\big(1+\wt P_n\big)=0$, so these commutation relations define the quadratic algebra $\Xi_{\wt A_n}(\CC)$, where $\wt A_n=\frac12\big(1-\wt P_n\big)$. Let us conclude.
\begin{itemize}
\itemsep=0pt
 \item The matrix $M$ is an $\big(A_n,\wt A_n\big)$-Manin matrix iff it satisfies~\eqref{Mikjl}.
 \item The matrix $M$ is an $\big(\wt A_n,\wt A_n\big)$-Manin matrix iff it satisfies~\eqref{Mikjl} and~\eqref{Mikil}.
 \item The matrix $M$ is an $\big(\wt A_n,A_n\big)$-Manin matrix iff it satisfies~\eqref{Mikjk},~\eqref{Mikjl} and~\eqref{Mikil}.
\end{itemize}

The notion of Manin matrix can be generalized to the infinite-dimensional case. Denote for any vector space $V$ the operator $P_V\in\End(V\otimes V)$ by
\begin{align*}
 P_V(v\otimes v')=v'\otimes v, \qquad v,v'\in V.
\end{align*}
Let $A_V=\frac{1-P_V}2$ and $S_V=1-A_V=\frac{1+P_V}2$ be corresponding anti-symmetrizer and symmetrizer. Let us call an $(A_V,A_W)$-Manin operator $M\in\gR\otimes\Hom(V,W)$ just Manin operator (over $\gR$, from $V$ to $W$).

Consider the case of the space of polynomials: $V=W=\CC[u]$. Its tensor square can be identified with the space of polynomials of two variables: $V\otimes V=\CC[u_1,u_2]$. In terms of this identification the operator $P_{\CC[u]}$ can be interpreted as the operator permuting the variables $u_1$ and $u_2$, we denote it by $P_{u_1,u_2}$. Let $(v_i)_{i=1}^\infty$ be a basis of the space $\CC[u]$. For example, one can take $v_i=u^{i-1}$. Acting by the operator $M$ to the basis we obtain $Mv_j=\sum_i M^i_jv_i$ (the sum is finite), so we have infinite-dimensional matrix $\big(M^i_j\big)$ with the column finiteness condition. The relation $A_VM^{(1)}M^{(2)}S_V=0$ in the basis $(v_i)$ takes the form~\eqref{Mikjk}, \eqref{Mikjl}, where $i,j,k,l=1,\dots,\infty$. This means that $M$ is a Manin operator iff any $2\times2$ submatrix of $\big(M^i_j\big)$ is a Manin matrix. The set of~Manin operators includes all the $n\times m$ Manin matrices as well as $(A_n,A_{\CC[u]})$- and~$(A_{\CC[u]},A_m)$-Manin operators (all over a fixed $\gR$).

To generalise the consideration to the case of any infinite matrix $\big(M^i_j\big)$ without any finiteness condition we should suppose that the operator $M$ belongs to the completion of $\gR\otimes\End\big(\CC[u]\big)$ consisting of the formal infinite sums $\sum_{i,j=1}^\infty M^i_jE_{i}{}^{j}$, where $M^i_j\in\gR$ and $E_{i}{}^{j}v_k=\delta^j_kv_i$. This completion is the space $\Hom\big(\CC[u],\gR[[u]]\big)$.

\subsection[q-Manin matrices]
{$\boldsymbol q$-Manin matrices}\label{secqM}

Let $q$ be a non-zero complex number. Consider the $q$-commuting variables $x^1,\dots,x^n$, that is $x^jx^i=qx^ix^j$ for $i<j$. By means of the notation
\begin{align*}
 \sgn(k)=\begin{cases}
 +1, &\text{if}\quad k>0, \\
 \phantom{-}0, &\text{if}\quad k=0, \\
 -1, &\text{if}\quad k<0,
 \end{cases}
\end{align*}
one can write these relations as
\begin{align}
 x^ix^j=q^{\sgn(i-j)}x^jx^i, \qquad i,j=1,\dots,n. \label{qxixj}
\end{align}
In the matrix form they have the form $(X\otimes X)=P^q_n(X\otimes X)$, where $X=\sum_{i=1}^nx^ie_i$ and~$P^q_n\in\End(\CC^n\otimes\CC^n)$ is the $q$-permutation operator acting as $P^q_n(e_i\otimes e_j)= q^{-\sgn(i-j)}e_j\otimes e_i$, that~is
\begin{align*}
 P^q_n=\sum_{i,j=1}^n q^{\sgn(i-j)} E_{i}{}^{j} \otimes E_{j}{}^{i}. %\label{PqDef}
\end{align*}
Its entries are $\big(P^q_n\big)^{ij}_{kl}=q^{\sgn(i-j)}\delta^i_l\delta^j_k=q^{\sgn(l-k)}\delta^i_l\delta^j_k$. It also satisfies the braid relation
\begin{align*}
 \big(P^q_n\big)^{(23)} \big(P^q_n\big)^{(12)}\big(P^q_n\big)^{(23)}= \big(P^q_n\big)^{(12)} \big(P^q_n\big)^{(23)} \big(P^q_n\big)^{(12)}. %\label{PqBR}
\end{align*}
Since $(P^q_n)^2=1$ the matrices
\begin{align*}
 A_n^q=\frac12\big(1-P^q_n\big),\qquad S_n^q=\frac12\big(1+P^q_n\big)=1-A_n^q
\end{align*}
are idempotents. The corresponding algebra $\gX_{A_n^q}(\CC)$ is generated by $x^i$ with the relations~\eqref{qxixj}. It can be interpreted as an ``algebra of functions'' on the $n$-dimensional quantum space $\CC^n_q$. The algebra $\Xi_{A_n^q}(\CC)$ is the $q$-Grassmann algebra generated by $\psi_1,\dots,\psi_n$ with the relations
\begin{align*}
 \psi_i\psi_j=-q^{-\sgn(i-j)}\psi_j\psi_i, \qquad i,j=1,\dots,n. %\label{qpsi}
\end{align*}

The matrix $M\in\gR\otimes\Hom(\CC^m,\CC^n)$ is an $\big(A_n^q,A_m^q\big)$-Manin matrix iff the following equivalent relations hold
\begin{gather}
 A_n^qM^{(1)}M^{(2)}S_m^q=0, \nonumber \\
 \big(1-P^q_n\big)M^{(1)}M^{(2)}\big(1+P_m^q\big)=0,\nonumber \\
 A_n^qM^{(1)}M^{(2)}=A_n^qM^{(1)}M^{(2)}A_m^q, \nonumber\\
 M^{(1)}M^{(2)}S_m^q=S_n^qM^{(1)}M^{(2)}S_m^q.\label{AMMSq}
\end{gather}
In terms of entries these relations have the form
\begin{gather}
 M^i_kM^j_k=q^{-1}M^j_kM^i_k, \qquad i<j,\nonumber %\label{Mqikjk}
 \\
 \big[M^i_k,M^j_l\big]+qM^i_lM^j_k-q^{-1}M^j_kM^i_l=0, \qquad i<j,\quad k<l. \label{Mqikjl}
\end{gather}
The $(A_n^q,A_m^q)$-Manin matrices are called {\it $q$-Manin matrices}. The Manin matrices considered in~Section~\ref{secMm} are $q$-Manin matrices for $q=1$. The properties of the Manin matrices described in~\cite{CF,CFR} were generalised to the $q$-case in the work~\cite{qManin}.

A natural generalisation of the column determinant to the case of $q$-Manin matrices is {\it $q$-deter\-minant} defined for a matrix $M\in\gR\otimes\End(\CC^n)$ as
\begin{align} \label{detq}
 \det_q(M)=\sum_{\sigma\in \SSS_n}(-q)^{-\inv(\sigma)} M^{\sigma(1)}_{1} M^{\sigma(2)}_{2} \cdots M^{\sigma(n)}_{n},
\end{align}
where $\inv(\sigma)$ is the number of inversions: it is equal to the number of pairs $(i,j)$ such that $1\le i<j\le n$ and $\sigma(i)>\sigma(j)$. It coincides with the length of $\sigma$ defined as the minimal $l$ such that $\sigma$ can be presented as a product of $l$ elementary transpositions $\sigma_i=\sigma_{i,i+1}$ (see Appendix~\ref{appRw} for details).

For the $2\times2$ matrix $M=\left(\begin{smallmatrix} a & b \\ c & d \end{smallmatrix}\right)$ the $q$-determinant has the from $\det_qM=ad-q^{-1}cb$. This matrix $M$ is a $q$-Manin matrix iff
\begin{gather*} %\label{abcdManinq}
 ca=qac, \qquad db=qbd, \qquad ad-da+qbc-q^{-1}cb=0.
\end{gather*}
In this case the $q$-determinant can be rewritten as $\det_qM=da-qbc$.

A general $n\times m$ matrix is a $q$-Manin matrix iff any $2\times2$ submatrix of this matrix is a~$q$-Manin matrix ($n\ge2$). For an $n\times n$ $q$-Manin matrix we can change the order of columns in~the expression of $q$-determinant in the following way~\cite{qManin}:
\begin{align}
 \sum_{\sigma\in \SSS_n}(-1)^\sigma M^{\sigma(1)}_{\tau(1)}M^{\sigma(2)}_{\tau(2)} \cdots M^{\sigma(n)}_{\tau(n)}=(-q)^{-\inv(\tau)}\det_q M, \qquad
\tau\in \SSS_n. \label{detqMtau}
\end{align}
By changing $\tau$ by $\tau^{-1}$ in~\eqref{detqMtau} and by taking into account $\inv(\tau^{-1})=\inv(\tau)$ one can write this formula in the form $\det_q(_\tau M)=(-q)^{-\inv(\tau)}\det_q M$. In contrast with the case of Section~\ref{secMm} the $q$-determinant of the matrix $^\tau M$ obtained from an $n\times n$ $q$-Manin matrix by a permutation of rows does not related with $\det_q M$ (the proof done for the $q=1$ case does not work since $q^{\inv(\tau\sigma)}\ne q^{\inv(\tau)}q^{\inv(\sigma)}$). Moreover, neither $^\tau M$ nor $_\tau M$ are $q$-Manin matrices in general. However they are Manin matrices for another idempotents. Namely, by virtue of Proposition~\ref{PropPerm} they are $\big((\tau\otimes\tau)A_n^q\big(\tau^{-1}\otimes\tau^{-1}\big),A_m^q\big)$- and $\big(A_n^q,(\tau\otimes\tau)A_m^q\big(\tau^{-1}\otimes\tau^{-1}\big)\big)$-Manin matrices respectively (see Section~\ref{secwhqMM}). As we will see in Section~\ref{secMinqp} the $q$-determinant is a natural operation for $\big(A_n^q,B\big)$-Manin matrices for any $B$, but not for $\big(B,A_m^q\big)$-Manin matrices (the symmetry of the $q$-determinant of an $\big(A_n^q,B\big)$-Matrix with respect to permutation of columns depends on the choice of the idempotent~$B$).

Analogously to the case $q=1$, the formula~\eqref{detqMtau} is valid for any $M\in\gR\otimes\End(\CC^n)$ satisfying the cross-relations~\eqref{Mqikjl}, that is for any $\big(A_n^q,\wt A_n^q\big)$-Manin matrix $M$, where $\wt A_n^q=\frac{1-\wt P_n^q}2$, $\big(\wt P_n^q\big)^{ij}_{kl}=(-1)^{\delta_{ij}}q^{\sgn(i-j)}\delta^i_l\delta^j_k$, $i,j,k,l=1,\dots,n$.

\subsection[Multi-parametric case: (q, p)-Manin matrices]
{Multi-parametric case: $\boldsymbol{(\wh q,\wh p)}$-Manin matrices}\label{secwhqMM}

In Section~\ref{secqM} we introduced a $q$-deformation of the polynomial algebra $\CC[x^1,\dots,x^n]$. The $q$-commutation of variables was defined by a unique parameter $q$. However we can consider multi-parameter deformation~\cite{Manin89}. One has $n(n-1)/2$ deformation parameters: one for each pair of variables.

We will say that an $n\times n$ matrix $\wh q=(q_{ij})$ is {\it parameter matrix} iff it has entries $q_{ij}\in\CC\backslash\{0\}$ satisfying the conditions
\begin{align*}
q_{ij}=q_{ji}^{-1}, \qquad q_{ii}=1. %\label{whq}
\end{align*}
A parameter matrix $\wh q$ defines the commutation relations
\begin{align*}
 x^jx^i=q_{ij}x^ix^j,
\end{align*}
where $i,j=1,\dots,n$ are arbitrary or subjected to $i<j$. It gives the algebra $\gX_{A_{\wh q}}(\CC)$, where
\begin{align*}
 A_{\wh q}=\frac{1-P_{\wh q}}2, \qquad
 S_{\wh q}=\frac{1+P_{\wh q}}2, \qquad
 (P_{\wh q})^{kl}_{ij}=q_{ij}\delta^k_j\delta^l_i.
\end{align*}
It is immediately checked that $(P_{\wh q})^2=1$ and that $P_{\wh q}$ satisfies the braid relation
\begin{align*}
 P_{\wh q}^{(23)}P_{\wh q}^{(12)}P_{\wh q}^{(23)}=
P_{\wh q}^{(12)}P_{\wh q}^{(23)}P_{\wh q}^{(12)}.
\end{align*}
The corresponding algebra $\Xi_{A_{\wh q}}(\CC)$ is defined by the relations
\begin{align}
 \psi_j\psi_i=-q_{ij}^{-1}\psi_i\psi_j. \label{psipsiwhq}
\end{align}
The independent relations are~\eqref{psipsiwhq} for $i<j$ and $\psi_i^2=0$.

Let $\wh p=(p_{ij})$ be an $m\times m$ parameter matrix. An $\big(A_{\wh q},A_{\wh p}\big)$-Manin matrix $M$ is an $n\times m$ matrix over an algebra $\gR$ satisfying
\begin{align*}
 A_{\wh p}M^{(1)}M^{(2)}S_{\wh p}=0.
\end{align*}
In terms of entries this relation can be written as
\begin{gather}
M^i_kM^j_k=q_{ji}M^j_kM^i_k, \label{Mqpikjk} \\
M^i_kM^j_l-q_{ji}p_{kl}M^j_lM^i_k+p_{kl}M^i_lM^j_k-q_{ji}M^j_kM^i_l=0. \label{Mqpikjl}
\end{gather}
These conditions are empty for $i=j$ and they do not change under $i\leftrightarrow j$ or $k\leftrightarrow l$, hence it is enough to check~\eqref{Mqpikjk} for $i<j$ and~\eqref{Mqpikjl} for $i<j$, $k<l$ (the relation~\eqref{Mqpikjk} is the relation~\eqref{Mqpikjl} for $k=l$).

\begin{Def}
 A matrix $M$ is called a {\it $(\wh q,\wh p)$-Manin matrix} satisfying the relations~\eqref{Mqpikjk}, \eqref{Mqpikjl}. A square matrix $M$ satisfying these relations is called {\it $\wh q$-Manin matrix} if $\wh q=\wh p$.
\end{Def}

Now let us consider the permutation of rows and columns of such matrices.

\begin{Prop} \label{PropPermqp}
 Let $M$ be an $n\times m$ matrix over $\gR$, $\sigma\in \SSS_n$ and $\tau\in \SSS_m$. Then the following statements are equivalent:
\begin{itemize}
\itemsep=0pt
 \item $M$ is an $(\wh q,\wh p)$-Manin matrix.
 \item $^\sigma M=\sigma M$ is a $\big(\sigma\wh q\sigma^{-1},\wh p\big)$-Manin matrix.
 \item $_\tau M=M\tau^{-1}$ is a $\big(\wh q,\tau\wh p\tau^{-1}\big)$-Manin matrix.
\end{itemize}
 The matrix $\sigma\wh q\sigma^{-1}$ has entries
\begin{align} \label{sigmaqsigma}
 \big(\sigma\wh q\sigma^{-1}\big)_{ij}=q_{\sigma^{-1}(i),\sigma^{-1}(j)}.
\end{align}
\end{Prop}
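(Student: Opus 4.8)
The plan is to reduce the statement to the general permutation result of Proposition~\ref{PropPerm}. The starting observation is that a $(\wh q,\wh p)$-Manin matrix is precisely an $(A_{\wh q},A_{\wh p})$-Manin matrix: the entry relations~\eqref{Mqpikjk} and~\eqref{Mqpikjl} defining the former are exactly the component form of $A_{\wh q}M^{(1)}M^{(2)}S_{\wh p}=0$. Applying Proposition~\ref{PropPerm} with $A=A_{\wh q}$ and $B=A_{\wh p}$ then tells me that $\sigma M$ is an $\big((\sigma\otimes\sigma)A_{\wh q}(\sigma^{-1}\otimes\sigma^{-1}),A_{\wh p}\big)$-Manin matrix and that $M\tau^{-1}$ is an $\big(A_{\wh q},(\tau\otimes\tau)A_{\wh p}(\tau^{-1}\otimes\tau^{-1})\big)$-Manin matrix. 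The whole proposition therefore reduces to identifying the two conjugated idempotents as $A_{\sigma\wh q\sigma^{-1}}$ and $A_{\tau\wh p\tau^{-1}}$.

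Since $A_{\wh q}=\frac{1-P_{\wh q}}2$, this identification reduces in turn to the operator identity $(\sigma\otimes\sigma)P_{\wh q}(\sigma^{-1}\otimes\sigma^{-1})=P_{\sigma\wh q\sigma^{-1}}$, with $\sigma\wh q\sigma^{-1}$ defined through~\eqref{sigmaqsigma}. I would check this by evaluating both sides on a basis vector $e_a\otimes e_b$. Using $\sigma^{\pm1}e_i=e_{\sigma^{\pm1}(i)}$ together with $P_{\wh q}(e_i\otimes e_j)=q_{ij}\,e_j\otimes e_i$, the left-hand side sends $e_a\otimes e_b$ first to $e_{\sigma^{-1}(a)}\otimes e_{\sigma^{-1}(b)}$, then to $q_{\sigma^{-1}(a),\sigma^{-1}(b)}\,e_{\sigma^{-1}(b)}\otimes e_{\sigma^{-1}(a)}$, and finally to $q_{\sigma^{-1}(a),\sigma^{-1}(b)}\,e_b\otimes e_a$. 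This is exactly $P_{\sigma\wh q\sigma^{-1}}(e_a\otimes e_b)$ provided $(\sigma\wh q\sigma^{-1})_{ab}=q_{\sigma^{-1}(a),\sigma^{-1}(b)}$, so the same computation both proves the identity and establishes~\eqref{sigmaqsigma}. I would also record that $\sigma\wh q\sigma^{-1}$ is again a parameter matrix, the relations $(\sigma\wh q\sigma^{-1})_{ij}=(\sigma\wh q\sigma^{-1})_{ji}^{-1}$ and $(\sigma\wh q\sigma^{-1})_{ii}=1$ being immediate from the corresponding properties of $\wh q$.

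The column case is entirely parallel: the identical computation with $\tau$ in place of $\sigma$ gives $(\tau\otimes\tau)P_{\wh p}(\tau^{-1}\otimes\tau^{-1})=P_{\tau\wh p\tau^{-1}}$ and hence $(\tau\otimes\tau)A_{\wh p}(\tau^{-1}\otimes\tau^{-1})=A_{\tau\wh p\tau^{-1}}$. Feeding these two identifications back into Proposition~\ref{PropPerm} yields the three stated equivalences. No step here presents a genuine difficulty; the only delicate point is the index bookkeeping in the conjugation of $P_{\wh q}$, and in particular keeping track of whether $\sigma$ or $\sigma^{-1}$ appears in the subscripts — this is exactly what determines the form of~\eqref{sigmaqsigma}.
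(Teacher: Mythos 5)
Your proof is correct and takes essentially the same route as the paper: the paper likewise reduces the statement to Proposition~\ref{PropPerm} combined with the conjugation identity $(\sigma\otimes\sigma)P_{\wh q}\big(\sigma^{-1}\otimes\sigma^{-1}\big)=P_{\sigma\wh q\sigma^{-1}}$, which it asserts follows ``by direct calculation.'' Your evaluation on basis vectors $e_a\otimes e_b$ supplies exactly that calculation (with the correct bookkeeping of $\sigma^{-1}$ in the subscripts), and your remark that $\sigma\wh q\sigma^{-1}$ is again a parameter matrix is a harmless extra check the paper leaves implicit.
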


\begin{proof} It follows from Proposition~\ref{PropPerm} and the formula
\begin{align} \label{sigmaPwhq}
 (\sigma\otimes\sigma)P_{\wh q}\big(\sigma^{-1}\otimes\sigma^{-1}\big)=P_{\sigma\wh q\sigma^{-1}},
\end{align}
which in turn is deduced by direct calculation. \end{proof}

\begin{Rem}
 Proposition does not work for any operators $\sigma\in {\rm GL}(n,\CC)$ and $\tau\in {\rm GL}(m,\CC)$ since the formula~\eqref{sigmaPwhq} is not valid for general $\sigma\in {\rm GL}(n,\CC)$.
\end{Rem}

Let us consider more general situation: one can apply the following operations on a mat\-rix~$M$: taking a submatrix, permutation of rows or columns, doubling of a row or a column. The~result of a sequence of such operations is a new matrix $N=M_{IJ}$ considered below.

\begin{Th} \label{ThqpIJ}
 Let $I=(i_1,\dots,i_k)$ and $J=(j_1,\dots,j_l)$, where $1\le i_s\le n$ and $1\le j_t\le m$ for any $s=1,\dots,k$ and $t=1,\dots,l$. Let $M=\big(M^i_j\big)$ be an $n\times m$ matrix over $\gR$ and $M_{IJ}$ be~$k\times l$ matrix with entries $(M_{IJ})^s_t=M^{i_s}_{j_t}$. Let $\wh q$ and $\wh p$ be $n\times n$ and $m\times m$ parameter matrices and let $\wh p_{II}$ and $\wh q_{JJ}$ be $k\times k$ and $l\times l$ matrices with entries $(\wh q_{II})_{su}=q_{i_si_u}$, $s,u=1,\dots,k$, $(\wh p_{JJ})_{tv}=q_{j_tj_v}$, $t,v=1,\dots,l$. They are also parameter matrices. If $M$ is a $(\wh q,\wh p)$-Manin matrix then $M_{IJ}$ is a $(\wh q_{II},\wh p_{JJ})$-Manin matrix.
\end{Th}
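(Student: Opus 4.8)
The plan is to verify directly that the entries of $M_{IJ}$ satisfy the defining relations \eqref{Mqpikjk} and \eqref{Mqpikjl} for the parameter matrices $\wh q_{II}$ and $\wh p_{JJ}$. The key preliminary observation, already indicated in the text preceding the theorem, is that for a $(\wh q,\wh p)$-Manin matrix the relations \eqref{Mqpikjk} and \eqref{Mqpikjl} in fact hold for \emph{all} values of the indices $i,j\in\{1,\dots,n\}$ and $k,l\in\{1,\dots,m\}$, not only for $i<j$, $k<l$. First I would establish this extended form of the relations, and then the theorem follows by a single substitution.

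To obtain the extended relations I would argue as follows. When $i=j$ the left-hand side of \eqref{Mqpikjl} collapses to $0$ because $q_{ii}=1$, and when $k=l$ it reduces to twice the left-hand side of \eqref{Mqpikjk} because $p_{kk}=1$; so these degenerate cases contribute nothing new. For the case $i>j$ (resp.\ $k>l$) I would check that \eqref{Mqpikjl} is invariant under the swap $i\leftrightarrow j$ (resp.\ $k\leftrightarrow l$): multiplying the relation through by $-q_{ij}=-q_{ji}^{-1}$ and reordering the four monomials reproduces the relation with $i$ and $j$ interchanged (here one uses $q_{ij}q_{ji}=1$), and likewise multiplying by $p_{lk}=p_{kl}^{-1}$ handles the swap $k\leftrightarrow l$. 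Hence \eqref{Mqpikjk} and \eqref{Mqpikjl} are valid for arbitrary index arguments in any order.

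With the extended relations in hand the theorem is immediate. The matrices $\wh q_{II}$ and $\wh p_{JJ}$ are parameter matrices, since $(\wh q_{II})_{us}=q_{i_ui_s}=q_{i_si_u}^{-1}=(\wh q_{II})_{su}^{-1}$ and $(\wh q_{II})_{ss}=q_{i_si_s}=1$, and similarly for $\wh p_{JJ}$. Writing $(M_{IJ})^s_t=M^{i_s}_{j_t}$, the relation \eqref{Mqpikjl} demanded of $M_{IJ}$ at rows $s,u$ and columns $t,v$ is exactly \eqref{Mqpikjl} for $M$ at rows $i_s,i_u$ and columns $j_t,j_v$, the parameters matching by construction as $(\wh q_{II})_{us}=q_{i_ui_s}$ and $(\wh p_{JJ})_{tv}=p_{j_tj_v}$; the same matching yields \eqref{Mqpikjk}. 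Since the extended relations hold for $M$ regardless of the order of $i_s,i_u$ and $j_t,j_v$, every such instance is satisfied, so $M_{IJ}$ is a $(\wh q_{II},\wh p_{JJ})$-Manin matrix.

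The one point requiring care — and the only place where something beyond a plain submatrix-and-permutation argument is needed — is the possibility of \emph{repeated} indices, i.e.\ $i_s=i_u$ or $j_t=j_v$ (the ``doubling'' operation). These are precisely the degenerate cases $i=j$ and $k=l$ of the extended relations, which are vacuous or reduce to \eqref{Mqpikjk}; thus repetitions cause no difficulty. Alternatively, one could build $M_{IJ}$ from $M$ by iterating submatrix selection, the permutations of Proposition~\ref{PropPermqp} together with \eqref{sigmaqsigma}, and row/column doubling, but the direct verification above avoids having to track the composition of these steps and the induced transformations of the parameter matrices.
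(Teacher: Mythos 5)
Your proof is correct and follows essentially the same route as the paper: the paper's proof is precisely the substitution $i\to i_s$, $j\to i_u$, $k\to j_t$, $l\to j_v$ into~\eqref{Mqpikjl}, relying on the observation (stated just before the definition of $(\wh q,\wh p)$-Manin matrices) that the relations are vacuous for $i=j$, reduce to~\eqref{Mqpikjk} for $k=l$, and are invariant under $i\leftrightarrow j$ and $k\leftrightarrow l$, so they hold for arbitrary index values. You merely spell out the verification of these symmetries explicitly, which the paper leaves as a remark.
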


\begin{proof} By substituting $i\to i_s$, $j\to i_u$, $k\to j_t$, $l\to j_v$ to~\eqref{Mqpikjl} we obtain the relations~\eqref{Mqpikjl} for the matrix $M_{IJ}$ with coefficients defined by the parameter matrices $\wh q_{II}$ and $\wh p_{JJ}$. \end{proof}

For a non-zero complex number $q$ let us denote by $q^{[n]}$ the $n\times n$ parameter matrix with entries $\big(q^{[n]}\big)_{ij}=q^{\sgn(j-i)}$. Then $P_{q^{[n]}}=P_n^q$ and the $\big(q^{[n]},q^{[m]}\big)$-Manin matrices are exactly the $n\times m$ $q$-Manin matrices. A permutation of rows or columns of a such matrix $M$ gives a~$\big(\sigma q^{[n]}\sigma^{-1},q^{[m]}\big)$- or $\big(q^{[n]},\tau q^{[m]}\tau^{-1}\big)$-Manin matrix respectively. In general these are not $q$-Manin matrices any more (see Section~\ref{secqM}), but they are related with the quadratic algebras isomorphic to $\gX_{A_n^q}(\CC)$ and $\gX_{A_m^q}(\CC)$, so that they have the same properties permuted in some sense. For instance, properties of the $q$-determinant of $_\sigma M$ are similar to ones of the $q$-determinant of a $q$-Manin matrix.

Let $x^1,\dots,x^n$ be the generators of $\gX_{A_n^q}(\CC)$. Then the $n\times n$ diagonal matrix
\begin{align} \label{Mxx}
 M=
\begin{pmatrix}
 x^1 & 0 & \cdots &0 \\
 0 & x^2 & \cdots &0 \\
 \vdots&\vdots&\ddots&\vdots \\
 0&0&\cdots&x^n
\end{pmatrix}
\end{align}
is an $(A_n^q,A_n)$-Manin matrix, i.e., a $\big(q^{[n]},1^{[n]}\big)$-Manin matrix. More generally, for any $p\in\CC\backslash\{0\}$ the matrix~\eqref{Mxx} is a $\big((pq)^{[n]},p^{[n]}\big)$-Manin matrix.

An analogue of the $q$-determinant for a $(\wh q,\wh p)$-Manin matrix depends on $\wh q$, but not on $\wh p$. We~call it {\it $\wh q$-determinant}, it is defined for an $n\times n$ matrix $M$ as follows~\cite{Manin89}:
\begin{align}
 \det_{\wh q}(M)&=\sum_{\sigma\in \SSS_n}(-1)^\sigma\prod_{\substack{i<j\\ \sigma(i)>\sigma(j)}}q_{ij}^{-1}\cdot M^{\sigma^{-1}(1)}_{1}M^{\sigma^{-1}(2)}_{2} \cdots M^{\sigma^{-1}(n)}_{n} \notag
 \\
 &=\sum_{\sigma\in \SSS_n}(-1)^\sigma\prod_{\substack{i<j\\ \sigma^{-1}(i)>\sigma^{-1}(j)}}q_{ij}^{-1}\cdot M^{\sigma(1)}_{1}M^{\sigma(2)}_{2} \cdots M^{\sigma(n)}_{n}. \label{whqdet}
\end{align}
The products in this formula runs over all inversions of the permutations $\sigma$ and $\sigma^{-1}$ respectively. Hence for $\wh q=q^{[n]}$ the formula~\eqref{whqdet} gives the $q$-determinant~\eqref{detq}. The $\wh q$-determinant is a~``gene\-ra\-lised'' determinant for the square $(\wh q,\wh p)$-Manin matrices. In particular, $q$-determinant is a ``generalised'' determinant for the square $\big(q^{[n]},\wh p\big)$-Manin matrices.

\begin{Lem} \label{Lempsisigma}
Let $\psi_1,\dots,\psi_n$ satisfy~\eqref{psipsiwhq}. Then
\begin{gather}
\psi_{i_1}\psi_{i_2}\cdots\psi_{i_n}=0, \qquad\text{if}\quad i_k=i_l\quad \text{for some}\quad k\ne l; \label{Lempsisigma1}
\\
 \psi_{\sigma(1)}\psi_{\sigma(2)}\cdots\psi_{\sigma(n)}=(-1)^\sigma\psi_{1}\psi_{2}
 \cdots\psi_{n}\!\!\!\!\prod_{\substack{i<j\\ \sigma^{-1}(i)>\sigma^{-1}(j)}}\!\!\!\!q_{ij}^{-1}, \qquad
 \text{for any}\quad \sigma\in \SSS_n. \label{Lempsisigma2}
\end{gather}
\end{Lem}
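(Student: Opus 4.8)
The plan is to prove \eqref{Lempsisigma1} and \eqref{Lempsisigma2} by exploiting two consequences of \eqref{psipsiwhq}: transposing two adjacent factors with distinct indices only multiplies the monomial by a nonzero scalar, while $\psi_i^2=0$ (forced by \eqref{psipsiwhq} together with $q_{ii}=1$) annihilates any monomial that contains two equal adjacent factors.

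For \eqref{Lempsisigma1}, suppose $i_k=i_l=:a$ with $k<l$. I would move the $l$-th factor $\psi_{i_l}$ leftward one position at a time. At each step I transpose it with its left neighbour $\psi_{i_m}$: if $i_m=a$ the neighbouring factors coincide and $\psi_a\psi_a=\psi_a^2=0$, so the whole product vanishes; otherwise $i_m\ne a$ and by \eqref{psipsiwhq} the transposition costs only the nonzero scalar $-q_{i_l i_m}^{-1}$. Iterating, either the monomial vanishes en route, or $\psi_{i_l}$ reaches the position immediately to the right of $\psi_{i_k}$, producing the factor $\psi_{i_k}\psi_{i_l}=\psi_a^2=0$. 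In every case the monomial equals a nonzero scalar times $0$, hence $0$.

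For \eqref{Lempsisigma2} I would induct on the number of inversions $\inv(\sigma)$. The base case $\sigma=\id$ is immediate, since the product over inversions is empty and $(-1)^{\id}=1$. If $\inv(\sigma)>0$ then $\sigma\ne\id$, so there is an adjacent descent, i.e.\ a position $a$ with $\sigma(a)>\sigma(a+1)$; set $\sigma'=\sigma\sigma_a$ with $\sigma_a=(a,a+1)$, so that $\inv(\sigma')=\inv(\sigma)-1$ and $(-1)^{\sigma'}=-(-1)^\sigma$. Writing $c=\sigma(a)>d=\sigma(a+1)$ and applying \eqref{psipsiwhq} in the form $\psi_c\psi_d=-q_{dc}^{-1}\psi_d\psi_c$ to the factors in positions $a,a+1$ gives
\begin{align*}
\psi_{\sigma(1)}\cdots\psi_{\sigma(n)}=-q_{dc}^{-1}\,\psi_{\sigma'(1)}\cdots\psi_{\sigma'(n)},
\end{align*}
to which I would apply the induction hypothesis for $\sigma'$ and then collect the accumulated factors.

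The key bookkeeping step, which I expect to be the only real point to verify, is that the inversion sets of $\sigma^{-1}$ and $(\sigma')^{-1}$ differ by exactly the single pair $(d,c)$. Indeed $\sigma$ and $\sigma'$ agree outside positions $a,a+1$, where the values $c,d$ are interchanged, so the only pair of indices whose relative order (as recorded by $\sigma^{-1}$ versus $(\sigma')^{-1}$) changes is $(d,c)$ with $d<c$; since $\sigma^{-1}(d)=a+1>a=\sigma^{-1}(c)$ this pair is an inversion of $\sigma^{-1}$ but not of $(\sigma')^{-1}$. Hence
\begin{align*}
\prod_{\substack{i<j\\ \sigma^{-1}(i)>\sigma^{-1}(j)}}q_{ij}^{-1}=q_{dc}^{-1}\prod_{\substack{i<j\\ (\sigma')^{-1}(i)>(\sigma')^{-1}(j)}}q_{ij}^{-1},
\end{align*}
and substituting the induction hypothesis, together with $-(-1)^{\sigma'}=(-1)^\sigma$, yields exactly \eqref{Lempsisigma2}.
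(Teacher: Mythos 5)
Your proof is correct. For \eqref{Lempsisigma1} it coincides with the paper's argument: permute the two equal factors to adjacent positions (each swap costing only a nonzero scalar by \eqref{psipsiwhq}), then use $\psi_a^2=0$. For \eqref{Lempsisigma2} your inductive skeleton is also the same as the paper's: both proofs strip one adjacent transposition off $\sigma$ at a descent, apply \eqref{psipsiwhq} once to produce the factor $-q_{dc}^{-1}$, and record that the relevant inversion set grows by exactly one element; note that your induction parameter $\inv(\sigma)$ equals the paper's $\ell(\sigma)$. The genuine difference is how that combinatorial bookkeeping is justified: the paper rewrites the product as $\prod_{\alpha\in R^+_{\sigma^{-1}}}q_\alpha^{-1}$ via \eqref{Rsigma} and invokes the root-system lemma \eqref{PropRw2} of Appendix~\ref{appRw}, which for a reduced expression $\sigma=\tau\sigma_{i_\ell}$ gives $R^+_{\sigma^{-1}}=R^+_{\tau^{-1}}\sqcup\{\tau(\alpha_{i_\ell})\}$, whereas you verify directly and elementarily that the inversion sets of $\sigma^{-1}$ and $(\sigma\sigma_a)^{-1}$ differ precisely by the pair $(d,c)$, $d<c$. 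Your route is self-contained and needs nothing beyond the symmetric group; the paper's route buys uniformity with the reflection-group formalism it develops anyway, which is stated for arbitrary Weyl groups and reused elsewhere in the text.
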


\begin{proof} The relations~\eqref{psipsiwhq} allows us to permute the factors in the left hand side of~\eqref{Lempsisigma1}. In~particular, one can place the factors $\psi_{i_k}$ and $\psi_{i_l}$ to neighbour sites. It $i_k=i_l$ then $\psi_{i_k}\psi_{i_l}=0$.

Let us rewrite the formula~\eqref{Lempsisigma2} in terms of Appendix~\ref{appRw}. Consider the root system for the reflection group $\SSS_n$. Denote $q_\alpha=q_{ij}$ for the root $\alpha=e_i-e_j$. Then due to~\eqref{Rsigma} the formula~\eqref{Lempsisigma2} takes the form
\begin{align}
 \psi_{\sigma(1)}\cdots\psi_{\sigma(n)}&=(-1)^\sigma \psi_{1}\cdots\psi_{n}\prod_{\alpha\in R^+_{\sigma^{-1}}}q_\alpha^{-1}. \label{Lempsisigma2R}
\end{align}
We prove the formula~\eqref{Lempsisigma2R} by using induction on the length $\ell=\ell(\sigma)$. Let $\sigma=\sigma_{i_1}\cdots\sigma_{i_{\ell-1}}\sigma_{i_\ell}$ be a reduced expression and $\tau=\sigma_{i_1}\cdots\sigma_{i_{\ell-1}}$. Then $\psi_{\sigma(1)}\cdots\psi_{\sigma(n)}=\psi_{\tau\sigma_{i_\ell}(1)}\cdots\psi_{\tau\sigma_{i_\ell}(n)}=-q^{-1}_{\tau(\alpha_{i_\ell})}\psi_{\tau(1)}\cdots\psi_{\tau(n)}$. Note that $\ell(\tau)=\ell-1$. From~\eqref{PropRw2} we obtain $R^+_{\sigma^{-1}}=R^+_{\tau^{-1}}\sqcup\{\tau(\alpha_{i_\ell})\}$. Together with the induction assumption this implies the formula~\eqref{Lempsisigma2R}. \end{proof}

Lemma~\ref{Lempsisigma} implies that for any $n\times n$ matrix $M$ such that $[M^i_j,\psi_k]=0$ we have
\begin{align} \label{phipsidet}
 \phi_1\phi_2\cdots\phi_n=\det_{\wh q}(M)\psi_1\psi_2\cdots\psi_n,
\end{align}
where $\phi_j=\sum_{i=1}^n\psi_iM^i_j$. A permutation of rows or columns corresponds to a permutation of $\psi_1,\dots,\psi_n$ or $\phi_1,\dots,\phi_n$ respectively:
\begin{gather} \label{phipsitau}
 \phi_j=\sum_{i=1}^n(^\tau M)^i_j\psi_{\tau^{-1}(i)}, \qquad
 \phi_{\tau^{-1}(j)}=\sum_{i=1}^n(_\tau M)^i_j\psi_i.
\end{gather}

\begin{Th} \label{Thdettau}
 Let $\wh q=(q_{ij})$ and $\wh p=(p_{ij})$ be $n\times n$ parameter matrices and $\tau\in \SSS_n$. Let $M$ be a $(\wh q,\wh p)$-Manin matrix over $\gR$. Then the ``generalised'' determinants of the $(\tau\wh q\tau^{-1},\wh p)$-Manin matrix $^\tau M$ and of the $(\wh q,\tau\wh p\tau^{-1})$-Manin matrix $_\tau M$ have the form
\begin{gather}
 \det_{\tau\wh q\tau^{-1}}(^\tau M)=(-1)^\tau \det_{\wh q}(M) \prod_{\substack{i<j\\ \tau(i)>\tau(j)}}q_{ij}, \label{Thdettaurow} \\
 \det_{\wh q}(_\tau M)=(-1)^\tau \det_{\wh q}(M) \prod_{\substack{i<j\\ \tau(i)>\tau(j)}}p_{ij}^{-1}. \label{Thdettaucol}
\end{gather}
More generally, the formula~\eqref{Thdettaurow} is valid for any $n\times n$ matrix $M$.
\end{Th}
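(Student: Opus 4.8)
The plan is to read off both formulas from the correspondence~\eqref{phipsidet} between the $\wh q$-determinant and the top product $\phi_1\cdots\phi_n$ of the auxiliary variables $\phi_j=\sum_{i=1}^n\psi_iM^i_j$, combined with the reordering rule of Lemma~\ref{Lempsisigma}. I work inside $\Xi_{A_{\wh q}}(\gR)=\gR\otimes\Xi_{A_{\wh q}}(\CC)$, whose generators $\psi_i$ satisfy~\eqref{psipsiwhq} and commute with every $M^i_j\in\gR$; its degree-$n$ component is the free rank-one $\gR$-module spanned by $\psi_1\cdots\psi_n$, so an equality $r\,\psi_1\cdots\psi_n=r'\,\psi_1\cdots\psi_n$ forces $r=r'$. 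This is how I extract scalar identities from products of the $\phi_j$.

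First I would prove the column formula~\eqref{Thdettaucol}. Since $M$ is a $(\wh q,\wh p)$-Manin matrix, Proposition~\ref{propABManinpsiphi} gives that the $\phi_j$ obey the relations of $\Xi_{A_{\wh p}}(\CC)$, namely $\phi_j\phi_i=-p_{ij}^{-1}\phi_i\phi_j$ and $\phi_i^2=0$. Because $({}_\tau M)^i_j=M^i_{\tau^{-1}(j)}$, the variable attached to ${}_\tau M$ via~\eqref{phipsidet} is $\sum_i\psi_i({}_\tau M)^i_j=\phi_{\tau^{-1}(j)}$ (the second identity in~\eqref{phipsitau}), so~\eqref{phipsidet} applied to ${}_\tau M$ reads $\phi_{\tau^{-1}(1)}\cdots\phi_{\tau^{-1}(n)}=\det_{\wh q}({}_\tau M)\,\psi_1\cdots\psi_n$. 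I then reorder the left side by Lemma~\ref{Lempsisigma}, applied to the $\phi_j$ with parameter matrix $\wh p$ and $\sigma=\tau^{-1}$; since $(\tau^{-1})^{-1}=\tau$ the inversion product runs over $\{i<j:\tau(i)>\tau(j)\}$, whence $\phi_{\tau^{-1}(1)}\cdots\phi_{\tau^{-1}(n)}=(-1)^{\tau}\phi_1\cdots\phi_n\prod_{i<j,\,\tau(i)>\tau(j)}p_{ij}^{-1}$. Substituting $\phi_1\cdots\phi_n=\det_{\wh q}(M)\,\psi_1\cdots\psi_n$ and comparing coefficients of $\psi_1\cdots\psi_n$ gives exactly~\eqref{Thdettaucol}.

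For the row formula~\eqref{Thdettaurow} the decisive step is a relabelling of the Grassmann variables. Setting $\tilde\psi_i=\psi_{\tau^{-1}(i)}$, a direct computation from~\eqref{psipsiwhq} and~\eqref{sigmaqsigma} yields $\tilde\psi_b\tilde\psi_a=-(\tau\wh q\tau^{-1})_{ab}^{-1}\tilde\psi_a\tilde\psi_b$, so the $\tilde\psi_i$ generate $\Xi_{A_{\tau\wh q\tau^{-1}}}(\CC)$. By the first identity in~\eqref{phipsitau} (moving the commuting entries past the $\tilde\psi_i$), the \emph{same} elements $\phi_j$ satisfy $\phi_j=\sum_i\tilde\psi_i({}^\tau M)^i_j$, so~\eqref{phipsidet} in the variables $\tilde\psi_i$ gives $\phi_1\cdots\phi_n=\det_{\tau\wh q\tau^{-1}}({}^\tau M)\,\tilde\psi_1\cdots\tilde\psi_n$. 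On the other hand $\phi_1\cdots\phi_n=\det_{\wh q}(M)\,\psi_1\cdots\psi_n$, and Lemma~\ref{Lempsisigma} for the $\psi_i$ with $\sigma=\tau^{-1}$ gives $\tilde\psi_1\cdots\tilde\psi_n=\psi_{\tau^{-1}(1)}\cdots\psi_{\tau^{-1}(n)}=(-1)^{\tau}\psi_1\cdots\psi_n\prod_{i<j,\,\tau(i)>\tau(j)}q_{ij}^{-1}$. Comparing coefficients and solving for $\det_{\tau\wh q\tau^{-1}}({}^\tau M)$ yields~\eqref{Thdettaurow}; notice this argument never used the Manin condition on $M$, which is exactly why the row formula holds for an arbitrary $n\times n$ matrix.

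The main obstacle I expect is bookkeeping rather than structure: keeping the direction of the permutation correct when passing inversion sets through Lemma~\ref{Lempsisigma}, whose product is indexed by $\{i<j:\sigma^{-1}(i)>\sigma^{-1}(j)\}$. Both cases require $\sigma=\tau^{-1}$, so one must use $(\tau^{-1})^{-1}=\tau$ and $(-1)^{\tau^{-1}}=(-1)^{\tau}$, and must invert the $q$-product when solving~\eqref{Thdettaurow} for the determinant of ${}^\tau M$. The one genuinely structural point is the verification that $\tilde\psi_i=\psi_{\tau^{-1}(i)}$ generate $\Xi_{A_{\tau\wh q\tau^{-1}}}(\CC)$, which is what makes $\det_{\tau\wh q\tau^{-1}}$ the appropriate determinant for ${}^\tau M$.
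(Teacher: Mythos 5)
Your proposal is correct and follows essentially the same route as the paper's proof: both express $\det_{\wh q}$ via the product $\phi_1\cdots\phi_n$ through~\eqref{phipsidet}, use~\eqref{phipsitau} to attach ${}^\tau M$ and ${}_\tau M$ to the relabelled variables $\psi_{\tau^{-1}(i)}$ and $\phi_{\tau^{-1}(j)}$, and apply Lemma~\ref{Lempsisigma} with $\sigma=\tau^{-1}$ (invoking~\eqref{sigmaqsigma} to identify the parameter matrix $\tau\wh q\tau^{-1}$), ending with the same observation that the row formula never uses the commutation relations of the $\phi_j$. The only cosmetic differences are that you cite Proposition~\ref{propABManinpsiphi} in place of Proposition~\ref{propAB} and you spell out the coefficient-comparison step (freeness of the top-degree component), which the paper leaves implicit.
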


\begin{proof} Let $\psi_i$ be generators of the algebra $\Xi_{A_{\wh q}}(\CC)$. Then Proposition~\ref{propAB} implies that the elements $\phi_j=\sum_{i=1}^nM^i_j\psi_i\in\Xi_{A_{\wh q}}(\gR)$ satisfy the commutation relations~\eqref{psipsiwhq} with the parameter matrix $\wh p$. Due to the formula~\eqref{sigmaqsigma} the elements $\psi'_i=\psi_{\tau^{-1}(i)}$ and $\phi'_j=\phi_{\tau^{-1}(j)}$ satisfy the commutation relations~\eqref{psipsiwhq} with the parameter matrices $\sigma\wh q\sigma^{-1}$ and $\sigma\wh p\sigma^{-1}$ respectively. From the formulae~\eqref{phipsidet} and~\eqref{phipsitau} we obtain
\begin{gather}
 \phi_1\cdots\phi_n=\psi'_1\cdots\psi'_n\det_{\tau\wh q\tau^{-1}}(^\tau M), \qquad
 \phi'_1\cdots\phi'_n=\psi_1\cdots\psi_n\det_{\wh q}(_\tau M). \label{phipsitaudet}
\end{gather}
The formula~\eqref{Lempsisigma2} for $\phi_j$ and $\psi_i$ with $\sigma=\tau^{-1}$ takes the form
\begin{gather*}
 \psi'_1\cdots\psi'_n=(-1)^\tau\psi_1\cdots\psi_n\prod_{\substack{i<j\\ \tau(i)>\tau(j)}}q_{ij}^{-1}, \qquad
 \phi'_1\cdots\phi'_n=(-1)^\tau\phi_1\cdots\phi_n\prod_{\substack{i<j\\ \tau(i)>\tau(j)}}p_{ij}^{-1}.
\end{gather*}
Substitution of these formulae and the formula~\eqref{phipsidet} to~\eqref{phipsitaudet} gives~\eqref{Thdettaurow} and \eqref{Thdettaucol}. We did not use the commutation relations of $\phi_j$ for the proof of the formula~\eqref{Thdettaurow}, so it is valid for any matrix $M$. \end{proof}

Let us consider the case when two rows or two columns coincide. We write some conditions leading to vanishing of the $\wh q$-determinant.

\begin{Cor} \label{Cordetq}
 Let $M$ be a $k\times k$ matrix over $\gR$. Let $\wh q$ and $\wh p$ be $k\times k$ parameter matrices.
\begin{itemize}
\itemsep=0pt
 \item Let $i\ne j$. If $M^i_l=M^j_l$ and $q_{il}=q_{jl}$ $\forall\,l$ $($in particular, $q_{ij}=1)$ then $\det_{\wh q}(M)=0$.
 \item If two columns of a $(\wh q,\wh p)$-Manin matrix $M$ coincide $($that is $M^l_i=M^l_j$ $\;\forall\,l$ for some $i\ne j)$ then $\det_{\wh q}(M)=0$.
\end{itemize}
\end{Cor}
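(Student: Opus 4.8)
The plan is to handle the two parts separately, reducing each to a fact already proved for the $\wh q$-determinant. For the first part I would invoke Theorem~\ref{Thdettau}, whose row-version~\eqref{Thdettaurow} is stated to hold for an \emph{arbitrary} $k\times k$ matrix. Assume without loss of generality that $i<j$ and take $\tau=(i\,j)$, the transposition of the two indices. Then $^\tau M=M$, since permuting rows $i$ and $j$ leaves $M$ unchanged exactly because $M^i_l=M^j_l$ for all $l$. Next I would check that the hypothesis $q_{il}=q_{jl}$ forces $\tau\wh q\tau^{-1}=\wh q$: by~\eqref{sigmaqsigma} one has $(\tau\wh q\tau^{-1})_{ab}=q_{\tau(a)\tau(b)}$, and a short case analysis according to whether $a,b$ lie in $\{i,j\}$ shows each entry is preserved; the only delicate case $a=i$, $b=j$ uses $q_{ij}=1$, which itself follows from $q_{il}=q_{jl}$ at $l=i$ (giving $q_{ji}=q_{ii}=1$, hence $q_{ij}=q_{ji}^{-1}=1$).

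With these two identifications, \eqref{Thdettaurow} becomes $\det_{\wh q}(M)=(-1)^\tau\det_{\wh q}(M)\prod_{a<b,\,\tau(a)>\tau(b)}q_{ab}$, so it remains to evaluate the product over the inversions of $\tau$. The inversions of $(i\,j)$ are the pair $(i,j)$ together with the pairs $(i,b)$ and $(b,j)$ for $i<b<j$, so the product equals $q_{ij}\prod_{i<b<j}q_{ib}q_{bj}$. Since $q_{ij}=1$ and $q_{ib}=q_{jb}=q_{bj}^{-1}$ by the hypothesis and the parameter-matrix relations, each factor $q_{ib}q_{bj}$ is $1$ and the whole product is $1$. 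Thus $\det_{\wh q}(M)=-\det_{\wh q}(M)$, and as $\CC$ has characteristic zero this gives $\det_{\wh q}(M)=0$.

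For the second part I would use the Grassmann-type identity~\eqref{phipsidet}. Let $\psi_1,\dots,\psi_k$ generate $\Xi_{A_{\wh q}}(\CC)$ and set $\phi_j=\sum_l\psi_l M^l_j$. Because $M$ is a $(\wh q,\wh p)$-Manin matrix, Proposition~\ref{propAB} guarantees that the $\phi_j$ satisfy the relations~\eqref{psipsiwhq} with the parameter matrix $\wh p$; in particular $\phi_i^2=0$. The coincidence of columns $i$ and $j$ yields $\phi_i=\phi_j$ directly from the definition. Mirroring the argument in the proof of Lemma~\ref{Lempsisigma}, I would use the relations~\eqref{psipsiwhq} for the $\phi_j$ to permute the factors of $\phi_1\cdots\phi_k$ (at the cost of nonzero scalars) and bring $\phi_i$ and $\phi_j$ into adjacent slots, producing a factor $\phi_i\phi_j=\phi_i^2=0$; equivalently one applies~\eqref{Lempsisigma2} with a permutation placing $\phi_i,\phi_j$ first. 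Hence $\phi_1\cdots\phi_k=0$, and \eqref{phipsidet} gives $\det_{\wh q}(M)\,\psi_1\cdots\psi_k=0$. Since $\psi_1\cdots\psi_k$ spans the one-dimensional top-degree component of $\Xi_{A_{\wh q}}(\CC)$ and is nonzero, in $\gR\otimes\Xi_{A_{\wh q}}(\CC)$ this forces $\det_{\wh q}(M)=0$.

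The routine parts are the reductions to Theorem~\ref{Thdettau} and to~\eqref{phipsidet}; the genuinely computational point in the first part is verifying that the inversion product collapses to $1$ (which is where both $q_{ij}=1$ and $q_{ib}=q_{bj}^{-1}$ are used in tandem). In the second part the only subtlety is that the indices $1,\dots,k$ in $\phi_1\cdots\phi_k$ are all distinct, so Lemma~\ref{Lempsisigma}\eqref{Lempsisigma1} does not apply verbatim; one must instead exploit the \emph{equality of elements} $\phi_i=\phi_j$ by reordering to make them adjacent before using $\phi_i^2=0$.
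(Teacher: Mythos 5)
Your proof is correct and follows essentially the same route as the paper: the first part is exactly the paper's argument via~\eqref{Thdettaurow} with $\tau=\sigma_{ij}$ (the inversion product $q_{ij}\prod_{i<b<j}q_{ib}q_{bj}$ collapsing to $1$), and the second part is the paper's general argument combining~\eqref{phipsidet} with $\phi_i\phi_j=\phi_i^2=0$, which you spell out via the reordering permitted by Lemma~\ref{Lempsisigma}. The only difference is cosmetic: the paper additionally remarks that for generic $\wh p$ one could instead use~\eqref{Thdettaucol}, a shortcut your argument does not need.
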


\begin{proof} Let $\sigma_{ij}\in\SSS_k$ be the transposition of $i$ and $j$. Suppose $i<j$ (without loss of generality). The conditions $q_{il}=q_{jl}$ imply $\sigma_{ij}\wh q\sigma_{ij}=\wh q$ and
\[
\prod\limits_{\substack{s<t\\ \sigma_{ij}(s)>\sigma_{ij}(t)}}q_{\rm st}=q_{ij} \prod\limits_{s=i+1}^{j-1}(q_{sj}q_{is})=1,
\]
so the substitution $\tau=\sigma_{ij}$ to~\eqref{Thdettaurow} gives $\det_{\wh q}(M)=-\det_{\wh q}(M)$. For generic $\wh p$ one can similarly prove the second statement by using the formula~\eqref{Thdettaucol} with $\tau=\sigma_{ij}$. For arbitrary $\wh p$ it follows from the relations~\eqref{phipsidet} and $\phi_i\phi_j=\phi_i^2=0$. \end{proof}

\begin{Cor} \label{CordetqIJ}
 Let $M$ be an $n\times m$ matrix over $\gR$. Let $\wh q$ and $\wh p$ be $n\times n$ and $m\times m$ parameter matrices. Let $I=(i_1,\dots,i_k)$ and $J=(j_1,\dots,j_k)$, where $1\le i_s\le n$ and $1\le j_s\le m$ for all $s=1,\dots,k$. Let $\tau\in\SSS_k$, $K=(i_{\tau(1)},\dots,i_{\tau(k)})$ and $L=(j_{\tau(1)},\dots,j_{\tau(k)})$.
\begin{itemize}
\itemsep=0pt
\item We have
\[
\det_{\wh q_{KK}}(M_{KJ})=(-1)^\tau
\det_{\wh q_{II}}(M_{IJ})\prod_{\substack{s<t\\ \tau^{-1}(s)>\tau^{-1}(t)}}q_{i_si_t}.
\]
\item If $i_s=i_t$ for some $s\ne t$ then $\det_{\wh q_{II}}(M_{IJ})=0$.
\item If $M$ is a $(\wh q,\wh p)$-Manin matrix then
\[
\det_{\wh q_{II}}(M_{IL})=(-1)^\tau\det_{\wh q_{II}}(M_{IJ})
\prod_{\substack{s<t\\ \tau^{-1}(s)>\tau^{-1}(t)}}p_{j_sj_t}^{-1}.
\]
\item If $M$ is a $(\wh q,\wh p)$-Manin matrix and $j_s=j_t$ for some $s\ne t$ then $\det_{\wh q_{II}}(M_{IJ})=0$.
\end{itemize}
\end{Cor}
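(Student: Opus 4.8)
The plan is to reduce all four items to the permutation formulae of Theorem~\ref{Thdettau}, combined with Theorem~\ref{ThqpIJ} and Corollary~\ref{Cordetq}; the only genuine work is index bookkeeping. The first thing I would observe is that $M_{KJ}$ and $M_{IL}$ are merely a row and a column permutation of the fixed $k\times k$ matrix $M_{IJ}$. Indeed, $(M_{KJ})^s_t=M^{i_{\tau(s)}}_{j_t}=(M_{IJ})^{\tau(s)}_t$, and since $({}^{\tau^{-1}}N)^s_t=N^{\tau(s)}_t$ by the definition of the row action, we get $M_{KJ}={}^{\tau^{-1}}(M_{IJ})$. Likewise $(M_{IL})^s_t=M^{i_s}_{j_{\tau(t)}}=(M_{IJ})^s_{\tau(t)}$ together with $({}_{\tau^{-1}}N)^s_t=N^s_{\tau(t)}$ gives $M_{IL}={}_{\tau^{-1}}(M_{IJ})$. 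Note the permutation acting is $\tau^{-1}$, not $\tau$.

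For the first bullet I would apply the row formula~\eqref{Thdettaurow} --- valid for an arbitrary matrix --- to $M_{IJ}$ with the permutation $\tau^{-1}$ and the parameter matrix $\wh q_{II}$. Using $(-1)^{\tau^{-1}}=(-1)^\tau$, the sign and the product over $s<t$ with $\tau^{-1}(s)>\tau^{-1}(t)$ of the entries $(\wh q_{II})_{st}=q_{i_si_t}$ already match the claim. It then remains only to identify the conjugated parameter matrix: by~\eqref{sigmaqsigma} one has $(\tau^{-1}\wh q_{II}\tau)_{st}=(\wh q_{II})_{\tau(s)\tau(t)}=q_{i_{\tau(s)}i_{\tau(t)}}=(\wh q_{KK})_{st}$, so $\det_{\tau^{-1}\wh q_{II}\tau}({}^{\tau^{-1}}M_{IJ})=\det_{\wh q_{KK}}(M_{KJ})$ and the stated identity follows verbatim.

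For the third bullet I would first invoke Theorem~\ref{ThqpIJ}: when $M$ is a $(\wh q,\wh p)$-Manin matrix, $M_{IJ}$ is a $(\wh q_{II},\wh p_{JJ})$-Manin matrix. Then the column formula~\eqref{Thdettaucol}, applied to $M_{IJ}$ with the permutation $\tau^{-1}$, reads $\det_{\wh q_{II}}({}_{\tau^{-1}}M_{IJ})=(-1)^\tau\det_{\wh q_{II}}(M_{IJ})\prod_{\substack{s<t\\ \tau^{-1}(s)>\tau^{-1}(t)}}(\wh p_{JJ})_{st}^{-1}$; since $(\wh p_{JJ})_{st}=p_{j_sj_t}$ and ${}_{\tau^{-1}}M_{IJ}=M_{IL}$, this is exactly the asserted formula (here the row parameter $\wh q_{II}$ is unchanged, as a column permutation must).

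The two vanishing statements are then immediate. For the second bullet, $i_s=i_t$ makes rows $s$ and $t$ of $M_{IJ}$ coincide while simultaneously $(\wh q_{II})_{sl}=q_{i_si_l}=q_{i_ti_l}=(\wh q_{II})_{tl}$ for all $l$, so the first bullet of Corollary~\ref{Cordetq} yields $\det_{\wh q_{II}}(M_{IJ})=0$ (equivalently, take $\tau=\sigma_{st}$ in the first bullet already proved). For the fourth, $j_s=j_t$ makes columns $s$ and $t$ of the $(\wh q_{II},\wh p_{JJ})$-Manin matrix $M_{IJ}$ coincide, so the second bullet of Corollary~\ref{Cordetq} gives the vanishing. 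The only delicate point throughout is keeping the direction of the permutation straight (it is $\tau^{-1}$ that acts, so that the inversion condition appears as $\tau^{-1}(s)>\tau^{-1}(t)$) and matching the conjugated parameter matrix with $\wh q_{KK}$ via~\eqref{sigmaqsigma}; once these are pinned down, every step is a direct citation of an earlier result.
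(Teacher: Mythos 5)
Your proposal is correct and follows exactly the paper's own route: the identifications $M_{KJ}=\leftidx{^{\tau^{-1}}}(M_{IJ})$, $M_{IL}=\leftidx{_{\tau^{-1}}}(M_{IJ})$ and $\tau^{-1}\wh q_{II}\tau=\wh q_{KK}$, followed by citations of Theorems~\ref{ThqpIJ}, \ref{Thdettau} and Corollary~\ref{Cordetq}. You merely spell out the index bookkeeping (including the correct appearance of $\tau^{-1}$ rather than $\tau$) that the paper leaves implicit.
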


\begin{proof} Note that $\tau^{-1}\wh q_{II}\tau=\wh q_{KK}$, $M_{KJ}=\leftidx{^{\tau^{-1}}}(M_{IJ})$ and $M_{IL}=\leftidx{_{\tau^{-1}}}(M_{IJ})$, so the statements follow from Theorems~\ref{ThqpIJ}, \ref{Thdettau} and Corollary~\ref{Cordetq}. \end{proof}

\begin{Rem}\looseness=-1
 The formula~\eqref{Lempsisigma2} follows from $\psi_j\psi_i=-q_{ij}^{-1}\psi_i\psi_j$, $i<j$. As consequence, we did not need the relations $\phi_i^2=0$ to prove the formula~\eqref{Thdettaucol}. The relations $\psi_j\psi_i=-p_{ij}^{-1}\psi_i\psi_j$, $i<j$, define the algebra $\Xi_{\wt A_{\wh p}}(\CC)$, where $\wt A_{\wh p}=\frac{1-\wt P_{\wh p}}2$ and $\big(\wt P_{\wh p}\big)^{kl}_{ij}=(-1)^{\delta_{ij}}p_{ij}\delta^k_j\delta^l_i$. Hence the formula~\eqref{Thdettaucol} is valid for any $\big(A_{\wh q},\wt A_{\wh p}\big)$-Manin matrix $M$. As consequence, the second statement of Corollary~\ref{Cordetq} is valid for these matrices if $\wh p$ is generic. However they are not valid for some~$\wh p$, so it is necessary to require $M$ to be a $(\wh q,\wh p)$-Manin matrix. Moreover the third and forth statements of Corollary~\ref{CordetqIJ} are not valid for $\big(A_{\wh q},\wt A_{\wh p}\big)$-Manin matrices even if $\wh p$ is generic since we used Theorem~\ref{ThqpIJ}. For example, let $M=\left(\begin{smallmatrix} a & b \\ c & d \end{smallmatrix}\right)$, $I=(1,2)$, $J=(1,1)$. Then the $q$-determinant of the matrix $M_{IJ}=\left(\begin{smallmatrix} a & a \\ c & c \end{smallmatrix}\right)$ is $\det_q(M_{IJ})=ac-q^{-1}ca$. It vanishes if $M$ is a~$\big(q^{[2]},p^{[2]}\big)$-Manin matrix, but the cross relation $ad-q^{-1}pda+pbc-q^{-1}cb=0$ is not enough for $\det_q(M_{IJ})=0$. The cross relation for the matrix $\left(\begin{smallmatrix} a & a \\ c & c \end{smallmatrix}\right)$ itself gives $ac-qp^{-1}ca+pac-q^{-1}ca=0$. This imply $\det_q(M)=0$ unless $p=-1$.
\end{Rem}

\begin{Rem}
 Let $n=k+l$, $q_{ij}=-1$ for $i,j=k+1,\dots,n$, $i\ne j$, and $q_{ij}=1$ for other $i$,~$j$. By factorizing the algebra $\gX_{A_{\wh q}}(\CC)$ over the relations $x_i^2=0$, $i=k+1,\dots,n$, and introducing a $\ZZ_2$-grading we obtain the free super-commutative quadratic algebra with $k$ even and $l$ odd generators. However the approach of Section~\ref{secQAMM} applied to this algebra does not give super-Manin matrices considered in~\cite{Manin89,MR}. The reason is that we suppose commutativity of $x^i$ with entries of $M$, which should be replaced by super-commutativity in the super-case. We will consider Manin matrices for quadratic super-algebras in future works.
\end{Rem}

\subsection{A 4-parametric quadratic algebra}\label{sec4p}

Consider the algebra with generators $x$, $y$, $z$ and relations
\begin{gather}
 axy-a^{-1}yx=\kappa z^2, \qquad
 byz-b^{-1}zy=\kappa x^2, \qquad
 czx-c^{-1}xz=\kappa y^2, \label{abcxyzkappa}
\end{gather}
where $a,b,c\in\CC\backslash\{0\}$, $\kappa\in\CC$.

Let $x^1=x$, $x^2=y$, $x^3=z$, $a_{12}=a$, $a_{23}=b$, $a_{31}=c$, $a_{ij}=a_{ji}^{-1}$, $a_{ii}=1$. Let $\varepsilon_{ijk}$ be the totally antisymmetric tensor such that $\varepsilon_{123}=1$. Then~\eqref{abcxyzkappa} is equivalent to the system
\begin{gather}
 a_{ij}x^ix^j-a_{ji}x^jx^i=\kappa\sum_{k=1}^3 \varepsilon_{ijk} x^kx^k, \qquad
 i,j=1,2,3. \label{aijxx}
\end{gather}
%Suppose that $a_{ij}^2\ne-1$ for all $i,j=1,2,3$.
These relations can be written as $x^i x^j=\sum_{k,l=1}^3P^{ij}_{kl}x^kx^l$, where
\begin{align} \label{PaijEntr}
 P^{ij}_{kl}=a_{ji}^2\delta^j_k \delta^i_l+\kappa a_{ji}\delta_{kl} \varepsilon_{ijk}.
\end{align}
The operator $P\in\End\big(\CC^3\otimes\CC^3\big)$ with the entries~\eqref{PaijEntr} satisfies $P^2=1$. Hence the operator $A^{a,b,c}_\kappa:=\frac{1-P}2$ is an idempotent and the relations~\eqref{abcxyzkappa} define the algebra $\gX_{A^{a,b,c}_\kappa}(\CC)$. By setting $\kappa=0$ we obtain the quadratic algebra $\gX_{A_{\wh q}}(\CC)$ with the parameters $q_{ij}=a_{ij}^2$, so the alge\-bra~$\gX_{A^{a,b,c}_\kappa}(\CC)$ is a generalisation of the $3$-dimensional case of the algebra $\gX_{A_{\wh q}}(\CC)$ considered in~Section~\ref{secwhqMM} (this is not a $\kappa$-deformation in general, see Remark~\ref{Rem4p}).

Let us consider some examples of Manin matrices by taking $A=A^{a,b,c}_\kappa$ as one of the idempotents. A $2\times3$ matrix
$ M=\left(\begin{smallmatrix}
 \alpha_1 & \alpha_2 & \alpha_3 \\
 \beta_1 & \beta_2 & \beta_3
\end{smallmatrix}\right)$
is an $\big(A^q_2,A^{a,b,c}_\kappa\big)$-Manin matrix iff
\begin{gather}
 q(a_{ji}\alpha_i\beta_j+a_{ij}\alpha_j\beta_i)=
a_{ji}\beta_i\alpha_j+a_{ij}\beta_j\alpha_i, \label{AA3cross}
\\
 q(a_{ij}\alpha_k\beta_k+\kappa\alpha_i\beta_j)=
a_{ij}\beta_k\alpha_k+\kappa\beta_i\alpha_j \label{AA3col}
\end{gather}
for all cyclic permutation $(i,j,k)$ of $(1,2,3)$. The relation~\eqref{AA3cross} is exactly the cross relation~\eqref{Mqpikjl} for the parameters $q_{ij}=q^{\sgn(j-i)}$ and $p_{ij}=a_{ij}^2$, while the relation~\eqref{AA3col} is a~generalisation of the $q$-commutation~\eqref{Mqpikjk}.

A $3\times2$ matrix
$ M=\left(\begin{smallmatrix}
 \alpha^1 & \beta^1 \\
 \alpha^2 & \beta^2 \\
 \alpha^3 & \beta^3
\end{smallmatrix}\right)$
is an $\big(A^{a,b,c}_\kappa,A^q_2\big)$-Manin matrix iff the relations~\eqref{aijxx} are satisfied by the substitutions $x^i=\alpha^i$ and $x^i=\beta^i$ and
\begin{align*}
 a_{ij}\big(\alpha^i\beta^j+q\beta^i\alpha^j\big)-a_{ji}\big(\alpha^j\beta^i+q\beta^j\alpha^i\big) =\kappa\big(\alpha^k\beta^k+q\beta^k\alpha^k\big)
\end{align*}
for all cyclic permutations $(i,j,k)$ of $(1,2,3)$.

\section{Lax operators}
\label{secLO}

Lax operators are different square matrices and endomorphisms of vector spaces arisen in the theories of integrable systems and quantum groups. We will consider Lax operators satisfying $RLL$-relations with some $R$-matrices (solutions of the Yang--Baxter equation). Different $R$-mat\-rices give different types of Lax operators. Since many quantum groups can be defined by $RLL$-relations the Lax operators of a certain type are related with the representation theory of the corresponding quantum group. Here we consider connections between Manin matrices associated with some quadratic algebras and the Lax operators associated with the quantum groups $U_q(\mathfrak{gl}_n)$, $Y(\mathfrak{gl}_n)$. Notice also that a connection between the $q$-Manin matrices and Lax operators associated the affine quantum group $U_q\big(\wh{\mathfrak{gl}}_n\big)$ was described in~\cite{qManin}.

\subsection[Lax operators of Uq(gln) type and q-Manin matrices]
{Lax operators of $\boldsymbol{U_q(\mathfrak{gl}_n)}$ type and $\boldsymbol q$-Manin matrices}\label{secUq}

A relationship between Lax operator and $q$-Manin matrices was first described by Manin, see~\cite{Manin88}. We investigate this relationship by applying a decomposition of the corresponding $R$-matrix.

Let us first write the relations for a transposed $q$-Manin matrix. Recall that the matrices $P_n$ defined by~\eqref{PnDef} permute the factors $\Hom(\CC^m,\CC^n)\otimes\Hom(\CC^m,\CC^n)$ in the following way:
\begin{align}
 P_nTP_m=T^{(21)}, \qquad P_nM^{(1)}N^{(2)}P_m=M^{(2)}N^{(1)}, \label{P1Def}
\end{align}
where $T\in\gR\otimes\Hom(\CC^m\otimes\CC^m,\CC^n\otimes\CC^n)$, $M,N\in\gR\otimes\Hom(\CC^m,\CC^n)$. Let us note that
\begin{align} \label{PAS21}
 \big(P_n^q\big)^{(21)}=P_n^{q^{-1}}, \qquad
 \big(A_n^q\big)^{(21)}=A_n^{q^{-1}}, \qquad
 \big(S_n^q\big)^{(21)}=S_n^{q^{-1}}.
\end{align}
Note also that transposition gives the same:
\begin{align} \label{PAStop}
 \big(P_n^q\big)^\top=P_n^{q^{-1}}, \qquad
 \big(A_n^q\big)^\top=A_n^{q^{-1}}, \qquad
 \big(S_n^q\big)^\top=S_n^{q^{-1}}.
\end{align}

\begin{Lem} \label{PropMtr}
 Let $M\in\gR\otimes\Hom(\CC^m,\CC^n)$. The transposed matrix $M^\top$ is a $q$-Manin matrix iff the matrix $M$ satisfies one of the following equivalent relations:
\begin{gather}
 S^{q^{-1}}_n M^{(1)}M^{(2)} A_m^{q^{-1}}=0, \label{MtrR1} \\
 S_n^q M^{(2)}M^{(1)} A_m^q=0. \label{MtrR2}
\end{gather}
\end{Lem}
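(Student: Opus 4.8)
The plan is to reduce the statement ``$M^\top$ is a $q$-Manin matrix'' to relation \eqref{MtrR1} by transposing the defining relation of a $q$-Manin matrix, and then to pass between \eqref{MtrR1} and \eqref{MtrR2} by conjugating with the plain permutation operators $P_n$ and $P_m$.

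First I would spell out what it means for the $m\times n$ matrix $M^\top$ to be a $q$-Manin matrix, namely that it is an $(A_m^q,A_n^q)$-Manin matrix, so that by Definition~\ref{defABManin} we have $A_m^q(M^\top)^{(1)}(M^\top)^{(2)}S_n^q=0$. The key input is the identity $\big((M^\top)^{(1)}(M^\top)^{(2)}\big)^\top=M^{(1)}M^{(2)}$ already used in the proof of Proposition~\ref{propA}; it holds because under transposition the scalar matrix parts $E_k{}^i\otimes E_l{}^j$ are transposed while the left-to-right order of the noncommutative entries $M^i_kM^j_l$ is preserved. Substituting this and then transposing the whole relation, I would use that $A_m^q$ and $S_n^q$ carry no $\gR$-entries: transposing a product $A_m^q\,W^\top S_n^q$ with scalar outer factors reverses only the scalar factors (sending each to its own transpose) and returns $W=M^{(1)}M^{(2)}$ itself rather than $M^{(2)}M^{(1)}$. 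This gives $(S_n^q)^\top M^{(1)}M^{(2)}(A_m^q)^\top=0$, and \eqref{PAStop}, which says $(S_n^q)^\top=S_n^{q^{-1}}$ and $(A_m^q)^\top=A_m^{q^{-1}}$, turns this into exactly \eqref{MtrR1}. As transposition is an involution every step is reversible, so $M^\top$ is a $q$-Manin matrix iff \eqref{MtrR1} holds.

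For the equivalence of \eqref{MtrR1} and \eqref{MtrR2} I would multiply \eqref{MtrR1} by $P_n$ on the left and $P_m$ on the right, inserting $P_n^2=1$ and $P_m^2=1$ to regroup the factors as $\big(P_nS_n^{q^{-1}}P_n\big)\big(P_nM^{(1)}M^{(2)}P_m\big)\big(P_mA_m^{q^{-1}}P_m\big)=0$. The outer factors become $(S_n^{q^{-1}})^{(21)}=S_n^q$ and $(A_m^{q^{-1}})^{(21)}=A_m^q$ by \eqref{PAS21}, while the middle factor becomes $M^{(2)}M^{(1)}$ by \eqref{P1Def}. This is precisely \eqref{MtrR2}, and conjugating back by $P_n$, $P_m$ (again using $P^2=1$) yields the converse.

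The step demanding the most care is the transposition of the $\gR$-valued relation: one must verify that placing the scalar idempotents on the outside forces the entries of $M^{(1)}M^{(2)}$ to keep their order under $\top$, so that $M^{(1)}M^{(2)}$ reappears and not $M^{(2)}M^{(1)}$. This is exactly the reason \eqref{MtrR1} and \eqref{MtrR2} are genuinely distinct relations, related only after the extra conjugation by $P$. Everything else is routine bookkeeping with the involutivity of $P_n$, $P_m$ and the formulae \eqref{PAS21}, \eqref{PAStop} and \eqref{P1Def}.
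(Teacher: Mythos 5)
Your proposal is correct and follows essentially the same route as the paper: transpose the defining relation $A_m^q(M^\top)^{(1)}(M^\top)^{(2)}S_n^q=0$ using $\big(M^{(1)}M^{(2)}\big)^\top=\big(M^\top\big)^{(1)}\big(M^\top\big)^{(2)}$ and \eqref{PAStop} to get \eqref{MtrR1}, then pass to \eqref{MtrR2} by permuting tensor factors via \eqref{PAS21} and \eqref{P1Def}. Your explicit conjugation by $P_n$, $P_m$ with inserted $P^2=1$ is just a spelled-out version of the paper's one-line "permutation of tensor factors" step, and your care about transposition over the noncommutative ring $\gR$ correctly justifies the step the paper takes for granted.
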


\begin{proof} The relation~\eqref{AMMSq} for the $m\times n$ matrix $M^\top$ has the form $A_m^q\big(M^\top\big)^{(1)}\big(M^\top\big)^{(2)}S_n^q=0$. If we transpose the both hand sides and take into account $\big(M^{(1)}M^{(2)}\big)^\top=\big(M^\top\big)^{(1)}\big(M^\top\big)^{(2)}$ and~\eqref{PAStop} we obtain~\eqref{MtrR1}. Due to~\eqref{PAS21} the permutation of tensor factors yields~\eqref{MtrR2}. \end{proof}

Suppose that $q^2\ne-1$. Consider the $R$-matrix
\begin{align*}
R^q=R^q_n= q^{-1} \sum_{i=1}^n E_{i}{}^{i}\otimes E_{i}{}^{i} +\sum_{i\ne j} E_{i}{}^{i}\otimes E_{j}{}^{j} +\big(q^{-1}-q\big) \sum_{i>j} E_{i}{}^{j}\otimes E_{j}{}^{i}. %\label{RqDef}
\end{align*}
It satisfies the Yang--Baxter equation
\begin{align} \label{RqYBR}
 (R^q)^{(12)} (R^q)^{(13)}(R^q)^{(23)}= (R^q)^{(23)} (R^q)^{(13)} (R^q)^{(12)}.
\end{align}
A Lax operator of $U_q(\mathfrak{gl}_n)$ type is an $n\times n$ matrix $L\in\gR\otimes\End(\CC^n)$ satisfying the $RLL$-relation
\begin{align*}
 R^q L^{(1)}L^{(2)} =L^{(2)}L^{(1)} R^q.% \label{RLLnn}
\end{align*}
More generally, consider an $n\times m$ matrix $L\in\gR\otimes\Hom(\CC^m,\CC^n)$ satisfying
\begin{align}
 R^q_n L^{(1)}L^{(2)} =L^{(2)}L^{(1)} R^q_m. \label{RLL}
\end{align}

\begin{Rem}
 The commutation relations for the quantum group $U_q(\mathfrak{gl}_n)$ can be written as three matrix relations $R^q_nL_\pm^{(1)}L_\pm^{(2)}=L_\pm^{(2)}L_\pm^{(1)} R^q_n$, $R^q_nL_+^{(1)}L_-^{(2)}=L_-^{(2)}L_+^{(1)} R^q_n$ for some matrices $L_+,L_-\in U_q(\mathfrak{gl}_n)\otimes\End(\CC^n)$~\cite{FRT88,FRT89}.
\end{Rem}

By multiplying the relation~\eqref{RLL} by $P_n$ from the left and by taking into account~\eqref{P1Def} we obtain the equivalent relation
\begin{align}
 \wh R^q_n L^{(1)}L^{(2)} =L^{(1)}L^{(2)} \wh R^q_n, \label{whRLL}
\end{align}
where
\begin{align}
 \wh R^q=\wh R^q_n:=P_nR^q_n= q^{-1} \sum_{i=1}^n E_{i}{}^{i}\otimes E_{i}{}^{i} +\sum_{i\ne j} E_{i}{}^{j}\otimes E_{j}{}^{i} +\big(q^{-1}-q\big) \sum_{i<j} E_{i}{}^{i}\otimes E_{j}{}^{j}. \label{whRq}
\end{align}
The matrix~\eqref{whRq} satisfies the braid relation
\begin{align}
 \big(\wh R^q\big)^{(23)} \big(\wh R^q\big)^{(12)}\big(\wh R^q\big)^{(23)}= \big(\wh R^q\big)^{(12)} \big(\wh R^q\big)^{(23)} \big(\wh R^q\big)^{(12)}, \label{BRq}
\end{align}
which is obtained by multiplying left and right hand sides of \eqref{PBR} and \eqref{RqYBR}.

\begin{Lem}[{\cite{FRT89}}] \label{LemRdec}
The matrix~\eqref{whRq} can be decomposed as
\begin{align}
 \wh R^q=q^{-1}\wh R_+^q-q\wh R_-^q, \label{whRqpDec}
\end{align}
where
\begin{align}
\wh R_+^q&=\wh R_{n+}^q:=\frac{q+\wh R^q_n}{q+q^{-1}}\notag
\\
&=\sum_{i=1}^n\! E_{i}{}^{i}\otimes E_{i}{}^{i}\!+\!\frac1{q\!+\!q^{-1}}\sum_{i\ne j}\! E_{i}{}^{j}\otimes E_{j}{}^{i}\!+\!\frac1{q\!+\!q^{-1}}\sum_{i<j}\!\big(q^{-1}E_{i}{}^{i}\otimes E_{j}{}^{j}\!+\!q E_{j}{}^{j}\otimes E_{i}{}^{i}\big), \label{whRqp}
\\
\wh R_-^q&=\wh R^q_{n-}:=\frac{q^{-1}-\wh R^q_n}{q+q^{-1}}\notag
\\
&=-\frac1{q+q^{-1}}\sum_{i\ne j} E_{i}{}^{j}\otimes E_{j}{}^{i}
+\frac1{q+q^{-1}}\sum_{i<j}\big(q E_{i}{}^{i}\otimes E_{j}{}^{j}+q^{-1} E_{j}{}^{j}\otimes E_{i}{}^{i}\big) \label{whRqm}
\end{align}
are orthogonal idempotents:
\begin{gather}
\wh R_+^q+\wh R_-^q=1, \qquad \big(\wh R_+^q\big)^2=\wh R_+^q, \qquad \big(\wh R_-^q\big)^2=\wh R_-^q, \qquad
\wh R_+^q\wh R_-^q=\wh R_-^q\wh R_+^q=0. \label{whRqpmOI}
\end{gather}
The matrix~\eqref{whRq} satisfies the Hecke relation:
\begin{align}
 \big(\wh R^q-q^{-1}\big)\big(\wh R^q+q\big)=0. \label{HeckeR}
\end{align}
\end{Lem}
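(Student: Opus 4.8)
The plan is to treat the formulas $\wh R_+^q=\frac{q+\wh R^q}{q+q^{-1}}$ and $\wh R_-^q=\frac{q^{-1}-\wh R^q}{q+q^{-1}}$ as the definitions of the two operators and to deduce everything from a single Hecke relation. First I would verify the two linear identities straight from these definitions: adding the fractions gives $\wh R_+^q+\wh R_-^q=\frac{q+q^{-1}}{q+q^{-1}}=1$, and
\begin{align*}
 q^{-1}\wh R_+^q-q\wh R_-^q=\frac{q^{-1}(q+\wh R^q)-q(q^{-1}-\wh R^q)}{q+q^{-1}}=\frac{(q+q^{-1})\wh R^q}{q+q^{-1}}=\wh R^q,
\end{align*}
which is the decomposition \eqref{whRqpDec}. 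The explicit expressions \eqref{whRqp} and \eqref{whRqm} then follow by substituting \eqref{whRq} and $1=\sum_{i,j}E_i{}^i\otimes E_j{}^j$ into $q+\wh R^q$ and $q^{-1}-\wh R^q$ and collecting the diagonal terms $E_i{}^i\otimes E_i{}^i$, the weight terms $E_i{}^i\otimes E_j{}^j$ with $i\ne j$ (whose coefficient depends on the sign of $i-j$), and the permutation terms $E_i{}^j\otimes E_j{}^i$.

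The core of the argument is the Hecke relation \eqref{HeckeR}, equivalent to $(\wh R^q)^2=(q^{-1}-q)\wh R^q+1$. I would prove it by computing the action of $\wh R^q$ on the basis $e_k\otimes e_l$ of $\CC^n\otimes\CC^n$. Using $E_i{}^je_k=\delta^j_ke_i$ one reads off from \eqref{whRq}
\begin{align*}
 \wh R^q(e_k\otimes e_l)=
 \begin{cases}
 q^{-1}\,e_k\otimes e_k, & k=l,\\
 e_l\otimes e_k+(q^{-1}-q)\,e_k\otimes e_l, & k<l,\\
 e_l\otimes e_k, & k>l.
 \end{cases}
\end{align*}
Applying $\wh R^q$ a second time in each of the three cases (the image vectors again split into exactly these three cases, so the second application is immediate) yields $(\wh R^q)^2(e_k\otimes e_l)$, which in every case coincides with $\big((q^{-1}-q)\wh R^q+1\big)(e_k\otimes e_l)$. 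This reduces to three short scalar checks; the only nontrivial one is $k<l$, where the coefficient of $e_k\otimes e_l$ equals $1+(q^{-1}-q)^2$ on both sides.

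Finally, I would deduce the orthogonal-idempotent relations \eqref{whRqpmOI} from the Hecke relation purely formally. Expanding the product and using $(\wh R^q)^2=(q^{-1}-q)\wh R^q+1$,
\begin{align*}
 (q+q^{-1})^2\,\wh R_+^q\wh R_-^q=(q+\wh R^q)(q^{-1}-\wh R^q)=1+(q^{-1}-q)\wh R^q-(\wh R^q)^2=0,
\end{align*}
so $\wh R_+^q\wh R_-^q=0$; since both operators are polynomials in $\wh R^q$ they commute, which gives $\wh R_-^q\wh R_+^q=0$ as well. Idempotency then follows from $\wh R_+^q+\wh R_-^q=1$: multiplying by $\wh R_+^q$ gives $(\wh R_+^q)^2=\wh R_+^q-\wh R_+^q\wh R_-^q=\wh R_+^q$, and likewise $(\wh R_-^q)^2=\wh R_-^q$. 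The main (and essentially only) computational obstacle is the basis verification of the Hecke relation; all the idempotency, orthogonality, and decomposition statements are then formal consequences of it together with the two linear identities.
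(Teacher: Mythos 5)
Your proof is correct, but it inverts the logical order of the paper's argument, and the comparison is instructive. The paper takes idempotency as the primary computation: it expands $\big(q+q^{-1}\big)^2\big(\wh R_-^q\big)^2$ directly as a product of sums of matrix units $E_{i}{}^{j}\otimes E_{j}{}^{i}$ and $E_{i}{}^{i}\otimes E_{j}{}^{j}$, obtains $\big(\wh R_-^q\big)^2=\wh R_-^q$, gets the remaining relations in~\eqref{whRqpmOI} from $\wh R_+^q=1-\wh R_-^q$, and only then deduces the Hecke relation~\eqref{HeckeR} as a \emph{consequence} of $\wh R_-^q\wh R_+^q=0$. You instead make the Hecke relation the primary computation, verified on the basis $e_k\otimes e_l$ by the three-case analysis ($k=l$, $k<l$, $k>l$), and then obtain orthogonality from $\big(q+q^{-1}\big)^2\wh R_+^q\wh R_-^q=\big(q+\wh R^q\big)\big(q^{-1}-\wh R^q\big)=1+\big(q^{-1}-q\big)\wh R^q-\big(\wh R^q\big)^2=0$ and idempotency from multiplying $\wh R_+^q+\wh R_-^q=1$ by each projector. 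Both routes hinge on exactly one honest computation; yours (the basis action of $\wh R^q$, with the single nontrivial coefficient check $1+\big(q^{-1}-q\big)^2$ in the case $k<l$) is arguably more transparent than the paper's term-by-term multiplication of matrix-unit sums, and it foregrounds the Hecke relation, which is the structural fact the paper exploits later (Section~\ref{secHecke}) when it realises $\wh R^q_n$ as a representation of the Hecke algebra $\He^q_k$. What the paper's order buys is that the Hecke relation comes for free at the end, with no separate verification; what your order buys is that all four relations in~\eqref{whRqpmOI} become purely formal consequences of a single spectral identity, making clear that $\wh R_+^q$ and $\wh R_-^q$ are just the spectral projectors of $\wh R^q$ onto the eigenvalues $q^{-1}$ and $-q$ (which is also why the standing assumption $q^2\ne-1$, i.e., $q+q^{-1}\ne0$, is needed for the projectors to exist).
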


\begin{proof} The formulae~\eqref{whRqpDec}, \eqref{whRqp}, \eqref{whRqm} and the first of~\eqref{whRqpmOI} are obtained directly from the definitions of $\wh R_+^q=\frac{q+\wh R^q_n}{q+q^{-1}}$ and $\wh R_-^q=\frac{q^{-1}-\wh R^q_n}{q+q^{-1}}$. Further
\begin{gather*}
\big(q+q^{-1}\big)^2\big(\wh R_-^q\big)^2
=\bigg({-}\sum_{i\ne j} E_{i}{}^{j}\otimes E_{j}{}^{i}+\sum_{i<j}\big(q E_{i}{}^{i}\otimes E_{j}{}^{j}+q^{-1} E_{j}{}^{j}\otimes E_{i}{}^{i}\big)\bigg)^2
\\ \hphantom{\big(q+q^{-1}\big)^2\big(\wh R_-^q\big)^2}
{}= \sum_{i\ne j} E_{i}{}^{i}\otimes E_{j}{}^{j}-\sum_{i<j}\big(q E_{i}{}^{j}\otimes E_{j}{}^{j}+q^{-1} E_{j}{}^{i}\otimes E_{i}{}^{i}\big)
\\ \hphantom{(q+q^{-1})^2\big(\wh R_-^q\big)^2=}
{}-\!\sum_{i<j}\big(q^{-1} E_{i}{}^{j}\otimes E_{j}{}^{j}+q E_{j}{}^{i}\otimes E_{i}{}^{i}\big)
+\!\sum_{i<j}\big(q^2 E_{i}{}^{i}\otimes E_{j}{}^{j}+q^{-2} E_{j}{}^{j}\otimes E_{i}{}^{i}\big)
\\ \hphantom{\big(q+q^{-1}\big)^2\big(\wh R_-^q\big)^2}
{}= -(q\!+\!q^{-1})\sum_{i\ne j}\! E_{i}{}^{j}\otimes E_{j}{}^{i}+\!\sum_{i<j}\!\big(\big(q^2+1\big) E_{i}{}^{i}\otimes E_{j}{}^{j}\!+\!\big(1\!+\!q^{-2}\big) E_{j}{}^{j}\otimes E_{i}{}^{i}\big)
\\ \hphantom{\big(q+q^{-1}\big)^2\big(\wh R_-^q\big)^2}
{} = \big(q+q^{-1}\big)\bigg({-}\sum_{i\ne j} E_{i}{}^{j}\otimes E_{j}{}^{i}+\sum_{i<j}\big(q E_{i}{}^{i}\otimes E_{j}{}^{j}+q^{-1} E_{j}{}^{j}\otimes E_{i}{}^{i}\big)\bigg)
\\ \hphantom{\big(q+q^{-1}\big)^2\big(\wh R_-^q\big)^2}
{}=\big(q+q^{-1}\big)^2\wh R_-^q.
\end{gather*}
Thus we obtain $\big(\wh R_-^q\big)^2=\wh R_-^q$. Other relations~\eqref{whRqpmOI} follows from $\wh R_+^q=1-\wh R_-^q$. Then the Hecke relation~\eqref{HeckeR} is a consequence of $\wh R_-^q\wh R_+^q=0$. \end{proof}

We see that
\begin{align}
 \wh R_{n-}^q=\frac{P_nP^q_n-P_n}{q+q^{-1}}=-\frac2{q+q^{-1}}P_nA_n^q. \label{RAn}
\end{align}
This means that the idempotents $A_n^q$ and $\wh R_{n-}^q$ are left-equivalent. Thus an $n\times m$ $q$-Manin matrix is exactly the Manin matrix for the pair $\big(\wh R_{n-}^q,\wh R_{m-}^q\big)$. Due to~\eqref{PAS21} we obtain
\begin{align}
 \wh R_{n-}^q=-\frac2{q+q^{-1}}A_n^{q^{-1}}P_n, \label{RAnRE}
\end{align}
so idempotent $\wh R_{n-}^q$ is right-equivalent to $A_n^{q^{-1}}$.

Note that the relation $\big(\wh R_-^q\big)^2=\wh R_-^q$ implies
\begin{align*}
 \bigg({-}\frac2{q+q^{-1}}\bigg)^2P_nA_n^qP_nA_n^q=-\frac2{q+q^{-1}}P_nA_n^q,
\end{align*}
so that
\begin{gather}
 A_n^{q^{-1}}A_n^q=-\frac{q+q^{-1}}2P_nA_n^q, \qquad
 A_n^qA_n^{q^{-1}}=-\frac{q+q^{-1}}2A_n^qP_n. \label{AAPA}
\end{gather}
\begin{Th} \label{ThqLax}
 Let $L\in\gR\otimes\Hom(\CC^m,\CC^n)$, then the following statements are equivalent:
\begin{itemize}
\itemsep=0pt
 \item $L$ satisfies~\eqref{RLL}, that is $R^q_n L^{(1)}L^{(2)} =L^{(2)}L^{(1)} R^q_m$.
 \item $L$ satisfies~\eqref{whRLL}, that is $\wh R^q_n L^{(1)}L^{(2)} =L^{(1)}L^{(2)} \wh R^q_m$.
 \item $L$ satisfies the relation
\begin{align}
\wh R_{n+}^q L^{(1)}L^{(2)} =L^{(1)}L^{(2)} \wh R_{m+}^q. \label{whRpLL}
\end{align}
 \item $L$ satisfies the relation
\begin{align}
\wh R_{n-}^q L^{(1)}L^{(2)} =L^{(1)}L^{(2)} \wh R_{m-}^q. \label{whRmLL}
\end{align}
 \item $L$ satisfies the relation
\begin{align}
 A_n^q L^{(1)}L^{(2)} =L^{(2)}L^{(1)}A_m^q. \label{AqLL}
\end{align}
 \item $L$ satisfies the relations
\begin{align}
 A_n^q L^{(1)}L^{(2)}S_m^q =0, \qquad S_n^qL^{(2)}L^{(1)}A_m^q=0. \label{AqLLSq}
\end{align}
 \item The matrices $L$ and $L^\top$ are both $q$-Manin matrices.
\end{itemize}
\end{Th}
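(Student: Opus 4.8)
The plan is to establish all seven conditions as a single cycle of implications $(1)\Rightarrow(2)\Rightarrow(3)\Rightarrow(4)\Rightarrow(5)\Rightarrow(6)\Rightarrow(7)\Rightarrow(4)$, in which the links among $(1)$--$(5)$ are in fact two-sided, so that the cycle closes and all statements coincide. Almost every link is a purely formal manipulation with the idempotents, the flip $P_n$, and the decomposition $\wh R^q=q^{-1}\wh R^q_+-q\wh R^q_-$; the only implication carrying real content is the one that closes the cycle, namely $(7)\Rightarrow(4)$, and this is where the hypothesis $q^2\neq-1$ is used in an essential way.

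For the formal part I would argue as follows. The equivalence $(1)\Leftrightarrow(2)$ is obtained, as indicated in the text, by multiplying \eqref{RLL} on the left by $P_n$ and using $P_nR^q_n=\wh R^q_n$ together with $P_nL^{(2)}L^{(1)}P_m=L^{(1)}L^{(2)}$ from \eqref{P1Def}. The equivalences $(2)\Leftrightarrow(3)\Leftrightarrow(4)$ hold because, by \eqref{whRqpDec} and \eqref{whRqpmOI}, each of $\wh R^q$, $\wh R^q_+$, $\wh R^q_-$ is an affine combination of the other two and of the identity, while the relation $L^{(1)}L^{(2)}=L^{(1)}L^{(2)}$ is trivially satisfied; hence the three intertwining relations are equivalent. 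For $(4)\Leftrightarrow(5)$ I would substitute $\wh R^q_{n-}=-\tfrac{2}{q+q^{-1}}P_nA^q_n$ from \eqref{RAn} into \eqref{whRmLL}, cancel the scalar, and left-multiply by $P_n$, using $P_n^2=1$ and $P_nL^{(1)}L^{(2)}P_m=L^{(2)}L^{(1)}$ to reach \eqref{AqLL}. The implication $(5)\Rightarrow(6)$ is immediate: right-multiplying \eqref{AqLL} by $S^q_m$ annihilates the right-hand side since $A^q_mS^q_m=0$, and left-multiplying by $S^q_n$ annihilates the left-hand side since $S^q_nA^q_n=0$, which yields the two relations \eqref{AqLLSq}. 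Finally $(6)\Leftrightarrow(7)$ is a matter of reading off definitions: the first relation in \eqref{AqLLSq} is the $q$-Manin relation \eqref{AMMSq} for $L$, and the second is relation \eqref{MtrR2}, which by Lemma~\ref{PropMtr} holds iff $L^\top$ is a $q$-Manin matrix.

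The main obstacle is the closing implication $(7)\Rightarrow(4)$, and the key fact is that $q^2\neq-1$ (equivalently $q+q^{-1}\neq0$) is exactly what makes $\wh R^q$ a genuine two-eigenvalue operator with complementary spectral projectors $\wh R^q_\pm$ in Lemma~\ref{LemRdec}. Writing $U=L^{(1)}L^{(2)}$ and inserting $1=\wh R^q_++\wh R^q_-$ on both the source and the target, relation \eqref{whRmLL} is equivalent, after collecting the four blocks $\wh R^q_{n\pm}U\wh R^q_{m\pm}$, to the vanishing of the two off-diagonal blocks $\wh R^q_{n-}U\wh R^q_{m+}=0$ and $\wh R^q_{n+}U\wh R^q_{m-}=0$. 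Each of the two $q$-Manin conditions supplies precisely one of these. Since $A^q_n$ is left-equivalent to $\wh R^q_{n-}$ by \eqref{RAn} (and likewise for the index $m$), Proposition~\ref{propMMlequiv} identifies $L$ being a $q$-Manin matrix with the $\big(\wh R^q_{n-},\wh R^q_{m-}\big)$-Manin relation $\wh R^q_{n-}U\wh R^q_{m+}=0$; and, using $\wh R^q_{m-}=-\tfrac{2}{q+q^{-1}}A^{q^{-1}}_mP_m$ from \eqref{RAnRE} together with Proposition~\ref{PropLRE} (which gives that $\wh R^q_{n+}$ is left-equivalent to $S^{q^{-1}}_n$), the transposed condition \eqref{MtrR1} for $L^\top$, namely $S^{q^{-1}}_nU A^{q^{-1}}_m=0$, becomes $\wh R^q_{n+}U\wh R^q_{m-}=0$. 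With both off-diagonal blocks gone, $\wh R^q_{n-}U$ and $U\wh R^q_{m-}$ each collapse to the single surviving block $\wh R^q_{n-}U\wh R^q_{m-}$, which is exactly \eqref{whRmLL}, i.e.\ condition~$(4)$. I expect the fiddly step to be the bookkeeping of which left/right equivalence and which $(21)$-flip from \eqref{PAS21} converts the transpose relation into the $\wh R^q_+$-block condition; it is here that $q^2\neq-1$ is indispensable, since without it the projectors $\wh R^q_\pm$ and the scalar $-\tfrac{2}{q+q^{-1}}$ do not exist.
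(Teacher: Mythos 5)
Your proof is correct, and most of its links coincide with the paper's: the chain $(1)\Leftrightarrow(2)\Leftrightarrow(3)\Leftrightarrow(4)\Leftrightarrow(5)$, the step $(5)\Rightarrow(6)$, and the identification $(6)\Leftrightarrow(7)$ via Lemma~\ref{PropMtr} are exactly the published argument (your affine-combination remark is the paper's ``add $qL^{(1)}L^{(2)}$ to both sides and divide by $q+q^{-1}$''). Where you genuinely diverge is the closing implication. The paper closes the cycle by proving $(6)\Rightarrow(5)$ directly: it rewrites the second relation of \eqref{AqLLSq} as $S^{q^{-1}}_nL^{(1)}L^{(2)}A^{q^{-1}}_m=0$ using \eqref{PAS21}, forms the products $A_n^qL^{(1)}L^{(2)}A_m^qA_m^{q^{-1}}$ and $A_n^qA_n^{q^{-1}}L^{(1)}L^{(2)}A_m^{q^{-1}}$, converts $A^qA^{q^{-1}}$ into $P$-factors by \eqref{AAPA}, arrives at $A_n^qL^{(1)}L^{(2)}A_m^q=A_n^qL^{(2)}L^{(1)}A_m^q$, and then inserts $1=A^q+S^q$ on both sides to recover \eqref{AqLL}. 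You instead prove $(7)\Rightarrow(4)$ structurally: \eqref{whRmLL} is equivalent to the simultaneous vanishing of the two off-diagonal blocks $\wh R^q_{n-}L^{(1)}L^{(2)}\wh R^q_{m+}$ and $\wh R^q_{n+}L^{(1)}L^{(2)}\wh R^q_{m-}$ (your multiplication of the single block equation by $\wh R^q_{n-}$, resp.\ $\wh R^q_{m-}$, is the clean way to see this), and each of the two $q$-Manin conditions kills exactly one block once it is transported along the left equivalences $\wh R^q_{n-}\sim A^q_n$ from \eqref{RAn} and $\wh R^q_{n+}\sim S^{q^{-1}}_n$ from \eqref{RAnRE} with Proposition~\ref{PropLRE}, using Proposition~\ref{propMMlequiv}. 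Both routes consume the same identities and the same hypothesis $q^2\ne-1$; yours buys conceptual clarity, reusing the idempotent-equivalence machinery of Sections~\ref{secLRE} and~\ref{secABmanin} and making visible why precisely two Manin conditions are needed (one per off-diagonal block of $L^{(1)}L^{(2)}$ with respect to the spectral projectors of $\wh R^q$), while the paper's computation is self-contained and never invokes Proposition~\ref{propMMlequiv}.
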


\begin{proof} By adding $qL^{(1)}L^{(2)}$ to the both hand sides of~\eqref{whRLL} and by dividing by $q+q^{-1}$ we obtain the equivalent relation~\eqref{whRpLL}. The equivalence of the relations~\eqref{whRLL} and~\eqref{whRmLL} is proved similarly. Further, by using~\eqref{RAn} one establishes the equivalence of~\eqref{whRmLL} and \eqref{AqLL}. The relations~\eqref{AqLLSq} are obtained from~\eqref{AqLL} by multiplying by $S_m^q$ from the right and by multiplying by $S_n^q$ from the left respectively. Conversely, suppose that $L$ satisfies the relations~\eqref{AqLLSq}. By virtue of the formulae~\eqref{PAS21} the second of the relations~\eqref{AqLLSq} can be written in the form
$S^{q^{-1}}_nL^{(1)}L^{(2)}A_m^{q^{-1}}=0$. Thus we have
\begin{align*}
 A_n^q L^{(1)}L^{(2)}A_m^q=A_n^q L^{(1)}L^{(2)}, \qquad L^{(1)}L^{(2)}A_m^{q^{-1}}=A_n^{q^{-1}}L^{(1)}L^{(2)}A_m^{q^{-1}}.
\end{align*}
These relations implies
\begin{align*}
 A_n^q L^{(1)}L^{(2)}A_m^qA_m^{q^{-1}}=
 A_n^qL^{(1)}L^{(2)}A_m^{q^{-1}}=A_n^qA_n^{q^{-1}}L^{(1)}L^{(2)}A_m^{q^{-1}}.
\end{align*}
By using~\eqref{AAPA} one yields
\begin{align*}
 A_n^q L^{(1)}L^{(2)}A_m^qP_m=A_n^qP_nL^{(1)}L^{(2)}A_m^{q^{-1}}.
\end{align*}
Multiplication by $P_m$ from the right gives
\begin{align*}
 A_n^q L^{(1)}L^{(2)}A_m^q=A_n^qL^{(2)}L^{(1)}A_m^q.
\end{align*}
By taking into account~\eqref{AqLLSq} we obtain~\eqref{AqLL} in the following way:
\begin{align*}
 A_n^q L^{(1)}L^{(2)}&=A_n^q L^{(1)}L^{(2)}\big(A_m^q+S_m^q\big)
=A_n^q L^{(1)}L^{(2)}A_m^q=A_n^qL^{(2)}L^{(1)}A_m^q
\\
&=\big(A_n^q+S_n^q\big)L^{(2)}L^{(1)}A_m^q=L^{(2)}L^{(1)}A_m^q.
\end{align*}
Finally, by virtue of Lemma~\ref{PropMtr} the relations~\eqref{AqLLSq} mean exactly that $L$ and $L^\top$ are $q$-Manin matrices. \end{proof}

Theorem~\ref{ThqLax} implies that a Lax operator of $U_q(\mathfrak{gl}_n)$ type is a particular case of $q$-Manin matrix. Some properties of these Lax operators can be generalised to the case of $q$-Manin matrices. The $q$-determinant~\eqref{detq} arose as a natural generalisation of the determinant for the Lax operators of $U_q(\mathfrak{gl}_n)$ type, its properties were generalised for the case of $q$-Manin matrices in~\cite{qManin}.

Note that the fact that the $RLL$-relation~\eqref{RLL} is equivalent to the claim that $L$ and $L^\top$ are both $q$-Manin matrices can be proved in the same way as Proposition~\ref{propManinComm} (see~\cite{CFR, Manin88}). The approach considered here explains this fact in terms of left equivalence of idempotents, which will be applied in Section~\ref{secHecke}. This allows to explain why the Newton identities for the $q$-Manin matrices proved in~\cite{qManin} differs from the Newton identities for $L$-operators and $\wh R^q_{n-}$-Manin matrices deduced in~\cite{IO,IOP98,IOP99,PS}.

\begin{Rem}
 Replacement of $\wh R^q_n$ by the idempotent $A^q_n$ in the equation~\eqref{whRLL} leads to another relations for the entries of the matrix $L$. Though $L$ is also a $q$-Manin matrix in this case, the matrix $L^\top$ is not. For $n=2$ it is described in details in~\cite{GS}.
\end{Rem}

Decomposition of the operator $\wh R^q$ into dual idempotents described in Lemma~\ref{LemRdec} gives a~gene\-ral idea how to connect Lax operators with Manin matrices. It can be applied to some general class of $R$-matrices.

\subsection{Lax operators of Yangian type as Manin operators}
\label{secLOY}

The decomposition method described in Section~\ref{secUq} is generalised here to the case of the rational $R$-matrix. This gives an interpretation of the corresponding Lax matrices as a class of Manin operators.

Let $h\colon Y(\mathfrak{gl}_n)\to\gR$ be a homomorphism from the Yangian to some algebra $\gR$. It is defined by the image of the matrix $T(u)$. It has the form $L(u)=1+\sum_{r=1}^\infty\sum_{i,j=1}^n \ell^{(r)}_{ij}E_{i}{}^{j}u^{-r}$, where $\ell^{(r)}_{ij}=h(t^{(r)}_{ij})$, and satisfies the $RLL$-relation
\begin{align}
 R(u_1-u_2) L^{(1)}(u_1)L^{(2)}(u_2)=L^{(2)}(u_2)L^{(1)}(u_1) R(u_1-u_2), \label{RLLun}
\end{align}
where $R(u)=R_n(u)=u-P_n$. Conversely, any $n\times n$ matrix over $\gR$ which has this form and satisfies~\eqref{RLLun} defines a homomorphism $Y(\mathfrak{gl}_n)\to\gR$. These are Lax operators of Yangian type.

We consider a more general matrix $L(u)\in\gR((u^{-1}))\otimes\Hom(\CC^m,\CC^n)$ satisfying the $RLL$-relation
\begin{align}
 R_n(u_1-u_2) L^{(1)}(u_1)L^{(2)}(u_2)=L^{(2)}(u_2)L^{(1)}(u_1) R_m(u_1-u_2). \label{RLLu}
\end{align}
It could be the Lax operator of the $\mathfrak{gl}_n$ $XXX$-model (which is an image of $T(u)$ under some representation of $Y(\mathfrak{gl}_n)$ multiplied by a polynomial), the Lax operator of the $n$ particle Toda chain etc.

Note that the matrix~$R(u)$ satisfies the Yang--Baxter equation
\begin{align} \label{YBE}
 R^{(12)}(u_{12}) R^{(13)}(u_{13})R^{(23)}(u_{23})= R^{(23)}(u_{23}) R^{(13)}(u_{13}) R^{(12)}(u_{12})
\end{align}
and
\begin{align}
 R^{(21)}(u_{21})R^{(12)}(u_{12})=1-u_{12}^2, \label{RhbarUnitaryCnn}
\end{align}
where $u_{ij}=u_i-u_j$.

Define an operator on $\CC^n\big(\big(u^{-1}\big)\big)^{\otimes2}=\CC^n\otimes\CC^n\big(\big(u_1^{-1},u_2^{-1}\big)\big)$ by the formula
\begin{align}
 \wh R=\wh R_n:=P_{u_1,u_2}P_nR_n(u_{12})= P_{u_1,u_2}(u_{12}P_n-1), \label{whRhbar}
\end{align}
where $P_{u_1,u_2}$ is the operator permuting $u_1$ and $u_2$, that is $(P_{u_1,u_2}f)(u_1,u_2)=f(u_2,u_1)$. The relation~\eqref{RLLu} is equivalent to
\begin{align}
 \wh R_n L^{(1)}(u_1)L^{(2)}(u_2) =L^{(1)}(u_1)L^{(2)}(u_2) \wh R_m. \label{whRLLu}
\end{align}
The relation~\eqref{RhbarUnitaryCnn} takes the from
\begin{align}
 \wh R_n\wh R_n=1-u_{12}^2. \label{whR2}
\end{align}

\begin{Rem}
The operator~\eqref{whRhbar} satisfies the braid relation
\begin{align} \label{whRBR}
 \wh R^{(23)}\wh R^{(12)}\wh R^{(23)}=\wh R^{(12)}\wh R^{(23)}\wh R^{(12)}.
\end{align}
It is obtained from the Yang--Baxter equation~\eqref{YBE} via multiplication by $P_{u_2,u_3}P_{u_1,u_2}P_{u_2,u_3}=P_{u_1,u_2}P_{u_2,u_3}P_{u_1,u_2}$ and~\eqref{PBR} from the left. Due to the formula~\eqref{whR2} this operator gives a~representation of the group $\SSS_k$ for an arbitrary $k$ after some renormalization. Namely, the normalised rational $R$-matrix $\overline R(u)=(1-u)^{-1}R(u)$ satisfies $\overline R^{(21)}(u_{21})\overline R^{(12)}(u_{12})=1$ and the same Yang Baxter equation~\eqref{YBE}. Hence the operator $\wt R=P_{u_1,u_2}P_n\wt R(u_{12})$ satisfies $\wt R^2=1$ and the braid relation~\eqref{whRBR}. This implies that the map $\sigma_a\mapsto\wt R^{(a,a+1)}$ gives a representation of $\SSS_k$ on the space $(\CC^n)^{\otimes k}(u_1,\dots,u_k)$.
\end{Rem}

\begin{Prop}
The operators
\begin{align*}
\wh R^+=\wh R_{n+}:=\frac{1-u_{12}+\wh R_n}2,\qquad
\wh R^-=\wh R_{n-}:=\frac{1+u_{12}-\wh R_n}2 %\label{whRhbarpm}
\end{align*}
are orthogonal idempotents dual to each other:
\begin{align}
\wh R^++\wh R^-=1,\qquad
\big(\wh R^+\big)^2=\wh R^+, \qquad
\big(\wh R^-\big)^2=\wh R^-, \qquad
\wh R^+\wh R^-=\wh R^-\wh R^+=0. \label{whRhbarpmOI}
\end{align}
\end{Prop}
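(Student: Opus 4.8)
The plan is to reduce the whole statement to two ingredients: the square relation \eqref{whR2}, namely $\wh R_n\wh R_n=1-u_{12}^2$, and the way the scalar $u_{12}$ interacts with $\wh R_n$. First I would dispose of the decomposition of unity: adding the definitions gives
\[
\wh R^++\wh R^-=\frac{1-u_{12}+\wh R_n}2+\frac{1+u_{12}-\wh R_n}2=1,
\]
with no computation required. The strategy is then to prove that $\wh R^-$ alone is an idempotent; once this is established, every remaining identity in \eqref{whRhbarpmOI} follows formally. Indeed, from $\wh R^+=1-\wh R^-$ one gets $(\wh R^+)^2=1-2\wh R^-+(\wh R^-)^2=1-\wh R^-=\wh R^+$ and $\wh R^+\wh R^-=(1-\wh R^-)\wh R^-=\wh R^--(\wh R^-)^2=0$, and likewise $\wh R^-\wh R^+=0$.

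The step I expect to be the crux is recognising that $u_{12}$ and $\wh R_n$ do not commute, and pinning down the correct sign. Since $\wh R_n=P_{u_1,u_2}(u_{12}P_n-1)$ by \eqref{whRhbar}, and $P_{u_1,u_2}$ swaps the spectral parameters and hence sends $u_{12}=u_1-u_2$ to $-u_{12}$, while $P_n$ commutes with the scalar $u_{12}$, I would compute
\[
\wh R_n\,u_{12}=P_{u_1,u_2}(u_{12}P_n-1)u_{12}=P_{u_1,u_2}\,u_{12}(u_{12}P_n-1)=-u_{12}\,P_{u_1,u_2}(u_{12}P_n-1)=-u_{12}\,\wh R_n,
\]
so that $\wh R_n$ and $u_{12}$ anticommute: $u_{12}\wh R_n+\wh R_n u_{12}=0$. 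As a consistency check, $u_{12}^2$ is even under the swap and therefore commutes with $\wh R_n$, in agreement with \eqref{whR2}.

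Finally I would verify the idempotency of $\wh R^-$ by direct expansion, using the two ingredients above. Expanding the square and keeping the operator order gives
\begin{align*}
4(\wh R^-)^2&=(1+u_{12}-\wh R_n)(1+u_{12}-\wh R_n)\\
&=\big(1+u_{12}^2+\wh R_n\wh R_n\big)+2u_{12}-2\wh R_n-\big(u_{12}\wh R_n+\wh R_n u_{12}\big).
\end{align*}
Here the cross term $u_{12}\wh R_n+\wh R_n u_{12}$ vanishes by the anticommutation relation, and $1+u_{12}^2+\wh R_n\wh R_n=1+u_{12}^2+(1-u_{12}^2)=2$ by \eqref{whR2}. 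Hence $4(\wh R^-)^2=2+2u_{12}-2\wh R_n=2(1+u_{12}-\wh R_n)$, i.e.\ $(\wh R^-)^2=\wh R^-$, which completes the argument together with the formal deductions of the first paragraph.
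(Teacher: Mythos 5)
Your proof is correct and follows essentially the same route as the paper: both arguments rest on \eqref{whR2} together with the anticommutation $\wh R_n u_{12}=-u_{12}\wh R_n$ (which the paper invokes in the general form $\wh R f(u_1,u_2)=f(u_2,u_1)\wh R$), followed by a single quadratic expansion. The only cosmetic difference is that the paper expands $\big(\wh R+1-u_{12}\big)\big(\wh R-1-u_{12}\big)=-4\,\wh R^+\wh R^-$ to obtain orthogonality first, whereas you expand $\big(1+u_{12}-\wh R_n\big)^2$ to obtain idempotency of $\wh R^-$ first; in either case the remaining identities in \eqref{whRhbarpmOI} follow formally from the sum relation.
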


\begin{proof} The first of~\eqref{whRhbarpmOI} is obvious. By using~\eqref{whR2} and $\wh R f(u_1,u_2)=f(u_2,u_1)\wh R$ we obtain $\big(\wh R+1-u_{12}\big)\big(\wh R-1-u_{12}\big)=0$. This implies the rest of~\eqref{whRhbarpmOI}. \end{proof}

Now consider
\begin{align} \label{whA}
 \wh A_n:=\wh R_{n-}\cdot\frac{1}{u_{12}}=\frac{1+u_{12}^{-1}-P_{u_1,u_2}P_nR(u_{12})u_{12}^{-1}}{2}=\frac{1+u_{12}^{-1}+P_{u_1,u_2}\big(u_{12}^{-1}-P_n\big)}{2}.
\end{align}

\begin{Lem} %\label{LemA}
The operator $\wh A_n$ is an idempotent acting on the space $\CC^n\big[u_1,u_1^{-1}\big]\otimes\CC^n\big[u_2,u_2^{-1}\big]$. It preserves the subspaces $\CC^n[u_1]\otimes\CC^n[u_2]$ and $\CC^n\big[u_1^{-1}\big]\otimes\CC^n\big[u_2^{-1}\big]$.
\end{Lem}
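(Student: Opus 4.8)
The plan is to reduce both claims to two properties of $\wh R_n$ that are already in hand: the intertwining relation $\wh R_n f(u_1,u_2)=f(u_2,u_1)\wh R_n$ established in the proof of the preceding Proposition, and the quadratic relation $\wh R_n\wh R_n=1-u_{12}^2$ from~\eqref{whR2}. Taking $f=u_{12}^{-1}$ in the intertwining relation yields the anticommutation $\wh R_n u_{12}^{-1}=-u_{12}^{-1}\wh R_n$, and feeding this into the definition~\eqref{whA} I would first rewrite $\wh A_n$ in the compact form
\begin{align*}
 \wh A_n=\frac{(1+u_{12}-\wh R_n)u_{12}^{-1}}{2}=\frac{1+u_{12}^{-1}+u_{12}^{-1}\wh R_n}{2},
\end{align*}
which is the shape in which the idempotency becomes a one-line expansion.

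For the idempotency I would expand $(2\wh A_n)^2=(1+u_{12}^{-1}+u_{12}^{-1}\wh R_n)^2$, pushing every $\wh R_n$ to the right with $\wh R_n u_{12}^{-1}=-u_{12}^{-1}\wh R_n$ and replacing $\wh R_n\wh R_n$ by $1-u_{12}^2$. The two products that matter are
\begin{align*}
 u_{12}^{-1}\wh R_n\,u_{12}^{-1}=-u_{12}^{-2}\wh R_n, \qquad u_{12}^{-1}\wh R_n\,u_{12}^{-1}\wh R_n=-u_{12}^{-2}\bigl(1-u_{12}^2\bigr)=1-u_{12}^{-2}.
\end{align*}
Thus the $u_{12}^{-2}$ coming from $\bigl(u_{12}^{-1}\bigr)^2$ is cancelled by the $-u_{12}^{-2}$ inside the last product, and the $u_{12}^{-2}\wh R_n$ coming from $u_{12}^{-1}\cdot u_{12}^{-1}\wh R_n$ is cancelled by the $-u_{12}^{-2}\wh R_n$ above; what remains is $(2\wh A_n)^2=2+2u_{12}^{-1}+2u_{12}^{-1}\wh R_n=2\,(2\wh A_n)$, i.e.\ $\wh A_n^2=\wh A_n$.

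It remains to check that $\wh A_n$ is a genuine operator on $\CC^n\bigl[u_1,u_1^{-1}\bigr]\otimes\CC^n\bigl[u_2,u_2^{-1}\bigr]$ and that it preserves the two stated subspaces; this is the only delicate point, because $u_{12}^{-1}$ is not itself a Laurent polynomial. I would evaluate $\wh A_n$ on the basis monomials $e_i\otimes e_j\,u_1^au_2^b$, $a,b\in\ZZ$, directly from~\eqref{whA}: the terms carrying $u_{12}^{-1}$ collapse to $\tfrac12\,e_i\otimes e_j\,(u_1^au_2^b-u_1^bu_2^a)/(u_1-u_2)$, while the term $-P_{u_1,u_2}P_n$ contributes the Laurent monomial $-\tfrac12\,e_j\otimes e_i\,u_1^bu_2^a$. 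The obstacle is to see that $u_1^au_2^b-u_1^bu_2^a$ is divisible by $u_1-u_2$ inside $\CC\bigl[u_1^{\pm1},u_2^{\pm1}\bigr]$; this holds because the numerator vanishes on the diagonal $u_1=u_2$ and $u_1-u_2$ is prime in this UFD, the quotient being explicitly $u_1^bu_2^b\sum_{s=0}^{a-b-1}u_1^su_2^{a-b-1-s}$ for $a>b$ (antisymmetric under $a\leftrightarrow b$, and zero for $a=b$). Hence $\wh A_n$ indeed maps the Laurent space to itself. Reading off the exponents of this explicit answer then settles the last two assertions: for $a,b\ge0$ every resulting monomial has nonnegative exponents, so $\CC^n[u_1]\otimes\CC^n[u_2]$ is preserved, and for $a,b\le0$ every exponent is nonpositive, so $\CC^n\bigl[u_1^{-1}\bigr]\otimes\CC^n\bigl[u_2^{-1}\bigr]$ is preserved.
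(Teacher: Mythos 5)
Your proof is correct and takes essentially the same route as the paper: the idempotency is the identical expansion of $(2\wh A_n)^2$ with $2\wh A_n=1+u_{12}^{-1}+u_{12}^{-1}\wh R_n$, using $\wh R_n u_{12}^{-1}=-u_{12}^{-1}\wh R_n$ and $\wh R_n^2=1-u_{12}^2$. For the preservation claims the paper argues abstractly that any antisymmetrized Laurent polynomial $p(u_1,u_2)-p(u_2,u_1)$ is divisible by $u_1-u_2$ within the appropriate subring, whereas you verify the same divisibility on basis monomials with the explicit quotient $u_1^bu_2^b\sum_{s}u_1^su_2^{a-b-1-s}$ — a formula the paper itself records immediately after the proof — so the difference is only cosmetic.
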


\begin{proof} Let us rewrite~\eqref{whA} in the form
\begin{align*}
 \wh A_n=\frac{1-P_{u_1,u_2}P_n}{2}+\frac{1}{2(u_1-u_2)}(1-P_{u_1,u_2}).
\end{align*}
The first term is obviously preserves all tree spaces. Let us prove that so does the second term. By acting by $(1-P_{u_1,u_2})$ on a Laurent polynomial $p(u_1,u_2)\in\CC\big[u_1,u_1^{-1},u_2,u_2^{-1}\big]$ we obtain the Laurent polynomial $q(u_1,u_2)=p(u_1,u_2)-p(u_2,u_1)$. Since $q(u_1,u_1)=0$ we have the formula $q(u_1,u_2)=(u_1-u_2)r(u_1,u_2)$ for some $r(u_1,u_2)\in\CC\big[u_1,u_1^{-1},u_2,u_2^{-1}\big]$. Hence $\frac{1}{2(u_1-u_2)}(1-P_{u_1,u_2})p(u_1,u_2)=\frac12r(u_1,u_2)$ is also a Laurent polynomial. If $p(u_1,u_2)\in\CC[u_1,u_2]$ then $r(u_1,u_2)\in\CC[u_1,u_2]$. Analogously, for a Laurent polynomial $p(u_1,u_2)\in\CC\big[u_1^{-1},u_2^{-1}\big]$ we obtain $q(u_1,u_2)=p(u_1,u_2)-p(u_2,u_1)=\big(u_1^{-1}-u_2^{-1}\big)r(u_1,u_2)$ for some $r(u_1,u_2)\in\CC\big[u_1^{-1},u_2^{-1}\big]$ and hence $\frac{1}{2(u_1-u_2)}(1-P_{u_1,u_2})p(u_1,u_2)
=-\frac{u_1^{-1}u_2^{-1}}{2}r(u_1,u_2)\in\CC\big[u_1^{-1},u_2^{-1}\big]$.

By multiplying $2\wh A_n=1+u_{12}^{-1}+u_{12}^{-1}\wh R$ with itself we obtain
\begin{align*}
4\wh A_n^2&=\big(1+u_{12}^{-1}\big)^2 +\big(1+u_{12}^{-1}\big)u_{12}^{-1}\wh R+
u_{12}^{-1}\wh R\big(1+u_{12}^{-1}\big)+u_{12}^{-1}\wh R u_{12}^{-1}\wh R
\\
&= 1+2u_{12}^{-1}+u_{12}^{-2} +\big(1+u_{12}^{-1}+1-u_{12}^{-1}\big)u_{12}^{-1}\wh R
-u_{12}^{-2}\wh R^2.
\end{align*}
By taking into account~\eqref{whR2} we obtain $4\wh A_n^2=2+2u_{12}^{-1} +2u_{12}^{-1}\wh R=4\wh A_n$. \end{proof}

The basis of $\CC^n\otimes\CC^n\big[u_1,u_1^{-1},u_2,u_2^{-1}\big]$ is $\big(e_i\otimes e_j u_1^ku_2^l\big)$, where $k,l\in\ZZ$, $i,j=1,\dots,n$, and the completion with respect to this basis is the space $\CC^n\otimes\CC^n\big[\big[u_1,u_1^{-1},u_2,u_2^{-1}\big]\big]$. Since $\wh A_n$ preserves the space $\CC^n\otimes\CC^n\big[u_1,u_1^{-1},u_2,u_2^{-1}\big]$ its matrix in this basis satisfies the column finiteness condition introduced in Section~\ref{secMO}. However it does not satisfies the row finiteness condition since it can not be extended to the completed tensor product space $\CC^n\otimes\CC^n\big[\big[u_1,u_1^{-1},u_2,u_2^{-1}\big]\big]$. Explicitly these can be seen from the formulae
\begin{align}
 &2\wh A_n\big(e_i\otimes e_j u_1^ku_2^l\big)=e_i\otimes e_j u_1^ku_2^l-e_j\otimes e_i u_1^ku_2^l+e_i\otimes e_j\frac{u_1^ku_2^l-u_1^lu_2^k}{u_1-u_2}, \label{Anbasis1} \\
 &\frac{u_1^ku_2^l-u_1^lu_2^k}{u_1-u_2}=
\begin{cases}
-\sum_{m=k}^{l-1}u_1^mu_2^{k+l-1-m}, &k<l, \\
0, &k=l, \\
\sum_{m=l}^{k-1}u_1^mu_2^{k+l-1-m}, &k>l.
\end{cases} \label{Anbasis2}
\end{align}
For instance, for any $k\ge1$ the vector $\wh A_n\big(e_i\otimes e_j u_1^ku_2^{1-k}\big)$ has a non-zero coefficient at the term $e_i\otimes e_j u_1^0u_2^0$.

Consider the topology of the space $V=\CC^n\big[u,u^{-1}\big]$ defined by the neighbourhoods of $0$ of the form $V_r=\big\{\sum_{k=-r}^{N}t_ku^k\mid N\ge-r,t_k\in\CC^n\big\}$. The completion of $V$ with respect to this topology is the space $\CC^n\big(\big(u^{-1}\big)\big)$. The corresponding completion of the space $\gR\otimes V$ is $\gR\whotimes V=\gR\big(\big(u^{-1}\big)\big)\otimes\CC^n$. The neighbourhoods $V_r\otimes V_s$ defines the topology of $V\otimes V$ which gives the completion $\CC^n\otimes\CC^n\big(\big(u_1^{-1},u_2^{-1}\big)\big)$. The operator $\wh A_n\in\End(V\otimes V)$ is continuous with respect to this topology. In particular, it means that $\wh A_n$ is extended to the completion $\CC^n\otimes\CC^n\big(\big(u_1^{-1},u_2^{-1}\big)\big)$.

Thus an $\big(\wh A_n,\wh A_m\big)$-Manin operator is an element $M$ of the space $\Hom\big(\CC^m\big[\big[u,u^{-1}\big]\big],\gR\whotimes V\big)=\Hom\big(\CC^m\big[\big[u,u^{-1}\big]\big],\gR\big(\big(u^{-1}\big)\big)\otimes\CC^n\big)$, satisfying
\begin{align*}
 \wh A_n M^{(1)}M^{(2)}=\wh A_n M^{(1)}M^{(2)}\wh A_m.
\end{align*}

 Note also that the operators $e^{a(\partial_{u_1}+\partial_{u_2})}=e^{a\partial_u}\otimes e^{a\partial_u}$ commute with $\wh A_n$. Hence if $M$ is an $\big(\wh A_n,\wh A_m\big)$-Manin operator then $e^{a\partial_u}Me^{b\partial_u}$ is also an $\big(\wh A_n,\wh A_m\big)$-Manin operator for any $a,b\in\CC$ (this follows from Proposition~\ref{PropPerm} generalised to the infinite-dimensional case).

\begin{Th} \label{ThwhA}
Let $L(u)\in\gR\big(\big(u^{-1}\big)\big)\otimes\Hom(\CC^m,\CC^n)$. The matrix $L(u)$ is an $\big(\wh A_n,\wh A_m\big)$-Manin operator iff it satisfies $RLL$-relation~\eqref{RLLu}.
\end{Th}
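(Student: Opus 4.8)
The plan is to connect the $RLL$-relation~\eqref{RLLu} with the Manin operator relation $\wh A_n\Pi(1-\wh A_m)=0$, where $\Pi:=L^{(1)}(u_1)L^{(2)}(u_2)$, through the stronger intertwining relation $\wh A_n\Pi=\Pi\wh A_m$. First I would pass from~\eqref{RLLu} to its braid form~\eqref{whRLLu}, $\wh R_n\Pi=\Pi\wh R_m$, as already established in the text. The crucial structural fact is that multiplication by $u_{12}$ commutes through $\Pi$: since $L^{(1)}(u_1)$ is multiplication by a series in $u_1^{-1}$ and $L^{(2)}(u_2)$ by a series in $u_2^{-1}$, one has $u_{12}\Pi=\Pi u_{12}$ (the left factor acting on the target, the right on the source). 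Because $\wh R_{n-}=\frac{1+u_{12}-\wh R_n}2$ is an affine combination of $1$, $u_{12}$ and $\wh R_n$, this commutation turns~\eqref{whRLLu} into the equivalent relation $\wh R_{n-}\Pi=\Pi\wh R_{m-}$. Finally, using the defining relation~\eqref{whA} in the form $\wh R_{n-}=\wh A_nu_{12}$ and commuting $u_{12}$ to the far right, this reads $\wh A_n\Pi u_{12}=\Pi\wh A_mu_{12}$; cancelling $u_{12}$, which is legitimate because multiplication by $u_{12}$ is injective (with dense image) and the operators are continuous in the relevant completion, yields $\wh A_n\Pi=\Pi\wh A_m$. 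Every step here is an equivalence, so~\eqref{RLLu} is equivalent to the intertwining $\wh A_n\Pi=\Pi\wh A_m$. The identity~\eqref{whR2}, $\wh R_n\wh R_n=1-u_{12}^2$, underlies the idempotency~\eqref{whRhbarpmOI} of $\wh R_{n\pm}$ that these manipulations rest on.

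The intertwining immediately gives the Manin operator relation: multiplying $\wh A_n\Pi=\Pi\wh A_m$ on the right by the idempotent $\wh A_m$ gives $\wh A_n\Pi\wh A_m=\Pi\wh A_m=\wh A_n\Pi$, that is $\wh A_n\Pi(1-\wh A_m)=0$. This settles the direction ``$RLL\Rightarrow(\wh A_n,\wh A_m)$-Manin operator''. For the converse one must recover the intertwining from the single one-sided relation $\wh A_n\Pi=\wh A_n\Pi\wh A_m$. Writing $\wh A_n\Pi-\Pi\wh A_m=\wh A_n\Pi\wh A_m-\Pi\wh A_m=-(1-\wh A_n)\Pi\wh A_m$, one sees that the whole converse reduces to proving the complementary relation $(1-\wh A_n)\Pi\wh A_m=0$.

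This complementary relation is the main obstacle, since a generic operator with $\wh A_n\Pi(1-\wh A_m)=0$ need not satisfy $(1-\wh A_n)\Pi\wh A_m=0$. What saves the argument is the separable structure of $\Pi=L^{(1)}(u_1)L^{(2)}(u_2)$, each factor depending on a single spectral variable, together with the explicit shape of $\wh A_n=\frac{1-P_{u_1,u_2}P_n}2+\frac{1}{2u_{12}}(1-P_{u_1,u_2})$. I would resolve it by decoding the one-sided relation into relations among the coefficients of the entries of $L(u)$: splitting $\wh A_n$ into its polynomial part and its $\frac1{u_{12}}$ part and applying it to basis vectors via the formulae~\eqref{Anbasis1},~\eqref{Anbasis2}, the equation $\wh A_n\Pi(1-\wh A_m)=0$ produces precisely the entrywise form of~\eqref{RLLu}. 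The case $n=m=1$, where the relation forces $L(u_1)L(u_2)$ to be symmetric in $u_1,u_2$ and hence the coefficients of $L$ to commute, already exhibits this mechanism and makes transparent why a single one-sided condition suffices here, in contrast with the $U_q$ case of Theorem~\ref{ThqLax} (the built-in $P_{u_1,u_2}$ plays the role of the transpose there, the unitarity~\eqref{RhbarUnitaryCnn} reflecting the self-duality of $RLL$ under $u_1\leftrightarrow u_2$). Once the entrywise relations are matched with those of~\eqref{RLLu}, equivalently with $\wh R_n\Pi=\Pi\wh R_m$, the complementary relation follows and the intertwining, hence~\eqref{RLLu}, is restored.
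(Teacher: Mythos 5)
Your forward implication is sound in substance: from \eqref{RLLu} one passes to \eqref{whRLLu}, hence, writing $\Pi=L^{(1)}(u_1)L^{(2)}(u_2)$, to $\wh R_{n-}\Pi=\Pi\wh R_{m-}$ by linearity (using $u_{12}\Pi=\Pi u_{12}$), and right multiplication by the idempotent $\wh A_m$ then kills $1-\wh A_m$. One flaw in the write-up, though: your justification for ``cancelling $u_{12}$'' is invalid as stated. Injectivity of multiplication by $u_{12}$ is irrelevant for removing a factor that is applied \emph{first}; what one would need is density of its image, and that fails in the topology of Section~\ref{secLOY} --- the quotient of $\CC^m\otimes\CC^m\big[u_1^{\pm1},u_2^{\pm1}\big]$ by the ideal $(u_1-u_2)$ is a copy of the Laurent ring in one variable, and the image of every neighbourhood $V_r\otimes V_s$ in this quotient is a proper subspace. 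The intertwining $\wh A_n\Pi=\Pi\wh A_m$ is in fact true under \eqref{whRLLu}, but it must be checked directly on the two summands of $\wh A_n$ in \eqref{whA} (the $\frac1{2u_{12}}(1-P_{u_1,u_2})$ term uses \eqref{whRLLu} in the rewritten form $u_{12}\big(P_{u_1,u_2}P_n\Pi-\Pi P_{u_1,u_2}P_m\big)=\Pi P_{u_1,u_2}-P_{u_1,u_2}\Pi$), not deduced by a formal cancellation.

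The decisive problem is the converse, and you say so yourself: everything reduces to showing that the one-sided relation $\wh A_n\Pi(1-\wh A_m)=0$ forces the complementary relation $(1-\wh A_n)\Pi\wh A_m=0$, and at that point your proposal only gestures --- ``decoding the one-sided relation into relations among the coefficients \dots\ produces precisely the entrywise form of \eqref{RLLu}'' is a restatement of the claim, not an argument, and even your $n=m=1$ illustration is asserted rather than computed. This is the half of the ``iff'' that carries the content of the theorem. The paper closes it with a single structural trick that your outline lacks: multiply $\wh A_n\Pi=\wh A_n\Pi\wh A_m$ on the right by $4u_{12}$ and substitute $2u_{12}\wh A_n=1+u_{12}+\wh R_n$ and $2\wh A_m=1+u_{12}^{-1}+u_{12}^{-1}\wh R_m$, obtaining the identity \eqref{RLLR}. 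Its left-hand side contains only terms with an even number of $\wh R$'s, hence is invariant under the conjugation $T\mapsto u_{12}Tu_{12}^{-1}$ (scalar functions and $\Pi$ commute with $u_{12}$, while each $\wh R$ anticommutes with it); its right-hand side has exactly one $\wh R$ per term, hence changes sign. Therefore both sides of \eqref{RLLR} vanish separately, and by \eqref{whR2} the vanishing of either side is equivalent to \eqref{whRLLu}. This parity argument delivers both implications at once; without it --- or without actually carrying out the entrywise verification you allude to --- your proposal establishes only the implication from \eqref{RLLu} to the Manin-operator relation, not the equivalence claimed in the theorem.
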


\begin{proof} Remind first that the relation~\eqref{RLLu} is equivalent to~\eqref{whRLLu}.
Let us multiply
\begin{align*}
\wh A_n L^{(1)}(u_1)L^{(2)}(u_2)=\wh A_n L^{(1)}(u_1)L^{(2)}(u_2)\wh A_m %\label{ALLu}
\end{align*}
by $4u_{12}$ from the right and substitute $2u_{12}\wh A_n=1+u_{12}+\wh R_n$, $2\wh A_m=1+u_{12}^{-1}+u_{12}^{-1}\wh R_m$. This gives the equivalent relation
\begin{gather}
 \big(u_{12}-u_{12}^{-1}\big)L^{(1)}(u_1)L^{(2)}(u_2)+u_{12}^{-1}\wh R_n L^{(1)}(u_1)L^{(2)}(u_2)\wh R_m\nonumber
 \\ \qquad
 {}=
 \big(1+u_{12}^{-1}\big)L^{(1)}(u_1)L^{(2)}(u_2)\wh R_m-\big(1+u_{12}^{-1}\big)\wh R_n L^{(1)}(u_1)L^{(2)}(u_2).
 \label{RLLR}
 \end{gather}
The left hand side of~\eqref{RLLR} does not change at the conjugation $T\mapsto u_{12}Tu_{12}^{-1}$ while the right hand side changes the sign. This means that~\eqref{RLLR} is valid iff the both hand sides vanish.
Due to~\eqref{whR2} the vanishing of each hand side of~\eqref{RLLR} is equivalent to~\eqref{whRLLu}. \end{proof}

Let $L(u)$ be an $\big(\wh A_n,\wh A_m\big)$-Manin operator. Then $M=L(u+a)e^{b \partial_u}$ is also an $\big(\wh A_n,\wh A_m\big)$-Manin operator for any $a,b\in\CC$. In particular, the $A_n$-Manin matrix $M=L(u)e^{-\partial_u}$ considered in Section~\ref{secMm} is an $\wh A_n$-Manin operator.

\begin{Rem} %\label{RemL}
 One can renormalise the matrix $L(u)\in\gR\big(\big(u^{-1}\big)\big)\otimes\Hom(\CC^m,\CC^n)$ by multiplying it by a function in~$u$. Such renormalization does not violate the $RLL$-relation~\eqref{RLLu}. Hence one can suppose that $L(u)\in\gR[[u^{-1}]]\otimes\Hom(\CC^m,\CC^n)$ by multiplying $L(u)$ by $u^k$ for big enough~$k$ if needed. A matrix $L(u)\in\gR\big[\big[u^{-1}\big]\big]\otimes\Hom(\CC^m,\CC^n)$ satisfying $RLL$-relation is exactly an $\big(\big(\wh A_n\big)^-_{\rm res},\big(\wh A_m\big)^-_{\rm res}\big)$-Manin operator, where $\big(\wh A_n\big)^-_{\rm res}$ is the restriction of $\wh A_n$ to $\CC^n\otimes\CC^n\big[u_1^{-1},u_2^{-1}\big]$. We see from the formulae~\eqref{Anbasis1}, \eqref{Anbasis2} that $\big(\wh A_n\big)^-_{\rm res}$ satisfies the row finiteness condition. In~particular, we have the right quantum algebra $\gU_{(\wh A_n)^-_{\rm res}}$ and Theorem~\ref{ThwhA} implies that the Yan\-gian~$Y(\mathfrak{gl}_n)$ is a factor algebra of $\gU_{(\wh A_n)^-_{\rm res}}$.
\end{Rem}

\begin{Rem} %\label{RemLu}
 The facts described in this Section works also for a matrix $L(u)\in\gR((u))\otimes\Hom(\CC^m,\CC^n)$ since we can consider another completion of $\gR\big[u,u^{-1}\big]$. Again, one can suppose that $L(u)\in\gR[[u]]\otimes\Hom(\CC^m,\CC^n)$ making a renormalization if needed. A matrix $L(u)\in\gR[[u]]\otimes\Hom(\CC^m,\CC^n)$ satisfying $RLL$-relation is an $\big(\big(\wh A_n\big)^+_{\rm res},\big(\wh A_m\big)^+_{\rm res}\big)$-Manin operator, where $\big(\wh A_n\big)^+_{\rm res}$ is the restriction of $\wh A_n$ to $\CC^n\otimes\CC^n[u_1,u_2]$.
\end{Rem}

\section{Minors of Manin matrices}
\label{secMin}

The notion of minor (determinant of a submatrix) is an important tool in the classical matrix theory. It can be interpreted in terms of the Grassmann algebra as some coefficients. This gives minors of $(A_n,A_m)$-Manin matrices which we defined in Section~\ref{secMm} as column determinants of submatrices. The same can be done for $q$- and $(\wh q,\wh p)$-Manin matrices by considering the quadratic algebras $\Xi_A(\CC)$ for $A=A_n^q$ and $A_{\wh q}$ respectively. There is a dual notion of minors corresponding to the quadratic algebras $\gX_A(\CC)$. In the case of the polynomial algebras these dual minors are written via permanent.

For a general $(A,B)$-Manin matrix $M$ we define two types of minors corresponding to the homomorphisms $f_M\colon\gX_A(\CC)\to\gX_B(\gR)$ and $f^M\colon\Xi_B(\CC)\to\Xi_A(\gR)$. In fact these minors give the graded components of these homomorphisms. As usual minors they have a good behaviour at the multiplication of Manin matrices and under permutations of rows and columns. In future works we hope to find more properties of these minors by generalising the properties of the usual minors.

\subsection[The q-minors and permanents]
{The $\boldsymbol q$-minors and permanents}\label{secPerm}

First we remind that the $q$-determinant~\eqref{detq} is the coefficient of proportionality for the product of the $q$-Grassmann variables~\cite{qManin} (see also the formula~\eqref{phipsidet} for the $\wh q$-version). Namely, let $\psi_1,\dots,\psi_n$ be the generators of $\Xi_{A_n^q}(\CC)$, $M$ be an $n\times n$ matrix over an algebra $\gR$ and $\phi_j=\sum_{i=1}^n\psi_iM^i_j$, where $j=1,\dots,n$, then
\begin{align*} %\label{phipsidetq}
 \phi_1\phi_2\cdots\phi_n=\det_q(M)\psi_1\psi_2\cdots\psi_n.
\end{align*}

More generally, let $M$ be $n\times m$ matrix over $\gR$ and $\phi_j=\sum_{i=1}^n\psi_iM^i_j$, where $j=1,\dots,m$. For two $k$-tuples of indices $I=(i_1,\dots,i_k)$ and $J=(j_1,\dots,j_k)$ we denote by $M_{IJ}$ the $k\times k$ matrix over $\gR$ with entries
\begin{align*}
(M_{IJ})_{ab}=M^{i_a}_{j_b},\qquad a,b=1,\dots,k,
\end{align*}
where we suppose $1\le i_a\le n$ and $1\le j_b\le m$. If $i_1<\dots<i_k$ and $j_1<\dots<j_k$ then $M_{IJ}$ is a $k\times k$ submatrix of $M$ and $\det_q(M_{IJ})$ is a $q$-analogue of minor defined in Section~\ref{secMm}. The $q$-determinants $\det_q(M_{IJ})$ are the coefficients in the decomposition
\begin{align} \label{phipsiMin}
 \phi_{j_1}\phi_{j_2}\cdots\phi_{j_k}=\sum_{I=(i_1<\dots<i_k)}\det_q(M_{IJ}) \psi_{i_1}\psi_{i_2}\cdots\psi_{i_k},
\end{align}
where the sum is taken over $k$-tuples $I=(i_1,\dots,i_k)$ such that $1\le i_1<\dots<i_k\le n$.

If $M$ is a $q$-Manin matrix then $\phi_1,\dots,\phi_m$ are also $q$-anticommuting and hence the elements $\phi_{j_1}\phi_{j_2}\cdots\phi_{j_k}$ with $1\le j_1<\dots<j_k\le m$ span the subalgebra of $\Xi_{A_n^q}(\gR)$ generated by $\phi_1,\dots,\phi_m$. This means that the $q$-determinants of submatrices of $M$ are enough to describe the decompositions~\eqref{phipsiMin} in this case.

A dual notion to column determinant is row permanent. Recall that the {\it permanent} of an~$n\times n$ matrix $M$ is
\begin{align} \label{rowperm}
 \perm(M)=\sum_{\sigma\in \SSS_n}M^{1}_{\sigma(1)}\cdots M^{n}_{\sigma(n)}.
\end{align}
This is the same expression as for the determinant but without the factors $(-1)^\sigma$. If $M$ is over non-commutative algebra the factors in this expression should be placed in a certain way. The~formula~\eqref{rowperm} defines the {\it row permanent} of $M$ (see~\cite{CF,CFR}). It is invariant under a permutation of columns: $\perm(_\tau M)=\perm(M)$. Contrary to the determinant it does not change sign, so, in~par\-ticular, the permanent of a matrix with coinciding rows can be non-zero. If $M$ is an~$A_n$-Manin matrix, then the permanent is invariant under a permutation of rows: $\perm(^\tau M)=\perm(M)$.

Now consider the analogues decompositions of products of $y^i=\sum_{j=1}^mM^i_jx^j$, where $x^1,\dots,x^m$ are generators of $\gX_{A_m}(\CC)$ (for simplicity we consider the case $q=1$). Define an action of~the group $\SSS_k$ on $k$-tuples by the formula $\sigma(j_1,\dots,j_k)=\big(j_{\sigma^{-1}(1)},\dots,j_{\sigma^{-1}(k)}\big)$. For any tuple $I=(i_1,\dots,i_k)$ we have (cf.~\cite{CFR})
\begin{align*}
 y^{i_1}\cdots y^{i_k}&=\sum_{l_1,\dots,l_k=1}^mM^{i_1}_{l_1}\cdots M^{i_k}_{l_k}x^{l_1}\cdots x^{l_k}
=\sum_{J=(j_1\le\cdots\le j_k)}x^{j_1}\cdots x^{j_k}
 \sum_{(l_1,\dots,l_k)\in\SSS_kJ}M^{i_1}_{l_1}\cdots M^{i_k}_{l_k}
 \\
 &=\sum_{J=(j_1\le\cdots\le j_k)}x^{j_1}\cdots x^{j_k}\frac1{|(\SSS_k)_J|}\perm(M_{IJ}),
\end{align*}
where $|(\SSS_k)_J|$ is the order of the stabiliser subgroup $(\SSS_k)_J=\{\sigma\in \SSS_k\mid\sigma J=J\}$. For a fixed $J=(j_1,\dots,j_k)$ let $\nu_j=|\{a\mid j_a=j\}|$, then $|(\SSS_k)_J|=\nu_1!\nu_2!\cdots \nu_m!$. The normalised permanents $\frac1{|(\SSS_k)_J|}\perm(M_{IJ})=\frac1{\nu_1!\nu_2!\cdots \nu_m!}\perm(M_{IJ})$ are analogues of the minors for the polynomial algebra $\gX_{A_m}(\CC)=\CC\big[x^1,\dots,x^m\big]$.

For a general idempotent $A$ we will define minors of two types and they will be coefficients of decomposition of $y^{i_1}\cdots y^{i_k}$ and $\phi_{i_1}\cdots\phi_{i_k}$ into sums of $x^{j_1}\cdots x^{j_k}$ and $\psi_{j_1}\cdots\psi_{j_k}$ respectively (up to a factor).

\subsection{Dual quadratic algebras and their pairings}
\label{secDualQA}

Consider the question of decomposition coefficients in general settings. Let $V$ and $W$ be vector spaces with a non-degenerate pairing $\la\cdot,\cdot\ra\colon V\times W\to\CC$. Let them have dual bases $(v^i)$ and~$(w_i)$, $\la v^i,w_j\ra=\delta^i_j$. Let $\gR$ be an algebra, then $\gR\otimes V$ is a free $\gR$-module with the basis~$(v^i)$. Consider a~decomposition $\alpha=\sum_i \alpha_iv^i$ of an element $\alpha\in\gR\otimes V$ in the basis $(v^i)$. The coefficients $\alpha_i\in\gR$ can be calculated via the pairing: $\alpha_i=\la\alpha,w_i\ra$, where $\la r\otimes v,w \ra:=r\cdot \la v,w\ra$ $\forall r\in\gR,\,v\in V$, $w\in W$.

More generally, let $A\in\End(V)$ and $A^*\in\End(W)$ be adjoint operators: $\la A v, w\ra=\la v,A^* w\ra$ $\forall v\in V,\, w\in W$. They act on basis vectors as $Av^i=\sum_jA^i_jv^j$, $A^* w_j=\sum_i A_j^iw_i$, $A^i_j\in\CC$ (the both sums are finite). Suppose they are idempotents: $\sum_jA^i_jA^j_k=A^i_k$. Consider the subspaces $\overline V=AV$ and $\overline W=A^*W$. These subspaces are spanned by the vectors $\overline v^i=Av^i$ and $\overline w_i=A^*w_i$ respectively. We have $\la\overline v^i,\overline w_j\ra=A^i_j$. A decomposition $\alpha=\sum_i\alpha_i\overline v^i$ of an element $\alpha\in\gR\otimes \overline V$ is not unique since $\overline v^i$ are not linearly independent in general. However we can fix the coefficients~$\alpha_i$ by imposing some ``symmetry'' conditions.

\begin{Prop} \label{propVAW}
\quad
\begin{enumerate}\itemsep=0pt
 \item[$1.$] The restriction of the non-degenerate pairing $\la\cdot,\cdot\ra\colon V\times W\to\CC$ to $\overline V\times\overline W$ is also non-degenerate.

 \item[$2.$] For any $\alpha\in\gR\otimes\overline V$ there is a unique $($finite$)$ sequence $(\alpha_i)$ such that $\alpha=\sum_i\alpha_i\overline v^i$ and $\sum_{i}A_j^i\alpha_i=\alpha_j$. The coefficients $\alpha_i\in\gR$ fixed by these conditions can be found by the formula $\alpha_i=\la\alpha,\overline w_i\ra$.
 \end{enumerate}
\end{Prop}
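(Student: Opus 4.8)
The plan is to treat the two parts separately, using only the adjointness relation $\la Av,w\ra=\la v,A^*w\ra$, the idempotency $A^2=A$ (equivalently $\sum_j A^i_jA^j_k=A^i_k$), and the non-degeneracy of the original pairing; everything is extended $\gR$-linearly, and the non-commutativity of $\gR$ causes no trouble since the structure constants $A^i_j$ are scalars. First I would record the two facts used throughout: on $\overline V=AV$ the operator $A$ restricts to the identity (and dually $A^*$ to the identity on $\overline W=A^*W$), because $A\overline v=A^2v=Av=\overline v$; and $\la\overline v^i,\overline w_j\ra=A^i_j$, which is already computed in the excerpt.

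For part~1, take $\overline v\in\overline V$ with $\la\overline v,\overline w\ra=0$ for every $\overline w\in\overline W$. Writing an arbitrary $\overline w$ as $A^*w$ and using adjointness together with $A\overline v=\overline v$, I obtain $\la\overline v,w\ra=\la A\overline v,w\ra=\la\overline v,A^*w\ra=0$ for all $w\in W$; non-degeneracy of the original pairing then forces $\overline v=0$. The symmetric argument, starting from $\overline w\in\overline W$ annihilating all of $\overline V$ and using $A^*\overline w=\overline w$, gives $\overline w=0$. Hence the restricted pairing is non-degenerate on both sides.

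For part~2, I would set $\alpha_i:=\la\alpha,\overline w_i\ra$ and verify the three requirements. The symmetry condition follows from the identity $\sum_iA^i_j\overline w_i=A^*\big(\sum_iA^i_jw_i\big)=(A^*)^2w_j=\overline w_j$, which yields $\sum_iA^i_j\alpha_i=\la\alpha,\sum_iA^i_j\overline w_i\ra=\la\alpha,\overline w_j\ra=\alpha_j$. For the decomposition itself I would expand $\alpha=\sum_k\gamma_kv^k$ in the basis $(v^k)$ (a finite sum, since $\alpha\in\gR\otimes\overline V$), note $\gamma_k=\la\alpha,w_k\ra$, and apply $\id\otimes A$; because $\alpha\in\gR\otimes\overline V$ is fixed by $\id\otimes A$, this gives $\alpha=\sum_k\gamma_k\overline v^k$, while adjointness and $A\alpha=\alpha$ show $\la\alpha,\overline w_k\ra=\la\alpha,A^*w_k\ra=\la\alpha,w_k\ra=\gamma_k$, so indeed $\alpha=\sum_k\alpha_k\overline v^k$. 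Uniqueness is the one place the symmetry condition does real work: for any sequence $(\alpha'_i)$ with $\alpha=\sum_i\alpha'_i\overline v^i$ and $\sum_iA^i_j\alpha'_i=\alpha'_j$, pairing with $\overline w_j$ and using $\la\overline v^i,\overline w_j\ra=A^i_j$ gives $\la\alpha,\overline w_j\ra=\sum_i\alpha'_iA^i_j=\alpha'_j$, whence $\alpha'_j=\alpha_j$.

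I do not expect a genuine obstacle; the only care needed is bookkeeping with the idempotent acting on one side versus the other and checking that all sums remain finite, which is guaranteed by $\alpha$ lying in the uncompleted tensor product $\gR\otimes\overline V$ and by the stated finiteness of $Av^i$ and $A^*w_j$. The mildly delicate point is recognising that, without the symmetry constraint $\sum_iA^i_j\alpha_i=\alpha_j$, the coefficients in $\alpha=\sum_i\alpha_i\overline v^i$ need not be unique, since the $\overline v^i$ are linearly dependent in general; the proof of uniqueness must therefore invoke exactly that constraint, which is precisely what the computation $\la\alpha,\overline w_j\ra=\alpha'_j$ accomplishes.
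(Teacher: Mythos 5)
Your proof is correct and follows essentially the same route as the paper's: both parts rest on the adjointness relation, the idempotency of $A$ and $A^*$, the computation $\la\overline v^i,\overline w_j\ra=A^i_j$, and the definition $\alpha_i=\la\alpha,\overline w_i\ra$, with uniqueness extracted exactly as in the paper by pairing against $\overline w_j$. The only (harmless) variations are cosmetic: you verify non-degeneracy on both sides where the paper does one side and leaves the other to symmetry, and you obtain the decomposition by expanding $\alpha$ in the basis $(v^k)$ and applying $\id\otimes A$, whereas the paper shows the difference $\alpha-\sum_i\alpha_i\overline v^i$ pairs to zero with every $w_j$ and invokes non-degeneracy.
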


\begin{proof} (1) If $w\in\overline W$ and $\la v,w\ra=0$ for all $v\in\overline V$, then for any $v\in V$ we have $\la v,w\ra=\la v,A^*w\ra=\la Av,w\ra=0$ and hence $w=0$. (2) Since $\alpha\in\gR\otimes\overline V$ we have $A\alpha=\alpha$, where the action of $A$ is extended on $\gR\otimes V$ by the formula $A(r\otimes v)=r\otimes Av$. Let $\alpha_i=\la\alpha,\overline w_i\ra$, then $\alpha_i=\la\alpha,A^*w_i\ra=\la A\alpha,w_i\ra=\la\alpha,w_i\ra$. Hence $\sum_{i}A_j^i\alpha_i=\sum_{i}A_j^i\la\alpha,w_i\ra=\la\alpha,\overline w_j\ra=\alpha_j$. Moreover, $\la\alpha-\sum_i\alpha_i\overline v^i,w_j\ra=\alpha_j-\sum_{i}A_j^i\alpha_i=0$, so that $\alpha-\sum_i\alpha_i\overline v^i=0$. To show the uniqueness suppose that $\alpha=\sum_i\alpha_i\overline v^i$ for some $\alpha_i\in\gR$ such that $\sum_{i}A_j^i\alpha_i=\alpha_j$, then we have $\la\alpha,\overline w_j\ra=\sum_i\alpha_i A^i_j=\alpha_j$. \end{proof}

Let $A\in\End(\CC^n\otimes\CC^n)$ be an idempotent. Consider the corresponding quadratic alge\-bra~$\gX_A(\CC)$. We need to define a vector space ${\gX}^*_A(\CC)$ and a non-degenerate pairing with $\gX_A(\CC)$. This implies that ${\gX}^*_A(\CC)$ is a graded vector space and the pairing respects the grading.

Define the algebra ${\gX}^*_A(\CC)$ as the graded algebra generated by the elements $x_1,\dots,x_n$ with the ``dual'' quadratic commutation relations: $\sum_{i,j=1}^n x_i x_j A^{ij}_{kl}=0$, that is
\begin{align}
 (X^*\otimes X^*)A=0, \label{Axx0_dual}
\end{align}
where $X^*=\sum_{i=1}^nx_i e^i$. We have the algebra isomorphisms ${\gX}^*_A(\CC)=\Xi_{1-A}(\CC)=\gX_{A^\top}(\CC)$. Let us introduce a pairing $\la\cdot,\cdot\ra\colon{\gX}_A(\CC)\times{\gX}^*_A(\CC)\to\CC$ respecting the grading, i.e., the product $\la x^{i_1}\cdots x^{i_l},x_{j_1}\cdots x_{j_k}\ra$ vanishes if $k\ne l$. The pairing for the elements of the degree $0$ and $1$ are defined as follows: $\la1,1\ra=1$ and $\la x^i,x_j\ra=\delta^i_j$. For the higher degree elements the pairing has the form
\begin{align}
 \la x^{i_1}\cdots x^{i_k},x_{j_1}\cdots x_{j_k}\ra=S^{i_1\dots i_k}_{j_1 \dots j_k}, \label{XXXX_Sij}
\end{align}
for some $S^{i_1\dots i_k}_{j_1\dots j_k}\in\CC$. The commutation relations~\eqref{Axx0} and \eqref{Axx0_dual} implies that
\begin{gather}
 \sum_{i_a,i_{a+1}=1}^nA^{lm}_{i_ai_{a+1}}S^{i_1\dots i_k}_{j_1\dots j_k}=0, \qquad
 \sum_{j_s,j_{s+1}=1}^nS^{i_1\dots i_k}_{j_1\dots j_k}A^{j_aj_{a+1}}_{lm}=0, \label{ASSA0}
\end{gather}
for all $a=1,\dots,k-1$ and $l,m=1,\dots,n$. Under these conditions the formula~\eqref{XXXX_Sij} correctly defines a pairing $\la\cdot,\cdot\ra\colon\gX_A(\CC)\times\gX^*_A(\CC)\to\CC$ that respects the grading.

\begin{Rem} %\label{remDual}
 In contrast to the situation described in Remark~\ref{remFunctorDual} one can not correctly define an operation on quadratic algebras by mapping an algebra $\gX_A(\CC)$ to $\gX^*_A(\CC)$. It happens since the equality $\gX_A(\CC)=\gX_{A'}(\CC)$ does not imply an isomorphism of $\gX^*_A(\CC)$ and $\gX^*_{A'}(\CC)$ (see Section~\ref{secMinEq} for details).
\end{Rem}

To write the formulae~\eqref{XXXX_Sij}, \eqref{ASSA0} in a matrix form we introduce some conventions. Let $V$ and $W$ be vector spaces and $W^*=\Hom(W,\CC)$. The product $\pi\xi$ of elements $\pi\in V$ and $\xi\in W^*$ is usually identified with the linear operator $W\to V$ acting as $(\pi\xi)(w)=\xi(w)\cdot\pi$, $w\in W$. For example, $e_ie^j=E_{i}{}^{j}\in\End(\CC^n)$. More generally, $e_{i_1\dots i_k}e^{j_1\dots j_k}=E_{i_1}{}^{j_1}\otimes\cdots\otimes E_{i_k}{}^{j_k}$, where $e^{j_1\dots j_k}=e^{j_1}\otimes\cdots\otimes e^{j_k}$ and $e_{i_1\dots i_k}=e_{i_1}\otimes\cdots\otimes e_{i_k}$.

Let us introduce the notation $\la\alpha,\beta\ra$ for $\alpha\in\gX_A(\CC)\otimes V$ and $\beta\in\gX^*_A(\CC)\otimes W^*$. The pairing acts on the first tensor factors while in the second factor the elements are multiplied as above: $\la u\pi,v\xi\ra=\la u,v\ra\pi\xi\in\Hom(W,V)$, where $u\in\gX_A(\CC)$, $v\in\gX^*(\CC)$, $\pi\in V$ and $\xi\in W^*$. In~particular,
\[
\bigg\la\sum\limits_{i_1,\dots,i_k} u_{i_1\dots i_k}e_{i_1\dots i_k},\sum\limits_{j_1,\dots,j_k} v_{j_1 \dots j_k} e^{j_1 \dots j_k}\bigg\ra=\sum\limits_{i_1,\dots,i_k,j_1\dots,j_k}\la u_{i_1 \dots i_k}, v_{j_1 \dots j_k}\ra E_{i_1}{}^{j_1}\otimes\cdots\otimes E_{i_k}{}^{j_k}.
\]
By using this notation we can write the pairing of degree $1$ elements in matrix form:
\begin{align*}
 \la X,X^*\ra=\la x^ie_i,x_je^j\ra=\delta^i_jE_{i}{}^{j}=1. %\label{XX_pair}
\end{align*}

Consider the operators $S_{(k)}=\sum_{i_1,\dots,i_k,j_1,\dots,j_k}S^{i_1\dots i_k}_{j_1\dots j_k}E_{i_1}{}^{j_1}\otimes\cdots\otimes E_{i_k}{}^{j_k}\in\End\big((\CC^n)^{\otimes k}\big)$. For $k=1$ the operator $S_{(1)}$ is the $n\times n$ unit matrix. The formula~\eqref{XXXX_Sij} in matrix form reads
\begin{align}
 \la X\otimes\cdots\otimes X,X^*\otimes\cdots\otimes X^*\ra=S_{(k)} \label{XXXX_Sm}
\end{align}
(here and below dots mean that we have $k$ tensor factors).
The conditions~\eqref{ASSA0} is written as
\begin{align}
 A^{(a,a+1)}S_{(k)}=S_{(k)}A^{(a,a+1)}=0, \label{ASSAk0}
\end{align}
$a=1,\dots,k-1$. In terms of $P=1-2A$ we can rewrite~\eqref{ASSAk0} in the form
\begin{align*}
 P^{(a,a+1)}S_{(k)}=S_{(k)}P^{(a,a+1)}=S_{(k)}.
\end{align*}

Analogously, one can define the algebra $\Xi^*_A(\CC)$ generated by the elements $\psi^1,\dots,\psi^n$ over $\CC$ with the commutation relations $\psi^i\psi^j=\sum_{k,l}A^{ij}_{kl}\psi^k\psi^l$, i.e.,
\begin{align*}
 (1-A)(\Psi^*\otimes\Psi^*)=0,
\end{align*}
where $\Psi^*=\sum\limits_{i=1}^n\psi^i e_i$. The pairing $\la\cdot,\cdot\ra\colon{\Xi}_A(\CC)\times{\Xi}^*_A(\CC)\to\CC$ is defined by the formula
\begin{align}
 \la\psi^{i_1}\cdots\psi^{i_l},\psi_{j_1}\cdots \psi_{j_k}\ra=\delta_{kl}A^{i_1\dots i_k}_{j_1\dots j_k}, \label{XXXXAij}
\end{align}
where $A^{i_1\dots i_k}_{j_1\dots j_k}\in\CC$, $A^i_j=\delta^i_j$, $A^\varnothing_\varnothing=1$. The formula~\eqref{XXXXAij} for $k=l$ can be written as
\begin{align}
 \la\Psi^*\otimes\cdots\otimes\Psi^*,\Psi\otimes\cdots\otimes\Psi\ra=A_{(k)}, \label{XXXXAk}
\end{align}
where $A_{(k)}=\sum\limits_{i_1,\dots,i_k,j_1,\dots,j_k}A^{i_1\dots i_k}_{j_1\dots j_k}E_{i_1}{}^{j_1}\otimes\cdots\otimes E_{i_k}{}^{j_k}$, $A_{(1)}=1$. Again, we should require
\begin{align*}
 S^{(a,a+1)}A_{(k)}=A_{(k)}S^{(a,a+1)}=0, %\label{SAAA0}
\end{align*}
where $a=1,\dots,k-1$, $S=1-A$, or, equivalently,
\begin{align*}
 P^{(a,a+1)}A_{(k)}=A_{(k)}P^{(a,a+1)}=-A_{(k)}.
\end{align*}

\begin{Lem} \label{lemXTX}
Let $V$ and $W$ be vector spaces. For any operators $T\in\Hom\big((\CC^n)^{\otimes k},V\big)$ and $\wt T\in\Hom\big(W,(\CC^n)^{\otimes k}\big)$ we have
\begin{gather}
 \big\la T(X\otimes\cdots\otimes X),(X^*\otimes\cdots\otimes X^*)\wt T\big\ra=T S_{(k)}\wt T, \label{XTTX}
 \\
 \big\la T(\Psi^*\otimes\cdots\otimes\Psi^*),(\Psi\otimes\cdots\otimes\Psi)\wt T\big\ra=TA_{(k)}\wt T \label{PsiTTPsi}
\end{gather}
$($we understand $T(X\otimes\cdots\otimes X)$, $(X^*\otimes\cdots\otimes X^*)\wt T$, $T(\Psi^*\otimes\cdots\otimes\Psi^*)$ and $(\Psi\otimes\cdots\otimes\Psi)\wt T$ as elements of $\gX_A(\CC)\otimes V$, $\gX^*_A(\CC)\otimes W^*$, $\Xi^*_A(\CC)\otimes V$ and $\Xi_A(\CC)\otimes W^*$ respectively$)$.
\end{Lem}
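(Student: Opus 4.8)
The plan is to prove both identities by expanding the two arguments of the pairing in the standard basis and reducing everything to the coordinate definitions \eqref{XXXX_Sij} and \eqref{XXXXAij}. I will carry out \eqref{XTTX} in detail; the identity \eqref{PsiTTPsi} follows by the same computation after exchanging the roles of the two algebras.

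First I would write out the two factors coordinate-wise. Since $X\otimes\cdots\otimes X=\sum_{i_1,\dots,i_k}x^{i_1}\cdots x^{i_k}\,e_{i_1\dots i_k}$ and the operator $T$ acts only on the auxiliary tensor factors (commuting with the $\gX_A(\CC)$-valued coefficients), one gets $T(X\otimes\cdots\otimes X)=\sum_{i_1,\dots,i_k}x^{i_1}\cdots x^{i_k}\,T(e_{i_1\dots i_k})\in\gX_A(\CC)\otimes V$. In the same way $(X^*\otimes\cdots\otimes X^*)\wt T=\sum_{j_1,\dots,j_k}x_{j_1}\cdots x_{j_k}\,(e^{j_1\dots j_k}\wt T)\in\gX^*_A(\CC)\otimes W^*$, where $e^{j_1\dots j_k}\wt T\in W^*$ is the composite covector.

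Next I would invoke the bilinearity rule $\la u\pi,v\xi\ra=\la u,v\ra\,\pi\xi$ defining the pairing of $\gX_A(\CC)\otimes V$ with $\gX^*_A(\CC)\otimes W^*$, which separates each summand into the scalar pairing of the algebra parts times the composition of the operator parts:
\begin{align*}
 \big\la T(X\otimes\cdots\otimes X),(X^*\otimes\cdots\otimes X^*)\wt T\big\ra
 =\sum_{\substack{i_1,\dots,i_k\\ j_1,\dots,j_k}}\la x^{i_1}\cdots x^{i_k},x_{j_1}\cdots x_{j_k}\ra\,T(e_{i_1\dots i_k})\,(e^{j_1\dots j_k}\wt T).
\end{align*}
Substituting $\la x^{i_1}\cdots x^{i_k},x_{j_1}\cdots x_{j_k}\ra=S^{i_1\dots i_k}_{j_1\dots j_k}$ from \eqref{XXXX_Sij} and using the elementary identity $e_{i_1\dots i_k}e^{j_1\dots j_k}=E_{i_1}{}^{j_1}\otimes\cdots\otimes E_{i_k}{}^{j_k}$, each summand becomes $S^{i_1\dots i_k}_{j_1\dots j_k}\,T\big(E_{i_1}{}^{j_1}\otimes\cdots\otimes E_{i_k}{}^{j_k}\big)\wt T$. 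Pulling the constant operators $T$ and $\wt T$ outside the sum and recognising $\sum S^{i_1\dots i_k}_{j_1\dots j_k}E_{i_1}{}^{j_1}\otimes\cdots\otimes E_{i_k}{}^{j_k}=S_{(k)}$ yields $T S_{(k)}\wt T$, which is \eqref{XTTX}.

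For \eqref{PsiTTPsi} the identical computation applies with $\Psi^*=\sum_i\psi^ie_i$ and $\Psi=\sum_i\psi_ie^i$, invoking the defining pairing \eqref{XXXXAij} in place of \eqref{XXXX_Sij} and assembling $A_{(k)}$ as in \eqref{XXXXAk} instead of $S_{(k)}$. Since the argument is a direct unwinding of the definitions, there is no substantial obstacle; the only point requiring care is the bookkeeping — keeping $T$ and $\wt T$ acting on the auxiliary tensor slots while the scalar pairing consumes the algebra factors, and respecting the index order so that the collapse into $S_{(k)}$ (resp. $A_{(k)}$) agrees with the convention fixed in \eqref{XXXX_Sm} (resp. \eqref{XXXXAk}).
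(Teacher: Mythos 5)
Your proof is correct and follows essentially the same route as the paper's own argument: expand both factors in the standard basis, apply the convention $\la u\pi,v\xi\ra=\la u,v\ra\,\pi\xi$, substitute the coordinate definition \eqref{XXXX_Sij} of the pairing together with $e_{i_1\dots i_k}e^{j_1\dots j_k}=E_{i_1}{}^{j_1}\otimes\cdots\otimes E_{i_k}{}^{j_k}$, and reassemble the sum into $TS_{(k)}\wt T$. The treatment of \eqref{PsiTTPsi} also matches the paper, which likewise reduces it to the same computation (viewing it as \eqref{XTTX} for the algebras $\Xi_A(\CC)=\gX^*_{1-A}(\CC)$ and $\Xi^*_A(\CC)=\gX_{1-A}(\CC)$).
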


\begin{proof} By substituting $T(X\otimes\cdots\otimes X)=\sum\limits_{i_1,\dots,i_k}x^{i_1}\cdots x^{i_k}Te_{i_1 \dots i_k}$ and $(X^*\otimes\cdots\otimes X^*)\wt T=\sum\limits_{j_1,\dots,j_k}x_{j_1}\cdots x_{j_k}e^{j_1 \dots j_k}\wt T$ we derive
\begin{gather*}
 \big\la T(X\otimes\cdots\otimes X),(X^*\otimes\cdots\otimes X^*)\wt T\big\ra
 \\ \qquad
{} =\sum_{i_1,\dots,i_k,j_1,\dots,j_k}\la x^{i_1}\otimes\cdots\otimes x^{i_k},x_{j_1}\otimes\cdots\otimes x_{j_k}\ra
 Te_{i_1 \dots i_k}e^{j_1 \dots j_k}\wt T
 \\ \qquad
 {}=\sum_{i_1,\dots,i_k,j_1,\dots,j_k}S^{i_1\dots i_k}_{j_1\dots j_k}T(E_{i_1}{}^{j_1}\otimes\cdots\otimes E_{i_k}{}^{j_k})\wt T=T S_{(k)}\wt T.
\end{gather*}
The formula~\eqref{PsiTTPsi} are proved in a similar way (it can be considered as the formula~\eqref{XTTX} for~the algebras $\Xi_A(\CC)={\gX}^*_{1-A}(\CC)$ and ${\Xi}^*_A(\CC)=\gX_{1-A}(\CC)$).
\end{proof}

\begin{Cor} \label{corTXXk}
 Let $T\in\Hom\big((\CC^n)^{\otimes k},V\big)$ and $\wt T\in\Hom\big(W,(\CC^n)^{\otimes k}\big)$.
\begin{itemize}\itemsep=0pt
 \item[$(a)$] If $T(X\otimes\cdots\otimes X)=0$ then $TS_{(k)}=0$.
 \item[$(b)$] If $(X^*\otimes\cdots\otimes X^*)\wt T=0$ then $S_{(k)}\wt T=0$.
 \item[$(c)$] If $(\Psi\otimes\cdots\otimes\Psi)\wt T=0$ then $A_{(k)}\wt T=0$.
 \item[$(d)$] If $T(\Psi^*\otimes\cdots\otimes\Psi^*)=0$ then $TA_{(k)}=0$.
\end{itemize}
In particular, these statements are valid for $T,\wt T\in\End\big((\CC^n)^{\otimes k}\big)$ and for $T\in\big((\CC^n)^{\otimes k}\big)^*$, $\wt T\in(\CC^n)^{\otimes k}$.
\end{Cor}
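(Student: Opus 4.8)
The plan is to obtain all four statements as immediate consequences of Lemma~\ref{lemXTX}, exploiting the fact that the pairing $\la\cdot,\cdot\ra$ is linear in each of its arguments and that the auxiliary operators $T$, $\wt T$ appearing in that lemma may be chosen freely over arbitrary spaces $V$, $W$. The key observation is that if one of the two arguments of the pairing vanishes, then the pairing itself vanishes, so the right-hand side of~\eqref{XTTX} or~\eqref{PsiTTPsi} must vanish as well; specialising the still-free operator to the identity then extracts the claimed conclusion.

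For part $(a)$ I would argue as follows. Suppose $T(X\otimes\cdots\otimes X)=0$. Then for every vector space $W$ and every $\wt T\in\Hom\big(W,(\CC^n)^{\otimes k}\big)$ the left-hand side of~\eqref{XTTX} is the pairing of $0$ with $(X^*\otimes\cdots\otimes X^*)\wt T$, hence equals $0$. By~\eqref{XTTX} this gives $TS_{(k)}\wt T=0$ for all such $\wt T$. Taking $W=(\CC^n)^{\otimes k}$ and $\wt T=\id$ yields $TS_{(k)}=0$. Part $(b)$ is symmetric: from $(X^*\otimes\cdots\otimes X^*)\wt T=0$ the left-hand side of~\eqref{XTTX} vanishes for every $T$, so $TS_{(k)}\wt T=0$ for all $T\in\Hom\big((\CC^n)^{\otimes k},V\big)$, and the choice $V=(\CC^n)^{\otimes k}$, $T=\id$ gives $S_{(k)}\wt T=0$. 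Parts $(c)$ and $(d)$ are established in exactly the same way using the second identity~\eqref{PsiTTPsi} in place of~\eqref{XTTX}: vanishing of $(\Psi\otimes\cdots\otimes\Psi)\wt T$, respectively of $T(\Psi^*\otimes\cdots\otimes\Psi^*)$, forces $TA_{(k)}\wt T=0$, and specialising the free operator to $\id$ gives $A_{(k)}\wt T=0$, respectively $TA_{(k)}=0$.

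There is essentially no hard step here; the result is a formal corollary of Lemma~\ref{lemXTX}. The only point requiring a little care is the legitimacy of specialising $T$ or $\wt T$ to the identity: this is justified because the hypothesis in each part constrains only one of the two operators, leaving the other free to range over all of $\Hom\big(W,(\CC^n)^{\otimes k}\big)$ or $\Hom\big((\CC^n)^{\otimes k},V\big)$, so that the choice $V=W=(\CC^n)^{\otimes k}$ together with the identity map is admissible. The final clause of the statement, that the conclusions hold for $T,\wt T\in\End\big((\CC^n)^{\otimes k}\big)$ and for $T\in\big((\CC^n)^{\otimes k}\big)^*$, $\wt T\in(\CC^n)^{\otimes k}$, is then simply the remark that these are particular instances of the general $V$, $W$ already treated.
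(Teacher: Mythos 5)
Your proof is correct and follows essentially the same route as the paper: both apply Lemma~\ref{lemXTX} with the free operator specialised to the identity on $(\CC^n)^{\otimes k}$, using bilinearity of the pairing to conclude that the right-hand side $TS_{(k)}\wt T$ (respectively $TA_{(k)}\wt T$) vanishes, and both treat the final clause as the particular choices $V=W=(\CC^n)^{\otimes k}$ and $V=W=\CC$. The only cosmetic difference is that you first let the free operator range over all homomorphisms before specialising, whereas the paper substitutes the identity immediately; the content is identical.
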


\begin{proof} The statement $(a)$ are obtained from~\eqref{XTTX} in the case $W=(\CC^n)^{\otimes k}$, $\wt T=1\in\End\big((\CC^n)^{\otimes k}\big)$. For $V=(\CC^n)^{\otimes k}$, $T=1\in\End\big((\CC^n)^{\otimes k}\big)$ we obtain $(b)$. The statements~$(c)$ and $(d)$ are obtained from~\eqref{PsiTTPsi} in the same way. In the cases $V=W=(\CC^n)^{\otimes k}$ and $V=W=\CC$ we obtain $\Hom\big((\CC^n)^{\otimes k},V\big)=\Hom\big(W,(\CC^n)^{\otimes k}\big)=\End\big((\CC^n)^{\otimes k})\big)$ and $\Hom\big((\CC^n)^{\otimes k},V\big)=\big((\CC^n)^{\otimes k}\big)^*$, $\Hom\big(W,(\CC^n)^{\otimes k}\big)=(\CC^n)^{\otimes k}$ respectively. \end{proof}

\subsection{Pairing operators}
\label{secPO}

Let $A\in\End(\CC^n\otimes\CC^n)$ be an idempotent and $S=1-A$. We formulate some conditions on the operators~\eqref{XXXX_Sm} and \eqref{XXXXAk} that guarantee non-degeneracy of the corresponding pairings.

\begin{Def}
 Operators $S_{(k)},\,A_{(k)}\in\End\big((\CC^n)^{\otimes k}\big)$ are called {\it pairing operators} (for the idempotent $A$) if they satisfy the following conditions:
\begin{gather}
 A^{(a,a+1)}S_{(k)}=S_{(k)}A^{(a,a+1)}=0, \label{PSSPS}
 \\
 (X^*\otimes\cdots\otimes X^*)S_{(k)}=(X^*\otimes\cdots\otimes X^*), \label{SkXXXXXX1}
 \\
 S_{(k)}(X\otimes\cdots\otimes X)=(X\otimes\cdots\otimes X), \label{SkXXXXXX2}
 \\
 S^{(a,a+1)}A_{(k)}=A_{(k)}S^{(a,a+1)}=0, \label{PAAPA}
 \\
 (\Psi\otimes\cdots\otimes\Psi)A_{(k)}=(\Psi\otimes\cdots\otimes\Psi), \label{AkXXXXXX1}
 \\
 A_{(k)}(\Psi^*\otimes\cdots\otimes\Psi^*)=(\Psi^*\otimes\cdots\otimes\Psi^*). \label{AkXXXXXX2}
\end{gather}
We call them the {\it $k$-th $S$-operator} and the {\it $k$-th $A$-operator}.
\end{Def}

The conditions~\eqref{SkXXXXXX2} and \eqref{AkXXXXXX1} for $k=1$ implies that $S_{(1)}=A_{(1)}=1$. For $k=2$ the equations~\eqref{PSSPS}--\eqref{SkXXXXXX2} and \eqref{PAAPA}--\eqref{AkXXXXXX2} have the solutions $S_{(k)}=S$ and $S_{(k)}=A$ respectively. Let us prove the uniqueness of solutions of these equations for any $k$ (we do not prove their existence for $k>2$ in a general case).

\begin{Prop} \label{propUnique}
 The pairing operators are unique. In particular, $S_{(2)}=S$ and $A_{(2)}=A$.
\end{Prop}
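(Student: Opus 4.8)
The plan is to show that any admissible pair of operators is forced to coincide with a single canonical pair of projections, after which uniqueness is automatic. Set $W=(\CC^n)^{\otimes k}$ and introduce the two subspaces
\[
 U=\bigcap_{a=1}^{k-1}\Ker A^{(a,a+1)},\qquad N=\sum_{a=1}^{k-1}\Im A^{(a,a+1)}.
\]
First I would read off \eqref{PSSPS} directly: the relations $A^{(a,a+1)}S_{(k)}=0$ say precisely $\Im S_{(k)}\subseteq\bigcap_a\Ker A^{(a,a+1)}=U$, while $S_{(k)}A^{(a,a+1)}=0$ says $N=\sum_a\Im A^{(a,a+1)}\subseteq\Ker S_{(k)}$. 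Thus \eqref{PSSPS} already constrains the image into $U$ and forces $N$ into the kernel, leaving open only how $S_{(k)}$ behaves within $U$ and on a complement of $N$.

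The heart of the argument, and the step I expect to be the main obstacle, is to translate the normalisation conditions \eqref{SkXXXXXX1} and \eqref{SkXXXXXX2} — which are stated through the generating series $X$ and $X^*$ — into the linear-algebra statements $S_{(k)}|_U=\id_U$ and $\Ker S_{(k)}\subseteq N$. For this I would use the canonical identifications attached to the quadratic algebras. Writing $\gX_A(\CC)_k=V^{\otimes k}/I_k$ with $V=(\CC^n)^*$, one gets $U=I_k^\perp$, so $U$ is canonically $\big(\gX_A(\CC)_k\big)^*$ and the inclusion $U\hookrightarrow W$ is dual to the quotient $p\colon V^{\otimes k}\to\gX_A(\CC)_k$; dually $\gX^*_A(\CC)_k=W/N$, with quotient map $q\colon W\to W/N$. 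Viewing $X^{\otimes k}\in\gX_A(\CC)_k\otimes W$ as the tensor corresponding to $p$ and $(X^*)^{\otimes k}\in\gX^*_A(\CC)_k\otimes W^*$ as the one corresponding to $q$, a direct check (tracking which tensor slot each operator hits, together with the transposes) shows that acting by $S_{(k)}$ on $X^{\otimes k}$ produces the tensor corresponding to $p\circ S_{(k)}^{\top}$, so \eqref{SkXXXXXX2} becomes $p\,S_{(k)}^{\top}=p$, i.e.\ (dualising) $S_{(k)}|_U=\id_U$; similarly \eqref{SkXXXXXX1} becomes $q\,S_{(k)}=q$, i.e.\ $\Im(1-S_{(k)})\subseteq\Ker q=N$. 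Verifying these two dictionary entries is the only genuine computation.

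Granting the translation, the conclusion follows quickly. Combined with \eqref{PSSPS}, the facts $\Im S_{(k)}\subseteq U$ and $S_{(k)}|_U=\id_U$ force $\Im S_{(k)}=U$ and $S_{(k)}^2=S_{(k)}$, so $S_{(k)}$ is a projection with image $U$; then $\Im(1-S_{(k)})=\Ker S_{(k)}$, so \eqref{SkXXXXXX1} yields $\Ker S_{(k)}\subseteq N$, and with \eqref{PSSPS} we get $\Ker S_{(k)}=N$. Hence $(\CC^n)^{\otimes k}=U\oplus N$ and $S_{(k)}$ is the projection onto $U$ along $N$, an object depending only on $A$ and $k$; this gives uniqueness (and reveals that existence presupposes $U\oplus N=(\CC^n)^{\otimes k}$, which is why existence is not claimed for $k>2$). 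The operator $A_{(k)}$ is treated by the same argument applied to the idempotent $S=1-A$, using $\Xi_A(\CC)=\gX^*_{1-A}(\CC)$ and $\Xi^*_A(\CC)=\gX_{1-A}(\CC)$, so that \eqref{PAAPA}--\eqref{AkXXXXXX2} are exactly \eqref{PSSPS}--\eqref{SkXXXXXX2} with $A$ and $S$ interchanged: $A_{(k)}$ is forced to be the projection onto $\bigcap_a\Ker S^{(a,a+1)}$ along $\sum_a\Im S^{(a,a+1)}$. Finally, for $k=2$ these subspaces are $\Ker A=\Im S$ and $\Im A=\Ker S$, whose projections are $S$ and $A$, giving $S_{(2)}=S$ and $A_{(2)}=A$.
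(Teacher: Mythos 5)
Your proof is correct, but it takes a genuinely different route from the paper's. The paper's own proof is a short comparison of two arbitrary solutions: if $S'_{(k)}$ and $\bar S_{(k)}$ are both $k$-th $S$-operators, then part~$(a)$ of Corollary~\ref{corTXXk} applied to $T=1-\bar S_{(k)}$ (legitimate because $\bar S_{(k)}$ satisfies \eqref{SkXXXXXX2}) gives $(1-\bar S_{(k)})S'_{(k)}=0$, part~$(b)$ applied to $\wt T=1-S'_{(k)}$ gives $\bar S_{(k)}(1-S'_{(k)})=0$, hence $S'_{(k)}=\bar S_{(k)}S'_{(k)}=\bar S_{(k)}$, with $A_{(k)}$ handled by parts $(c)$, $(d)$; the operator itself is never identified. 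You instead characterise it completely: \eqref{PSSPS} gives $\Im S_{(k)}\subseteq U$ and $N\subseteq\Ker S_{(k)}$, and your dictionary turns \eqref{SkXXXXXX2} and \eqref{SkXXXXXX1} into $S_{(k)}|_U=\id_U$ and $\Im(1-S_{(k)})\subseteq N$, forcing $S_{(k)}$ to be the projection onto $U$ along $N$, an object determined by $A$ and $k$ alone. That dictionary is sound; it is precisely the identification the paper makes only later, in the proof of Theorem~\ref{ThPOviaDB}, of the maps $\xi\mapsto\xi(X\otimes\cdots\otimes X)$ and $\pi\mapsto(X^*\otimes\cdots\otimes X^*)\pi$ with the canonical projections onto $\gX_A(\CC)_k$ and $\gX^*_A(\CC)_k$ (whose kernels are $U^{\perp}$ and $N$). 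As for what each approach buys: the paper's argument is shorter and recycles Corollary~\ref{corTXXk}, which is already in place; yours costs the extra translation but delivers more, namely idempotency of $S_{(k)}$ (the paper's next proposition), the decomposition $(\CC^n)^{\otimes k}=U\oplus N$, and the observation that existence of $S_{(k)}$ is equivalent to this decomposition, which is the dual-basis criterion of Theorem~\ref{ThPOviaDB} in disguise — so your proof in effect anticipates that theorem. Your reduction of $A_{(k)}$ to the $S$-operator for $1-A$ via $\Xi_A(\CC)=\gX^*_{1-A}(\CC)$, $\Xi^*_A(\CC)=\gX_{1-A}(\CC)$ is exactly the paper's symmetry \eqref{ASinter}, and your $k=2$ identification of the two projections with $S$ and $A$ is correct.
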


\begin{proof} Let $S'_{(k)}$ and $\bar S_{(k)}$ be $k$-th $S$-operators for the same idempotent $A$. By applying the part~$(a)$ of Corollary~\ref{corTXXk} for $S_{(k)}=S'_{(k)}$, $T=1-\bar S_{(k)}$ and the part~$(b)$ of Corollary~\ref{corTXXk} for $S_{(k)}=\bar S_{(k)}$, $T=1-S'_{(k)}$ we obtain $S'_{(k)}=\bar S_{(k)}S'_{(k)}$ and $\bar S_{(k)}=\bar S_{(k)}S'_{(k)}$. Hence we obtain $S'_{(k)}=\bar S_{(k)}$. The uniqueness of the $A$-operators follows similarly from the parts $(c)$ and $(d)$ of~Corollary~\ref{corTXXk}. \end{proof}

\begin{Prop}
 Pairing operators $S_{(k)}$ and $A_{(k)}$ are idempotents; they are orthogonal for $k\ge2$:
\begin{gather}
 S_{(k)}^2=S_{(k)}, \qquad A_{(k)}^2=A_{(k)}, \qquad S_{(k)}A_{(k)}=A_{(k)}S_{(k)}=0. \label{SkAk0}
 \\
 (X^*\otimes\cdots\otimes X^*)A_{(k)}=0, \qquad A_{(k)}(X\otimes\cdots\otimes X)=0, \label{AkX0}
 \\
 (\Psi\otimes\cdots\otimes\Psi)S_{(k)}=0, \qquad S_{(k)}(\Psi^*\otimes\cdots\otimes\Psi^*)=0. \label{SkX0}
\end{gather}
\end{Prop}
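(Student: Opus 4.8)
The plan is to derive everything from the defining relations \eqref{PSSPS}--\eqref{AkXXXXXX2} of the pairing operators together with Corollary~\ref{corTXXk}. I would organise the argument in three stages: idempotence first, then the vanishing identities \eqref{AkX0} and \eqref{SkX0}, and finally the mutual orthogonality in \eqref{SkAk0}.

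For idempotence the key point is that each pairing operator fixes the relevant monomial vector. From \eqref{SkXXXXXX2} the operator $1-S_{(k)}$ annihilates $X\otimes\cdots\otimes X$, so part~$(a)$ of Corollary~\ref{corTXXk} applied with $T=1-S_{(k)}$ gives $(1-S_{(k)})S_{(k)}=0$, that is $S_{(k)}^2=S_{(k)}$. Symmetrically, \eqref{AkXXXXXX1} says that $\Psi\otimes\cdots\otimes\Psi$ is annihilated by $1-A_{(k)}$ on the right, so part~$(c)$ with $\wt T=1-A_{(k)}$ yields $A_{(k)}(1-A_{(k)})=0$, hence $A_{(k)}^2=A_{(k)}$. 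This step uses only the normalisation relations and works for every $k\ge1$.

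Next I would record the consequences of the quadratic relations of the four algebras: from \eqref{Axx0}, \eqref{Axx0_dual}, the relation \eqref{PsiPsiA} for $\Xi_A(\CC)$ and the analogous one for $\Xi^*_A(\CC)$ one gets, for each admissible site $a\in\{1,\dots,k-1\}$, that $A^{(a,a+1)}$ fixes $\Psi\otimes\cdots\otimes\Psi$ on the right and $\Psi^*\otimes\cdots\otimes\Psi^*$ on the left, while $S^{(a,a+1)}$ fixes $X\otimes\cdots\otimes X$ on the left and $X^*\otimes\cdots\otimes X^*$ on the right. Inserting such a fixed factor and then applying the orthogonality relations \eqref{PSSPS}, \eqref{PAAPA} collapses the expression; for instance $S_{(k)}(\Psi^*\otimes\cdots\otimes\Psi^*)=S_{(k)}A^{(a,a+1)}(\Psi^*\otimes\cdots\otimes\Psi^*)=0$ since $S_{(k)}A^{(a,a+1)}=0$. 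The same one-line manipulation, with the appropriate choice of algebra and of left or right placement, gives all four identities in \eqref{AkX0} and \eqref{SkX0}.

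Finally, orthogonality follows by feeding these vanishing identities back into Corollary~\ref{corTXXk}: since $A_{(k)}(X\otimes\cdots\otimes X)=0$, part~$(a)$ with $T=A_{(k)}$ gives $A_{(k)}S_{(k)}=0$; and since $(X^*\otimes\cdots\otimes X^*)A_{(k)}=0$, part~$(b)$ with $\wt T=A_{(k)}$ gives $S_{(k)}A_{(k)}=0$ (the alternative routes through parts $(c)$, $(d)$ give the same two equalities). The only genuine subtlety, and the step I would watch most carefully, is the role of the hypothesis $k\ge2$: the insertion trick in the second stage needs at least one available neighbouring site $a\in\{1,\dots,k-1\}$, which exists exactly when $k\ge2$; for $k=1$ one has $S_{(1)}=A_{(1)}=1$ and the vanishing and orthogonality statements genuinely fail. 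Beyond tracking this index carefully, the computations are purely formal, so I expect no deeper obstacle.
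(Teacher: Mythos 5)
Your proof is correct, and its first stage (idempotence via Corollary~\ref{corTXXk} applied to \eqref{SkXXXXXX2} and \eqref{AkXXXXXX1}) coincides with the paper's. Where you diverge is in the second half: the paper establishes the orthogonality in \eqref{SkAk0} \emph{first}, by a purely operator-level sign argument -- inserting $P^{(a,a+1)}=1-2A^{(a,a+1)}$ between the two operators, so that \eqref{PSSPS} gives $S_{(k)}P^{(a,a+1)}=S_{(k)}$ while \eqref{PAAPA} gives $P^{(a,a+1)}A_{(k)}=-A_{(k)}$, whence $S_{(k)}A_{(k)}=-S_{(k)}A_{(k)}=0$ and likewise for $A_{(k)}S_{(k)}$ -- and then gets \eqref{AkX0}, \eqref{SkX0} as one-line consequences of orthogonality together with the normalisations, e.g.\ $A_{(k)}(X\otimes\cdots\otimes X)=A_{(k)}S_{(k)}(X\otimes\cdots\otimes X)=0$. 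You reverse this order: you first prove \eqref{AkX0}, \eqref{SkX0} by inserting the idempotent $A^{(a,a+1)}$ or $S^{(a,a+1)}$ that fixes the relevant monomial vector (which requires invoking the defining relations of all four quadratic algebras $\gX_A$, $\gX^*_A$, $\Xi_A$, $\Xi^*_A$), and only then deduce orthogonality from Corollary~\ref{corTXXk}. Both routes hinge on the existence of a site $a\in\{1,\dots,k-1\}$, i.e.\ on $k\ge2$, which you track correctly, including the failure at $k=1$. The trade-off: the paper's argument is shorter and never leaves the matrix algebra $\End\big((\CC^n)^{\otimes k}\big)$, but it implicitly divides by $2$ (from $x=-x$ to $x=0$); your argument avoids that division -- so it would survive in characteristic $2$, a moot point here since the paper works over $\CC$ -- at the cost of routing the orthogonality through the quadratic algebras and the pairing machinery of Corollary~\ref{corTXXk} a second time.
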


\begin{proof} Due to Corollary~\ref{corTXXk} the formula~\eqref{SkXXXXXX2} gives $S_{(k)}^2=S_{(k)}$, while the formula~\eqref{AkXXXXXX1} implies $A_{(k)}^2=A_{(k)}$. Further, from~\eqref{PSSPS} and \eqref{PAAPA} we derive $S_{(k)}A_{(k)}=S_{(k)}P^{(a,a+1)}A_{(k)}=-S_{(k)}A_{(k)}$ and $A_{(k)}S_{(k)}=A_{(k)}P^{(a,a+1)}S_{(k)}=-A_{(k)}S_{(k)}$. The formulae~\eqref{AkX0} and \eqref{SkX0} follows from the orthogonality and~\eqref{SkXXXXXX1},~\eqref{SkXXXXXX2},~\eqref{AkXXXXXX1}, \eqref{AkXXXXXX2}. For example, the relation~\eqref{SkXXXXXX2} implies $A_{(k)}(X\otimes\cdots\otimes X)=A_{(k)}S_{(k)}(X\otimes\cdots\otimes X)=0$. \end{proof}

Below we suppose that $S_{(k)}$ and $A_{(k)}$ are pairing operators for an idempotent $A$.
Note that in general the sum $S_{(k)}+A_{(k)}$ does not coincide with the identity operator.

The following formulae generalise the equalities~\eqref{SkAk0}. They are proved in the same way.

\begin{Prop} \label{PropASkl}
If $\ell\le k$ then
\begin{gather*}
 S_{(\ell)}^{(a,\dots,a+\ell-1)}S_{(k)}=S_{(k)}S_{(\ell)}^{(a,\dots,a+\ell-1)}=S_{(k)},\quad
 A_{(\ell)}^{(a,\dots,a+\ell-1)}A_{(k)}=A_{(k)}A_{(\ell)}^{(a,\dots,a+\ell-1)}=A_{(k)}, \\
 A_{(\ell)}^{(a,\dots,a+\ell-1)}S_{(k)}=S_{(k)}A_{(\ell)}^{(a,\dots,a+\ell-1)}=0, \qquad
 S_{(\ell)}^{(a,\dots,a+\ell-1)}A_{(k)}=A_{(k)}S_{(\ell)}^{(a,\dots,a+\ell-1)}=0,
\end{gather*}
where $a=1,\dots,k-\ell+1$ and we used the notation $T^{(a,\dots,a+\ell-1)}=\id^{\otimes(a-1)}\otimes T\otimes\id^{\otimes(k-\ell-a+1)}$ for $T\in\End\big((\CC^n)^{\otimes\ell}\big)$.
\end{Prop}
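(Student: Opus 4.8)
The plan is to reduce every identity to Corollary~\ref{corTXXk} by producing, in each case, an operator $T$ or $\widetilde T$ that annihilates the relevant ``vacuum'' vector $X\otimes\cdots\otimes X$, $X^*\otimes\cdots\otimes X^*$, $\Psi\otimes\cdots\otimes\Psi$ or $\Psi^*\otimes\cdots\otimes\Psi^*$. The engine of the whole argument is an \emph{embedding identity}: for $\ell\le k$ and $1\le a\le k-\ell+1$,
\begin{gather*}
 S_{(\ell)}^{(a,\dots,a+\ell-1)}(X\otimes\cdots\otimes X)=X\otimes\cdots\otimes X,
\end{gather*}
with $k$ tensor factors on both sides, together with its three analogues: the right-handed version on $X^*\otimes\cdots\otimes X^*$ coming from~\eqref{SkXXXXXX1}, and the $A_{(\ell)}$-versions acting on $\Psi\otimes\cdots\otimes\Psi$ and on $\Psi^*\otimes\cdots\otimes\Psi^*$ coming from~\eqref{AkXXXXXX1}, \eqref{AkXXXXXX2}. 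I would prove the displayed one by expanding $X\otimes\cdots\otimes X=\sum x^{m_1}\cdots x^{m_k}\,e_{m_1}\otimes\cdots\otimes e_{m_k}$, letting $S_{(\ell)}^{(a,\dots,a+\ell-1)}$ act only on the factors in positions $a,\dots,a+\ell-1$, and collecting the coefficient of a fixed output tensor. Since the entries $S^{\cdots}_{\cdots}$ are scalars, that coefficient becomes $x^{m_1}\cdots x^{m_{a-1}}\big(\sum S^{p_a\dots p_{a+\ell-1}}_{m_a\dots m_{a+\ell-1}}x^{m_a}\cdots x^{m_{a+\ell-1}}\big)x^{m_{a+\ell}}\cdots x^{m_k}$, and the bracket equals $x^{p_a}\cdots x^{p_{a+\ell-1}}$ by the coefficient form of~\eqref{SkXXXXXX2}; this reproduces the coefficient in $X\otimes\cdots\otimes X$.

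With the embedding identity in hand, the first two blocks follow immediately. Applying Corollary~\ref{corTXXk}(a) to $T=1-S_{(\ell)}^{(a,\dots,a+\ell-1)}$ yields $TS_{(k)}=0$, that is $S_{(\ell)}^{(a,\dots,a+\ell-1)}S_{(k)}=S_{(k)}$; the right-hand equality $S_{(k)}S_{(\ell)}^{(a,\dots,a+\ell-1)}=S_{(k)}$ comes from Corollary~\ref{corTXXk}(b) and the right analogue of the embedding identity built on~\eqref{SkXXXXXX1}. The two equalities for $A_{(\ell)}^{(a,\dots,a+\ell-1)}$ and $A_{(k)}$ are proved in exactly the same way using Corollary~\ref{corTXXk}(d), (c) and the $\Psi^*$- and $\Psi$-versions of the embedding identity from~\eqref{AkXXXXXX2}, \eqref{AkXXXXXX1}.

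Finally, the four orthogonality relations (for $\ell\ge2$, consistent with~\eqref{SkAk0}) I would deduce from the already-established equalities by inserting an intermediate factor: for instance $A_{(\ell)}^{(a,\dots,a+\ell-1)}S_{(k)}=A_{(\ell)}^{(a,\dots,a+\ell-1)}\big(S_{(\ell)}^{(a,\dots,a+\ell-1)}S_{(k)}\big)=\big(A_{(\ell)}S_{(\ell)}\big)^{(a,\dots,a+\ell-1)}S_{(k)}=0$, using $A_{(\ell)}S_{(\ell)}=0$ from~\eqref{SkAk0}; the remaining three are obtained symmetrically. The only genuine obstacle is the embedding identity itself: one must check carefully that the scalar intertwiner $S_{(\ell)}$ acts on the consecutive block of generators sitting \emph{inside} a longer non-commuting monomial, so that the local relation~\eqref{SkXXXXXX2} can be substituted verbatim after factoring out the surrounding $x$'s on the left and right. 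Everything past that point is formal manipulation with Corollary~\ref{corTXXk} and the level-$\ell$ orthogonality.
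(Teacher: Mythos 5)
Your proof is correct, and in substance it is the paper's intended argument: the paper dismisses Proposition~\ref{PropASkl} with the single remark that it is ``proved in the same way'' as the equalities~\eqref{SkAk0}, i.e., by feeding vacuum-invariance statements into Corollary~\ref{corTXXk}, and your embedding identity $S_{(\ell)}^{(a,\dots,a+\ell-1)}(X\otimes\cdots\otimes X)=X\otimes\cdots\otimes X$ (with its three analogues) is precisely the step that remark glosses over; your coordinate verification is sound, since the entries of $S_{(\ell)}$ are scalars and \eqref{SkXXXXXX2} at level $\ell$ can be substituted verbatim inside the monomial after factoring out the surrounding generators. The one place you deviate from the paper's template is the orthogonality block: the model proof in the paper uses the sign trick from \eqref{PSSPS} and \eqref{PAAPA}, namely $A_{(\ell)}^{(a,\dots,a+\ell-1)}S_{(k)}=A_{(\ell)}^{(a,\dots,a+\ell-1)}P^{(b,b+1)}S_{(k)}=-A_{(\ell)}^{(a,\dots,a+\ell-1)}S_{(k)}$ for a position $b$ with $a\le b\le a+\ell-2$, whereas you insert $S_{(\ell)}^{(a,\dots,a+\ell-1)}$ via the already-proved first block and invoke $A_{(\ell)}S_{(\ell)}=0$, using that $T\mapsto T^{(a,\dots,a+\ell-1)}$ is an algebra homomorphism. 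Both are valid one-line reductions, and both require $\ell\ge2$, a restriction you correctly flag (for $\ell=1$ one has $S_{(1)}=A_{(1)}=1$ and the orthogonality relations fail, exactly as the case $k=1$ is excluded in~\eqref{SkAk0}); your variant has the minor advantage of not reaching back to the operator $P$ at all, so it works verbatim whenever the level-$\ell$ pairing operators exist.
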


Since $\Xi_A(\CC)={\gX}^*_S(\CC)$ and ${\Xi}^*_A(\CC)=\gX_S(\CC)$ the conditions~\eqref{PSSPS}, \eqref{SkXXXXXX1}, \eqref{SkXXXXXX2} interchange with the conditions~\eqref{PAAPA}, \eqref{AkXXXXXX1}, \eqref{AkXXXXXX2} under the substitutions
\begin{gather}
A\leftrightarrow S, \qquad
P\leftrightarrow -P,\qquad
A_{(k)}\leftrightarrow S_{(k)}, \qquad
X\leftrightarrow\Psi^*, \qquad
X^*\leftrightarrow\Psi. \label{ASinter}
\end{gather}
Thus if $S_{(k)}$ and $A_{(k)}$ are pairing operators for $A$ then $A_{(k)}$ and $S_{(k)}$ are pairing operators for $S=1-A$.

The following property of the pairing operators shows their role for the quadratic algebras. Recall that we have identifications $\gX_A(\CC)_1=(\CC^n)^*$ and $\Xi_A(\CC)_1=\CC^n$. Let us identify the higher graded components $\gX_A(\CC)_k$ and $\Xi_A(\CC)_k$ with subspaces of $\big((\CC^n)^{\otimes k}\big)^*$ and $\big((\CC^n)^{\otimes k}\big)$ by using the idempotents $S_{(k)}$ and $A_{(k)}$ respectively.

\begin{Prop} \label{propIsom}
We have the following isomorphisms of vector spaces:
\begin{gather}
 \big((\CC^n)^{\otimes k}\big)^*S_{(k)}\cong\gX_A(\CC)_k, \qquad
 \xi\mapsto\xi(X\otimes\cdots\otimes X), \nonumber
 \\
 S_{(k)}\big((\CC^n)^{\otimes k}\big)\cong\gX^*_A(\CC)_k, \qquad\
 \pi\mapsto(X^*\otimes\cdots\otimes X^*)\pi,\nonumber
 \\
 A_{(k)}\big((\CC^n)^{\otimes k}\big)\cong\Xi_A(\CC)_k, \qquad\
 \pi\mapsto(\Psi\otimes\cdots\otimes\Psi)\pi, \nonumber
 \\
 \big((\CC^n)^{\otimes k}\big)^*A_{(k)}\cong\Xi^*_A(\CC)_k, \qquad \xi\mapsto\xi(\Psi^*\otimes\cdots\otimes\Psi^*).\label{gXk}
\end{gather}
The pairings $\la\cdot,\cdot\ra\colon\gX_A(\CC)_k\times\gX^*_A(\CC)_k\to\CC$ and $\la\cdot,\cdot\ra\colon\Xi^*_A(\CC)_k\times\Xi_A(\CC)_k\to\CC$ correspond to the multiplication $(\xi,\pi)\mapsto\xi\pi\in\CC$.
\end{Prop}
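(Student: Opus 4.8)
The plan is to realise each of the four maps as the restriction, to the image of the relevant pairing idempotent, of the canonical projection from a tensor power onto a graded component of a quadratic algebra; the whole content then reduces to identifying the kernel of that projection with the image of the complementary idempotent, and the conditions defining the pairing operators are exactly what is needed to do this.

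I would treat the first isomorphism in detail. Write $\varphi_k\colon\big((\CC^n)^{\otimes k}\big)^*\to\gX_A(\CC)_k$ for the map $\xi\mapsto\xi(X\otimes\cdots\otimes X)$. Since $\xi(X\otimes\cdots\otimes X)=\sum\xi_{i_1\dots i_k}x^{i_1}\cdots x^{i_k}$ when $\xi=\sum\xi_{i_1\dots i_k}e^{i_1\dots i_k}$, this $\varphi_k$ is precisely the degree-$k$ component of the canonical surjection $T((\CC^n)^*)\to\gX_A(\CC)$, so it is onto and its kernel is the degree-$k$ part of the defining ideal. The key step is the identity $\Ker\varphi_k=\big((\CC^n)^{\otimes k}\big)^*(1-S_{(k)})$. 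The inclusion $\subseteq$ is Corollary~\ref{corTXXk}$(a)$ applied to $T=\xi$: if $\varphi_k(\xi)=\xi(X\otimes\cdots\otimes X)=0$ then $\xi S_{(k)}=0$, i.e., $\xi=\xi(1-S_{(k)})$. The inclusion $\supseteq$ is condition~\eqref{SkXXXXXX2}, which gives $(1-S_{(k)})(X\otimes\cdots\otimes X)=0$ and hence $\varphi_k\big(\xi(1-S_{(k)})\big)=0$ for every $\xi$.

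With this identity in hand, the idempotency $S_{(k)}^2=S_{(k)}$ yields the decomposition $\big((\CC^n)^{\otimes k}\big)^*=\big((\CC^n)^{\otimes k}\big)^*S_{(k)}\oplus\Ker\varphi_k$, so $\varphi_k$ restricts to a linear isomorphism of $\big((\CC^n)^{\otimes k}\big)^*S_{(k)}$ onto $\gX_A(\CC)_k$; this is the first line of~\eqref{gXk}. The second line, concerning $\gX^*_A(\CC)$, is obtained by the evident left-right transpose of the argument: the map $\pi\mapsto(X^*\otimes\cdots\otimes X^*)\pi$ is the degree-$k$ component of the projection $T(\CC^n)\to\gX^*_A(\CC)$, its kernel equals $(1-S_{(k)})\big((\CC^n)^{\otimes k}\big)$ by Corollary~\ref{corTXXk}$(b)$ together with~\eqref{SkXXXXXX1}, and $S_{(k)}$ again splits off a complement. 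The last two lines of~\eqref{gXk} follow upon passing from $A$ to $S=1-A$: by~\eqref{ASinter} this interchanges $\gX_A(\CC)\leftrightarrow\Xi^*_A(\CC)$, $\gX^*_A(\CC)\leftrightarrow\Xi_A(\CC)$ and $S_{(k)}\leftrightarrow A_{(k)}$, so parts~$(c)$ and~$(d)$ of Corollary~\ref{corTXXk} with conditions~\eqref{AkXXXXXX1},~\eqref{AkXXXXXX2} give the isomorphisms for $\Xi_A(\CC)_k$ and $\Xi^*_A(\CC)_k$.

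Finally, the statement about pairings is immediate from Lemma~\ref{lemXTX}. Taking $V=W=\CC$, $T=\xi\in\big((\CC^n)^{\otimes k}\big)^*S_{(k)}$ and $\wt T=\pi\in S_{(k)}\big((\CC^n)^{\otimes k}\big)$ in~\eqref{XTTX} gives $\la\xi(X\otimes\cdots\otimes X),(X^*\otimes\cdots\otimes X^*)\pi\ra=\xi S_{(k)}\pi=\xi\pi$, where the last equality uses $\xi S_{(k)}=\xi$; likewise~\eqref{PsiTTPsi} handles the $\Xi^*_A(\CC)_k\times\Xi_A(\CC)_k$ pairing. I expect the only real care to be in the bookkeeping: the identity $\Ker\varphi_k=\big((\CC^n)^{\otimes k}\big)^*(1-S_{(k)})$ genuinely needs both the orthogonality~\eqref{PSSPS} (through Corollary~\ref{corTXXk}$(a)$) and the normalisation~\eqref{SkXXXXXX2}, and one must keep the row/column and $A\leftrightarrow S$ conventions straight across the four variants so that the correct idempotent is split off in each case.
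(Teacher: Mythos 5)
Your proposal is correct and follows essentially the same route as the paper's proof: both rest on Corollary~\ref{corTXXk} (for injectivity, via $\xi(X\otimes\cdots\otimes X)=0\Rightarrow\xi S_{(k)}=0$ together with $\xi S_{(k)}=\xi$), on the normalisation~\eqref{SkXXXXXX2} (for surjectivity), and on Lemma~\ref{lemXTX} for the pairing statement. Your packaging via the kernel of the canonical projection and the idempotent splitting $\big((\CC^n)^{\otimes k}\big)^*=\big((\CC^n)^{\otimes k}\big)^*S_{(k)}\oplus\big((\CC^n)^{\otimes k}\big)^*\big(1-S_{(k)}\big)$ is only a cosmetic rearrangement of the paper's direct surjectivity-plus-injectivity argument.
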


\begin{proof} All these maps are linear. Consider the map~\eqref{gXk}. Due to~\eqref{SkXXXXXX2} the image of the covector $\xi=e^{i_1 \dots i_k}S_{(k)}$ is $e^{i_1 \dots i_k}S_{(k)}(X\otimes\cdots\otimes X)=e^{i_1 \dots i_k}(X\otimes\cdots\otimes X)=x^{i_1}\cdots x^{i_k}$. Hence this map is surjective. Let us check injectivity. If $\xi(X\otimes\cdots\otimes X)=0$ then by using Corollary~\ref{corTXXk} for $T=\xi$ we obtain $\xi S_{(k)}=0$. Since $\xi\in\big((\CC^n)^{\otimes k}\big)^*S_{(k)}$ we have $\xi=\xi S_{(k)}=0$. The bijectivity of other maps are proved in the same way. The last statement follows from Lemma~\ref{lemXTX}, for example, $\big\la\xi(X\otimes\cdots\otimes X),(X^*\otimes\cdots\otimes X^*)\pi\big\ra=\xi S_{(k)}\pi=\xi\pi$. \end{proof}

Let $d_k=\rk S_{(k)}$ and $r_k=\rk A_{(k)}$. Then Proposition~\ref{propIsom} implies
\begin{align} \label{dkrk}
 \dim\gX_A(\CC)_k=\dim\gX^*_A(\CC)_k=d_k, \qquad \dim\Xi_A(\CC)_k=\dim\Xi^*_A(\CC)_k=r_k.
\end{align}

\begin{Prop}
 Let $S_{(k)}$ and $A_{(k)}$ be the pairing operators for the idempotent $A$. Then the pairings $\la\cdot,\cdot\ra\colon\gX_A(\CC)\times\gX^*_A(\CC)\to\CC$ and $\la\cdot,\cdot\ra\colon\Xi^*_A(\CC)\times\Xi_A(\CC)\to\CC$ defined by the formulae~\eqref{XXXX_Sm} and \eqref{XXXXAk} are non-degenerate.
\end{Prop}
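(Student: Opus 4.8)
The plan is to reduce the statement to the finite-dimensional fact already proved in Proposition~\ref{propVAW}, transporting it through the identifications of graded components supplied by Proposition~\ref{propIsom}. First I would use that both pairings respect the grading: the product $\la x^{i_1}\cdots x^{i_l},x_{j_1}\cdots x_{j_k}\ra$ vanishes unless $k=l$, and similarly for the $\Xi$-pairing. Consequently a grading-compatible pairing of $\bigoplus_k\gX_A(\CC)_k$ with $\bigoplus_k\gX^*_A(\CC)_k$ is non-degenerate if and only if its restriction to each homogeneous block $\gX_A(\CC)_k\times\gX^*_A(\CC)_k$ is non-degenerate. So it suffices to fix a degree $k$ and treat that block in isolation.

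For the block of degree $k$ I would invoke Proposition~\ref{propVAW} with $V=(\CC^n)^{\otimes k}$, $W=\big((\CC^n)^{\otimes k}\big)^*$, the evaluation pairing $\la v,w\ra=wv$ on the dual bases $e_{i_1\dots i_k}$ and $e^{i_1\dots i_k}$, and with the idempotent taken to be left multiplication by $S_{(k)}$ (an idempotent by the already established relation $S_{(k)}^2=S_{(k)}$). Its adjoint with respect to the evaluation pairing is right multiplication by $S_{(k)}$, again idempotent, so the two relevant subspaces are $\overline V=S_{(k)}(\CC^n)^{\otimes k}$ and $\overline W=\big((\CC^n)^{\otimes k}\big)^*S_{(k)}$. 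Part~1 of Proposition~\ref{propVAW} states exactly that the restricted pairing on $\overline V\times\overline W$ is non-degenerate. By Proposition~\ref{propIsom} these subspaces are isomorphic to $\gX^*_A(\CC)_k$ and $\gX_A(\CC)_k$ respectively, and — by the final sentence of that proposition — under these isomorphisms the evaluation pairing $\xi\pi$ is precisely the pairing~\eqref{XXXX_Sm}. Hence the degree-$k$ block of $\la\cdot,\cdot\ra\colon\gX_A(\CC)\times\gX^*_A(\CC)\to\CC$ is non-degenerate, and assembling over all $k$ gives the claim for this pairing.

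The $\Xi$-pairing~\eqref{XXXXAk} is entirely parallel: I would repeat the argument with the idempotent $A_{(k)}$ in place of $S_{(k)}$, using the third and fourth isomorphisms of Proposition~\ref{propIsom} to identify $A_{(k)}(\CC^n)^{\otimes k}\cong\Xi_A(\CC)_k$ and $\big((\CC^n)^{\otimes k}\big)^*A_{(k)}\cong\Xi^*_A(\CC)_k$, and again reading off the pairing as $\xi\pi$. I do not anticipate a real obstacle, since the substance has already been isolated in Propositions~\ref{propVAW} and~\ref{propIsom} and what remains is essentially bookkeeping. The two points that deserve a line of care are the adjointness claim — that left multiplication by $S_{(k)}$ on column tensors is adjoint, for the evaluation pairing, to right multiplication by $S_{(k)}$ on row tensors — and the passage from blockwise non-degeneracy to non-degeneracy of the pairing on the full graded algebras, which is exactly where grading-compatibility is used.
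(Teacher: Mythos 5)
Your proof is correct and follows essentially the same route as the paper: reduce to graded blocks using grading-compatibility, identify the degree-$k$ blocks with $\big((\CC^n)^{\otimes k}\big)^*S_{(k)}$ and $S_{(k)}\big((\CC^n)^{\otimes k}\big)$ (resp.\ the $A_{(k)}$-analogues) via Proposition~\ref{propIsom}, and apply part~(1) of Proposition~\ref{propVAW} to the evaluation pairing. The only difference is that you spell out the adjointness of left and right multiplication by $S_{(k)}$, which the paper's proof leaves implicit.
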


\begin{proof} Since the pairing $\la\cdot,\cdot\ra\colon\gX_A(\CC)\times\gX^*_A(\CC)\to\CC$ respects the grading its non-degeneracy is equivalent to non-degeneracy of the restricted pairings $\la\cdot,\cdot\ra\colon\gX_A(\CC)_k\times\gX^*_A(\CC)_k\to\CC$. Due to Proposition~\ref{propIsom} it is enough to prove that the pairings $\big((\CC^n)^{\otimes k}\big)^*S_{(k)}\times S_{(k)}\big((\CC^n)^{\otimes k}\big)\to\CC$ given by $(\xi,\pi)\mapsto\xi\pi$ are non-degenerate, but this fact follows from the part (1) of Proposition~\ref{propVAW}. The non-degeneracy of the pairing $\la\cdot,\cdot\ra\colon\Xi^*_A(\CC)\times\Xi_A(\CC)\to\CC$ is obtained in the same way. \end{proof}

The space $(\CC^n)^{\otimes k}$ is decomposed into the direct sum of $S_{(k)}(\CC^n)^{\otimes k}$ and $\big(1-S_{(k)}\big)(\CC^n)^{\otimes k}$. Let $v_\alpha=v_\alpha^{S_{(k)}}\in(\CC^n)^{\otimes k}$ be eigenvectors: $S_{(k)}v_\alpha=v_\alpha$ for $\alpha=1,\dots,d_k$ and $S_{(k)}v_\alpha=0$ for $\alpha=d_k+1,\dots,n^k$.
 Then $(v_\alpha)$ is a basis of $(\CC^n)^{\otimes k}$ such that $(v_\alpha)_{\alpha\le d_k}$ and $(v_\alpha)_{\alpha>d_k}$ are bases of the subspaces $S_{(k)}(\CC^n)^{\otimes k}$ and $\big(1-S_{(k)}\big)(\CC^n)^{\otimes k}$ respectively. Let $(v^\alpha=v^\alpha_{S_{(k)}})$ be dual basis of $\big((\CC^n)^{\otimes k}\big)^*$, i.e., $v^\alpha v_\beta=\delta^\alpha_\beta$.
 Since $v^\alpha S_{(k)}v_\beta=\delta^\alpha_\beta$ for $\beta\ge d_k$ and $v^\alpha S_{(k)}v_\beta=0$ for $\beta>d_k$ we obtain $v^\alpha S_{(k)}=v_\alpha$, $\alpha=1,\dots,d_k$, $v^\alpha S_{(k)}=0$, $\alpha=d_k+1,\dots,n^k$. In coordinates we have $v_\alpha=\sum_{i_1,\dots,i_k}v_\alpha^{i_1 \dots i_k}e_{i_1 \dots i_k}$, $v^\alpha=\sum_{i_1,\dots,i_k}v^\alpha_{i_1 \dots i_k}e^{i_1 \dots i_k}$ for some $v_\alpha^{i_1 \dots i_k},v^\alpha_{i_1 \dots i_k}\in\CC$ such that $\sum_{i_1,\dots,i_k}v_\alpha^{i_1 \dots i_k}v^\beta_{i_1 \dots i_k}=\delta^\beta_\alpha$ and $\sum_{\alpha=1}^{n^k}v_\alpha^{i_1 \dots i_k}v^\alpha_{j_1 \dots j_k}=\delta^{i_1}_{j_1}\cdots\delta^{i_k}_{j_k}$.

Analogously, let $w_\alpha=v_\alpha^{A_{(k)}}=\sum_{i_1,\dots,i_k}w_\alpha^{i_1 \dots i_k}e_{i_1 \dots i_k}$ and $w^\alpha=v^\alpha_{A_{(k)}}=\sum_{i_1,\dots,i_k}w^\alpha_{i_1 \dots i_k}e^{i_1 \dots i_k}$ form dual bases of $(\CC^n)^{\otimes k}$ and $\big((\CC^n)^{\otimes k}\big)^*$ such that $A_{(k)}w_\alpha=w_\alpha$ for $\alpha=1,\dots,r_k$ and \mbox{$A_{(k)}w_\alpha=0$} for $\alpha=r_k+1,\dots,n^k$. Then $w^\alpha A_{(k)}=w^\alpha$, $\alpha=1,\dots,r_k$ and $w^\alpha A_{(k)}=0$, $\alpha=r_k+1,\dots,n^k$.

\begin{Prop} %\label{PropBases}
 The elements
\begin{align}
 &x_{(k)}^\alpha=v^\alpha (X\otimes\cdots\otimes X)=\sum_{i_1,\dots,i_k}v^\alpha_{i_1 \dots i_k}x^{i_1}\cdots x^{i_k}, \qquad\alpha=1,\dots,d_k, \label{xalphak} \\
 &x^{(k)}_\alpha=(X^*\otimes\cdots\otimes X^*)v_\alpha=\sum_{i_1,\dots,i_k}v_\alpha^{i_1 \dots i_k}x_{i_1}\cdots x_{i_k}, \qquad\alpha=1,\dots,d_k, \label{xklapha} \\
 &\psi^{(k)}_\alpha=(\Psi\otimes\cdots\otimes\Psi)w_\alpha=\sum_{i_1,\dots,i_k}w_\alpha^{i_1 \dots i_k}\psi_{i_1}\cdots \psi_{i_k}, \qquad\alpha=1,\dots,r_k, \label{psikalpha} \\
 &\psi_{(k)}^\alpha=w^\alpha (\Psi^*\otimes\cdots\otimes\Psi^*)=\sum_{i_1,\dots,i_k}w^\alpha_{i_1 \dots i_k}\psi^{i_1}\cdots \psi^{i_k}, \qquad\alpha=1,\dots,r_k, \label{psialphak}
\end{align}
form the dual bases of the $k$-th graded components $\gX_A(\CC)_k$, $\gX^*_A(\CC)_k$, $\Xi_A(\CC)_k$, $\Xi^*_A(\CC)_k$.
\end{Prop}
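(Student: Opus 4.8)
The plan is to deduce everything from the isomorphisms of Proposition~\ref{propIsom} together with the pairing formulae of Lemma~\ref{lemXTX}, transporting along them the dual bases $(v^\alpha),(v_\alpha)$ and $(w^\alpha),(w_\alpha)$ constructed just before the statement. The four assertions are completely parallel under the substitutions~\eqref{ASinter}, so it suffices to treat the $\gX_A(\CC)_k$/$\gX^*_A(\CC)_k$ pair carefully and to indicate that the $\Xi$-case is obtained by replacing $S_{(k)}$, $X$, $X^*$ by $A_{(k)}$, $\Psi^*$, $\Psi$.

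First I would record the linear-algebra facts already prepared in the text preceding the Proposition: the vectors $(v_\alpha)_{\alpha\le d_k}$ form a basis of $S_{(k)}(\CC^n)^{\otimes k}$, while the covectors $(v^\alpha)_{\alpha\le d_k}$ satisfy $v^\alpha S_{(k)}=v^\alpha$ and hence form a basis of the row space $\big((\CC^n)^{\otimes k}\big)^*S_{(k)}$ (they lie in it, are linearly independent as part of a dual basis, and number $d_k=\rk S_{(k)}$). Applying the first isomorphism of Proposition~\ref{propIsom}, namely $\xi\mapsto\xi(X\otimes\cdots\otimes X)$, sends the basis $(v^\alpha)_{\alpha\le d_k}$ to precisely the elements $x_{(k)}^\alpha$ of~\eqref{xalphak}; since that map is bijective, $(x_{(k)}^\alpha)_{\alpha\le d_k}$ is a basis of $\gX_A(\CC)_k$. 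In the same way the second isomorphism $\pi\mapsto(X^*\otimes\cdots\otimes X^*)\pi$ carries the basis $(v_\alpha)_{\alpha\le d_k}$ to $(x^{(k)}_\alpha)_{\alpha\le d_k}$, a basis of $\gX^*_A(\CC)_k$. The two $\Xi$-statements follow identically from the third and fourth isomorphisms of Proposition~\ref{propIsom} applied to the bases $(w_\alpha)_{\alpha\le r_k}$ and $(w^\alpha)_{\alpha\le r_k}$ of $A_{(k)}(\CC^n)^{\otimes k}$ and $\big((\CC^n)^{\otimes k}\big)^*A_{(k)}$.

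It then remains to verify duality, which I would read off from formula~\eqref{XTTX} of Lemma~\ref{lemXTX}. Taking $T=v^\alpha$ and $\wt T=v_\beta$ gives
\begin{align*}
\big\la x_{(k)}^\alpha,x^{(k)}_\beta\big\ra=\big\la v^\alpha(X\otimes\cdots\otimes X),(X^*\otimes\cdots\otimes X^*)v_\beta\big\ra=v^\alpha S_{(k)}v_\beta=v^\alpha v_\beta=\delta^\alpha_\beta,
\end{align*}
the last two equalities using $v^\alpha S_{(k)}=v^\alpha$ (valid for $\alpha\le d_k$) and $v^\alpha v_\beta=\delta^\alpha_\beta$; equivalently this is the pairing-as-multiplication statement at the end of Proposition~\ref{propIsom}. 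The $\Xi$-pairing would be handled by the companion formula~\eqref{PsiTTPsi} with $T=w^\alpha$, $\wt T=w_\beta$, yielding $\big\la\psi_{(k)}^\alpha,\psi^{(k)}_\beta\big\ra=w^\alpha A_{(k)}w_\beta=w^\alpha w_\beta=\delta^\alpha_\beta$.

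There is no serious obstacle here: the content was already packaged into Proposition~\ref{propIsom} and Lemma~\ref{lemXTX}, and the remaining argument is pure bookkeeping. The only point demanding care is keeping straight which range of the index $\alpha$ (namely $\alpha\le d_k$ for the $S_{(k)}$-eigenvectors and $\alpha\le r_k$ for the $A_{(k)}$-eigenvectors) yields elements of the image of the relevant idempotent, and correspondingly which of the two pairings — $\gX_A(\CC)_k\times\gX^*_A(\CC)_k$ or $\Xi^*_A(\CC)_k\times\Xi_A(\CC)_k$ — is being evaluated, so that the relations $v^\alpha S_{(k)}=v^\alpha$ and $A_{(k)}w_\beta=w_\beta$ may legitimately be used to collapse the inserted idempotent.
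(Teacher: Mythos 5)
Your proof is correct and follows essentially the same route as the paper: the paper's own argument is precisely "consequence of Proposition~\ref{propIsom} and the fact that $(v^\alpha)$, $(v_\alpha)$, $(w_\alpha)$, $(w^\alpha)$ are dual bases of the corresponding images of the idempotents," which is exactly what you spell out. Your explicit duality computation via Lemma~\ref{lemXTX} is just an unpacking of the final statement of Proposition~\ref{propIsom} (pairing as multiplication $\xi\pi$), so no new ingredient is introduced and none is missing.
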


\begin{proof} This is consequence of Proposition~\ref{propIsom} and the fact that $(v^\alpha)_{\alpha=1}^{d_k}$, $(v_\alpha)_{\alpha=1}^{d_k}$, $(w_\alpha)_{\alpha=1}^{r_k}$ and $(w^\alpha)_{\alpha=1}^{r_k}$ are dual bases of
 $\big((\CC^n)^{\otimes k}\big)^*S_{(k)}$, $S_{(k)}\big((\CC^n)^{\otimes k}\big)$, $A_{(k)}\big((\CC^n)^{\otimes k}\big)$ and $\big((\CC^n)^{\otimes k}\big)^*A_{(k)}$ respectively. \end{proof}

In contrast to the uniqueness the existence of the pairing operators is not guaranteed for an arbitrary idempotent $A$. In many interesting cases the pairing operators can be found explicitly. In general situation we can claim the existence of $S_{(1)}$, $A_{(1)}$, $S_{(2)}$ and $A_{(2)}$ only. In Section~\ref{secPO4p} we consider cases when the third pairing operators do not exist. Now we give necessary and sufficient conditions for existence of a pairing operator.

\begin{Th} \label{ThPOviaDB}
 Let $A\in\End(\CC^n\otimes\CC^n)$ be an idempotent and $k\ge2$. Consider the subspaces
\begin{align*}
 &V_k=\big\{\pi\in(\CC^n)^{\otimes k}\mid A^{(a,a+1)}\pi=0\big\}, \qquad
 \overline V_k=\big\{\xi\in\big((\CC^n)^{\otimes k}\big)^*\mid \xi A^{(a,a+1)}=0\big\},
 \\
 &W_k=\big\{\pi\in(\CC^n)^{\otimes k}\mid S^{(a,a+1)}\pi=0\big\}, \qquad
 \overline W_k=\big\{\xi\in\big((\CC^n)^{\otimes k}\big)^*\mid \xi S^{(a,a+1)}=0\big\}.
\end{align*}
We have isomorphisms $\gX_A(\CC)_k\cong V_k^*$, $\gX^*_A(\CC)_k\cong \overline V_k^*$, $\Xi_A(\CC)_k\cong\overline W_k^*$, $\Xi^*_A(\CC)_k\cong W_k^*$ given by the pairings
\begin{align}
 &\big\la\xi(X\otimes\cdots\otimes X),\pi\big\ra=\xi\pi, \qquad
 \xi\in\big((\CC^n)^{\otimes k} \big)^*,\quad\pi\in V_k, \label{pairVk} \\
 &\big\la(X^*\otimes\cdots\otimes X^*)\pi,\xi\big\ra=\xi\pi, \qquad
 \pi\in(\CC^n)^{\otimes k},\quad\xi\in\overline V_k, \label{pairVkD} \\
 &\big\la(\Psi\otimes\cdots\otimes\Psi)\pi,\xi\big\ra=\xi\pi, \qquad
 \pi\in(\CC^n)^{\otimes k},\quad\xi\in\overline W_k, \label{pairWkD} \\
 &\big\la\xi(\Psi^*\otimes\cdots\otimes\Psi^*),\pi\big\ra=\xi\pi, \qquad
 \xi\in\big((\CC^n)^{\otimes k} \big)^*,\quad\pi\in W_k.\label{pairWk}
\end{align}
\begin{itemize}
\itemsep=0pt
 \item The $k$-th $S$-operator $S_{(k)}$ for the idempotent $A$ exists iff the spaces $V_k$ and $\overline V_k$ are dual via the natural pairing, that is $\dim V_k=\dim\overline V_k$ and there are bases $(v_\alpha)$ and $(v^\alpha)$ of $V_k$ and~$\overline V_k$ such that $v^\alpha v_\beta=\delta^\alpha_\beta$. This pairing operator has the form $S_{(k)}=\sum_{\alpha}v_\alpha v^\alpha$.
 \item The $k$-th $A$-operator $A_{(k)}$ for the idempotent $A$ exists iff the spaces $W_k$ and $\overline W_k$ are dual via the natural pairing, that is $\dim W_k=\dim\overline W_k$ and there are bases $(w_\alpha)$ and $(w^\alpha)$ of~$W_k$ and $\overline W_k$ such that $w^\alpha w_\beta=\delta^\alpha_\beta$. This pairing operator is $A_{(k)}=\sum_{\alpha}w_\alpha w^\alpha$.
\end{itemize}
\end{Th}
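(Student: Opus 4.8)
The plan is to establish the four grading-respecting isomorphisms independently of the existence of pairing operators, and then read off the existence criterion from the defining relations \eqref{PSSPS}--\eqref{AkXXXXXX2}. For the isomorphisms I would identify $\gX_A(\CC)_k$ with the quotient $\bigl((\CC^n)^{\otimes k}\bigr)^*/I_k$, where $x^{i_1}\cdots x^{i_k}$ is the image of $e^{i_1\dots i_k}$ and $I_k$ is the degree-$k$ part of the defining ideal. A direct computation shows a degree-$2$ relation $\sum_{ij}A^{kl}_{ij}e^i\otimes e^j$ equals $e^{kl}A$, so $I_k=\sum_{a=1}^{k-1}\bigl((\CC^n)^{\otimes k}\bigr)^*A^{(a,a+1)}$ and hence $I_k^\perp=\bigcap_a\Ker A^{(a,a+1)}=V_k$. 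The standard duality between a quotient and the annihilator of the corresponding subspace then gives a perfect pairing $\gX_A(\CC)_k\times V_k\to\CC$, $(\xi(X\otimes\cdots\otimes X),\pi)\mapsto\xi\pi$, which is exactly the isomorphism $\gX_A(\CC)_k\cong V_k^*$ of \eqref{pairVk}. The remaining three are obtained the same way, identifying $\gX^*_A(\CC)_k$, $\Xi_A(\CC)_k$, $\Xi^*_A(\CC)_k$ with quotients by $\sum_a\Im A^{(a,a+1)}$, $\sum_a\Im S^{(a,a+1)}$, $\sum_a\bigl((\CC^n)^{\otimes k}\bigr)^*S^{(a,a+1)}$ and matching their annihilators with $\overline V_k$, $\overline W_k$, $W_k$. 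One must argue directly here, not through Proposition~\ref{propIsom}, since those isomorphisms presuppose the very operators whose existence is in question.

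The second part rests on rewriting the defining conditions of $S_{(k)}$ through these isomorphisms. Condition \eqref{PSSPS} says precisely that $\Im S_{(k)}\subseteq V_k$ and that $\xi S_{(k)}\in\overline V_k$ for every covector $\xi$. Pairing \eqref{SkXXXXXX2} in the auxiliary factor with an arbitrary $\xi$ and using injectivity of $\gX_A(\CC)_k\hookrightarrow V_k^*$ shows that \eqref{SkXXXXXX2} is equivalent to $S_{(k)}|_{V_k}=\id$; dually, \eqref{SkXXXXXX1} is equivalent to $\xi S_{(k)}=\xi$ for all $\xi\in\overline V_k$. Thus a $k$-th $S$-operator is exactly an operator on $(\CC^n)^{\otimes k}$ projecting onto $V_k$ whose transpose projects onto $\overline V_k$.

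With this reformulation the equivalence is immediate. If $S_{(k)}$ exists it is idempotent with image $V_k$, so $\rk S_{(k)}=\dim V_k$, while its transpose is idempotent with image $\overline V_k$, so $\rk S_{(k)}=\dim\overline V_k$; hence $\dim V_k=\dim\overline V_k$. Moreover $\Ker S_{(k)}=(\Im S_{(k)}^{\,\top})^\perp=\overline V_k^{\,\perp}$, and the projection decomposition $(\CC^n)^{\otimes k}=V_k\oplus\overline V_k^{\,\perp}$ forces $V_k\cap\overline V_k^{\,\perp}=0$, which is non-degeneracy of the pairing $\overline V_k\times V_k\to\CC$. Conversely, given dual bases $(v_\alpha)$ of $V_k$ and $(v^\alpha)$ of $\overline V_k$ with $v^\alpha v_\beta=\delta^\alpha_\beta$, I would set $S_{(k)}=\sum_\alpha v_\alpha v^\alpha$ and check the relations one by one: $A^{(a,a+1)}v_\alpha=0$ and $v^\alpha A^{(a,a+1)}=0$ give \eqref{PSSPS}, while $S_{(k)}v_\beta=v_\beta$ and $v^\beta S_{(k)}=v^\beta$ give \eqref{SkXXXXXX2} and \eqref{SkXXXXXX1}. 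By Proposition~\ref{propUnique} this is the unique $k$-th $S$-operator, which also yields the formula $S_{(k)}=\sum_\alpha v_\alpha v^\alpha$.

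The statement for $A_{(k)}$, $W_k$, $\overline W_k$ then follows verbatim after applying the involution \eqref{ASinter} exchanging $A\leftrightarrow S$, $X\leftrightarrow\Psi^*$, $X^*\leftrightarrow\Psi$ and $S_{(k)}\leftrightarrow A_{(k)}$, which turns \eqref{PSSPS}--\eqref{SkXXXXXX2} into \eqref{PAAPA}--\eqref{AkXXXXXX2} and the pair $(V_k,\overline V_k)$ into $(W_k,\overline W_k)$. The one genuinely delicate point is the second paragraph: making rigorous that the symbolic identities \eqref{SkXXXXXX1} and \eqref{SkXXXXXX2}, living in $\gX^*_A(\CC)\otimes(\cdots)$ and $\gX_A(\CC)\otimes(\cdots)$, translate into honest pointwise identities for the operator $S_{(k)}$ on $\overline V_k$ and $V_k$. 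This is precisely where the perfectness of the pairings from the first part is consumed, so the two halves of the argument are logically intertwined rather than independent.
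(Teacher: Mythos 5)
Your proposal is correct and takes essentially the same route as the paper: both establish the four isomorphisms by identifying the degree-$k$ component of the defining ideal with the annihilator of $V_k$ (respectively $\overline V_k$, $W_k$, $\overline W_k$) via quotient/annihilator duality, reduce the $A$-operator case to the $S$-operator case by the involution exchanging $A\leftrightarrow S$, and settle existence by showing that an existing $S_{(k)}$ is an idempotent with image $V_k$ and transpose image $\overline V_k$, while conversely checking that $\sum_\alpha v_\alpha v^\alpha$ satisfies \eqref{PSSPS}--\eqref{SkXXXXXX2}. The only organizational difference is that you isolate the translation of \eqref{SkXXXXXX1} and \eqref{SkXXXXXX2} into pointwise projection conditions as an explicit intermediate equivalence, whereas the paper verifies these relations through the expansion $(X^*\otimes\cdots\otimes X^*)=\sum_\alpha x^{(k)}_\alpha v^\alpha$ in the converse direction and, in the forward direction, a dimension count resting on Proposition~\ref{propIsom}.
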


\begin{proof} Due to the symmetry~\eqref{ASinter} it is enough to prove the statements concerning $V_k$ and $\overline V_k$.

 By definition the algebra $\gX^*_A(\CC)$ is the quotient of the tensor algebra $T\CC^n=\bigoplus_{k\in\NN_0}(\CC^n)^{\otimes k}$ by its two sided ideal $I$ generated by the elements $\sum_{s,t=1}^ne_{\rm st}A^{\rm st}_{ij}$. In the $k$-th graded component we have $\gX^*_A(\CC)_k=(\CC^n)^{\otimes k}/I_k$, where $I_k$ is a subspace of $(\CC^n)^{\otimes k}$ spanned by the vectors $\sum_{s,t=1}^n e_{i_1\dots i_{a-1}} \otimes\big(e_{\rm st}A^{\rm st}_{i_ai_{a+1}}\big)\otimes e_{i_{a+2}\dots i_k}=A^{(a,a+1)}e_{i_1\dots i_k}$, where $a=1,\dots,k-1$, $i_1,\dots,i_k=1,\dots,n$. That is $I_k=\sum_{a=1}^{k-1}A^{(a,a+1)}(\CC^n)^{\otimes k}$.
 This definition implies that the element $x_i$ is the class $[e_i]$, hence $[e_{i_1\dots i_k}]=[e_{i_1}\otimes\cdots\otimes e_{i_k}]=x_{i_1}\cdots x_{i_k}=(X^*\otimes\cdots\otimes X^*)e_{i_1\dots i_k}$, so the canonical projection $p_k\colon(\CC^n)^{\otimes k}\to\gX^*_A(\CC)_k$ has the form $p_k\colon\pi\mapsto(X^*\otimes\cdots\otimes X^*)\pi$. The subspace orthogonal to $\Ker p_k=I_k$ is $I_k^\bot=\big\{\xi\in\big((\CC^n)^{\otimes k}\big)^*\mid\xi I_k=0\big\}=\overline V_k$.

Note that for any subspace $U_0$ of a vector space $U$ we have an isomorphism $U_0^\bot\cong(U/U_0)^*$ via the pairing $\big\la[u],\lambda\big\ra=\lambda(u)$, where $\lambda\in U_0^\bot$ and $[u]\in U/U_0$ is the class of a vector $u\in U$. Hence we obtain the isomorphism $\overline V_k=I_k^\bot\cong\big((\CC^n)^{\otimes k}/I_k\big)^*=\big(\gX^*_A(\CC)_k\big)^*$ given by the pairing~\eqref{pairVkD}. In the same way we obtain the isomorphism $V_k\cong\big(\gX_A(\CC)_k\big)^*$.

If $S_{(k)}$ exists then there are dual bases $(v_\alpha)_{\alpha=1}^{d_k}$ and $(v^\alpha)_{\alpha=1}^{d_k}$ of the spaces $S_{(k)}(\CC^n)^{\otimes k}$ and $\big((\CC^n)^{\otimes k}\big)^*S_{(k)}$. Since $S_{(k)}A^{(a,a+1)}=0$ we have $\big((\CC^n)^{\otimes k}\big)^*S_{(k)}\subset\overline V_k$. The equality~\eqref{dkrk} and the isomorphism $\overline V_k^*\cong\gX^*_A(\CC)_k$ imply $\dim\overline V_k=\dim\gX^*_A(\CC)_k=d_k=\dim\big((\CC^n)^{\otimes k}\big)^*S_{(k)}$, hence $\big((\CC^n)^{\otimes k}\big)^*S_{(k)}=\overline V_k$. Similarly we obtain $\dim V_k=d_k$ and $S_{(k)}(\CC^n)^{\otimes k}=V_k$. Thus $(v_\alpha)_{\alpha=1}^{d_k}$ and $(v^\alpha)_{\alpha=1}^{d_k}$ are dual bases of $V_k$ and $\overline V_k$ respectively.

Conversely, let $d=\dim V_k=\dim\overline V_k$ and let $(v_\alpha)_{\alpha=1}^{d}$ and $(v^\alpha)_{\alpha=1}^{d}$ be dual bases of $V_k$ and~$\overline V_k$. Since the vectors $v_\alpha$ are not orthogonal to $\overline V_k$ they do not belong to $I_k$, that is $V_k\cap I_k=0$. Moreover, $\dim I_k=n^k-\dim\overline V_k=n^k-\dim V_k$ and hence $(\CC^n)^{\otimes k}=V_k\oplus I_k$. This implies that the restriction of the projection $p_k$ to the subspace $V_k$ is an isomorphism and hence the elements $x_\alpha^{(k)}:=(X^*\otimes\cdots\otimes X^*)v_\alpha$ form a basis of $\gX^*_A(\CC)_k$. In particular, $x_{i_1}\cdots x_{i_k}=\sum_{\alpha=1}^{d}c_{i_1\dots i_k}^\alpha x^{(k)}_\alpha$ for some $c_{i_1\dots i_k}^\alpha\in\CC$, so that $(X^*\otimes\cdots\otimes X^*)=\sum_{\alpha=1}^{d}x^{(k)}_\alpha\xi^\alpha$, where $\xi^\alpha=\sum_{i_1,\dots,i_k=1}^n c_{i_1\dots i_k}^\alpha e^{i_1\dots i_k}\in\big((\CC^n)^{\otimes k}\big)^*$. By multiplying this by $A^{(a,a+1)}$ from the right and taking into account~\eqref{Axx0_dual} we obtain $\sum_{\alpha=1}^{d}x^{(k)}_\alpha\xi^\alpha A^{(a,a+1)}=0$. Since the elements $x^{(k)}_\alpha$ are linearly independent, we have $\xi^\alpha A^{(a,a+1)}=0$ for all $a=1,\dots,k-1$, so that $\xi^\alpha\in\overline V_k$. Multiplication of the same relation by $v_\beta$ from the right gives $\xi^\alpha v_\beta=\delta^\alpha_\beta$, hence $\xi^\alpha=v^\alpha$. Let $S_{(k)}=\sum_{\alpha=1}^dv_\alpha v^\alpha$, then we have $(X^*\otimes\cdots\otimes X^*)S_{(k)}=\sum_{\alpha,\beta=1}^{d}x^{(k)}_\alpha v^\alpha v_\beta v^\beta=\sum_{\alpha=1}^{d}x^{(k)}_\alpha v^\alpha=(X^*\otimes\cdots\otimes X^*)$. Analogously we obtain $S_{(k)}(X\otimes\cdots\otimes X)=(X\otimes\cdots\otimes X)$. Since $S_{(k)}$ satisfies also $A^{(a,a+1)}S_{(k)}=S_{(k)}A^{(a,a+1)}=0$ it is the $k$-th $S$-operator for the idempotent $A$. \end{proof}

\begin{Rem}
 Theorem~\ref{ThPOviaDB} means in fact that the non-degeneracies of the pairings $\overline V_k\times V_k\to\CC$ and $\overline W_k\times W_k\to\CC$ given by $\la\xi,\pi\ra=\xi\pi$ imply that they induce non-degenerate pairings $\gX^*_A(\CC)_k\times\gX_A(\CC)_k\to\CC$ and $\Xi_A(\CC)_k\times\Xi^*_A(\CC)_k\to\CC$ respectively. Conversely, if there exists a non-degenerate pairing $\gX^*_A(\CC)_k\times\gX_A(\CC)_k\to\CC$ or $\Xi_A(\CC)_k\times\Xi^*_A(\CC)_k\to\CC$ then the induced pairing $\overline V_k\times V_k\to\CC$ or $\overline W_k\times W_k\to\CC$ is non-degenerate but it may differ from the corresponding pairing defined by $\la\xi,\pi\ra=\xi\pi$. In other words, the conditions $\dim\gX^*_A(\CC)_k=\dim\gX_A(\CC)_k$ and $\dim\Xi^*_A(\CC)_k=\dim\Xi_A(\CC)_k$ do not guarantee the existence of $S_{(k)}$ and $A_{(k)}$ respectively (e.g., see~Section~\ref{secPO4p}).
\end{Rem}

\begin{Rem}
 If some pairing $S$- or $A$-operators do not exist then one can consider the dual space $\big(\gX_A(\CC)\big)^*=\bigoplus_{k=0}^\infty V_k$ or $\big(\Xi_A(\CC)\big)^*=\bigoplus_{k=0}^\infty\overline W_k$ (without a structure of algebra) instead of the algebra $\gX^*_A(\CC)$ or $\Xi^*_A(\CC)$ respectively. The algebra structures on the spaces $\gX^*_A(\CC)$ and~$\Xi^*_A(\CC)$ are auxiliary. They are not in agreement with the algebra structures of $\gX_A(\CC)$ and $\Xi_A(\CC)$, but they are used to define these spaces in a more convenient way.
\end{Rem}

\subsection{Minor operators}
\label{secMinO}

 Let $S_{(k)},A_{(k)}\in\End\big((\CC^n)^{\otimes k}\big)$ and $\wt S_{(k)},\wt A_{(k)}\in\End\big((\CC^m)^{\otimes k}\big)$ be pairing operators for idempotents $A\in\End(\CC^n\otimes\CC^n)$ and $\wt A\in\End(\CC^n\otimes\CC^n)$ respectively.
Let $X$, $X^*$, $\Psi$, $\Psi^*$ denote the same as previous subsection. The corresponding column- and row-vectors for $\wt A$ we denote by~$\wt X$, $\wt X^*$,~$\wt\Psi^*$, $\wt\Psi$.

By virtue of Proposition~\ref{propIsom} any graded linear operator $\gX_A(\CC)\to\gX_{\wt A}(\gR)$ is given by the formula
\begin{align}
 \xi(X\otimes\cdots\otimes X)\mapsto\xi T_k\big(\wt X\otimes\cdots\otimes\wt X\big), \qquad
 \xi\in\big((\CC^n)^{\otimes k}\big)^* \label{TkX}
\end{align}
for some operators $T_k\in\gR\otimes\Hom\big((\CC^m)^{\otimes k},(\CC^n)^{\otimes k}\big)$ such that $S_{(k)}T_k\wt S_{(k)}=T_k\wt S_{(k)}$. In the same time a graded linear operator $\gX^*_{\wt A}(\CC)\to\gX^*_A(\gR)$ has the form
\begin{align}
 (\wt X^*\otimes\cdots\otimes\wt X^*)\pi\mapsto(X^*\otimes\cdots\otimes X^*) T_k\pi, \qquad \pi\in(\CC^m)^{\otimes k} \label{XTk}
\end{align}
for some $T_k\in\gR\otimes\Hom\big((\CC^m)^{\otimes k},(\CC^n)^{\otimes k}\big)$ such that $S_{(k)}T_k\wt S_{(k)}=S_{(k)}T_k$.

Analogously, a graded linear operator $\Xi_{\wt A}(\CC)\to\Xi_A(\gR)$ can be written as
\begin{align}
 (\wt\Psi\otimes\cdots\otimes\wt\Psi)\pi\mapsto(\Psi\otimes\cdots\otimes\Psi) R_k\pi,\qquad \pi\in(\CC^m)^{\otimes k} \label{PsiRk}
\end{align}
for $R_k\in\gR\otimes\Hom\big((\CC^m)^{\otimes k},(\CC^n)^{\otimes k}\big)$ such that $A_{(k)}R_k\wt A_{(k)}=A_{(k)}R_k$, while a graded linear operator $\Xi^*_A(\CC)\to\Xi^*_{\wt A}(\gR)$ has the form
\begin{align}
 \xi(\Psi^*\otimes\cdots\otimes\Psi^*)\mapsto\xi R_k\big(\wt\Psi^*\otimes\cdots\otimes\wt\Psi^*\big), \qquad \xi\in\big((\CC^n)^{\otimes k}\big)^* \label{RkPsi}
\end{align}
for some operators $R_k\in\gR\otimes\Hom\big((\CC^m)^{\otimes k},(\CC^n)^{\otimes k}\big)$ such that $A_{(k)}R_k\wt A_{(k)}=R_k\wt A_{(k)}$.

Note that $T_k$ and $R_k$ can be replaced by $S_{(k)}T_k\wt S_{(k)}$ and $A_{(k)}R_k\wt A_{(k)}$ respectively and this does not change the maps~\eqref{TkX}, \eqref{XTk}, \eqref{PsiRk}, \eqref{RkPsi}. Hence we can always suppose $S_{(k)}T_k\wt S_{(k)}=T_k$ and $A_{(k)}R_k\wt A_{(k)}=R_k$.

Denote $\gX^*_A(\gR)=\gR\otimes\gX^*_A(\CC)$ and $\Xi^*_A(\gR)=\gR\otimes\Xi^*_A(\CC)$. The pairings are invariant in the following sense.

\begin{Prop} %\label{PropInvPair}
 Let $t\colon\gX_A(\CC)\to\gX_{\wt A}(\gR)$ and $t^*\colon\gX^*_{\wt A}(\CC)\to\gX^*_A(\gR)$ be the operators~\eqref{TkX} and \eqref{XTk} defined by the same $T_k\in\gR\otimes\Hom\big((\CC^m)^{\otimes k},(\CC^n)^{\otimes k}\big)$ satisfying $S_{(k)}T_k\wt S_{(k)}=T_k$. Let $r\colon\Xi_{\wt A}(\CC)\to\Xi_A(\gR)$ and $r^*\colon\Xi^*_A(\CC)\to\Xi^*_{\wt A}(\gR)$ be the operators~\eqref{PsiRk} and \eqref{RkPsi} defined by the same $R_k\in\gR\otimes\Hom\big((\CC^m)^{\otimes k},(\CC^n)^{\otimes k}\big)$ satisfying $A_{(k)}R_k\wt A_{(k)}=R_k$. Then for any $u\in\gX_A(\CC)$, $v\in\gX^*_{\wt A}(\CC)$, $\nu\in\Xi^*_A(\CC)$, $\mu\in\Xi_{\wt A}(\CC)$ we have
\begin{align}
 \la u,t^*(v)\ra=\la t(u),v\ra, \qquad
 \la\nu,r(\mu)\ra=\la r^*(\nu),\mu\ra, \label{InvPair}
\end{align}
where the pairings $\gX_A(\CC)\times\gX^*_A(\CC)\to\CC$, $\Xi^*_A(\CC)\times\Xi_A(\CC)\to\CC$ and $\gX_{\wt A}(\CC)\times\gX^*_{\wt A}(\CC)\to\CC$, $\Xi^*_{\wt A}(\CC)\times\Xi_{\wt A}(\CC)\to\CC$ are defined by the pairing operators $S_{(k)}$, $A_{(k)}$ and $\wt S_{(k)}$, $\wt A_{(k)}$ respectively.
\end{Prop}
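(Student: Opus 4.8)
The plan is to verify each of the two identities in \eqref{InvPair} degree by degree. Since all four pairings respect the grading, both sides of each identity vanish unless the two arguments live in the same graded component, so I fix $k$ and take $u\in\gX_A(\CC)_k$ and $v\in\gX^*_{\wt A}(\CC)_k$. First I would invoke Proposition~\ref{propIsom} to put these elements in a convenient standard form: write $u=\xi(X\otimes\cdots\otimes X)$ with $\xi\in\big((\CC^n)^{\otimes k}\big)^*$, and $v=(\wt X^*\otimes\cdots\otimes\wt X^*)\pi$ with $\pi\in(\CC^m)^{\otimes k}$. Then the defining formulae \eqref{TkX} and \eqref{XTk} give $t(u)=\xi T_k(\wt X\otimes\cdots\otimes\wt X)$ and $t^*(v)=(X^*\otimes\cdots\otimes X^*)T_k\pi$.

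Next I would evaluate both sides with Lemma~\ref{lemXTX}. On the right the relevant pairing is the one on $\gX_{\wt A}(\CC)\times\gX^*_{\wt A}(\CC)$ governed by $\wt S_{(k)}$, so formula \eqref{XTTX} applied with $T=\xi T_k$ and $\wt T=\pi$ yields $\la t(u),v\ra=\xi T_k\wt S_{(k)}\pi$. On the left the pairing is the one on $\gX_A(\CC)\times\gX^*_A(\CC)$ governed by $S_{(k)}$, so \eqref{XTTX} with $T=\xi$ and $\wt T=T_k\pi$ gives $\la u,t^*(v)\ra=\xi S_{(k)}T_k\pi$. It then remains only to identify the two scalars $\xi S_{(k)}T_k\pi$ and $\xi T_k\wt S_{(k)}\pi$.

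The key algebraic step is the normalisation $S_{(k)}T_k\wt S_{(k)}=T_k$ imposed just before the statement. Combined with the idempotency $S_{(k)}^2=S_{(k)}$ and $\wt S_{(k)}^2=\wt S_{(k)}$, it yields $S_{(k)}T_k=T_k$ and $T_k\wt S_{(k)}=T_k$; hence $\xi S_{(k)}T_k\pi=\xi T_k\pi=\xi T_k\wt S_{(k)}\pi$, which is the first identity. The second identity follows the identical route: I would represent $\nu=\xi(\Psi^*\otimes\cdots\otimes\Psi^*)$ and $\mu=(\wt\Psi\otimes\cdots\otimes\wt\Psi)\pi$ via Proposition~\ref{propIsom}, use \eqref{PsiRk} and \eqref{RkPsi} together with formula \eqref{PsiTTPsi} of Lemma~\ref{lemXTX} to obtain $\la\nu,r(\mu)\ra=\xi A_{(k)}R_k\pi$ and $\la r^*(\nu),\mu\ra=\xi R_k\wt A_{(k)}\pi$, and then apply $A_{(k)}R_k\wt A_{(k)}=R_k$ (whence $A_{(k)}R_k=R_k=R_k\wt A_{(k)}$) to conclude both equal $\xi R_k\pi$. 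Alternatively the second identity is the image of the first under the duality symmetry \eqref{ASinter}.

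There is essentially no genuine obstacle here; the argument is a direct bookkeeping computation. The only point requiring care is keeping straight which pairing operator governs each side — namely $S_{(k)}$ and $A_{(k)}$ on the domain ($A$-)algebras versus $\wt S_{(k)}$ and $\wt A_{(k)}$ on the codomain ($\wt A$-)algebras — since the entire content of the statement is precisely that the single intertwiner $T_k$ (respectively $R_k$) transports one pairing into the other, and this transport is exactly what the normalisation condition encodes.
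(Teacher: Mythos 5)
Your proof is correct and follows essentially the same route as the paper: represent $u=\xi(X\otimes\cdots\otimes X)$ and $v=(\wt X^*\otimes\cdots\otimes\wt X^*)\pi$, evaluate both sides with Lemma~\ref{lemXTX} to get $\xi S_{(k)}T_k\pi$ and $\xi T_k\wt S_{(k)}\pi$, and identify them via the normalisation $S_{(k)}T_k\wt S_{(k)}=T_k$ (the paper states this identification directly, while you spell out the intermediate step $S_{(k)}T_k=T_k=T_k\wt S_{(k)}$). The treatment of the second identity, by the parallel $\Psi$-computation or the symmetry~\eqref{ASinter}, likewise matches the paper's ``proved similarly.''
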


\begin{proof} Let $u=\xi(X\otimes\cdots\otimes X)$ and $v=\big(\wt X^*\otimes\cdots\otimes\wt X^*\big)\pi$. Then by using Lemma~\ref{lemXTX} we have
\begin{align*}
 \la u,t^*(v)\ra&=\big\la\xi(X\otimes\cdots\otimes X),(X^*\otimes\cdots\otimes X^*)T_k\pi\big\ra=\xi S_{(k)}T_k\pi=\xi T_k\wt S_{(k)}\pi
 \\
 &=\big\la\xi T_k\big(\wt X\otimes\cdots\otimes\wt X\big),\big(\wt X^*\otimes\cdots\otimes\wt X^*\big)\pi\big\ra=\la t(u),v\ra.
\end{align*}
The second formula~\eqref{InvPair} is proved similarly. \end{proof}

Recall that in Section~\ref{secCommCat} we introduced the homomorphisms $f_M\colon\gX_A(\CC)\to\gX_{\wt A}(\CC)$ and $f^M\colon\Xi_{\wt A}(\CC)\to\Xi_A(\CC)$ for a $\big(A,\wt A\big)$-Manin matrix $M\in\gR\otimes\Hom(\CC^m,\CC^n)$. Their definition on~generators can be written in the matrix form as
\begin{align*}
 f_M(X)=M\wt X,\qquad
 f^M(\wt\Psi)=\Psi M.
\end{align*}
Since homomorphisms preserve multiplications we obtain
\begin{gather}
 f_M(X\otimes\cdots\otimes X)=M^{(1)}\cdots M^{(k)}\big(\wt X\otimes\cdots\otimes\wt X\big), \label{fMXk} \\
 f^M(\wt\Psi\otimes\cdots\otimes\wt\Psi)=(\Psi\otimes\cdots\otimes\Psi) M^{(1)}\cdots M^{(k)}. \label{fMPsik}
\end{gather}
For general elements of $\gX_A(\CC)$ and $\Xi_{\wt A}(\CC)$ the values of the maps $f_M$ and $f^M$ are obtained via multiplication of these formulae by $\xi$ and $\pi$ respectively, so these maps have the form~\eqref{TkX} and~\eqref{PsiRk} for $T_k=R_k=M^{(1)}\cdots M^{(k)}$. In this way we obtain the following generalisation of~\eqref{AMMAMMA} and \eqref{SMMS}.

\begin{Prop} \label{propAMMMAk}
 Any $\big(A,\wt A\big)$-Manin matrix $M\in\gR\otimes\Hom(\CC^m,\CC^n)$ satisfy the relations
\begin{gather}
 M^{(1)}\cdots M^{(k)}\wt S_{(k)} =S_{(k)} M^{(1)}\cdots M^{(k)}\wt S_{(k)}, \label{SMMMSk} \\
 A_{(k)} M^{(1)}\cdots M^{(k)}=A_{(k)} M^{(1)}\cdots M^{(k)}\wt A_{(k)}. \label{AMMMAk}
\end{gather}
\end{Prop}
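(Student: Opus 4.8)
The plan is to derive both relations from the fact, established in Section~\ref{secCommCat}, that an $(A,\wt A)$-Manin matrix $M$ gives rise to genuine graded algebra homomorphisms $f_M\colon\gX_A(\CC)\to\gX_{\wt A}(\gR)$ and $f^M\colon\Xi_{\wt A}(\CC)\to\Xi_A(\gR)$, whose $k$-th graded components are computed by \eqref{fMXk} and \eqref{fMPsik}. The point is that these maps are \emph{well defined}, and the defining relations are exactly the statements that the naive representatives $T_k=R_k=M^{(1)}\cdots M^{(k)}$ respect the identifications of the graded components with the images of the pairing operators. I would handle \eqref{SMMMSk} through the functor $\gX$ using covectors, and \eqref{AMMMAk} through the functor $\Xi$ using vectors, the two arguments being mirror images under the symmetry \eqref{ASinter}.

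For \eqref{SMMMSk}, I first record that for a covector $\xi\in\big((\CC^n)^{\otimes k}\big)^*$ the element $\xi(X\otimes\cdots\otimes X)$ vanishes in $\gX_A(\CC)_k$ precisely when $\xi S_{(k)}=0$: the forward implication is Corollary~\ref{corTXXk}$(a)$, and the converse follows from \eqref{SkXXXXXX2}. Now fix any $\xi$ with $\xi S_{(k)}=0$. Then $\xi(X\otimes\cdots\otimes X)=0$, so applying $f_M$ and using \eqref{fMXk} together with the linearity of $f_M$ in the $(\CC^n)^{\otimes k}$-slot gives $\xi M^{(1)}\cdots M^{(k)}\big(\wt X\otimes\cdots\otimes\wt X\big)=f_M\big(\xi(X\otimes\cdots\otimes X)\big)=0$. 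Corollary~\ref{corTXXk}$(a)$ applied to the idempotent $\wt A$ then forces $\xi M^{(1)}\cdots M^{(k)}\wt S_{(k)}=0$. Choosing $\xi=e^{i_1\dots i_k}\big(1-S_{(k)}\big)$, which satisfies $\xi S_{(k)}=0$ by idempotency of $S_{(k)}$, and letting the multi-index range over all values, I conclude $\big(1-S_{(k)}\big)M^{(1)}\cdots M^{(k)}\wt S_{(k)}=0$, which is precisely \eqref{SMMMSk}.

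For \eqref{AMMMAk}, I run the dual argument with vectors. For $\pi\in(\CC^m)^{\otimes k}$ the element $\big(\wt\Psi\otimes\cdots\otimes\wt\Psi\big)\pi$ vanishes in $\Xi_{\wt A}(\CC)_k$ exactly when $\wt A_{(k)}\pi=0$, by Corollary~\ref{corTXXk}$(c)$ and \eqref{AkXXXXXX1}. Taking such a $\pi$ and applying $f^M$ with \eqref{fMPsik} yields $(\Psi\otimes\cdots\otimes\Psi)M^{(1)}\cdots M^{(k)}\pi=0$, whence Corollary~\ref{corTXXk}$(c)$ for $A$ gives $A_{(k)}M^{(1)}\cdots M^{(k)}\pi=0$. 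Substituting $\pi=\big(1-\wt A_{(k)}\big)\rho$ for arbitrary $\rho$, which indeed satisfies $\wt A_{(k)}\pi=0$, and letting $\rho$ vary, I obtain $A_{(k)}M^{(1)}\cdots M^{(k)}\big(1-\wt A_{(k)}\big)=0$, that is \eqref{AMMMAk}. As a shortcut, both relations may also simply be read off from the form statements \eqref{TkX} and \eqref{PsiRk}: the accompanying well-definedness constraints $S_{(k)}T_k\wt S_{(k)}=T_k\wt S_{(k)}$ and $A_{(k)}R_k\wt A_{(k)}=A_{(k)}R_k$, specialised to $T_k=R_k=M^{(1)}\cdots M^{(k)}$, are exactly \eqref{SMMMSk} and \eqref{AMMMAk}.

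The only point that needs care — and it is a minor one rather than a genuine obstacle — is that Corollary~\ref{corTXXk} is stated for operators with complex entries, whereas here $\xi M^{(1)}\cdots M^{(k)}$ and $M^{(1)}\cdots M^{(k)}\pi$ have entries in $\gR$. This is harmless: the generators $x^i$ and $\psi_i$ commute with $\gR$ in $\gX_{\wt A}(\gR)$ and $\Xi_A(\gR)$, so the substitution used in the proof of Lemma~\ref{lemXTX}, and hence Corollary~\ref{corTXXk}, holds verbatim with $\gR$-coefficients. I would state this extension once at the start of the proof and then invoke it for both relations.
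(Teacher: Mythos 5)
Your proof is correct and follows essentially the same route as the paper: both arguments feed the homomorphism identities \eqref{fMXk}, \eqref{fMPsik} and the invariance properties \eqref{SkXXXXXX2}, \eqref{AkXXXXXX1} of the pairing operators into Corollary~\ref{corTXXk}, applied to the idempotents $\wt A$ and $A$ respectively. The only difference is cosmetic: where you propose to re-prove Corollary~\ref{corTXXk} for $\gR$-valued entries, the paper simply invokes it with $V=\gR\otimes(\CC^n)^{\otimes k}$ (resp.\ $W=\gR^*\otimes(\CC^m)^{\otimes k}$), which its formulation with arbitrary vector spaces $V$, $W$ already permits.
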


\begin{proof} Note that the left hand sides of~\eqref{fMXk} and \eqref{fMPsik} are invariant under multiplication by~$S_{(k)}$ from the left and by $\wt A_{(k)}$ from the right respectively. As consequence, we obtain
\begin{gather*}
S_{(k)}M^{(1)}\cdots M^{(k)}\big(\wt X\otimes\cdots\otimes\wt X\big)=
M^{(1)}\cdots M^{(k)}\big(\wt X\otimes\cdots\otimes\wt X\big),
\\
(\Psi\otimes\cdots\otimes\Psi)M^{(1)}\cdots M^{(k)}\wt A_{(k)}= (\Psi\otimes\cdots\otimes\Psi)M^{(1)}\cdots M^{(k)}.
\end{gather*}
Then, \eqref{SMMMSk} and \eqref{AMMMAk} are derived by application of Corollary~\ref{corTXXk} for $V=\gR\otimes(\CC^n)^{\otimes k}$, $T=S_{(k)}M^{(1)}\cdots M^{(k)}-M^{(1)}\cdots M^{(k)}\in\Hom\big((\CC^m)^{\otimes k},V\big)$ and for $W=\gR^*\otimes(\CC^m)^{\otimes k}$, $\wt T=M^{(1)}\cdots M^{(k)}\wt A_{(k)}-M^{(1)}\cdots M^{(k)}\in\Hom\big(W,(\CC^n)^{\otimes k}\big)$ respectively. \end{proof}

For an arbitrary $M\in\gR\otimes\Hom(\CC^m,\CC^n)$ define the linear operators $t_M\colon\gX_A(\CC)\to\gX_{\wt A}(\gR)$ and $t^*_M\colon\gX^*_{\wt A}(\CC)\to\gX^*_A(\gR)$ by the operators $T_k=S_{(k)}M^{(1)}\cdots M^{(k)}\wt S_{(k)}$, that is
\begin{gather}
 t_M(X\otimes\cdots\otimes X)=S_{(k)}M^{(1)}\cdots M^{(k)}\big(\wt X\otimes\cdots\otimes\wt X\big), \label{tMX}
 \\
 t^*_M\big(\wt X^*\otimes\cdots\otimes\wt X^*\big)=(X^*\otimes\cdots\otimes X^*) M^{(1)}\cdots M^{(k)}\wt S_{(k)}. \label{tMXdual}
\end{gather}

Analogously, for an arbitrary matrix $M\in\gR\otimes\Hom(\CC^m,\CC^n)$ define the linear operators $r_M\colon\Xi_{\wt A}(\CC)\to\Xi_A(\gR)$ and $r^*_M\colon\Xi^*_A(\CC)\to\Xi^*_{\wt A}(\gR)$ by the operators $R_k=A_{(k)}M^{(1)}\cdots M^{(k)}\wt A_{(k)}$, that is
\begin{gather}
 r_M\big(\wt\Psi\otimes\cdots\otimes\wt\Psi\big)=(\Psi\otimes\cdots\otimes\Psi) M^{(1)}\cdots M^{(k)}\wt A_{(k)}, \label{rMPsi}
 \\
 r_M^*(\Psi^*\otimes\cdots\otimes\Psi^*)=A_{(k)}M^{(1)}\cdots M^{(k)}\big(\wt\Psi^*\otimes\cdots\otimes\wt\Psi^*\big). \label{rMPsidual}
\end{gather}

\begin{Rem} %\label{RemMhomom}
 If $M$ is an $\big(A,\wt A\big)$-Manin matrix then the maps~\eqref{tMX} and \eqref{rMPsi} are homomorphisms: $t_M=f_M$ and $r_M=f_M$ (however, the maps $t^*_M$ and $r_M^*$ are not homomorphisms). Conversely, if $t_M$ or $r_M$ is a homomorphism then $M$ is an $\big(A,\wt A\big)$-Manin matrix. The maps~$t^*_M$ and $r_M^*$ are homomorphisms iff $M$ is a $\big(S,\wt S\big)$-Manin matrix, where $S=1-A$, $\wt S=1-\wt A$.
\end{Rem}

For a matrix $M\in\gR\otimes\Hom(\CC^m,\CC^n)$ we introduce {\it minor operator} corresponding to a pair of operators $T\in\End\big((\CC^n)^{\otimes k}\big)$ and $\wt T\in\End\big((\CC^m)^{\otimes k}\big)$ by the formula
\begin{align}
 \Min^T_{\wt T} M&:=T M^{(1)}\cdots M^{(k)}\wt T. \label{MinTTM}
\end{align}
From~\eqref{tMX}--\eqref{rMPsidual} we obtain\vspace{-1ex}
\begin{align*}
 \Min^{S_{(k)}}_{\wt S_{(k)}} M&=\big\la t_M(X\otimes\cdots\otimes X),\wt X^*\otimes\cdots\otimes\wt X^*\big\ra=\big\la X\otimes\cdots\otimes X,t^*_M\big(\wt X^*\otimes\cdots\otimes\wt X^*\big)\big\ra, \\
 \Min^{A_{(k)}}_{\wt A_{(k)}}M&=\big\la\Psi^*\otimes\cdots\otimes \Psi^*,r_M(\wt\Psi\otimes\cdots\otimes\wt\Psi)\big\ra=\big\la r^*_M(\Psi^*\otimes\cdots\otimes \Psi^*),\wt\Psi\otimes\cdots\otimes\wt\Psi\big\ra.
\end{align*}
If $M$ is an $\big(A,\wt A\big)$-Manin matrix then due to Proposition~\ref{propAMMMAk} these operators take the form\vspace{-1ex}
\begin{align}
 \Min^{S_{(k)}}_{\wt S_{(k)}} M&=S_{(k)}M^{(1)}\cdots M^{(k)}\wt S_{(k)}=M^{(1)}\cdots M^{(k)}\wt S_{(k)}, \label{MinSS} \\
 \Min^{A_{(k)}}_{\wt A_{(k)}} M&=A_{(k)}M^{(1)}\cdots M^{(k)}\wt A_{(k)}=A_{(k)}M^{(1)}\cdots M^{(k)}. \label{MinAA}
\end{align}
In these case these minor operators are defined by one operator only: $\wt S_{(k)}$ and $A_{(k)}$ respectively, so we can denote them as
\begin{align}
 \Min_{\wt S_{(k)}} M&:=M^{(1)}\cdots M^{(k)}\wt S_{(k)}=\big\la f_M(X\otimes\cdots\otimes X),\wt X^*\otimes\cdots\otimes\wt X^*\big\ra, \label{MinS} \\
 \Min^{A_{(k)}} M&:=A_{(k)}M^{(1)}\cdots M^{(k)}=\big\la\Psi^*\otimes\cdots\otimes\Psi^*,f^M\big(\wt\Psi\otimes\cdots\otimes\wt\Psi\big)\big\ra. \label{MinA}
\end{align}

\begin{Def} %\label{DefMin}
Let $M\in\gR\otimes\Hom(\CC^m,\CC^n)$ be an $\big(A,\wt A\big)$-Manin matrix. Then the minor operators~\eqref{MinS} and \eqref{MinA} are called {\it $S$-minor} and {\it $A$-minor operators} respectively. We also call them {\it minor operators for $\big(A,\wt A\big)$-Manin matrix}. Their entries
\begin{align}
 \big(\Min_{\wt S_{(k)}} M\big)^{i_1 \dots i_k}_{j_1 \dots j_k}&=e^{i_1 \dots i_k}\big(\Min_{\wt S_{(k)}} M\big)e_{j_1 \dots j_k}=\big\la f_M\big(x^{i_1}\cdots x^{i_k}\big),\wt x_{j_1}\cdots\wt x_{j_k}\big\ra, \label{DefSMin} \\
 \big(\Min^{A_{(k)}} M\big)^{i_1 \dots i_k}_{j_1 \dots j_k}&=e^{i_1 \dots i_k}\big(\Min^{A_{(k)}} M\big)e_{j_1 \dots j_k}=\big\la \psi^{i_1}\cdots\psi^{i_k},f^M\big(\wt\psi_{j_1}\cdots\wt\psi_{j_k}\big)\big\ra \label{DefAMin}
\end{align}
are called {\it $S$-minors} and {\it $A$-minors of the order $k$} or simply {\it minors for $\big(A,\wt A\big)$-Manin matrix} (here $\wt x_i$ and $\wt\psi_i$ are the generators of $\gX^*_{\wt A}(\CC)$ and $\Xi_{\wt A}(\CC)$ respectively).
\end{Def}

In terms of $y^i=f_M(x^i)=\sum_{j=1}^mM^i_j\wt x^j$ and $\phi_j=f^M\big(\wt\psi_j\big)=\sum_{i=1}^n\psi_iM^i_j$ the minors can be written as
\begin{gather}
 \big(\Min_{\wt S_{(k)}} M\big)^{i_1 \dots i_k}_{j_1 \dots j_k}
 =\big\la y^{i_1}\cdots y^{i_k},\wt x_{j_1}\cdots\wt x_{j_k}\big\ra, \label{SMin}
 \\
 \big(\Min^{A_{(k)}} M\big)^{i_1 \dots i_k}_{j_1 \dots j_k} =\big\la\psi^{i_1}\cdots\psi^{i_k},\phi_{j_1}\cdots\phi_{j_k}\big\ra. \label{AMin}
\end{gather}
They are coefficients in the decompositions
\begin{gather}
 y^{i_1}\cdots y^{i_k}=\sum_{j_1,\dots,j_k=1}^m\big(\Min_{\wt S_{(k)}} M\big)^{i_1 \dots i_k}_{j_1 \dots j_k}\;\wt x^{j_1}\cdots\wt x^{j_k}, \label{yydecMin}
 \\
 \phi_{j_1}\cdots\phi_{j_k}=\sum_{i_1 \dots i_k=1}^n\big(\Min^{A_{(k)}} M\big)^{i_1 \dots i_k}_{j_1 \dots j_k}\;\psi_{i_1}\cdots\psi_{i_k}. \label{phiphidecMin}
\end{gather}
In the matrix form these formulae are written as
\begin{gather*}
 (Y\otimes\cdots\otimes Y)=\big(\Min_{\wt S_{(k)}} M\big)\big(\wt X\otimes\cdots\otimes\wt X\big),
 \\
 (\Phi\otimes\cdots\otimes\Phi)=(\Psi\otimes\cdots\otimes\Psi)\big(\Min^{A_{(k)}} M\big),
\end{gather*}
where $Y=\sum_{i=1}^ny^ie_i$ and $\Phi=\sum_{j=1}^m\phi_je^j$.

The formulae~\eqref{yydecMin} and \eqref{phiphidecMin} are not decompositions by bases. However, due to Proposition~\ref{propIsom} they have the form of the decompositions considered in the part (2) of Proposition~\ref{propVAW}. For example, for the formula~\eqref{yydecMin} we need to set $V=\big((\CC^m)^{\otimes k}\big)^*$ and $W=(\CC^m)^{\otimes k}$, while the operator $S_{(k)}$ acting on $\xi\in\big((\CC^m)^{\otimes k}\big)^*$ from the right plays the role of the idempotent $A$. Thus the minors of an $\big(A,\wt A\big)$-Manin matrix $M$ are the coefficients of the decompositions~\eqref{yydecMin} and~\eqref{phiphidecMin} satisfying the conditions
\begin{align*}
 \sum_{l_1,\dots,l_k=1}^m\big(\Min_{\wt S_{(k)}} M\big)^{i_1 \dots i_k}_{l_1 \dots l_k}\wt S^{l_1 \dots l_k}_{j_1 \dots j_k}=\big(\Min_{\wt S_{(k)}} M\big)^{i_1 \dots i_k}_{j_1 \dots j_k}, %\label{MinSsym}
 \\
 \sum_{l_1,\dots,l_k=1}^nA^{i_1 \dots i_k}_{l_1 \dots l_k}\big(\Min^{A_{(k)}} M\big)^{l_1 \dots l_k}_{j_1 \dots j_k}=\big(\Min_{\wt S_{(k)}} M\big)^{i_1 \dots i_k}_{j_1 \dots j_k}. %\label{MinAsym}
\end{align*}
In operator form these symmetries can be written as
\begin{align*}
\big(\Min_{\wt S_{(k)}}M\big)\wt S_{(k)}=\Min_{\wt S_{(k)}}M, \qquad
A_{(k)}\big(\Min^{A_{(k)}}M\big)=\Min^{A_{(k)}}M.
\end{align*}

The expression for the $S$- and $A$-minors of an $\big(A,\wt A\big)$-Manin matrix $M$ depends on the pairing operators $\wt S_{(k)}$ and $A_{(k)}$ only, hence they are defined if these pairing operators exist even if $S_{(k)}$ and $\wt A_{(k)}$ do not exist. The condition that $M$ is a $\big(A,\wt A\big)$-Manin matrix implies the symmetry of the minor $S$- and $A$-operators with respect to the upper and lower indices:
\begin{align*}
S^{(a,a+1)}\big(\Min_{\wt S_{(k)}}M\big)=\Min_{\wt S_{(k)}}M,\qquad
\big(\Min^{A_{(k)}}M\big)\wt A^{(a,a+1)}=\Min^{A_{(k)}}M.
\end{align*}
If $S_{(k)}$ and $\wt A_{(k)}$ do exist these symmetries can be written in the form
\begin{align*}
S_{(k)}\big(\Min_{\wt S_{(k)}}M\big)=\Min_{\wt S_{(k)}}M, \qquad
\big(\Min^{A_{(k)}}M\big)\wt A_{(k)}=\Min^{A_{(k)}}M.
\end{align*}

\subsection{Properties of the minor operators}
\label{secPMinO}

The determinant of usual complex matrices is a homomorphism: $\det(MN)=\det(M)\det(N)$. The generalisation of this property to the case of $k\times k$ minors is (a generalisation of the) Cauchy--Binet formula:
\begin{align*}
 \det\big((MN)_{IJ}\big)=\sum_{L=(l_1<\dots<l_k)}\det(M_{IL})\det(N_{LJ}).
\end{align*}
The right hand side corresponds to the product of the $A$-minor operators. This property is generalised to Manin matrices.

\begin{Prop}
 Let $S_{(k)}$, $A_{(k)}$, $\wt S_{(k)}$, $\wt A_{(k)}$ and $\wh S_{(k)}$, $\wh A_{(k)}$ be the pairing operators for idempotents $A\in\End(\CC^n\otimes\CC^n)$, $\wt A\in\End(\CC^m\otimes\CC^m)$ and $\wh A\in\End\big(\CC^l\otimes\CC^l\big)$ respectively. Let $M$ and~$N$ be $n\times m$ and $m\times l$ matrices over an algebra $\gR$. Suppose that the entries of the first one commute with the entries of the second one: $\big[M^i_j,N^k_l\big]=0$.
\begin{itemize}
\itemsep=-1pt
 \item If $N$ is an $(\wt A,\wh A)$-Manin matrix then
\begin{align}
 \Min^{S_{(k)}}_{\wh S_{(k)}}(MN)=\big(\Min^{S_{(k)}}_{\wt S_{(k)}} M\big)\big(\Min_{\wh S_{(k)}} N\big). \label{MinSMN}
\end{align}
 \item If $M$ is an $\big(A,\wt A\big)$-Manin matrix then
\begin{align}
 \Min^{A_{(k)}}_{\wh A_{(k)}}(MN)=\big(\Min^{A_{(k)}} M\big)\big(\Min^{\wt A_{(k)}}_{\wh A_{(k)}} N\big). \label{MinAMN}
\end{align}
 \item If $M$ and $N$ are Manin matrices for $\big(A,\wt A\big)$ and $\big(\wt A,\wh A\big)$ respectively then $MN$ is a Manin matrix for $\big(A,\wh A\big)$ and the formulae~\eqref{MinSMN},~\eqref{MinAMN} take the form
\begin{gather}
 \Min_{\wh S_{(k)}}(MN)=\big(\Min_{\wt S_{(k)}} M\big)\big(\Min_{\wh S_{(k)}} N\big), \label{SMinMN}
 \\
 \Min^{A_{(k)}}(MN)=\big(\Min^{A_{(k)}} M\big)\big(\Min^{\wt A_{(k)}} N\big). \label{AMinMN}
\end{gather}
\end{itemize}
\end{Prop}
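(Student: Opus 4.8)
The plan is to reduce all three assertions to a single operator identity on tensor powers and then route the appropriate pairing operator into the middle of the product by means of Proposition~\ref{propAMMMAk}. Throughout write $K=MN$, so that $K^i_l=\sum_j M^i_j N^j_l$ and hence $K^{(a)}=M^{(a)}N^{(a)}$ at each site $a$. The first step I would carry out is the factorisation
\[
 K^{(1)}\cdots K^{(k)}=M^{(1)}\cdots M^{(k)}\,N^{(1)}\cdots N^{(k)}.
\]
Starting from $K^{(1)}\cdots K^{(k)}=M^{(1)}N^{(1)}M^{(2)}N^{(2)}\cdots M^{(k)}N^{(k)}$, the hypothesis $\big[M^i_j,N^k_l\big]=0$ together with the fact that $N^{(a)}$ and $M^{(b)}$ act on disjoint tensor sites for $a\ne b$ gives $N^{(a)}M^{(b)}=M^{(b)}N^{(a)}$ whenever $a\ne b$; sliding every $N^{(a)}$ to the right past all the $M^{(b)}$ with $b>a$ produces the displayed identity. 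This is exactly the commutation manoeuvre from the proof of Proposition~\ref{propProd}, now performed on $k$ factors.

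For the first bullet I would then compute
\[
 \Min^{S_{(k)}}_{\wh S_{(k)}}(MN)=S_{(k)}M^{(1)}\cdots M^{(k)}N^{(1)}\cdots N^{(k)}\wh S_{(k)},
\]
and insert $\wt S_{(k)}$ in the middle: since $N$ is an $\big(\wt A,\wh A\big)$-Manin matrix, the relation~\eqref{SMMMSk} of Proposition~\ref{propAMMMAk} applied to $N$ gives $N^{(1)}\cdots N^{(k)}\wh S_{(k)}=\wt S_{(k)}N^{(1)}\cdots N^{(k)}\wh S_{(k)}$, whence
\[
 \Min^{S_{(k)}}_{\wh S_{(k)}}(MN)=\big(S_{(k)}M^{(1)}\cdots M^{(k)}\wt S_{(k)}\big)\big(N^{(1)}\cdots N^{(k)}\wh S_{(k)}\big).
\]
Recognising the first factor as $\Min^{S_{(k)}}_{\wt S_{(k)}}M$ (the general minor operator, defined for arbitrary $M$) and the second as $\Min_{\wh S_{(k)}}N$ (the simplified form for the Manin matrix $N$) yields~\eqref{MinSMN}. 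The second bullet is the mirror image: the same factorisation gives $\Min^{A_{(k)}}_{\wh A_{(k)}}(MN)=A_{(k)}M^{(1)}\cdots M^{(k)}N^{(1)}\cdots N^{(k)}\wh A_{(k)}$, and now it is $M$ being $\big(A,\wt A\big)$-Manin that lets me insert $\wt A_{(k)}$ via $A_{(k)}M^{(1)}\cdots M^{(k)}=A_{(k)}M^{(1)}\cdots M^{(k)}\wt A_{(k)}$ from~\eqref{AMMMAk}, after which the expression splits as $\big(\Min^{A_{(k)}}M\big)\big(\Min^{\wt A_{(k)}}_{\wh A_{(k)}}N\big)$, giving~\eqref{MinAMN}. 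For the third bullet, Proposition~\ref{propProd} shows that $MN$ is an $\big(A,\wh A\big)$-Manin matrix, so by~\eqref{MinSS} the leading $S_{(k)}$ is redundant (respectively the trailing $\wh A_{(k)}$ is redundant by~\eqref{MinAA}); combining this with $\Min^{S_{(k)}}_{\wt S_{(k)}}M=\Min_{\wt S_{(k)}}M$ and $\Min^{\wt A_{(k)}}_{\wh A_{(k)}}N=\Min^{\wt A_{(k)}}N$ collapses~\eqref{MinSMN} and~\eqref{MinAMN} into~\eqref{SMinMN} and~\eqref{AMinMN}.

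The only delicate point, and the step I would check most carefully, is the identification of the concatenated tensor expression $S_{(k)}M^{(1)}\cdots M^{(k)}\wt S_{(k)}N^{(1)}\cdots N^{(k)}\wh S_{(k)}$ with the genuine matrix composition of two operators over the noncommutative algebra $\gR$. This is legitimate precisely because all the $M$-entries stand to the left of all the $N$-entries in the product, and they commute with one another by hypothesis, so the order in which the scalar coefficients are multiplied is immaterial. Beyond this bookkeeping there is no real obstacle: the whole argument rests on the $k$-fold rearrangement of Proposition~\ref{propProd} and the extended Manin relations~\eqref{SMMMSk}--\eqref{AMMMAk}, the latter supplying the interior pairing operator that makes the two minor factors appear.
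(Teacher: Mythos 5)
Your proposal is correct and follows essentially the same route as the paper's own proof: factor $(MN)^{(1)}\cdots(MN)^{(k)}=M^{(1)}\cdots M^{(k)}N^{(1)}\cdots N^{(k)}$ via the site-wise commutativity $N^{(i)}M^{(j)}=M^{(j)}N^{(i)}$ for $i<j$, insert the intermediate pairing operator through Proposition~\ref{propAMMMAk} (equations~\eqref{SMMMSk} and~\eqref{AMMMAk}), and deduce the third bullet from Proposition~\ref{propProd} together with~\eqref{MinSS} and~\eqref{MinAA}. The only difference is presentational: the paper writes out the computation for~\eqref{MinAMN} alone and leaves~\eqref{MinSMN} symmetric, whereas you treat both cases explicitly and add a (harmless, slightly overcautious) remark on the bookkeeping of entries over $\gR$, which in fact follows from associativity alone since all $M$-entries already stand to the left of all $N$-entries.
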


\begin{proof} The first and second statements follow from Proposition~\ref{propAMMMAk}. For instance, the formula~\eqref{MinAMN} is derived in the following way:
\begin{align*}
 A_{(k)}(MN)^{(1)}\cdots (MN)^{(k)}\wh A_{(k)}&=
 A_{(k)}M^{(1)}\cdots M^{(k)}N^{(1)}\cdots N^{(k)}\wh A_{(k)}
 \\
 &= A_{(k)}M^{(1)}\cdots M^{(k)}\wt A_{(k)}N^{(1)}\cdots N^{(k)}\wh A_{(k)},
\end{align*}
where we used the commutativity in the form $N^{(i)}M^{(j)}=M^{(j)}N^{(i)}$, $i<j$. The last statement is implied by Proposition~\ref{propProd} and the formulae~\eqref{MinSS}, \eqref{MinAA}. \end{proof}

Now we give formulae for permutations of rows and columns. For any $n$ and $\sigma\in {\rm GL}(n,\CC)$ denote the conjugation by the element $\sigma^{\otimes k}=\sigma\otimes\cdots\otimes\sigma$ as
\begin{align}
 \iota_\sigma T:=\sigma^{\otimes k} T\big(\sigma^{\otimes k}\big)^{-1},
\end{align}
where $T\in\End\big((\CC^n)^{\otimes k}\big)$ and $\big(\sigma^{\otimes k}\big)^{-1}=\sigma^{-1}\otimes\cdots\otimes\sigma^{-1}$.
Note that $\iota_\sigma S_{(k)}$ and $\iota_\sigma A_{(k)}$ are pairing operators for the idempotent $\iota_\sigma A=(\sigma\otimes\sigma)A\big(\sigma^{-1}\otimes\sigma^{-1}\big)$.

\begin{Prop} \label{propMinPerm}
 For any matrix $M\in\gR\otimes\Hom(\CC^m,\CC^n)$ and operators $\sigma\in {\rm GL}(n,\CC)$, $\tau\in {\rm GL}(m,\CC)$ we have
\begin{align}
 &\Min^{\iota_\sigma S_{(k)}}_{\iota_\tau \wt S_{(k)}}\big(\sigma M\tau^{-1}\big)=\sigma^{\otimes k}\big(\Min^{S_{(k)}}_{\wt S_{(k)}}M\big)\big(\tau^{\otimes k}\big)^{-1}, \label{SSMinPerm} \\
 &\Min^{\iota_\sigma A_{(k)}}_{\iota_\tau \wt A_{(k)}}\big(\sigma M\tau^{-1}\big)=\sigma^{\otimes k}\big(\Min^{A_{(k)}}_{\wt A_{(k)}}M\big)\big(\tau^{\otimes k}\big)^{-1}. \label{AAMinPerm}
\end{align}
If $M$ is an $\big(A,\wt A\big)$-Manin matrix then
\begin{align}
 &\Min_{\iota_\tau \wt S_{(k)}}\big(\sigma M\tau^{-1}\big)=\sigma^{\otimes k}\big(\Min_{\wt S_{(k)}}M\big)\big(\tau^{\otimes k}\big)^{-1}, \label{SMinPerm} \\
 &\Min^{\iota_\sigma A_{(k)}}\big(\sigma M\tau^{-1}\big)=\sigma^{\otimes k}\big(\Min^{A_{(k)}}M\big)\big(\tau^{\otimes k}\big)^{-1}. \label{AMinPerm}
\end{align}
\end{Prop}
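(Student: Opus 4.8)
The plan is to reduce everything to a single ``leg'' identity for the product $M^{(1)}\cdots M^{(k)}$ under the substitution $M\mapsto\sigma M\tau^{-1}$, and then to absorb the conjugations $\iota_\sigma$, $\iota_\tau$ by cancellation. First I would record the site-wise factorisation $(\sigma M\tau^{-1})^{(a)}=\sigma^{(a)}M^{(a)}\big(\tau^{-1}\big)^{(a)}$, where $\sigma^{(a)}$ and $(\tau^{-1})^{(a)}$ place $\sigma\in\End(\CC^n)$ and $\tau^{-1}\in\End(\CC^m)$ at site $a$, acting on the output copy of $\CC^n$ and the input copy of $\CC^m$ respectively. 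This is well typed because in $(\sigma M\tau^{-1})^{(a)}$ site $a$ runs through $\CC^m\xrightarrow{\tau^{-1}}\CC^m\xrightarrow{M}\CC^n\xrightarrow{\sigma}\CC^n$.

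The central step is then the identity
\begin{align}
(\sigma M\tau^{-1})^{(1)}\cdots(\sigma M\tau^{-1})^{(k)}=\sigma^{\otimes k}\,M^{(1)}\cdots M^{(k)}\,\big(\tau^{\otimes k}\big)^{-1}.\label{legident}
\end{align}
I would prove \eqref{legident} by the standard observation that operators supported on disjoint tensor sites commute: each $\sigma^{(a)}$ commutes with every $M^{(b)}$ and $(\tau^{-1})^{(b)}$ for $b\ne a$, and dually for $(\tau^{-1})^{(a)}$. Reading the product right to left, each factor $(\tau^{-1})^{(a)}$ touches only site $a$, which is left untouched by all factors to its right, so it can be slid to the far right; each $\sigma^{(a)}$ can be slid to the far left past everything acting on sites $\ne a$. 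Collecting the slid factors yields $(\tau^{-1})^{(1)}\cdots(\tau^{-1})^{(k)}=\big(\tau^{\otimes k}\big)^{-1}$ on the right and $\sigma^{(1)}\cdots\sigma^{(k)}=\sigma^{\otimes k}$ on the left, leaving $M^{(1)}\cdots M^{(k)}$ in the middle.

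With \eqref{legident} in hand the proposition is immediate. Writing $\iota_\sigma S_{(k)}=\sigma^{\otimes k}S_{(k)}\big(\sigma^{\otimes k}\big)^{-1}$ and $\iota_\tau\wt S_{(k)}=\tau^{\otimes k}\wt S_{(k)}\big(\tau^{\otimes k}\big)^{-1}$ and substituting into $\Min^{T}_{\wt T}M:=T M^{(1)}\cdots M^{(k)}\wt T$, the inner copies of $\big(\sigma^{\otimes k}\big)^{-1}\sigma^{\otimes k}$ and $\big(\tau^{\otimes k}\big)^{-1}\tau^{\otimes k}$ cancel, leaving $\sigma^{\otimes k}\big(S_{(k)}M^{(1)}\cdots M^{(k)}\wt S_{(k)}\big)\big(\tau^{\otimes k}\big)^{-1}$, which is \eqref{SSMinPerm}; the computation for \eqref{AAMinPerm} is identical with $A_{(k)},\wt A_{(k)}$ in place of $S_{(k)},\wt S_{(k)}$.

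For the last two formulae I would invoke Proposition~\ref{PropPerm} to see that $\sigma M\tau^{-1}$ is an $(\iota_\sigma A,\iota_\tau\wt A)$-Manin matrix, recall from the remark preceding the statement that $\iota_\sigma S_{(k)},\iota_\tau\wt S_{(k)}$ and $\iota_\sigma A_{(k)},\iota_\tau\wt A_{(k)}$ are the pairing operators for $\iota_\sigma A,\iota_\tau\wt A$, and then apply \eqref{MinSS} and \eqref{MinAA}, which collapse $\Min^{\iota_\sigma S_{(k)}}_{\iota_\tau\wt S_{(k)}}$ to $\Min_{\iota_\tau\wt S_{(k)}}$ and $\Min^{\iota_\sigma A_{(k)}}_{\iota_\tau\wt A_{(k)}}$ to $\Min^{\iota_\sigma A_{(k)}}$, giving \eqref{SMinPerm} and \eqref{AMinPerm}. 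The only genuinely delicate point is the disjoint-site reordering in \eqref{legident}: one must check that sliding a leg operator past the others is compatible with the intermediate spaces (site $a$ remaining $\CC^m$ to the right of all $M^{(b)}$ and $\CC^n$ to their left), but this is the usual bookkeeping of the leg notation rather than a real difficulty.
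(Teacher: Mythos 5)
Your proof is correct and follows essentially the same route as the paper: the first two formulae are exactly what the paper means by "directly from the definition" of $\Min^T_{\wt T}$ (your leg identity plus cancellation of the inner conjugating factors), and the last two are obtained, as in the paper, from Proposition~\ref{PropPerm} combined with the fact that $\iota_\sigma S_{(k)}$, $\iota_\sigma A_{(k)}$ are the pairing operators for $\iota_\sigma A$, so that the two-sided minor operators collapse to the one-sided ones via~\eqref{MinSS} and~\eqref{MinAA}.
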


\begin{proof} The formulae~\eqref{SSMinPerm}, \eqref{AAMinPerm} follow directly from the definition~\eqref{MinTTM}. If $M$ is an $\big(A,\wt A\big)$-Manin matrix, then Proposition~\ref{PropPerm} implies that $\sigma M\tau^{-1}$ is a $\big(\iota_\sigma A,\iota_\tau\wt A\big)$-Manin matrix. Thus we obtain~\eqref{SMinPerm}, \eqref{AMinPerm}. \end{proof}

In particular, Proposition~\ref{propMinPerm} gives the minors of $^\sigma M=\sigma M$ and $_\tau M=M\tau^{-1}$ for a matrix $M\in\gR\otimes\Hom(\CC^m,\CC^n)$ and permutations $\sigma\in \SSS_n$, $\tau\in\SSS_m$.

Let us consider the minor operators $\Min_{\wt S_{(k)}}M$ and $\Min^{A_{(k)}}M$ for an $\big(A,\wt A\big)$-Manin matrix~$M$ as $n^k\times m^k$ matrices over $\gR$ with the entries~\eqref{SMin} and~\eqref{AMin}. They are also Manin matrices for some pairs of idempotents.

\begin{Prop} \label{propManin2k}
Let $M$ be an $\big(A,\wt A\big)$-Manin matrix. Let $S_{(k)}$, $A_{(k)}$ and $\wt S_{(k)}$, $\wt A_{(k)}$ are the pairing operators for $A$ and $\wt A$. For any $k,\ell\ge1$ we have
\begin{gather}
 \big(\Min_{\wt S_{(k)}}M\big)\otimes\big(\Min_{\wt S_{(\ell)}}M\big)\wt S_{(k+\ell)}=
 M^{(1)}\cdots M^{(k+\ell)}\wt S_{(k+\ell)}=\Min_{\wt S_{(k+\ell)}}M, \label{SMinkl}
 \\
 A_{(k+\ell)}\big(\Min^{A_{(k)}}M\big)\otimes\big(\Min^{A_{(\ell)}}M\big)=
 A_{(k+\ell)}M^{(1)}\cdots M^{(k+\ell)}=\Min^{A_{(k+\ell)}}M. \label{AMinkl}
\end{gather}
In particular, $\Min_{\wt S_{(k)}}M$ and $\Min^{A_{(k)}}M$ are $\big(1-S_{(2k)},1-\wt S_{(2k)}\big)$- and $\big(A_{(2k)},\wt A_{(2k)}\big)$-Manin matrices respectively.
\end{Prop}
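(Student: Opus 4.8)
The plan is to establish the two displayed identities \eqref{SMinkl} and \eqref{AMinkl} first, and then read off the ``in particular'' assertions about the Manin-matrix property directly from them. The two identities are dual to each other under the symmetry \eqref{ASinter}, so it suffices to treat one of them in detail, say the $A$-operator identity \eqref{AMinkl}; the $S$-operator identity \eqref{SMinkl} then follows by the interchange $A\leftrightarrow S$, $A_{(k)}\leftrightarrow S_{(k)}$.

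For \eqref{AMinkl}, I would start from the right-hand equality, which is just the definition \eqref{MinAA}: since $M$ is an $(A,\wt A)$-Manin matrix, Proposition~\ref{propAMMMAk} gives $\Min^{A_{(k+\ell)}}M = A_{(k+\ell)}M^{(1)}\cdots M^{(k+\ell)}$. The substance is the left-hand equality. First I would rewrite the tensor product $(\Min^{A_{(k)}}M)\otimes(\Min^{A_{(\ell)}}M)$ in the operator notation of the paper: by \eqref{MinAA} this equals $\bigl(A_{(k)}M^{(1)}\cdots M^{(k)}\bigr)\bigl(A_{(\ell)}^{(k+1,\dots,k+\ell)}M^{(k+1)}\cdots M^{(k+\ell)}\bigr)$, where the second factor acts on tensor sites $k+1,\dots,k+\ell$. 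The key move is then to observe that left-multiplication by $A_{(k+\ell)}$ absorbs both $A_{(k)}=A_{(k)}^{(1,\dots,k)}$ and $A_{(\ell)}^{(k+1,\dots,k+\ell)}$: by Proposition~\ref{PropASkl} one has $A_{(k+\ell)}A_{(k)}^{(1,\dots,k)}=A_{(k+\ell)}$ and $A_{(k+\ell)}A_{(\ell)}^{(k+1,\dots,k+\ell)}=A_{(k+\ell)}$. Since the factors $A_{(\ell)}^{(k+1,\dots,k+\ell)}$ and $M^{(1)}\cdots M^{(k)}$ act on disjoint tensor sites with scalar ``inner'' factor, they commute as operators, so the embedded $A_{(\ell)}^{(k+1,\dots,k+\ell)}$ can be slid to the left past $M^{(1)}\cdots M^{(k)}$ and then swallowed by $A_{(k+\ell)}$. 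After dropping both $A$-projectors from the middle, what remains is exactly $A_{(k+\ell)}M^{(1)}\cdots M^{(k+\ell)}$, giving \eqref{AMinkl}. The genuinely delicate point, and the one I would check most carefully, is the commutation of $A_{(\ell)}^{(k+1,\dots,k+\ell)}$ with $M^{(1)}\cdots M^{(k)}$: this is a statement about operators with noncommutative entries, and it holds because these two expressions involve matrix units $E_{i}{}^{j}$ in disjoint sites while their algebra entries are products of the same $M^i_j$; one must verify that no entry of $M$ is forced to commute with another, i.e.\ that the rearrangement is purely a rearrangement of tensor-site factors and not of the $\gR$-factors. I expect this bookkeeping to be the main obstacle.

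Finally, for the ``in particular'' claim I would feed \eqref{SMinkl} and \eqref{AMinkl} with $\ell=k$ into the definition \eqref{AMMB} of a Manin matrix for the $2k$-fold tensor operators. Writing $N:=\Min^{A_{(k)}}M$, the identity \eqref{AMinkl} with $\ell=k$ says $A_{(2k)}\,N^{(1)}N^{(2)} = A_{(2k)}M^{(1)}\cdots M^{(2k)}$, where $N^{(1)},N^{(2)}$ denote $N$ acting on the first and second $k$-blocks of sites. The right-hand side is visibly invariant under right-multiplication by $\wt A_{(2k)}$ because $M$ is an $(A,\wt A)$-Manin matrix (apply \eqref{AMMMAk} with $k$ replaced by $2k$), hence $A_{(2k)}N^{(1)}N^{(2)}(1-\wt A_{(2k)})=0$, which is precisely the statement that $N$ is an $(A_{(2k)},\wt A_{(2k)})$-Manin matrix. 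Dually, $\Min_{\wt S_{(k)}}M$ is seen from \eqref{SMinkl} to satisfy $\bigl(1-S_{(2k)}\bigr)(\cdots)\wt S_{(2k)}=0$ in the appropriate arrangement, so it is a $\bigl(1-S_{(2k)},1-\wt S_{(2k)}\bigr)$-Manin matrix. I would conclude by remarking that these Manin-matrix properties are exactly what one expects from Proposition~\ref{propManin2k}'s role as a generalised Cauchy--Binet/multiplicativity statement for minor operators.
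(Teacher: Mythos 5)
Your plan is, in substance, exactly the paper's proof: the identities \eqref{SMinkl}, \eqref{AMinkl} are obtained by absorbing the embedded pairing operators via Proposition~\ref{PropASkl} (after the disjoint-site commutation you correctly single out as the only bookkeeping point, which works because $A_{(\ell)}$, $\wt S_{(k)}$ have scalar entries), and the ``in particular'' claim follows by setting $\ell=k$ and invoking Proposition~\ref{propAMMMAk} with $k$ replaced by $2k$ --- which is precisely the paper's two-line argument.

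One imprecision is worth flagging: your claim that \eqref{SMinkl} follows from \eqref{AMinkl} ``by the interchange $A\leftrightarrow S$, $A_{(k)}\leftrightarrow S_{(k)}$'' of \eqref{ASinter} is not literally correct. That interchange replaces the idempotents $(A,\wt A)$ by $(1-A,1-\wt A)$, so it turns \eqref{AMinkl} into a \emph{left}-multiplication statement, $S_{(k+\ell)}\big(\Min^{S_{(k)}}N\big)\otimes\big(\Min^{S_{(\ell)}}N\big)=S_{(k+\ell)}N^{(1)}\cdots N^{(k+\ell)}$, valid for $\big(1-A,1-\wt A\big)$-Manin matrices $N$ --- not into the \emph{right}-multiplication identity \eqref{SMinkl} for the given $\big(A,\wt A\big)$-Manin matrix $M$. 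To pass between the two identities by duality one must also transpose, using that $M^\top$ is a $\big(1-\wt A^\top,1-A^\top\big)$-Manin matrix and that $\wt S_{(k)}^\top$, $\wt A_{(k)}^\top$ are pairing operators for $\wt A^\top$; alternatively (and more simply) one just runs your absorption argument in mirror image on the right: commute the scalar matrix $\wt S_{(k)}^{(1,\dots,k)}$ past $M^{(k+1)}\cdots M^{(k+\ell)}$ and absorb both $\wt S_{(k)}^{(1,\dots,k)}$ and $\wt S_{(\ell)}^{(k+1,\dots,k+\ell)}$ into $\wt S_{(k+\ell)}$ by Proposition~\ref{PropASkl}. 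Since this uses exactly the ingredients you already spelled out for \eqref{AMinkl}, the gap is cosmetic rather than substantive.
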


\begin{proof} The formulae~\eqref{SMinkl} and \eqref{AMinkl} follow from Proposition~\ref{PropASkl}. To prove the second statement one needs to put $\ell=k$ in these formulae and to apply Proposition~\ref{propAMMMAk} for $2k$. \end{proof}

Let us finally write the minor operators in terms of bases. Denote $y^\alpha_{(k)}=v^\alpha(Y\otimes\cdots\otimes Y)$, $\wt x^\alpha_{(k)}=\wt v^\alpha\big(\wt X\otimes\cdots\otimes\wt X\big)$, $\phi_\alpha^{(k)}=(\Phi\otimes\cdots\otimes\Phi)\wt w_\alpha$, where $\wt v^\alpha=v^\alpha_{\wt S_{(k)}}$ and $\wt w_\alpha=v_\alpha^{\wt A_{(k)}}$ are eigenvectors of the idempotents $\wt S_{(k)}$ and $(\wt A_{(k)})^\top$ respectively. Let $\wt d_k$ and $\wt r_k$ be the ranks of $\wt S_{(k)}$ and $\wt A_{(k)}$. Consider matrix entries of the minor operators:
\begin{align}
&\big(\Min_{\wt S_{(k)}} M\big)^\alpha_\beta:=v^\alpha\big(\Min_{\wt S_{(k)}} M\big)\wt v_\beta=v^\alpha M^{(1)}\cdots M^{(k)}\wt v_\beta, \label{MinSab} \\
 &\big(\Min^{A_{(k)}} M\big)_\delta^\gamma:=w^\gamma\big(\Min^{A_{(k)}} M\big)\wt w_\delta=w^\gamma M^{(1)}\cdots M^{(k)}\wt w_\delta, \label{MinAab}
\end{align}
where $\alpha\le d_k$, $\beta\le\wt d_k$, $\gamma\le r_k$ and $\delta\le\wt r_k$. Then the formulae~\eqref{yydecMin}, \eqref{phiphidecMin} are rewritten as
\begin{align}
 y^\alpha_{(k)}=\sum_{\gamma=1}^{\wt d_k}\big(\Min_{\wt S_{(k)}} M\big)^\alpha_\gamma\;\wt x^\gamma_{(k)}, \qquad
 \phi_\beta^{(k)}=\sum_{\gamma=1}^{r_k}\big(\Min^{A_{(k)}} M\big)_\beta^\gamma\;\psi_\gamma^{(k)}, \label{yalphaMin}
\end{align}
where $\alpha=1,\dots,d_k$, $\beta=1,\dots,\wt r_k$. These are decompositions by bases. They generalise the formulae given in Section~\ref{secPerm}.

Note that $y^\alpha_{(k)}=f_M\big(x^\alpha_{(k)}\big)$ and $\phi^{(k)}_\alpha=f^M\big(\wt\psi^{(k)}_\alpha\big)$. Thus the formulae~\eqref{yalphaMin} describe the homomorphisms $f_M\colon\gX_A(\CC)\to\gX_{\wt A}(\gR)$ and $f^M\colon\Xi_{\wt A}(\CC)\to\Xi_A(\gR)$ in terms of bases~\eqref{xalphak}--\eqref{psialphak}.

The formulae~\eqref{SMinMN} and \eqref{AMinMN} are written in terms of bases in the form
\begin{gather}
 \big(\Min_{\wh S_{(k)}}(MN)\big)^\alpha_\gamma=\sum_{\beta=1}^{\wt d_k}\big(\Min_{\wt S_{(k)}} M\big)^\alpha_\beta\big(\Min_{\wh S_{(k)}} N\big)^\beta_\gamma, \label{SMinMNb}
 \\
 \big(\Min^{A_{(k)}}(MN)\big)^\alpha_\gamma=\sum_{\beta=1}^{\wt r_k}\big(\Min^{A_{(k)}} M\big)^\alpha_\beta\big(\Min^{\wt A_{(k)}} N\big)^\beta_\gamma. \label{AMinMNb}
\end{gather}

\subsection{Minors for left-equivalent idempotents}
\label{secMinEq}

In contrast with the quadratic algebras $\gX_A(\CC)$ and $\Xi_A(\CC)$ their dual algebras $\gX^*_A(\CC)$ and $\Xi^*_A(\CC)$ do not coincide with the corresponding algebras for a left-equivalent idempotent $A'$ in general, but they do coincide for a right-equivalent idempotent $A'$. Indeed, by applying Proposition~\ref{PropLEQA} to for $A^\top$ and $(A')^\top$ we see that the following 4 conditions are equivalent: $\gX^*_A(\CC)=\gX^*_{A'}(\CC)$, $\Xi^*_A(\CC)=\Xi^*_{A'}(\CC)$, $A$ is right-equivalent to $A'$, $S=1-A$ is left-equivalent to $S'=1-A'$.

\begin{Prop} \label{propSAequiv}
 Let $S_{(k)}$, $A_{(k)}$ and $S'_{(k)}$, $A'_{(k)}$ be the pairing operators for idempotents $A$ and~$A'$. If $A$ is left-equi\-valent to $A'$ then $A_{(k)}$ is left-equivalent to $A'_{(k)}$, while $S_{(k)}$ is right-equi\-va\-lent to $S'_{(k)}$ for each $k\ge0$. If $A$ is right-equivalent to $A'$ then $A_{(k)}$ is right-equivalent to~$A'_{(k)}$, while $S_{(k)}$ is left-equivalent to $S'_{(k)}$ for each $k\ge0$.
\end{Prop}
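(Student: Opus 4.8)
The plan is to read off the image and the row space of each pairing operator from Theorem~\ref{ThPOviaDB} and then invoke the fact that, for idempotents on a finite-dimensional space, right-equivalence is detected by the image and left-equivalence by the row space. Concretely, by Proposition~\ref{propSVequiv} together with Lemma~\ref{lemAS0} (the latter applied to transposed operators to cover the right-equivalent case), two idempotents $T,T'\in\End(U)$ on a finite-dimensional space $U$ are right-equivalent iff $TU=T'U$ and left-equivalent iff $U^*T=U^*T'$. By the proof of Theorem~\ref{ThPOviaDB} the pairing operators satisfy $S_{(k)}(\CC^n)^{\otimes k}=V_k$, $\big((\CC^n)^{\otimes k}\big)^*S_{(k)}=\overline V_k$, $A_{(k)}(\CC^n)^{\otimes k}=W_k$ and $\big((\CC^n)^{\otimes k}\big)^*A_{(k)}=\overline W_k$, and likewise with primes for $A'$. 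Thus everything reduces to comparing the four subspaces $V_k,\overline V_k,W_k,\overline W_k$ attached to $A$ with their primed counterparts. For $k\le1$ all these operators equal the identity and the claim is trivial, so I would assume $k\ge2$.

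First I would treat the case where $A$ is left-equivalent to $A'$, say $A'=GA$ with $G\in\Aut(\CC^n\otimes\CC^n)$. Since the tensor-leg embedding $(-)^{(a,a+1)}$ is multiplicative, $(A')^{(a,a+1)}=G^{(a,a+1)}A^{(a,a+1)}$ with $G^{(a,a+1)}$ invertible, so $\Ker(A')^{(a,a+1)}=\Ker A^{(a,a+1)}$ for every $a$; intersecting over $a$ yields $V'_k=V_k$. Hence $S_{(k)}$ and $S'_{(k)}$ have the same image and are therefore right-equivalent. For the $A$-operators I would pass to the dual idempotents: by Proposition~\ref{PropLRE}, $S=1-A$ is right-equivalent to $S'=1-A'$, say $S'=SG'$, whence $(S')^{(a,a+1)}=S^{(a,a+1)}(G')^{(a,a+1)}$ and $\xi(S')^{(a,a+1)}=0\iff\xi S^{(a,a+1)}=0$ because $(G')^{(a,a+1)}$ is invertible. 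Intersecting gives $\overline W'_k=\overline W_k$, so $A_{(k)}$ and $A'_{(k)}$ have the same row space and are left-equivalent.

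The right-equivalent case is entirely dual, and I would obtain it either directly or by symmetry. Directly: $A'=AG$ gives $(A')^{(a,a+1)}=A^{(a,a+1)}G^{(a,a+1)}$, so $\overline V'_k=\overline V_k$ (equal row spaces, hence $S_{(k)}$ left-equivalent to $S'_{(k)}$), while $1-A$ is left-equivalent to $1-A'$ by Proposition~\ref{PropLRE}, giving $W'_k=W_k$ (equal images, hence $A_{(k)}$ right-equivalent to $A'_{(k)}$). By symmetry: applying the left-equivalent case already proved to the left-equivalent pair $1-A,\,1-A'$ and using~\eqref{ASinter}, under which the pairing operators for $1-A$ are precisely $A_{(k)}$ and $S_{(k)}$, returns exactly the two right-equivalent statements. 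The one point demanding care is the crossover of sides: left-equivalence of $A$ pins down the common kernel $V_k$ on the column side (producing right-equivalence of $S_{(k)}$) but it moves the row side, so one must route through Proposition~\ref{PropLRE} to transfer the information to the row spaces $\overline W_k$ that govern the $A$-operators. Once the multiplicativity of $(-)^{(a,a+1)}$ and the invertibility of $G^{(a,a+1)}$ are recorded, the remainder is routine bookkeeping.
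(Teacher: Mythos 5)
Your proof is correct, but it follows a different route than the paper's. The paper argues through the quadratic algebras: left equivalence of $A$ and $A'$ gives $\Xi_A(\CC)=\Xi_{A'}(\CC)$ and $\gX_A(\CC)=\gX_{A'}(\CC)$ (Proposition~\ref{PropLEQA}), so both sets of pairing operators fix the \emph{same} tensors $\Psi\otimes\cdots\otimes\Psi$ and $X\otimes\cdots\otimes X$; Corollary~\ref{corTXXk} then converts this into the mutual-annihilation relations $A_{(k)}\big(1-A'_{(k)}\big)=0$, $A'_{(k)}\big(1-A_{(k)}\big)=0$ and $\big(1-S_{(k)}\big)S'_{(k)}=0$, $\big(1-S'_{(k)}\big)S_{(k)}=0$, after which Lemma~\ref{lemAS0} (plus Proposition~\ref{PropLRE} for the $S$-operators) concludes; the right-equivalent case is left to the $A\leftrightarrow S$ duality, which you spell out explicitly. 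You instead identify the images and row spaces of the pairing operators with $V_k,\overline V_k,W_k,\overline W_k$ via the proof of Theorem~\ref{ThPOviaDB}, observe that these subspaces are unchanged under the relevant one-sided equivalence because $T\mapsto T^{(a,a+1)}$ is multiplicative and sends invertibles to invertibles, and then invoke the subspace characterization of left/right equivalence (which the paper records in the Remark after Proposition~\ref{PropLEQA}, and which itself rests on Proposition~\ref{propSVequiv} and Lemma~\ref{lemAS0}). So both arguments bottom out in Lemma~\ref{lemAS0}, but your intermediary is linear-algebraic (kernels and row-kernels of the leg operators, plus Theorem~\ref{ThPOviaDB}), while the paper's is algebraic (identity of the quadratic algebras plus the universal annihilation property of Corollary~\ref{corTXXk}); the paper's route is a bit shorter and does not need Theorem~\ref{ThPOviaDB} at all, whereas yours makes explicit exactly which subspaces determine each pairing operator, including the crossover point (left equivalence of $A$ controls $V_k$ and $\overline W_k$, not $W_k$) that forces the detour through Proposition~\ref{PropLRE} in both proofs.
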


\begin{proof} The left equivalence of $A$ and $A'$ implies $\Xi_A(\CC)=\Xi_{A'}(\CC)$ and hence we obtain the equality $(\Psi\otimes\cdots\otimes\Psi)A_{(k)}=(\Psi\otimes\cdots\otimes\Psi)=(\Psi\otimes\cdots\otimes\Psi)A'_{(k)}$. By applying Corollary~\ref{corTXXk} we obtain $A'_{(k)}\big(1-A_{(k)}\big)=0$ and $A_{(k)}\big(1-A'_{(k)}\big)=0$. Then, due to Lemma~\ref{lemAS0} the idempotents~$A_{(k)}$ and $A'_{(k)}$ are left-equivalent. Analogously, we obtain $\big(1-S_{(k)}\big)S'_{(k)}=0$, $\big(1-S'_{(k)}\big)S_{(k)}=0$. By~Lemma~\ref{lemAS0} this implies the left equivalence of $1-S_{(k)}$ and $1-S'_{(k)}$, which means in turn the right equivalence of $S_{(k)}$ and $S'_{(k)}$ by Proposition~\ref{PropLRE}. \end{proof}

Let $A,A'\in\End(\CC^n\otimes\CC^n)$ be left-equivalent idempotents and let $S_{(k)}$, $A_{(k)}$ and $S'_{(k)}$, $A'_{(k)}$ be the corresponding pairing operators. From Proposition~\ref{propSAequiv} we obtain
\begin{align*}
 A'_{(k)}=G_{[k]}A_{(k)}, \qquad
 S'_{(k)}=S_{(k)}G_{(k)}
\end{align*}
for some $G_{[k]},G_{(k)}\in\Aut\big((\CC^n)^{\otimes k}\big)$. Then, by means of Proposition~\ref{propIsom} the identification $\gX_A(\CC)_k=\gX_{A'}(\CC)_k$ induces an isomorphism $\big((\CC^n)^{\otimes k}\big)^*S_{(k)}\cong\big((\CC^n)^{\otimes k}\big)^*S'_{(k)}$. Explicitly it has the form
\begin{align*}
\xi\mapsto\xi'=\xi G_{(k)}=\xi S'_{(k)}, \qquad\xi\in\big((\CC^n)^{\otimes k}\big)^*S_{(k)}.
\end{align*}
Indeed, we have $\xi'\in\big((\CC^n)^{\otimes k}\big)^*S'_{(k)}$ and $\xi'(X\otimes\cdots\otimes X)=\xi S'_{(k)}(X\otimes\cdots\otimes X)=\xi(X\otimes\cdots\otimes X)$. The inverse map is $\xi'\mapsto\xi' G_{(k)}^{-1}=\xi S_{(k)}$.
Analogously, the identification $\Xi_A(\CC)_k=\Xi_{A'}(\CC)_k$ gives $A_{(k)}\big((\CC^n)^{\otimes k}\big)\cong A'_{(k)}\big((\CC^n)^{\otimes k}\big)$,
\begin{align*}
 \pi\mapsto\pi'=G_{[k]}\pi=A'_{(k)}\pi, \qquad
 \pi\in A_{(k)}\big((\CC^n)^{\otimes k}\big).
\end{align*}

Proposition~\ref{propSVequiv} implies the equalities of subspaces: $S_{(k)}\big((\CC^n)^{\otimes k}\big)=S'_{(k)}\big((\CC^n)^{\otimes k}\big)$ and $\big((\CC^n)^{\otimes k}\big)^*A_{(k)}=\big((\CC^n)^{\otimes k}\big)^*A'_{(k)}$. Proposition~\ref{propIsom} in turn gives the isomorphisms of vector spaces $\gX^*_A(\CC)_k\cong\gX^*_{A'}(\CC)_k$ and $\Xi^*_A(\CC)_k\cong\Xi^*_{A'}(\CC)_k$ (these are not homomorphisms of algebras). They are given by the formulae $x^A_{i_1}\cdots x^A_{i_k}\mapsto x^{A'}_{i_1}\cdots x^{A'}_{i_k}$ and $\psi_A^{i_1}\cdots\psi_A^{i_k}\mapsto\psi_{A'}^{i_1}\cdots\psi_{A'}^{i_k}$, where $x^A_i$, $x^{A'}_i$, $\psi^i_A$ and $\psi^i_{A'}$ are the generators of the algebras $\gX^*_A(\CC)$, $\gX^*_{A'}(\CC)$, $\Xi^*_A(\CC)$ and $\Xi^*_{A'}(\CC)$ respectively.

Let $(v_\alpha)_{\alpha=1}^{d_k}$, $(v^\alpha)_{\alpha=1}^{d_k}$ and $(w_\alpha)_{\alpha=1}^{r_k}$, $(w^\alpha)_{\alpha=1}^{r_k}$ be dual bases of $S_{(k)}\big((\CC^m)^{\otimes k}\big)$, $\big((\CC^m)^{\otimes k}\big)^*S_{(k)}$, $A_{(k)}\big((\CC^m)^{\otimes k}\big)$ and $\big((\CC^m)^{\otimes k}\big)^*A_{(k)}$ respectively. Then
\begin{gather}
v'_\alpha=v_\alpha, \qquad
(v')^\alpha=v^\alpha G_{(k)}=v^\alpha S'_{(k)}, \qquad
\alpha=1,\dots,d_k, \nonumber
\\
(w')^\alpha=w^\alpha, \qquad
w'_\alpha=G_{[k]}w_\alpha=A'_{(k)}w_\alpha, \qquad
\alpha=1,\dots,r_k, \label{walphaG}
\end{gather}
are dual bases of $S'_{(k)}\big((\CC^m)^{\otimes k}\big)$, $\big((\CC^m)^{\otimes k}\big)^*S'_{(k)}$, $\big((\CC^m)^{\otimes k}\big)^*A'_{(k)}$ and $A'_{(k)}\big((\CC^m)^{\otimes k}\big)$.

By substituting $(v')^\alpha$ and $w'_\alpha$ to the formulae~\eqref{xalphak} and \eqref{psikalpha} we obtain the same bases of~$\gX_{A'}(\CC)_k=\gX_A(\CC)_k$ and $\Xi_{A'}(\CC)_k=\Xi_A(\CC)_k$:
\begin{gather}
 (v')^\alpha(X^*\otimes\cdots\otimes X^*)=v^\alpha S'_{(k)}(X^*\otimes\cdots X^*)=v^\alpha(X^*\otimes\cdots X^*)=x_{(k)}^{\alpha}, \nonumber
 \\
 (\Psi\otimes\cdots\otimes\Psi)w'_\alpha= (\Psi\otimes\cdots\otimes\Psi)A'_{(k)}w_\alpha=(\Psi\otimes\cdots\otimes\Psi)w_\alpha=\psi_\alpha^{(k)}. \label{psialphaLE}
\end{gather}
Note that the bases of $\gX^*_{A'}(\CC)_k$ and $\gX_A(\CC)_k$ defined by the formula~\eqref{xklapha} for $v'_\alpha=v_\alpha$ are not identified since they are elements of different algebras. The same is valid for the bases~\eqref{psialphak} of~$\Xi_{A'}(\CC)_k$ and $\Xi_A(\CC)_k$.

Let $\wt S_{(k)}$, $\wt A_{(k)}$ and $\wt S'_{(k)}$, $\wt A'_{(k)}$ be the pairing operators for two left-equivalent idempotents $\wt A,\wt A'\in\End(\CC^m\otimes\CC^m)$. We have $\wt S'_{(k)}=\wt S_{(k)}\wt G_{(k)}$ and $\wt A'_{(k)}=\wt G_{[k]}\wt A_{[k]}$ for some matrices $\wt G_{(k)},\wt G_{[k]}\in\Aut\big((\CC^m)^{\otimes k}\big)$. Let $M$ be an $\big(A,\wt A\big)$-Manin matrix. Due to Proposition~\ref{propMMlequiv} this means that $M$ is an $\big(A',\wt A'\big)$-Manin matrix. We can consider different minor operators for~$M$, but they are related by a multiplication of a complex matrix in the following way.

\begin{Prop} \label{propMinlequiv}
 For any $\big(A,\wt A\big)$-Manin matrix we have
\begin{gather}
\Min_{\wt S'_{(k)}}M=\big(\Min_{\wt S_{(k)}}M\big)\wt G_{(k)}
=\big(\Min_{\wt S_{(k)}}M\big)\wt S'_{(k)}, \nonumber
\\
\Min^{A'_{(k)}}M=G_{[k]}\big(\Min^{A_{(k)}}M\big)
=A'_{(k)}\big(\Min^{A_{(k)}}M\big). \label{MinAGA}
\end{gather}
\end{Prop}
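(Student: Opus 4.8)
The plan is to reduce everything to the definitions \eqref{MinS}, \eqref{MinA} of the two minor operators, combined with the relations between pairing operators of left-equivalent idempotents that were recorded just before the statement. First I would assemble the three inputs. By Proposition~\ref{propMMlequiv} the matrix $M$ is simultaneously an $(A,\wt A)$- and an $(A',\wt A')$-Manin matrix, so all four minor operators are defined and, by \eqref{MinSS}, \eqref{MinAA}, collapse to their one-sided forms $\Min_{\wt S_{(k)}}M=M^{(1)}\cdots M^{(k)}\wt S_{(k)}$ and $\Min^{A_{(k)}}M=A_{(k)}M^{(1)}\cdots M^{(k)}$, together with the analogous primed expressions. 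Second, as consequences of Proposition~\ref{propSAequiv} we already have $\wt S'_{(k)}=\wt S_{(k)}\wt G_{(k)}$ with $\wt G_{(k)}\in\Aut\big((\CC^m)^{\otimes k}\big)$ and $A'_{(k)}=G_{[k]}A_{(k)}$ with $G_{[k]}\in\Aut\big((\CC^n)^{\otimes k}\big)$.

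The first line of \eqref{MinAGA} then falls out by substitution: expanding $\Min_{\wt S'_{(k)}}M=M^{(1)}\cdots M^{(k)}\wt S'_{(k)}$ and inserting $\wt S'_{(k)}=\wt S_{(k)}\wt G_{(k)}$ gives $\big(\Min_{\wt S_{(k)}}M\big)\wt G_{(k)}$. To obtain the third expression I would use idempotence of $\wt S_{(k)}$ to compute $\wt S_{(k)}\wt S'_{(k)}=\wt S_{(k)}\wt S_{(k)}\wt G_{(k)}=\wt S_{(k)}\wt G_{(k)}=\wt S'_{(k)}$, so that $\big(\Min_{\wt S_{(k)}}M\big)\wt S'_{(k)}=M^{(1)}\cdots M^{(k)}\wt S'_{(k)}=\Min_{\wt S'_{(k)}}M$. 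The second line is entirely dual: $\Min^{A'_{(k)}}M=A'_{(k)}M^{(1)}\cdots M^{(k)}=G_{[k]}A_{(k)}M^{(1)}\cdots M^{(k)}=G_{[k]}\big(\Min^{A_{(k)}}M\big)$, and idempotence of $A_{(k)}$ gives $A'_{(k)}A_{(k)}=G_{[k]}A_{(k)}A_{(k)}=G_{[k]}A_{(k)}=A'_{(k)}$, whence $A'_{(k)}\big(\Min^{A_{(k)}}M\big)=A'_{(k)}M^{(1)}\cdots M^{(k)}=\Min^{A'_{(k)}}M$.

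There is no genuinely hard step: once the inputs are in place the argument is pure bookkeeping. The only point I would state explicitly rather than take for granted is well-definedness, namely the appeal to Proposition~\ref{propMMlequiv} that $M$ is an $(A',\wt A')$-Manin matrix; without it the primed minor operators would not collapse to the one-sided forms and the substitutions above would be unavailable. A secondary subtlety worth flagging is that $\Min_{\wt S_{(k)}}M$ depends only on its subscript pairing operator and $\Min^{A_{(k)}}M$ only on its superscript one, so passing from $\wt S_{(k)}$ to $\wt S'_{(k)}$ (respectively from $A_{(k)}$ to $A'_{(k)}$) is precisely the change being measured, and the automorphisms $\wt G_{(k)}$, $G_{[k]}$ record it as right, respectively left, multiplication by an invertible complex matrix.
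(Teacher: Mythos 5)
Your proof is correct and takes essentially the same route as the paper's: both reduce to the one-sided definitions of the minor operators, substitute $\wt S'_{(k)}=\wt S_{(k)}\wt G_{(k)}$ and $A'_{(k)}=G_{[k]}A_{(k)}$ coming from Proposition~\ref{propSAequiv}, and use idempotence of the pairing operators (your $\wt S_{(k)}\wt S'_{(k)}=\wt S'_{(k)}$ and $A'_{(k)}A_{(k)}=A'_{(k)}$ are the same bookkeeping as the paper's insertion of $\wt S'_{(k)}\wt S'_{(k)}$ and $A'_{(k)}A'_{(k)}$). Your explicit appeal to Proposition~\ref{propMMlequiv} for well-definedness also mirrors the paper, which makes that same appeal in the text immediately preceding the proposition.
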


\begin{proof} The formulae are obtained by the definitions of the minors:
\begin{gather*}
\Min_{\wt S'_{(k)}}M=M^{(1)}\cdots M^{(k)}\wt S'_{(k)} =M^{(1)}\cdots M^{(k)}\wt S_{(k)}\wt G_{(k)}=\big(\Min_{\wt S_{(k)}}M\big)\wt G_{(k)},
\\
\Min_{\wt S'_{(k)}}M=M^{(1)}\cdots M^{(k)}\wt S'_{(k)}\wt S'_{(k)}= \big(\Min_{\wt S_{(k)}}M\big)\wt S'_{(k)},
\\
\Min^{A'_{(k)}}M=A'_{(k)}M^{(1)}\cdots M^{(k)}=G_{[k]}A_{(k)}M^{(1)}\cdots M^{(k)}=G_{[k]}\big(\Min^{A_{(k)}}M\big),
\\
\Min^{A'_{(k)}}M=A'_{(k)}A'_{(k)}M^{(1)}\cdots M^{(k)}= A'_{(k)}\big(\Min^{A_{(k)}}M\big).\tag*{\qed}
\end{gather*}
\renewcommand{\qed}{}
\end{proof}

Note that an $n^k\times m^k$ matrix is a $\big(A_{(2k)},\wt A_{(2k)}\big)$-Manin matrix iff it is an $\big(A'_{(2k)},\wt A'_{(2k)}\big)$-Manin matrix. It is a $\big(1-S_{(k)},1-\wt S_{(k)}\big)$-Manin matrix iff it is a $\big(1-S'_{(2k)},1-\wt S'_{(2k)}\big)$-Manin matrix. Due to Proposition~\ref{propManin2k} the both matrices $\Min_{\wt S_{(k)}}M$ and $\Min_{\wt S'_{(k)}}M$ are $\big(1-S_{(k)},1-\wt S_{(k)}\big)$-Manin matrices as well as $\big(1-S'_{(2k)},1-\wt S'_{(2k)}\big)$-Manin matrices. They are related by the change of basis in the space $(\CC^m)^{\otimes k}$ corresponding to the matrix $\wt G_{(k)}^{-1}$ (see Section~\ref{secABmanin}). In~the same way the matrices $\Min^{A_{(k)}}M$ and $\Min^{A'_{(k)}}M$ are $\big(A_{(k)},\wt A_{(k)}\big)$-Manin matrices as well as $\big(A'_{(2k)},\wt A'_{(2k)}\big)$-Manin matrices, and they are related by the change of basis in the space $(\CC^n)^{\otimes k}$ corresponding to the matrix $G_{[k]}$.

Let $(\wt v_\alpha)_{\alpha=1}^{\wt d_k}$, $(\wt v^\alpha)_{\alpha=1}^{\wt d_k}$ and $(\wt w_\alpha)_{\alpha=1}^{\wt r_k}$, $(\wt w^\alpha)_{\alpha=1}^{\wt r_k}$ be dual bases of $\wt S_{(k)}\big((\CC^m)^{\otimes k}\big)$, $\big((\CC^m)^{\otimes k}\big)^*\wt S_{(k)}$, $\wt A_{(k)}\big((\CC^m)^{\otimes k}\big)$ and $\big((\CC^m)^{\otimes k}\big)^*\wt A_{(k)}$. Let
\begin{gather*}
 \wt v'_\alpha=\wt v_\alpha, \qquad (\wt v')^\alpha=\wt v^\alpha\wt G_{(k)}=\wt v^\alpha\wt S'_{(k)}, \qquad \alpha=1,\dots,\wt d_k,
 \\
 (\wt w')^\alpha=\wt w^\alpha, \qquad \wt w'_\alpha=\wt G_{[k]}\wt w_\alpha=\wt A'_{(k)}\wt w^\alpha, \qquad \alpha=1,\dots,\wt r_k.
\end{gather*}

\begin{Prop} \label{propMinabLE}
{\samepage Consider the entries of minor operators~\eqref{MinSab} and \eqref{MinAab} in the bases defined above:
\begin{gather*}
\big(\Min_{\wt S_{(k)}} M\big)^\alpha_\beta=v^\alpha\big(\Min_{\wt S_{(k)}} M\big)\wt v_\beta, \qquad \big(\Min_{\wt S'_{(k)}} M\big)^\alpha_\beta=(v')^\alpha\big(\Min_{\wt S'_{(k)}} M\big)\wt v'_\beta,
\\
\big(\Min^{A_{(k)}} M\big)^\gamma_\delta=w^\gamma\big(\Min^{A_{(k)}} M\big)\wt w_\delta, \qquad \big(\Min^{A'_{(k)}} M\big)^\gamma_\delta=(w')^\gamma\big(\Min^{A'_{(k)}} M\big)\wt w'_\delta.
\end{gather*}}
These entries coincide:
\begin{align*}
 \big(\Min_{\wt S_{(k)}}M\big)^\alpha_\beta= \big(\Min_{\wt S'_{(k)}}M\big)^\alpha_\beta, \qquad
 \big(\Min^{A_{(k)}}M\big)^\gamma_\delta= \big(\Min^{A'_{(k)}}M\big)^\gamma_\delta.
\end{align*}
\end{Prop}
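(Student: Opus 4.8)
The plan is to evaluate both minor entries straight from their definitions \eqref{MinS}, \eqref{MinA} and to reduce each to a single expression that manifestly does not mention the choice of idempotent. For the $S$-minors the common target is $v^\alpha M^{(1)}\cdots M^{(k)}\wt v_\beta$, and for the $A$-minors it is $w^\gamma M^{(1)}\cdots M^{(k)}\wt w_\delta$. Once both the $A$-version and the $A'$-version of each entry are shown to equal the same such expression, the claimed equalities follow at once.

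The first ingredient I would assemble is how the pairing operators act on the chosen basis (co)vectors. The vectors $\wt v_\beta$ lie in $\wt S_{(k)}\big((\CC^m)^{\otimes k}\big)$; since $\wt A$ and $\wt A'$ are left-equivalent, Proposition~\ref{propSAequiv} makes $\wt S_{(k)}$ and $\wt S'_{(k)}$ right-equivalent, whence $\wt S_{(k)}\big((\CC^m)^{\otimes k}\big)=\wt S'_{(k)}\big((\CC^m)^{\otimes k}\big)$ by Proposition~\ref{propSVequiv}; therefore \emph{both} idempotents fix these vectors, $\wt S_{(k)}\wt v_\beta=\wt S'_{(k)}\wt v_\beta=\wt v_\beta$. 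Dually, the covectors $w^\gamma$ lie in the common row space $\big((\CC^n)^{\otimes k}\big)^*A_{(k)}=\big((\CC^n)^{\otimes k}\big)^*A'_{(k)}$, so $w^\gamma A_{(k)}=w^\gamma A'_{(k)}=w^\gamma$. I would also record that $M$ is a Manin matrix for both pairs $(A,\wt A)$ and $(A',\wt A')$ by Proposition~\ref{propMMlequiv}, so the symmetry relations \eqref{SMMMSk}, \eqref{AMMMAk} of Proposition~\ref{propAMMMAk} are available for either pair.

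Next I would substitute the basis relations \eqref{walphaG}, that is $(v')^\alpha=v^\alpha S'_{(k)}$, $\wt v'_\beta=\wt v_\beta$, $(w')^\gamma=w^\gamma$ and $\wt w'_\delta=\wt A'_{(k)}\wt w_\delta$, into the primed entries. For the $S$-minor this gives $v^\alpha S'_{(k)}M^{(1)}\cdots M^{(k)}\wt S'_{(k)}\wt v_\beta$; applying the Manin symmetry \eqref{SMMMSk} for $(A',\wt A')$ to absorb the left factor $S'_{(k)}$, and then $\wt S'_{(k)}\wt v_\beta=\wt v_\beta$, collapses it to $v^\alpha M^{(1)}\cdots M^{(k)}\wt v_\beta$, which is exactly the unprimed entry once $\wt S_{(k)}\wt v_\beta=\wt v_\beta$ is used. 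For the $A$-minor the substitutions give $w^\gamma A'_{(k)}M^{(1)}\cdots M^{(k)}\wt A'_{(k)}\wt w_\delta$; applying \eqref{AMMMAk} for $(A',\wt A')$ to replace $A'_{(k)}M^{(1)}\cdots M^{(k)}\wt A'_{(k)}$ by $A'_{(k)}M^{(1)}\cdots M^{(k)}$ and then $w^\gamma A'_{(k)}=w^\gamma$ reduces it to $w^\gamma M^{(1)}\cdots M^{(k)}\wt w_\delta$, matching the unprimed entry after $w^\gamma A_{(k)}=w^\gamma$.

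The only genuine subtlety — and the step I would write out with care — is precisely that the eigen(co)vectors adapted to one idempotent are automatically fixed by the left-equivalent one; a priori $\wt v_\beta$ is only tied to $\wt S_{(k)}$ and $w^\gamma$ only to $A_{(k)}$, and it is Propositions~\ref{propSAequiv} and \ref{propSVequiv} that supply the equalities of image and row space needed to pass to the primed operators. Everything else is bookkeeping with the symmetry relations. As an equivalent shortcut one could instead start from the identities $\Min_{\wt S'_{(k)}}M=(\Min_{\wt S_{(k)}}M)\wt S'_{(k)}$ and $\Min^{A'_{(k)}}M=A'_{(k)}(\Min^{A_{(k)}}M)$ of Proposition~\ref{propMinlequiv} and observe that the extra idempotent factors act as the identity on the matched basis (co)vectors, yielding the same conclusion.
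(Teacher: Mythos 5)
Your proof is correct, and it uses the same toolkit as the paper's, just arranged differently. You expand both the primed and unprimed entries from the definitions and reduce each to the common expression $v^\alpha M^{(1)}\cdots M^{(k)}\wt v_\beta$, respectively $w^\gamma M^{(1)}\cdots M^{(k)}\wt w_\delta$, using Proposition~\ref{propAMMMAk} together with the observation that the matched basis (co)vectors are fixed by both the primed and the unprimed pairing operators. The paper instead transforms the primed entry into the unprimed one in a single chain: it substitutes $\wt w'_\delta=\wt A'_{(k)}\wt w_\delta$, absorbs $\wt A'_{(k)}$ via the symmetry $\big(\Min^{A'_{(k)}}M\big)\wt A'_{(k)}=\Min^{A'_{(k)}}M$, replaces $\Min^{A'_{(k)}}M$ by $A'_{(k)}\big(\Min^{A_{(k)}}M\big)$ using Proposition~\ref{propMinlequiv}, and finishes with $(w')^\gamma A'_{(k)}=(w')^\gamma=w^\gamma$ --- which is exactly the ``shortcut'' you describe in your final sentence. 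Your explicit derivation of $\wt S'_{(k)}\wt v_\beta=\wt v_\beta$ and $w^\gamma A'_{(k)}=w^\gamma$ from Propositions~\ref{propSAequiv} and~\ref{propSVequiv} addresses a point the paper settles in the text preceding~\eqref{walphaG}, where the subspace equalities $S_{(k)}\big((\CC^n)^{\otimes k}\big)=S'_{(k)}\big((\CC^n)^{\otimes k}\big)$ and $\big((\CC^n)^{\otimes k}\big)^*A_{(k)}=\big((\CC^n)^{\otimes k}\big)^*A'_{(k)}$ are what make the primed families genuine dual bases; so that step of yours is sound and consistent with the paper's setup. Neither route buys much over the other: yours makes the idempotent-independence of the entries manifest by exhibiting a common middle expression, while the paper's is a slightly shorter chain of equalities.
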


\begin{proof} By using $\big(\Min^{A'_{(k)}}M\big)\wt A'_{(k)}=\big(\Min^{A'_{(k)}}M\big)$, Proposition~\ref{propMinlequiv} and $(w')^\gamma A'_{(k)}=(w')^\gamma$ we obtain
\begin{align*}
\big(\Min^{A'_{(k)}}M\big)_\delta^\gamma
&=(w')^\gamma\big(\Min^{A'_{(k)}}M\big)\wt w'_\delta
=(w')^\gamma\big(\Min^{A'_{(k)}}M\big)\wt A'_{(k)}\wt w_\delta
\\[.5ex]
&= (w')^\gamma\big(\Min^{A'_{(k)}}M\big)\wt w_\delta
=(w')^\gamma A'_{(k)}\big(\Min^{A_{(k)}}M\big)\wt w_\delta
\\[.5ex]
&=(w')^\gamma\big(\Min^{A_{(k)}}M\big)\wt w_\delta
=\big(\Min^{A_{(k)}}M\big)_\delta^\gamma.
\end{align*}
The equality of the entries of the $S$-minors is proved similarly. \end{proof}

\subsection{Construction of pairing operators}
\label{secConstr}

Theorem~\ref{ThPOviaDB} gives a formula for the pairing operators via dual bases. However there is basisless method of construction. It uses the representation theory of groups and algebras, for which some appropriate idempotents are already constructed.

The operators $A^{(1,2)},A^{(2,3)},\dots,A^{(k-1,k)}\in\End\big((\CC^n)^{\otimes k}\big)$ generate a subalgebra $\U_k$ of the algebra $\End\big((\CC^n)^{\otimes k}\big)$. Equivalently, the algebra $\U_k$ can be defined as a subalgebra of $\End\big((\CC^n)^{\otimes k}\big)$ generates by $P^{(a,a+1)}$ or $S^{(a,a+1)}$.
Let $\U_k^+$ and $\U_k^-$ be ideals of $\U_k$ generated by $A^{(a,a+1)}$ and~$S^{(a,a+1)}$ respectively.
The commutation relations for the algebras $\gX^*_A(\CC)$, $\gX_A(\CC)$, $\Xi_A(\CC)$ and~$\Xi_A^*(\CC)$ imply
\begin{gather}
 (X^*\otimes\cdots\otimes X^*)T=0, \qquad
 T(X\otimes\cdots\otimes X)=0\qquad \forall\,T\in\U_k^+, \nonumber
 \\[.5ex]
 (\Psi\otimes\cdots\otimes\Psi)T=0, \qquad
 T(\Psi^*\otimes\cdots\otimes\Psi^*)=0\qquad \forall\,T\in\U_k^-.\label{XXXT} %\label{PsiT}
\end{gather}
Thus, if $S_{(k)}\in\U_k$ satisfies~\eqref{PSSPS} and $1-S_{(k)}\in \U_k^+$ then $S_{(k)}$ is the $k$-th $S$-operator. Analogously, an operator $A_{(k)}\in\U_k$ satisfying~\eqref{PAAPA} and $1-A_{(k)}\in\U_k^-$ is the $k$-th $A$-operator. If~the alge\-bra~$\U_k$ admits an involution $\omega\colon P^{(a,a+1)}\mapsto-P^{(a,a+1)}$ then $A_{(k)}=\omega\big(S_{(k)}\big)$, so by using this involution one can obtain the $k$-th $A$-operator from the $k$-th $S$-operator and vice versa.

Let us consider a case when the algebra $\U_k$ is a group algebra of a finite group.

\begin{Prop} \label{propG}
Let $G^+_k$ and $G^-_k$ be subgroups of $\End\big((\CC^n)^{\otimes k}\big)$ generated by the operators $P^{(1,2)},P^{(2,3)},\dots,P^{(k-1,k)}$ and $-P^{(1,2)},-P^{(2,3)},\dots,-P^{(k-1,k)}$ respectively. The group $G^+_k$ is finite iff the group $G^-_k$ is finite. In this case the pairing operators exist and have the form
\begin{align}
 S_{(k)}=\frac1{|G^+_k|}\sum_{g\in G^+_k}g, \qquad
 A_{(k)}=\frac1{|G^-_k|}\sum_{g\in G^-_k}g. \label{SkG}
\end{align}
\end{Prop}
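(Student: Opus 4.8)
The plan is to verify that the operators defined in \eqref{SkG} meet the sufficient criterion for pairing operators recorded just before the proposition: an element $S_{(k)}\in\U_k$ satisfying \eqref{PSSPS} together with $1-S_{(k)}\in\U_k^+$ is automatically the $k$-th $S$-operator, and likewise an $A_{(k)}\in\U_k$ satisfying \eqref{PAAPA} and $1-A_{(k)}\in\U_k^-$ is the $k$-th $A$-operator. Throughout I use that $P^{(a,a+1)}=\id-2A^{(a,a+1)}$ is an involution and that, as already noted in the text, $\U_k$ is equally generated by the $P^{(a,a+1)}$; consequently every element of $G^+_k$ or $G^-_k$ lies in $\U_k$.

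First I would settle the finiteness equivalence by introducing the auxiliary subgroup $\tilde G_k=\langle\,\pm P^{(a,a+1)}:a=1,\dots,k-1\,\rangle\subset\End\big((\CC^n)^{\otimes k}\big)$. Since $P^{(a,a+1)}\cdot\big(-P^{(a,a+1)}\big)=-\id$, the central element $-\id$ belongs to $\tilde G_k$, and because $-P^{(a,a+1)}=(-\id)P^{(a,a+1)}$ one has $\tilde G_k=G^+_k\cdot\langle-\id\rangle=G^-_k\cdot\langle-\id\rangle$. Hence $G^+_k$ and $G^-_k$ are subgroups of index at most $2$ in $\tilde G_k$, so all three groups are finite or infinite simultaneously; in particular $G^+_k$ is finite iff $G^-_k$ is.

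Assume now these groups are finite and set $N^\pm=|G^\pm_k|$. For $S_{(k)}=\frac1{N^+}\sum_{g\in G^+_k}g$ the standard averaging identity gives $P^{(a,a+1)}S_{(k)}=S_{(k)}P^{(a,a+1)}=S_{(k)}$, since left (or right) multiplication by the generator $P^{(a,a+1)}\in G^+_k$ merely permutes the summands. Substituting $P^{(a,a+1)}=\id-2A^{(a,a+1)}$ yields $A^{(a,a+1)}S_{(k)}=S_{(k)}A^{(a,a+1)}=0$, which is \eqref{PSSPS}. To obtain $1-S_{(k)}\in\U_k^+$ I would pass to the quotient algebra $\U_k/\U_k^+$: there every $A^{(a,a+1)}$ vanishes, hence every $P^{(a,a+1)}$ becomes the identity, so each $g\in G^+_k$ maps to $\id$ and $S_{(k)}$ maps to $\frac1{N^+}\sum_{g}\id=\id$. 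Thus $1-S_{(k)}\in\U_k^+$, and by the criterion above $S_{(k)}$ is the $k$-th $S$-operator.

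The argument for $A_{(k)}=\frac1{N^-}\sum_{g\in G^-_k}g$ is entirely parallel, with the signs tracked through $G^-_k$. Averaging by the generator $-P^{(a,a+1)}\in G^-_k$ gives $\big(-P^{(a,a+1)}\big)A_{(k)}=A_{(k)}$, i.e.\ $P^{(a,a+1)}A_{(k)}=-A_{(k)}$ and symmetrically on the right, whence $S^{(a,a+1)}A_{(k)}=A_{(k)}S^{(a,a+1)}=0$, which is \eqref{PAAPA}. In the quotient $\U_k/\U_k^-$ every $S^{(a,a+1)}$ vanishes, so $P^{(a,a+1)}\equiv-\id$ and each generator $-P^{(a,a+1)}$ of $G^-_k$ maps to $\id$; therefore $A_{(k)}\equiv\id$ and $1-A_{(k)}\in\U_k^-$, making $A_{(k)}$ the $k$-th $A$-operator. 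Uniqueness of both operators then follows from Proposition~\ref{propUnique}. The only genuinely substantive step is the ideal membership $1-S_{(k)}\in\U_k^+$ and its dual; the quotient-algebra device makes it immediate, while the sign bookkeeping in the $G^-_k$ case is the one place where care is needed.
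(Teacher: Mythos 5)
Your proof is correct, and while its skeleton --- group averaging to obtain \eqref{PSSPS} and \eqref{PAAPA}, then the ideal memberships $1-S_{(k)}\in\U_k^+$ and $1-A_{(k)}\in\U_k^-$, then the criterion stated just before the proposition --- matches the paper's, both substantive sub-arguments are carried out by genuinely different means. For the finiteness equivalence, the paper splits into cases according to whether $-1\in G_k^-$: when $-1\in G_k^-$ it notes $G_k^-=G_k^+\cup\big({-}G_k^+\big)$, and when $-1\notin G_k^-$ it runs a word-shortening induction (any product of $N+1$ generators $-P^{(a,a+1)}$ equals a product of at most $N$ of them, where $N$ bounds word lengths in $G_k^+$). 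Your auxiliary group $\tilde G_k=G_k^+\cdot\langle-\id\rangle=G_k^-\cdot\langle-\id\rangle$, in which both groups sit with index at most $2$, eliminates the case analysis and the induction entirely; it is shorter and cleaner, the only thing lost being the paper's explicit description of $G_k^-$ in the first case. For the ideal membership, the paper proves $1-g\in\U_k^\pm$ for every $g\in G_k^\pm$ by induction on word length via the identity $1-gg'=(1-g)+(1-g')-(1-g)(1-g')$ and then averages; your passage to the quotient algebra $\U_k/\U_k^\pm$, where each generator of $G_k^\pm$ maps to the identity and hence so does the average, encodes exactly the same induction but makes the membership immediate, and it works without extra comment even in the degenerate situation $\U_k^\pm=\U_k$. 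The averaging step yielding \eqref{PSSPS} and \eqref{PAAPA}, and the appeal (via \eqref{XXXT}) to the sufficiency criterion, coincide in the two proofs.
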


\begin{proof} Suppose $G_k^+$ is finite. If $-1\in G_k^-$ then $G_k^-=G_k^+\cup\big({-}G_k^+\big)$, so that $G_k^-$ is also finite (more precisely we have $G_k^-=G_k^+$ if $-1\in G_k^+$ or $G_k^-=G_k^+\sqcup\big({-}G_k^+\big)$ if $-1\notin G_k^+$). Hence we can suppose $-1\notin G_k^-$. For brevity we denote $g_a=P^{(a,a+1)}$ and $g^*_a=-g_a=-P^{(a,a+1)}$. The finiteness of the group $G_k^+$ means that there exists $N\in\NN_0$ such that any element $g\in G_k^+$ can be written as $g_{a_1}g_{a_2}\cdots g_{a_m}$ with $m\le N$. Then any product $g^*_{a_1}\cdots g^*_{a_{N+1}}=(-1)^{N+1}g_{a_1}\cdots g_{a_{N+1}}$ can be written as $(-1)^{N+1}g_{a'_1}\cdots g_{a'_m}=(-1)^{N+1+m}g^*_{a'_1}\cdots g^*_{a'_m}$ for some $m\le N$. By using $(g^*_s)^2=1$ we obtain $(-1)^{N+1+m}=g^*_{a_1}\cdots g^*_{a_{N+1}}g^*_{a'_m}\cdots g^*_{a'_1}\in G_k^-$. Since $-1\notin G_k^-$ we have $(-1)^{N+1+m}=1$, so for any $a_1,\dots,a_N,a_{N+1}$ there exist $m\le N$ and $a'_1,\dots,a'_m$ such that $g^*_{a_1}\cdots g^*_{a_{N+1}}=g^*_{a'_1}\cdots g^*_{a'_m}$. This means that by induction we can write any element of $G_k^-$ as a product $g^*_{a_1}\cdots g^*_{a_m}$ for some $m\le N$, which implies the finiteness of the group $G_k^-$. The converse implication is obtained by changing the sign of $P$.

We have $\U_k=\CC[G_k^+]=\CC[G_k^-]$. Moreover, $1-g\in\U_k^\pm$ for any $g\in G_k^\pm$. Indeed, for $g=\pm P^{(a,a+1)}$ it follows from the definition of the ideals $\U_k^\pm$; further, if $1-g$ and $1-g'$ belongs to $\U_k^\pm$, then $1-gg'=(1-g)+(1-g')-(1-g)(1-g')\in\U_k^\pm$. Note that the operators~\eqref{SkG} satisfy $gS_{(k)}=S_{(k)}g=S_{(k)}$ $\;\forall\,g\in G_k^+$ and $AS_{(k)}=A_{(k)}g=A_{(k)}$ $\forall g\in G_k^-$, hence conditions~\eqref{PSSPS} and~\eqref{PAAPA} are valid. Since $1-S_{(k)}=\frac1{|G_k^+|}\sum_{g\in G_k^+}(1-g)\in\U_k^+$ and $1-A_{(k)}=\frac1{|G_k^-|}\sum_{g\in G_k^-}(1-g)\in\U_k^-$, the operators $S_{(k)}$ and $A_{(k)}$ are the pairing operators for the idempotent $A$. \end{proof}

\looseness=1
At $k=2$ the groups consist of exactly two elements: $G_2^+=\{1,P\}$, $G^-_2=\{1,-P\}$. Hence we obtain $S_{(2)}=\frac12(1+P)=S$ and $A_{(2)}=\frac12(1-P)=A$, which is in accordance with Proposition~\ref{propUnique}.

Let $\gU_k$ be an abstract algebra with an augmentation $\varepsilon\colon\gU_k\to\CC$. We call an element $s_{(k)}$ {\it left invariant} or {\it right invariant} (with respect to $\varepsilon$) if $us_{(k)}=\varepsilon(u)s_{(k)}\;\forall\,u\in\gU_k$ or $s_{(k)}u=\varepsilon(u)s_{(k)}\;\forall\,u\in\gU_k$ respectively. If an element $s_{(k)}\in\gU_k$ is left or right invariant and normalised as $\varepsilon(s_{(k)})=1$, then it is an idempotent.

Let $\rho\colon\gU_k\to\End\big((\CC^n)^{\otimes k}\big)$ be an algebra homomorphism (representation) satisfying the condition $\U_k\subset\rho(\gU_k)$. By applying it to the conditions of left and right invariance of $s_{(k)}$ we see that $S_{(k)}=\rho(s_{(k)})$ satisfy~\eqref{PSSPS}, if we additionally suppose that $\varepsilon(u_1)=\dots=\varepsilon(u_{k-1})=0$ for some $u_1,\dots,u_{k-1}\in\gU_k$ such that $\rho(u_a)=A^{(a,a+1)}$. In the case $\rho(\gU_k)=\U_k$ we have $S_{(k)}\in\U_k$, so due to the formulae~\eqref{XXXT} the operator $S_{(k)}$ is the $k$-th $S$-operator. In more general case one need to check the conditions~\eqref{SkXXXXXX1}, \eqref{SkXXXXXX2}; due to the normalisation $\varepsilon(s_{(k)})=1$ it is enough to~show that the operators $\rho(u)-\varepsilon(u)$ annihilate $(X^*\otimes\cdots\otimes X^*)$ and $(X\otimes\cdots\otimes X)$ by acting from the right and left respectively, where $u$ runs over the algebra $\gU_k$ or at least over a set of its generators.

The pairing operators $A_{(k)}$ are obtained in the same way. Usually one needs to consider the same $\gU_k$, $\varepsilon$, $s_{(k)}$ with different representation $\rho$ or the same representation with different $s_{(k)}$ and~$\varepsilon$. Let us formulate a general statement.

\begin{Th} \label{ThUkrhosk}
 Let $\rho\colon\gU_k\to\End\big((\CC^n)^{\otimes k}\big)$ and $\varepsilon\colon\gU_k\to\CC$ be algebra homomorphisms. Let $s_{(k)}\in\gU_k$ be a normalised left and right invariant element:
\begin{align}
 us_{(k)}=s_{(k)}u=\varepsilon(u)s_{(k)}\qquad\forall\,u\in\gU_k, \qquad\varepsilon(s_{(k)})=1. \label{usk}
\end{align}
\begin{itemize}
\itemsep=0pt
 \item Suppose there exist elements $u_1,\dots,u_k\in\gU_k$ such that $\rho(u_a)=A^{(a,a+1)}$ and $\varepsilon(u_a)=0$. If~$\rho$ and $\varepsilon$ satisfy
\begin{gather}
 (X^*\otimes\cdots\otimes X^*)\rho(u)=\varepsilon(u)(X^*\otimes\cdots\otimes X^*), \label{Xpi}
 \\
 \rho(u)(X\otimes\cdots\otimes X)=\varepsilon(u)(X\otimes\cdots\otimes X) \label{piX}
\end{gather}
for all $u\in\gU_k$, then $S_{(k)}=\rho(s_{(k)})\in\End\big((\CC^n)^{\otimes k}\big)$ is the $k$-th $S$-operator.
\item Suppose there exist elements $u_1,\dots,u_k\in\gU_k$ such that $\rho(u_a)=S^{(a,a+1)}$ and $\varepsilon(u_a)=0$. If~$\rho$ and $\varepsilon$ satisfy
{\samepage\begin{gather}
 (\Psi\otimes\cdots\otimes\Psi)\rho(u) =\varepsilon(u)(\Psi\otimes\cdots\otimes\Psi), \label{Psipi}
 \\
 \rho(u)(\Psi^*\otimes\cdots\otimes\Psi^*) =\varepsilon(u)(\Psi^*\otimes\cdots \otimes\Psi^*) \label{piPsi}
\end{gather}
for all $u\in\gU_k$, then $A_{(k)}=\rho(s_{(k)})\in\End\big((\CC^n)^{\otimes k}\big)$ is the $k$-th $A$-operator.}
\end{itemize}
\end{Th}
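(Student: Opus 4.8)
The plan is to verify directly that $S_{(k)}=\rho(s_{(k)})$ satisfies the three defining conditions \eqref{PSSPS}, \eqref{SkXXXXXX1}, \eqref{SkXXXXXX2} of the $k$-th $S$-operator, and then to dispatch the $A$-operator by the symmetric argument. The structural fact I would exploit throughout is that $\rho$ is an algebra homomorphism, so that products in $\gU_k$ are carried to compositions in $\End\big((\CC^n)^{\otimes k}\big)$; this is exactly what lets the two-sided invariance \eqref{usk} of $s_{(k)}$ be transported into relations among $S_{(k)}$, the operators $A^{(a,a+1)}$, and the vectors $X$, $X^*$.

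First I would establish the orthogonality relations \eqref{PSSPS}. Using the hypothesis $\rho(u_a)=A^{(a,a+1)}$ together with the invariance \eqref{usk} and $\varepsilon(u_a)=0$, I compute
\[
A^{(a,a+1)}S_{(k)}=\rho(u_a)\rho(s_{(k)})=\rho(u_as_{(k)})=\varepsilon(u_a)\rho(s_{(k)})=0,
\]
and symmetrically $S_{(k)}A^{(a,a+1)}=\rho(s_{(k)}u_a)=\varepsilon(u_a)S_{(k)}=0$, giving \eqref{PSSPS} for every $a=1,\dots,k-1$.

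Next I would obtain the two fixed-vector conditions by specialising the compatibility hypotheses \eqref{Xpi} and \eqref{piX} to $u=s_{(k)}$. Since $\varepsilon(s_{(k)})=1$, equation \eqref{Xpi} yields $(X^*\otimes\cdots\otimes X^*)S_{(k)}=\varepsilon(s_{(k)})(X^*\otimes\cdots\otimes X^*)=(X^*\otimes\cdots\otimes X^*)$, which is \eqref{SkXXXXXX1}; likewise \eqref{piX} gives $S_{(k)}(X\otimes\cdots\otimes X)=(X\otimes\cdots\otimes X)$, which is \eqref{SkXXXXXX2}. Having checked all three conditions, $S_{(k)}$ is the $k$-th $S$-operator (the unique one, by Proposition~\ref{propUnique}). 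The second bullet then follows by repeating these three steps after replacing $A^{(a,a+1)}$ by $S^{(a,a+1)}$, the hypotheses \eqref{Xpi}, \eqref{piX} by \eqref{Psipi}, \eqref{piPsi}, and $X$, $X^*$ by $\Psi^*$, $\Psi$, so that \eqref{PAAPA}, \eqref{AkXXXXXX1}, \eqref{AkXXXXXX2} are verified for $A_{(k)}=\rho(s_{(k)})$.

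I do not expect a genuine obstacle here: the whole argument is a direct transport of the invariance identity \eqref{usk} through the homomorphism $\rho$. The only point needing care is bookkeeping, namely ensuring that \eqref{Xpi}--\eqref{piX} are genuinely invoked at $u=s_{(k)}$ (not merely at the generators $u_a$, whose role is solely to supply \eqref{PSSPS}), and keeping straight that it is the choice of data $\{u_a\}$ and of the compatibility pair (either \eqref{Xpi}, \eqref{piX} or \eqref{Psipi}, \eqref{piPsi}) — with the same $s_{(k)}$ and $\varepsilon$ — that separates the $S$-case from the $A$-case in the two statements.
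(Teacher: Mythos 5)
Your proof is correct and follows essentially the same route as the paper, which in fact leaves this theorem without a formal proof environment: the argument is exactly the discussion preceding the statement, namely transporting the invariance \eqref{usk} through $\rho$ at the elements $u_a$ to get \eqref{PSSPS}, and specialising \eqref{Xpi}, \eqref{piX} (resp.\ \eqref{Psipi}, \eqref{piPsi}) at $u=s_{(k)}$ with $\varepsilon(s_{(k)})=1$ to get the fixed-vector conditions. Your bookkeeping remark about which hypotheses are used where matches the paper's intent precisely.
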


\begin{Rem}
 If $\gX^*_A(\CC)_k\ne0$ or $\gX_A(\CC)_k\ne0$, then $X^*\otimes\cdots\otimes X^*\ne0$ or $X\otimes\cdots\otimes X\ne0$. Analogously, $\Xi_A(\CC)_k\ne0$ and $\Xi^*_A(\CC)$ imply $\Psi\otimes\cdots\otimes\Psi\ne0$ and $\Psi^*\otimes\cdots\otimes\Psi^*\ne0$ respectively. In these cases the condition $\varepsilon(u_a)=0$ can be derived from~\eqref{Xpi}, \eqref{piX}, \eqref{Psipi} or \eqref{piPsi} by the substitution $u=u_a$. However, to prove the non-vanishing of a component of a quadratic algebra (and, more generally, to find its dimension) one needs sometimes to construct the corresponding pairing operator by using Theorem~\ref{ThUkrhosk}.
\end{Rem}

\begin{Rem}
 The augmentation $\varepsilon\colon\gU_k\to\CC$ defines the ideal $\mathfrak I_k:=\Ker(\varepsilon)$ of $\gU_k$. It is a~maximal ideal consisting of the elements $u-\varepsilon(u)$, where $u\in\gU_k$. In terms of this ideal the conditions~\eqref{usk} take the form $us_{(k)}=s_{(k)}u=0$ $\;\forall\,u\in\mathfrak I_k$ and $1-s_{(k)}\in\mathfrak I_k$.

Conversely, for any maximal ideal $\mathfrak I_k$ of a finite-dimensional algebra $\gU_k$ there is a unique algebra isomorphism $\gU_k/\mathfrak I_k\cong\CC$, so the canonical projection $\gU_k\twoheadrightarrow\gU_k/\mathfrak I_k$ defines an augmentation $\varepsilon\colon\gU_k\to\CC$. In particular, if the ideal $\U_k^\pm\subset\U_k$ is proper, it gives an augmentation $\U_k\twoheadrightarrow\U_k/\U_k^\pm\cong\CC$.
\end{Rem}

If the algebra $\gU_k$ admits an anti-automorphism which does not change generators then it is enough to check only left invariance (or only right invariance) due to the following fact.

\begin{Prop} \label{propInv}
Let $\varepsilon\colon\gU_k\to\CC$ be a homomorphism.
\begin{itemize}
\itemsep=0pt
 \item Let $s_{(k)}$ be left invariant and $\bar s_{(k)}$ be right invariant elements of the algebra $\gU_k$. If $\varepsilon(s_{(k)})=\varepsilon(\bar s_{(k)})=1$, then $s_{(k)}=\bar s_{(k)}$.
 \item Let $s_{(k)}\in\gU_k$ be left-invariant and $\varepsilon(s_{(k)})=1$. If there exists an anti-automorphism $\alpha\colon\gU_k\to\gU_k$ such that $\varepsilon\alpha=\varepsilon$ then $\bar s_{(k)}:=\alpha(s_{(k)})$ is right invariant and $\varepsilon(\bar s_{(k)})=1$. Hence $s_{(k)}=\bar s_{(k)}$, and $s_{(k)}$ is right invariant as well.
\item If a solution of~\eqref{usk} exists then it is unique.
\end{itemize}
\end{Prop}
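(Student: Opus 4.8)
The plan is to establish the three bullets in the order stated, since each rests on the previous one, and the whole argument reduces to a single device: evaluating a product of a left-invariant and a right-invariant element in two different ways. First I would prove the first bullet. Given $s_{(k)}$ left invariant and $\bar s_{(k)}$ right invariant with $\varepsilon(s_{(k)})=\varepsilon(\bar s_{(k)})=1$, I consider the product $\bar s_{(k)}\,s_{(k)}$. Applying the left invariance of $s_{(k)}$ to $u=\bar s_{(k)}$ gives $\bar s_{(k)}\,s_{(k)}=\varepsilon(\bar s_{(k)})\,s_{(k)}=s_{(k)}$; applying the right invariance of $\bar s_{(k)}$ to $u=s_{(k)}$ gives $\bar s_{(k)}\,s_{(k)}=\varepsilon(s_{(k)})\,\bar s_{(k)}=\bar s_{(k)}$. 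Comparing the two expressions yields $s_{(k)}=\bar s_{(k)}$.

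For the second bullet I set $\bar s_{(k)}=\alpha(s_{(k)})$. Normalisation is immediate, since $\varepsilon(\bar s_{(k)})=\varepsilon\alpha(s_{(k)})=\varepsilon(s_{(k)})=1$ by the hypothesis $\varepsilon\alpha=\varepsilon$. To verify right invariance I use that $\alpha$ is a bijection, so any $u\in\gU_k$ can be written as $u=\alpha(v)$ for a unique $v$; because $\alpha$ reverses products, $\bar s_{(k)}u=\alpha(s_{(k)})\alpha(v)=\alpha(v\,s_{(k)})$. The left invariance of $s_{(k)}$ gives $v\,s_{(k)}=\varepsilon(v)s_{(k)}$, whence $\bar s_{(k)}u=\varepsilon(v)\alpha(s_{(k)})=\varepsilon(v)\bar s_{(k)}$, and finally $\varepsilon(v)=\varepsilon\alpha(v)=\varepsilon(u)$. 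Thus $\bar s_{(k)}u=\varepsilon(u)\bar s_{(k)}$ for all $u$, so $\bar s_{(k)}$ is right invariant; the first bullet then forces $s_{(k)}=\bar s_{(k)}$, and since $\bar s_{(k)}$ is right invariant, so is $s_{(k)}$. The only delicate point here is the order-reversal bookkeeping: one must substitute $u=\alpha(v)$ and track that anti-multiplicativity sends $\alpha(s_{(k)})\alpha(v)$ to $\alpha(v\,s_{(k)})$ rather than $\alpha(s_{(k)}v)$, so that it is precisely the \emph{left} invariance of $s_{(k)}$ that applies.

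The third bullet is then a corollary. A solution of~\eqref{usk} is by definition simultaneously left and right invariant and satisfies $\varepsilon(s_{(k)})=1$. Given two solutions $s_{(k)}$ and $s'_{(k)}$, I apply the first bullet to the pair formed by $s_{(k)}$, viewed through its left invariance, and $s'_{(k)}$, viewed through its right invariance, and conclude $s_{(k)}=s'_{(k)}$. No step presents a genuine obstacle; the argument is the standard uniqueness proof for a two-sided invariant (Haar-type) element, and the main thing to get right is simply the order conventions flagged above.
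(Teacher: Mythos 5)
Your proof is correct and follows essentially the same route as the paper: the first bullet is exactly the paper's two-way evaluation of $\bar s_{(k)}s_{(k)}$, and your detailed check that $\alpha(s_{(k)})$ is right invariant (via $u=\alpha(v)$ and anti-multiplicativity) is just the spelled-out version of the paper's one-line assertion that $\bar s_{(k)}=\alpha(s_{(k)})$ is right invariant with respect to $\varepsilon\alpha^{-1}=\varepsilon$. The deduction of the third bullet from the first is likewise the intended argument.
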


\begin{proof} The first part is proved as Proposition~\ref{propUnique}, that is $\bar s_{(k)}s_{(k)}=\varepsilon(\bar s_{(k)})s_{(k)}=s_{(k)}$, $\bar s_{(k)}s_{(k)}=\varepsilon(s_{(k)})\bar s_{(k)}=\bar s_{(k)}$. The second part follows from the fact that $\bar s_{(k)}=\alpha(s_{(k)})$ is right invariant with respect to the augmentation $\varepsilon\alpha^{-1}=\varepsilon$. \end{proof}

Consider the case when $P$ satisfies the braid relation $P^{(12)}P^{(23)}P^{(12)}=P^{(23)}P^{(12)}P^{(23)}$. Since $P^2=1$ we have the homomorphisms $\rho^\pm\colon\CC[\SSS_k]\to\End\big((\CC^n)^{\otimes k}\big)$ defined by the formulae $\rho^\pm(\sigma_a)=\pm P^{(a,a+1)}$. The role of $\gU_k$ is played by $\CC[\SSS_k]$. Since $G_k^\pm=\rho^\pm(\SSS_k)$ the groups $G_k^+$ and~$G_k^-$ are finite. The pairing operator can be obtained by Proposition~\ref{propG} or by Theorem~\ref{ThUkrhosk}. The augmentation $\varepsilon\colon\CC[\SSS_k]\to\CC$ is the counit $\varepsilon(\sigma)=1$ $\;\forall\,\sigma\in\SSS_k$. The elements $u_a$ have the form $\frac{1-\sigma_a}2$. The operators~\eqref{SkG} coincide with the image of $s_{(k)}=\frac1{k!}\sum_{\sigma\in\SSS_k}\sigma$ under the homomorphisms $\rho^+$ and $\rho^-$, these are
\begin{align}
S_{(k)}=\frac1{k!}\sum_{\sigma\in\SSS_k}\rho^+(\sigma), \qquad
A_{(k)}=\frac1{k!}\sum_{\sigma\in\SSS_k}(-1)^\sigma\rho^+(\sigma). \label{POBR}
\end{align}

Note that $A_{(k)}$ can be obtained as the image of $a_{(k)}=\frac1{k!}\sum_{\sigma\in\SSS_k}(-1)^\sigma\sigma$ under $\rho^+$. In this case one need to consider the augmentation $\varepsilon(\sigma_a)=-1$ and $u_a=\frac{1+\sigma_a}2$.

\section{Examples of minor operators}
\label{secExMO}

Here we construct pairing operators for the examples given in Section~\ref{secPc} and consider the corresponding minors. Since the Manin matrices described in Sections~\ref{secMm} and \ref{secqM} are particular cases of the $(\wh q,\wh p)$-Manin matrices it is sufficient to consider minors for the case of Section~\ref{secwhqMM}. The formulae for $S$- and $A$-minors of the $(\wh q,\wh p)$-Manin matrices are valid for more general case: for $(B,A_{\wh p})$- and $(A_{\wh q},B)$-Manin matrices respectively. By starting with the idempotents $\wh R^q_{-}$ int\-roduced in Section~\ref{secUq} we write another pairing operators, which gives related minor operators for $q$-Manin matrices. Finally we investigate the case of Section~\ref{sec4p}.

\subsection[Minors for the (q,p)-Manin matrices]
{Minors for the $\boldsymbol{(\wh q,\wh p)}$-Manin matrices}\label{secMinqp}

Consider the idempotent $A=A_{\wh q}$. The pairing operators for $A$ are given by the formulae~\eqref{POBR}, where $\rho^+=\rho_{\wh q} \colon\CC[\SSS_k]\to\End\big((\CC^n)^{\otimes k}\big)$, $\rho_{\wh q}(\sigma_a)= P_{\wh q}^{(a,a+1)}$.

Let $I=(i_1,\dots,i_k)$ and $\wh q_{II}$ be the corresponding $k\times k$ matrix with entries $(\wh q_{II})_{\rm st}=q_{i_si_t}$. Then we have homomorphism $\Xi_{A_{\wh q_{II}}}(\CC)\to\Xi_{A_{\wh q}}(\CC)$ given by the formula $\psi_s\mapsto\psi_{i_s}$. By applying it to~\eqref{Lempsisigma2} one yields\vspace{-1ex}
\begin{align}
 \psi_{i_{\sigma(1)}}\cdots\psi_{i_{\sigma(k)}}=\frac{(-1)^\sigma}{\mu(\wh q_{II},\sigma)}\psi_{i_1}\cdots\psi_{i_k}, \label{Lempsisigma2k}
\end{align}\vspace{-1ex}
where\vspace{-1ex}
\begin{align*}
\mu(\wh q,\sigma):=\!\!\!\!\prod_{\substack{s<t\\ \sigma^{-1}(s)>\sigma^{-1}(t)}}\!\!\!\!q_{\rm st}, \qquad
\mu(\wh q_{II},\sigma)=\!\!\!\!\prod\limits_{\substack{s<t\\ \sigma^{-1}(s)>\sigma^{-1}(t)}}\!\!\!\! q_{i_si_t}.\vspace{-1ex}
\end{align*}
Since $\Xi_{A_{\wh q}}^*(\CC)=\Xi_{A_{\wh{q'}}}(\CC)$, where $q'_{ij}=q_{ij}^{-1}$ we obtain\vspace{-1ex}
\begin{align}
 \psi^{i_{\sigma(1)}}\cdots\psi^{i_{\sigma(k)}}=(-1)^\sigma\mu(\wh q_{II},\sigma)\psi^{i_1}\cdots\psi^{i_k}. \label{Lempsisigma2kDual}\vspace{-1ex}
\end{align}

Denote $e_I:=e_{i_1 \dots i_k}=e_{i_1}\otimes\cdots\otimes e_{i_k}$ and $e^J:=e^{j_1 \dots j_k}=e^{j_1}\otimes\cdots\otimes e^{j_k}$ for arbitrary $I=(i_1,\dots,i_k)$ and $J=(j_1,\dots,j_k)$. Let us write $I=(i_1<\dots<i_k\le n)$ if $I=(i_1,i_2,\dots,i_k)$ such that $1\le i_1<i_2<\dots<i_k\le n$.

Since $\rho_{\wh q}(\sigma)e_I$ is proportional to $e_{\sigma I}$ we have\vspace{-1ex}
\begin{align} \label{psipsiA}
 \la\psi^{j_1}\cdots\psi^{j_k},\psi_{i_1}\cdots\psi_{i_k}\ra=A^{j_1 \dots j_k}_{i_1 \dots i_k}=\frac1{k!}\sum_{\sigma\in\SSS_k}(-1)^\sigma e^J\rho_{\wh q}(\sigma)e_I=\frac1{k!}e^Je_I=\frac1{k!}\delta^J_I\vspace{-1ex}
\end{align}
for any $I=(i_1<\dots<i_k\le n)$ and $J=(j_1<\dots<j_k\le n)$.

The formulae~\eqref{Lempsisigma2k}, \eqref{Lempsisigma2kDual} and \eqref{psipsiA} give the formula for the entries of the $A$-operator:
\begin{align} \label{Awhq}
 A^{i_{\sigma(1)}\dots i_{\sigma(k)}}_{j_{\tau(1)} \dots j_{\tau(k)}}=
 \big\la\psi^{i_{\sigma(1)}}\cdots\psi^{i_{\sigma(k)}},\psi_{j_{\tau(1)}}\cdots\psi_{j_{\tau(k)}}\big\ra =\frac1{k!}(-1)^\tau(-1)^\sigma\frac{\mu(\wh q_{II},\sigma)}{\mu(\wh q_{II},\tau)}\delta_J^I,
\end{align}
where $I=(i_1<\dots<i_k\le n)$, $J=(j_1<\dots<j_k\le n)$ and $\sigma,\tau\in\SSS_k$. Due to the formula~\eqref{Lempsisigma1} the other entries vanish.

Note that the vectors $\psi_{i_1}\cdots\psi_{i_k}$, where $1\le i_1<\dots<i_k\le n$, form a basis of $\Xi_{A_{\wh q}}(\CC)_k$, so its dimension is $r_k=\dim\Xi_{A_{\wh q}}(\CC)_k=\dim\Xi_{A_{\wh q}}^*(\CC)_k={n\choose k}$. By using the formula~\eqref{Awhq} one can directly check that $\tr A_{(k)}=r_k$:
\begin{align} \label{trAwtq}
 \tr A_{(k)}=\sum_{l_1,\dots,l_k=1}^nA^{l_1\dots l_k}_{l_1\dots l_k}=\sum_{\substack{1\le i_1<\dots<i_k\le n\\ \sigma\in\SSS_k}} A^{i_{\sigma(1)} \dots i_{\sigma(k)}}_{i_{\sigma(1)} \dots i_{\sigma(k)}}=\sum_{1\le i_1<\dots<i_k\le n}1={n\choose k}.
\end{align}

Let us calculate the $A$-minors of an $\big(A_{\wh q},\wt A\big)$-Manin matrix $M\in\gR\otimes\Hom(\CC^m,\CC^n)$, where $\wt A\in\End(\CC^m\otimes\CC^m)$ is an arbitrary idempotent. For any $I=(i_1<\dots<i_k\le n)$ and $J=(j_1,\dots,j_k)$, where $j_1,\dots,j_k=1,\dots,m$, we have
\begin{align}
 \big(\Min^{A_{(k)}}M\big)^{i_{\sigma(1)}\dots i_{\sigma(k)}}_{j_1\dots j_k}&=\sum_{l_1,\dots,l_k=1}^n A^{i_{\sigma(1)}\dots i_{\sigma(k)}}_{l_1 \dots l_k}M^{l_1}_{j_1}\cdots M^{l_k}_{j_k}\nonumber
 \\
 &=
 \frac1{k!}\sum_{\tau\in\SSS_k}(-1)^\tau(-1)^\sigma\frac{\mu(\wh q_{II},\sigma)}{\mu(\wh q_{II},\tau)}M^{i_{\tau(1)}}_{j_1}\cdots M^{i_{\tau(k)}}_{j_k}\nonumber
 \\
 &=\frac1{k!} (-1)^\sigma\mu(\wh q_{II},\sigma)\det_{\wh q_{II}}(M_{IJ})\label{AMinqp}
\end{align}
(the other entries vanish). Note that this is in agreement with the formulae~\eqref{DefAMin} and \eqref{Lempsisigma2kDual}. Together with the properties the $\wh q$-determinant with respect to a change of rows formulated in~Corollary~\ref{CordetqIJ} the relation~\eqref{AMinqp} implies
\begin{align} \label{Mindetq}
 \big(\Min^{A_{(k)}}M\big)^{i_1\dots i_k}_{j_1\dots j_k}=
 \frac1{k!}\det_{\wh q_{II}}(M_{IJ})
\end{align}
for any $I=(i_1,\dots,i_k)$ and $J=(j_1,\dots,j_k)$.

Let $\wt A=A_{\wh p}$, then $M$ is a $(\wh q,\wh p)$-Manin matrix. The formulae~\eqref{DefAMin} and~\eqref{Lempsisigma2k} gives us the symmetry with respect to the lower indices in the form
\begin{align*}
\big(\Min^{A_{(k)}}M\big)^{i_1\dots i_k}_{j_{\tau(1)} \dots j_{\tau(k)}}=\frac{(-1)^\tau}{\mu(\wh p_{JJ},\tau)}\big(\Min^{A_{(k)}}M\big)^{i_1\dots i_k}_{j_1\dots j_k},
\end{align*}
where $J=(j_1<\dots<j_k\le m)$ and $i_1,\dots,i_k$ are arbitrary (the entries for other lower indices vanish). This also follows from the formula~\eqref{Mindetq} and Corollary~\ref{CordetqIJ}.

Since $A_{(k)}e_{\sigma I}$ is proportional to $A_{(k)}\rho_{\wh q}(\sigma)e_I=(-1)^\sigma A_{(k)}e_I$ the vectors
\begin{align} \label{wI}
 w_I=A_{(k)}e_I=\frac1{k!}\sum_{\tau\in\SSS_k}(-1)^\tau\mu(\wh q_{II},\tau)e_{i_{\tau(1)} \dots i_{\tau(k)}}, \qquad I=(i_1<\dots<i_k\le n),
\end{align}
form the basis of $A_{(k)}(\CC^n)^{\otimes k}$. The corresponding elements of the dual basis are
\begin{align*}
 w^I=k!e^IA_{(k)}=\sum_{\sigma\in\SSS_k}(-1)^\sigma\frac1{\mu(\wh q_{II},\sigma)}e^{i_{\sigma(1)} \dots i_{\sigma(k)}}.
\end{align*}
The $A$-minors in these bases are exactly $\wh q$-minors of the matrix $M$:
\begin{align*}
 \big(\Min^{A_{(k)}}M\big)^I_J&=w^I\big(\Min^{A_{(k)}}M\big)\wt w_J=k!e^IA_{(k)}M^{(1)}\cdots M^{(k)}\wt A_{(k)}e_J
 =w^IM^{(1)}\cdots M^{(k)}e_J
 \\
 &=\sum_{\sigma\in\SSS_k}(-1)^\sigma\frac1{\mu(\wh q_{II},\sigma)}M^{i_{\sigma(1)}}_{j_1}\cdots M^{i_{\sigma(k)}}_{j_k}=
\det_{\wh q_{II}}(M_{IJ}),
\end{align*}
where $I=(i_1<\dots<i_k\le n)$ and $J=(j_1<\dots<j_k\le m)$, $\wt w_J=\wt A_{(k)}e_J$. Thus we have $\phi^{(k)}_J=\sum_{I=(i_1<\dots<i_k\le n)}\det_{\wh q_{II}}(M_{IJ}) \psi^{(k)}_I$, where
\begin{gather*}
 \phi^{(k)}_J=\frac1{k!}\sum_{\sigma\in\SSS_k}(-1)^\sigma\mu(\wh p_{JJ},\sigma)\phi_{j_{\sigma(1)}}\cdots\phi_{j_{\sigma(k)}}=\phi_{j_1}\cdots\phi_{j_k}, \qquad
\phi_j=\sum_{i=1}^nM^i_j\psi_i,
 \\
\psi^{(k)}_I=\frac1{k!}\sum_{\sigma\in\SSS_k}(-1)^\sigma\mu(\wh q_{II},\sigma) \psi_{i_{\sigma(1)}}\cdots\psi_{i_{\sigma(k)}}=\psi_{i_1}\cdots\psi_{i_k}.
\end{gather*}

To consider $S$-operators for $A=A_{\wh q}$ and corresponding $S$-minors we first note the formula
\begin{align}
 x^{\sigma(1)}\cdots x^{\sigma(n)}=x^1\cdots x^n\!\!\!\!\prod_{\substack{i<j\\ \sigma^{-1}(i)>\sigma^{-1}(j)}}\!\!\!\!q_{ij}, \qquad \sigma\in \SSS_n, \label{xsigma}
\end{align}
It is proved in the same way as~\eqref{Lempsisigma2}. Let $I=(i_1\le\cdots\le i_k\le n)$, i.e., $I=(i_1,\dots,i_k)$ such that $1\le i_1\le\cdots\le i_k\le n$. By applying the homomorphism
 $\gX_{A_{\wh q_{II}}}(\CC)\to\gX_{A_{\wh q}}(\CC)$, $x^s\mapsto x^{i_s}$, to~\eqref{xsigma} we obtain
\begin{align}
 x^{i_{\sigma(1)}}\cdots x^{i_{\sigma(k)}}&=\mu(\wh q_{II},\sigma)x^{i_1}\cdots x^{i_k}. \label{xsigmak}
\end{align}
By changing $x^i\to x_i$, $q_{ij}\to q_{ij}^{-1}$ we derive
\begin{align}
 x_{i_{\sigma(1)}}\cdots x_{i_{\sigma(k)}}&=\frac1{\mu(\wh q_{II},\sigma)}x_{i_1}\cdots x_{i_k}. \label{xsigmakDual}
\end{align}

Recall that for a tuple $I=(i_1,\dots,i_k)$ we defined $\nu_i=|\{s\mid i_s=i\}|$ (see~Section~\ref{secPerm}). As~we mentioned the stabiliser $(\SSS_k)_I$ has order $\nu_1!\cdots \nu_n!$. This means that the group $(\SSS_k)_I$ is generated by $\sigma_s\in(\SSS_k)_I$, since the subgroup of $\SSS_k$ generated by these elements has exactly the order $\nu_1!\cdots \nu_n!$. In other words, we have $(\SSS_k)_I\cong\SSS_{\nu_1}\times\cdots\times\SSS_{\nu_n}$. Denote
\begin{align}
 \nu_I:=\big|(\SSS_k)_I\big|=\nu_1!\cdots \nu_n!. \label{nuI}
\end{align}
By taking into account $P_{\wh q}(e_i\otimes e_i)=e_i\otimes e_i$ we obtain $\rho_{\wh q}(\sigma)e_I=e_I$ for any $\sigma\in(\SSS_k)_I$, so that
\begin{align*}
 \big\la x^{i_1}\cdots x^{i_k},x_{j_1}\cdots x_{j_k}\big\ra=S^{i_1 \dots i_k}_{j_1 \dots j_k}=\frac{\nu_I}{k!}\delta^I_J,
\end{align*}
where $I=(i_1\le\cdots \le i_k\le n)$, $J=(j_1\le\cdots \le j_k\le n)$. Any $k$-tuple is a permutation of some $I=(i_1\le\cdots \le i_k\le n)$, so an arbitrary entry of $S_{(k)}$ has the form
\begin{align} \label{Swhq}
 S_{j_{\tau(1)} \dots j_{\tau(k)}}^{i_{\sigma(1)} \dots i_{\sigma(k)}}=\big\la x^{i_{\sigma(1)}}\cdots x^{i_{\sigma(k)}},x_{j_{\tau(1)}}\cdots x_{j_{\tau(k)}}\big\ra=\frac{\nu_I}{k!}\cdot\frac{\mu(\wh q_{II},\sigma)}{\mu(\wh q_{II},\tau)}\delta^I_J.
\end{align}

Note that $\sigma^{-1}I\equiv(i_{\sigma(1)},\dots,i_{\sigma(k)})=(i_{\tau(1)},\dots,i_{\tau(k)})\equiv\tau^{-1}I$ iff $(\SSS_k)_I\cdot\sigma=(\SSS_k)_I\cdot\tau$. Hence by permuting $I=(i_1\le\cdots\le i_k\le n)$ by all the permutations $\sigma\in\SSS_k$ we obtain $\frac{k!}{\nu_I}$ groups of $\nu_I$ identical tuples. This implies the formula
\begin{align}
\sum_{l_1,\dots,l_k=1}^n C_{l_1 \dots l_k}=\sum_{I=(i_1\le\cdots\le i_k\le n)}\frac1{\nu_I}\sum_{\sigma\in\SSS_k} C_{i_{\sigma(1)} \dots i_{\sigma(k)}}. \label{sumll}
\end{align}

Since the vectors $x^{i_1}\cdots x^{i_k}$, $1\le i_1\le\cdots\le i_k\le n$ form a basis of the vector space $\gX_{A_{\wh q}}(\CC)_k$, its dimension is $d_k=\dim\gX_{A_{\wh q}}(\CC)_k=\dim\gX_{A_{\wh q}}^*(\CC)_k={k+n-1\choose k}$. The equality $\tr S_{(k)}=d_k$ for~\eqref{Swhq} can be checked as
\begin{gather*}
 \tr S_{(k)}=\sum_{l_1,\dots,l_k=1}^nS^{l_1\dots l_k}_{l_1\dots l_k}=\sum_{\substack{I=(i_1\le\cdots\le i_k\le n)\\ \sigma\in\SSS_k}}\frac1{\nu_I} S^{i_{\sigma(1)} \dots i_{\sigma(k)}}_{i_{\sigma(1)} \dots i_{\sigma(k)}}=\sum_{I=(i_1\le\cdots\le i_k\le n)}1={k+n-1\choose k}.
\end{gather*}

Let $M\in\gR\otimes\Hom(\CC^m,\CC^n)$ be a $(B,A_{\wh p})$-Manin matrix and $I=(i_1,\dots,i_k)$, $1\le i_s\le n$, $J=(j_1\le\cdots\le j_k\le m)$. Let $\wt S_{(k)}$ be the $k$-th $S$-operator for $\wt A=A_{\wh p}$. Then by using the formula~\eqref{sumll} we obtain
{\samepage\begin{align*}
 \big(\Min_{\wt S_{(k)}}M\big)^{i_1 \dots i_k}_{j_1 \dots j_k}=\sum_{l_1,\dots,l_k=1}^m M^{i_1}_{l_1}\cdots M^{i_k}_{l_k}\wt S^{l_1 \dots l_k}_{j_1 \dots j_k}=
 \frac{1}{k!}\sum_{\sigma\in\SSS_k}\mu(\wh p_{JJ},\sigma)M^{i_1}_{j_{\sigma(1)}}\cdots M^{i_k}_{j_{\sigma(k)}},
\end{align*}
where $\wt S^{l_1 \dots l_k}_{j_1 \dots j_k}$ are entries of $\wt S_{(k)}$.

}

Let us introduce the $\wh q$-permanent for a $n\times n$ matrix $M$ by the formula
\begin{align*}
 \perm_{\wh q}(M)=\sum_{\sigma\in\SSS_n}M^{1}_{\sigma(1)}\cdots M^{n}_{\sigma(n)}\prod_{\substack{i<j\\ \sigma^{-1}(i)<\sigma^{-1}(j)}}q_{ij},
\end{align*}
Then we can write
\begin{align*}
\big \la f_M(x^{i_{1}}\cdots x^{i_{k}}),\wt x_{j_{1}}\cdots\wt x_{j_{k}}\big\ra= \big(\Min_{\wt S_{(k)}}M\big)^{i_1 \dots i_k}_{j_1 \dots j_k}=\frac{1}{k!} \perm_{\wh p_{JJ}}(M_{IJ}),
\end{align*}
where $\wt x_j$ are the generators of $\gX_{A_{\wh p}}^*(\CC)$. The other entries of the $S$-minors are calculated by means of the formulae~\eqref{xsigmakDual}:\vspace{-1ex}
\begin{align}
\big(\Min_{\wt S_{(k)}}M\big)^{i_1\dots i_k}_{j_{\tau(1)} \dots j_{\tau(k)}}&=\big\la f_M\big(x^{i_1}\cdots x^{i_k}\big),\wt x_{j_{\tau(1)}}\cdots\wt x_{j_{\tau(k)}}\big\ra\nonumber
\\
&= \frac{1}{\mu(\wh p_{JJ},\tau)} \big\la f_M\big(x^{i_{1}}\cdots x^{i_{k}}\big),\wt x_{j_{1}}
\cdots\wt x_{j_{k}}\big\ra\nonumber
\\
&=\frac{1}{k!}\,\frac{1}{\mu(\wh p_{JJ},\tau)}\perm_{\wh p_{JJ}}(M_{IJ}). \label{MinSMperm}
\end{align}

Note that $\wh q$-determinant becomes $\wh q$-permanent of the transposed matrix after the substitution $q_{ij}\to -q_{ij}^{-1}$ ($i\ne j$), so the formula~\eqref{Thdettaurow} gives the corresponding properties of $\wh q$-permanent. In~this way we obtain
\[
\perm_{\wh p_{LL}}(M_{IL})=
\det_{\wh p_{JJ}}(M_{IJ})\prod_{\substack{s<t\\ \tau^{-1}(s)>\tau^{-1}(t)}}p_{i_si_t}^{-1}
\]
for arbitrary matrix $M$ and $k$-tuples $I=(i_1,\dots,i_k)$, $J=(j_1,\dots,j_k)$, $L=(j_{\tau(1)},\dots,j_{\tau(k)})$ (cf.~Corollary~\ref{CordetqIJ}). By using this formula and the relation~\eqref{MinSMperm} we derive
\begin{gather*}
\big(\Min_{\wt S_{(k)}}M\big)^{i_1\dots i_k}_{j_1\dots j_k}=
\frac{1}{k!}\,
\perm_{\wh p_{JJ}}(M_{IJ}).\end{gather*}
for any $I=(i_1,\dots,i_k)$, $J=(j_1,\dots,j_k)$.

Let $B=A_{\wh q}$, so $M$ is a $(\wh q,\wh p)$-Manin matrix. The symmetry of the $S$-minors with respect to the upper indices follows from the formulae~\eqref{xsigmak}:
\begin{gather*}
\big(\Min_{\wt S_{(k)}}M\big)^{i_{\sigma(1)} \dots i_{\sigma(k)}}_{j_1\dots j_k}=\big\la f_M\big(x^{i_{\sigma(1)}}\cdots x^{i_{\sigma(k)}}\big),\wt x_{j_1}\cdots\wt x_{j_k}\big\ra=
 \mu(\wh p_{II},\sigma) \big(\Min_{\wt S_{(k)}}M\big)^{i_1\dots i_k}_{j_1\dots j_k},
\end{gather*}
where $I=(i_1,\dots,i_k)$ and $J=(j_1,\dots,j_k)$ are arbitrary.

The basis of $S_{(k)}(\CC^n)^{\otimes k}$ and its dual are formed by the vectors
\begin{gather*}
 v_I=\frac{k!}{\nu_I}S_{(k)}e_I=\frac1{\nu_I}\sum_{\sigma\in\SSS_k}\mu(\wh q_{II},\sigma)e_{i_{\sigma(1)} \dots i_{\sigma(k)}},
 \\
 v^I=e^IS_{(k)}=\frac1{k!}\sum_{\sigma\in\SSS_k}\frac1{\mu(\wh q_{II},\sigma)}e^{i_{\sigma(1)} \dots i_{\sigma(k)}},
\end{gather*}
where $I=(i_1\le\cdots\le i_k\le n)$. The $S$-minors in these bases are
\begin{align*}
 \big(\Min_{\wt S_{(k)}}M\big)^I_J&=v^I\big(\Min_{\wt S_{(k)}}M\big)\wt v_J=\frac{k!}{\nu_J}e^I S_{(k)}M^{(1)}\cdots M^{(k)}\wt S_{(k)}e_J= e^IM^{(1)}\cdots M^{(k)}\wt v_J
 \\
 &=\frac1{\nu_J}\sum_{\sigma\in\SSS_k}\mu(\wh p_{JJ},\sigma)M^{i_1}_{j_{\sigma(1)}}\cdots M^{i_k}_{j_{\sigma(k)}}=\frac1{\nu_J}
\perm_{\wh p_{JJ}}(M_{IJ}),
\end{align*}
where $I=(i_1\le\cdots\le i_k\le n)$, $J=(j_1\le\cdots\le j_k\le m)$ and $\wt v_J=\frac{k!}{\nu_J}\wt S_{(k)}e_J$. We derive $y_{(k)}^I=\sum_{J=(i_1\le\cdots\le i_k\le m)}\frac1{\nu_J}\perm_{\wh q_{JJ}}(M_{IJ})\; \wt x_{(k)}^J$, where
\begin{gather*}
 y_{(k)}^I=\frac1{k!}\sum_{\sigma\in\SSS_k}\frac1{\mu(\wh q_{II},\sigma)} y^{i_{\sigma(1)}}\cdots y^{i_{\sigma(k)}}=y^{i_1}\cdots y^{i_k}, \qquad
 y^i=\sum_{j=1}^mM^i_j\wt x^j,
 \\
 \wt x_{(k)}^J=\frac1{k!}\sum_{\sigma\in\SSS_k}\frac1{\mu(\wh p_{JJ},\sigma)} \wt x^{j_{\sigma(1)}}\cdots\wt x^{j_{\sigma(k)}}=\wt x^{j_1}\cdots\wt x^{j_k}.
\end{gather*}

Let $M$ be an $n\times m$ $(\wh q,\wh p)$-Manin matrix and $N$ be an $m\times\ell$ $(\wh p,\wh r)$-Manin matrix such that $[M^i_j,N^k_l]=0$. From~\eqref{SMinMNb} and \eqref{AMinMNb} we obtain the formulae
\begin{gather}
 \perm_{\wh r_{LL}}\big((MN)_{IL}\big)=\sum_{J=(j_1\le\cdots\le j_k\le m)} \frac1{\nu_J}\perm_{\wh p_{JJ}}(M_{IJ})\perm_{\wh r_{LL}}(N_{JL}), \label{permMN}
 \\
 \det_{\wh q_{II}}\big((MN)_{IL}\big)=\sum_{J=(j_1<\dots<j_k\le m)}\det_{\wh q_{II}}(M_{IJ})\det_{\wh p_{JJ}}(N_{JL}), \label{detMN}
\end{gather}
where $I=(i_1\le\cdots\le i_k\le n)$ and $L=(l_1\le\cdots\le l_k\le\ell)$ in~\eqref{permMN} and $I=(i_1<\dots<i_k\le n)$ and $L=(l_1<\dots<l_k\le\ell)$ in~\eqref{detMN}.

Now let us demonstrate how the general formulae~\eqref{SMinPerm}, \eqref{AMinPerm} can be applied to $\wh q$-permanent and $\wh q$-determinant. Let $M$ be an $n\times n$ $(\wh q,\wh p)$-Manin matrix. According to Proposition~\ref{PropPermqp} the matrix $^\sigma_\tau M=\sigma M\tau^{-1}$ is a $\big(\sigma\wh q\sigma^{-1},\tau\wh p\tau^{-1}\big)$-Manin matrix. Let us use the formulae~\eqref{SMinPerm}, \eqref{AMinPerm} for $M$ and $\sigma,\tau\in\SSS_n$ (this is the case $m=n=k$). The formula~\eqref{sigmaPwhq} implies $\iota_\tau\big(\rho_{\wh q}(\sigma)\big)=\tau^{\otimes k}\rho_{\wh q}(\sigma)\big(\tau^{\otimes k}\big)^{-1}=\rho_{\tau\wh q\tau^{-1}}(\sigma)$. Hence $\iota_\tau\wt S_{(n)}$ and $\iota_\sigma A_{(n)}$ are the $n$-th $S$- and $A$-operators for~$A_{\tau\wh p\tau^{-1}}$ and $A_{\sigma\wh q\sigma^{-1}}$ respectively.
 Note that $\big(\sigma^{\otimes n}T(\tau^{\otimes n})^{-1}\big)^{i_1 \dots i_n}_{j_1 \dots j_n}=T^{\sigma^{-1}(i_1)\dots\sigma^{-1}(i_n)}_{\tau^{-1}(j_1)\dots\tau^{-1}(j_n)}$. By taking $i_s=s$ and $j_s=s$ we obtain
\begin{gather}
\perm_{\tau\wh p\tau^{-1}}(\sigma M\tau^{-1})= \prod_{\substack{s<t\\ \sigma(s)>\sigma(t)}}\!\!\!\!q_{\rm st}
\prod_{\substack{s<t\\ \tau(s)>\tau(t)}}\!\!\!\!p_{\rm st}^{-1}\perm_{\wh p}(M), \label{permPermqp}
\\
\det_{\sigma\wh q\sigma^{-1}}(\sigma M\tau^{-1})= \prod_{\substack{s<t\\ \sigma(s)>\sigma(t)}}\!\!\!\!q_{\rm st}
\prod_{\substack{s<t\\ \tau(s)>\tau(t)}}\!\!\!\!p_{\rm st}^{-1}\det_{\wh q}(M). \label{detPermqp}
\end{gather}

Consider the $2\times2$ example of a $(\wh q,\wh p)$-Manin matrix:
\begin{gather*}
M=\begin{pmatrix} a & b \\ c & d \end{pmatrix}\!, \qquad
\wh q=\begin{pmatrix} 1 & q \\ q^{-1} & 1 \end{pmatrix}\!, \qquad
\wh p=\begin{pmatrix} 1 & p \\ p^{-1} & 1 \end{pmatrix}\!.
\end{gather*}
The formulae~\eqref{Mqpikjk}, \eqref{Mqpikjl} have the form
\begin{gather*}
ac=q^{-1}ca,\qquad
bd=q^{-1}db,\qquad
ad-q^{-1}pda+pbc-q^{-1}cb=0.
\end{gather*}
The $\wh q$-determinant and $\wh p$-permanent have the from
\begin{gather*}
\det_{\wh q}M=ad-q^{-1}cb=\det_qM, \qquad
\perm_p M:=\perm_{\wh p}M=ad+pbc.
\end{gather*}
Let $\sigma=\tau=\sigma_{12}\in\SSS_2$. Since $\sigma M=\left(\begin{smallmatrix} c & d \\ a & b \end{smallmatrix}\right)$, $M\tau^{-1}=\left(\begin{smallmatrix} b & a \\ d & c \end{smallmatrix}\right)$ we obtain
\begin{gather*}
 \det_q(M\tau^{-1})=bc-q^{-1}da=-p^{-1}\det_qM, \\
 \perm_{p^{-1}}(M\tau^{-1})=bc+p^{-1}ad=p^{-1}\perm_pM,
 \\
 \det_{q^{-1}}(\sigma M)=cb-qad=-q\det_qM, \qquad
 \perm_p(\sigma M)=cb+pda=q\perm_pM.
\end{gather*}
These formulae are particular cases of~\eqref{permPermqp}, \eqref{detPermqp}.

\subsection[Pairing operators for q-Manin matrices from Hecke algebras]
{Pairing operators for $\boldsymbol q$-Manin matrices from Hecke algebras}\label{secHecke}

The formulae of the Section~\ref{secMinqp} at $\wh q=q^{[n]}$ and $\wh p=q^{[m]}$ give the corresponding formulae for the $q$-Manin case. Here we construct the minors for the $q$-Manin matrices by using the idempotent~\eqref{whRqm} by supposing that $q\in\CC\backslash\{0\}$ is not a root of unity. In particular, this condition implies that the $q$-numbers
{\samepage\begin{gather*}
 k_q:=\frac{q^k-q^{-k}}{q-q^{-1}}=q^{k-1}+q^{k-3}+q^{k-5}+\dots+q^{1-k}
\end{gather*}
do not vanish for any $k\in\ZZ_{\ge1}$.}

Remind that due to the formula~\eqref{RAn} the idempotents $A=\wh R^q_{n-}=2_q^{-1}\big(q^{-1}-\wh R^q_n\big)$ and $A'=A^q_n$ are left-equivalent, so the quadratic algebras for these idempotents coincide with each other: $\gX_{\wh R^q_{n-}}(\CC)=\gX_{A^q_n}(\CC)$ and $\Xi_{\wh R^q_{n-}}(\CC)=\Xi_{A^q_n}(\CC)$. The ``dual'' quadratic algebras do not coincide. Namely, since the idempotent $\wh R^q_{n-}$ is right-equivalent to $A^{q^{-1}}_n$ (the formula~\eqref{RAnRE}) we have $\gX^*_{\wh R^q_{n-}}(\CC)=\gX^*_{A^{q^{-1}}_n}(\CC)=\gX_{A^q_n}(\CC)$ and $\Xi^*_{\wh R^q_{n-}}(\CC)=\Xi^*_{A^{q^{-1}}_n}(\CC)=\Xi_{A^q_n}(\CC)$.

Let us construct pairing operators for the idempotent $A=R^q_{n-}$. Due to Lemma~\ref{LemRdec} the dual idempotent $S=1-A=R^q_{n+}$ has the form~\eqref{whRqp}. The algebra $\U_k$ in this case is a subalgebra of $\End\big((\CC^n)^{\otimes k}\big)$ generated by the matrices $\big(\wh R^q_n\big)^{(a,a+1)}$, while its maximal ideals $\U_k^\pm$ are generated by $\big(\wh R^q_{n\pm}\big)^{(a,a+1)}$.
 Since $\wh R^q_n$ satisfies the braid relation~\eqref{BRq} and Hecke relation~\eqref{HeckeR} the role of this algebra is $\gU_k$ is played by the Hecke algebra.

Recall that the Hecke algebra $\He_k^q$ is the algebra generated by $h_1,\dots,h_{k-1}$ with the relations
\begin{gather}
 \big(h_a-q^{-1}\big)(h_a+q)=0 \qquad a=1,\dots,k-1, \label{DefHe1} \\
 h_ah_b=h_bh_a, \qquad |a-b|>1, \label{DefHe2} \\
 h_ah_{a+1}h_a=h_{a+1}h_ah_{a+1}, \qquad a=1,\dots,k-2. \label{DefHe3}
\end{gather}
For $b\le k$ we will identify the subalgebra generated by $h_1,\dots,h_b$ with $\He^q_b$. Note that the relations~\eqref{DefHe2} implies that the elements of $\He^q_b$ commute with $h_{b+1},\dots,h_{k-1}$.

The algebra $\U_k$ is the image of the representation
\begin{align}
 \rho\colon\quad \He_k^q\to\End\big((\CC^n)^{\otimes k}\big), \qquad \rho(h_a)=\big(\wh R^q_n \big)^{(a,a+1)}. \label{piHecke}
\end{align}
The relations~\eqref{DefHe1}, \eqref{DefHe3} and the conditions on the parameter $q$ imply that there are exactly two augmentations
\begin{align}
 \varepsilon^+\colon\quad \He_k^q\to\CC, \qquad\varepsilon^+(h_a)=q^{-1}, \qquad \varepsilon^-\colon\quad \He_k^q\to\CC, \qquad\varepsilon^-(h_a)=-q. \label{gammapm}
\end{align}
Since $\wh R^q_n-q^{-1}=-2_q\wh R^q_{n-}$ and $\wh R^q_n+q=2_q\wh R^q_{n+}$, we have $\rho(u_a^\pm)=\big(\wh R^q_{n\mp}\big)^{(a,a+1)}$ and $\varepsilon(u_a^\pm)=0$ for $u_a^\pm=\frac{q^{\mp1}\mp h_a}{2_q}$. Hence to apply Theorem~\ref{ThUkrhosk} we only need idempotents $s^+_{(k)},s^-_{(k)}\in\He^q_k$ invariant with respect to the augmentations $\varepsilon^+$ and $\varepsilon^-$ respectively. Such idempotents were constructed by D. Gurevich on the level of representation~\eqref{piHecke}. We define the idempotents $s^\pm_{(k)}$ (as some elements of the abstract Hecke algebra) by following his work~\cite{Gur}.

Consider the elements
\begin{align}
 &t^+_k=q^{k-1}+q^{k-2}h_{k-1}+q^{k-3}h_{k-2}h_{k-1}+\dots+h_1\cdots h_{k-1}, \label{tpk} \\
 &t^-_k=q^{1-k}-q^{2-k}h_{k-1}+q^{3-k}h_{k-2}h_{k-1}-\dots+(-1)^{k-1}h_1\cdots h_{k-1}. \label{tmk}
\end{align}
By applying the augmentations~\eqref{gammapm} we obtain
\begin{align}
 \varepsilon^\pm(t^\pm_k)=k_q. \label{gammat}
\end{align}
The elements~\eqref{tpk}, \eqref{tmk} can be defined iteratively by the formulae $t^\pm_1=1$ and
\begin{align*}
 t^\pm_k=q^{\pm(k-1)}\pm t^\pm_{k-1}h_{k-1}. %\label{tIt}
\end{align*}

Define the elements $s^\pm_{(k)}$ iteratively as
\begin{align}
s^\pm_{(k)}=k_q^{-1}t^\pm_ks^\pm_{(k-1)}, \qquad s^\pm_{(1)}=1. \label{spmHecke}
\end{align}
Note that $\rho\big(s^\pm_{(2)}\big)=\wh R^q_{n\pm}$.

It was proved in~\cite{Gur} that $\rho\big(s^\pm_{(k)}\big)$ are left and right invariant with respect to corresponding augmentations of $\U_k=\rho(\He^q_k)$. This proof is suitable to establish the invariance of $s^\pm_{(k)}$ with respect to $\varepsilon^\pm$, but we give an alternative proof.

\begin{Prop}
 The formulae~\eqref{spmHecke} define idempotents $s^\pm_{(k)}$ satisfying
\begin{align}
\varepsilon^\pm\big(s^\pm_{(k)}\big)=1, \qquad us^\pm_{(k)}=s^\pm_{(k)}u=\varepsilon^\pm(u)s^\pm_{(k)} \qquad \forall\,u\in\He^q_k. \label{ussu}
\end{align}
Hence $S_{(k)}=\rho\big(s^+_{(k)}\big)$ and $A_{(k)}=\rho\big(s^-_{(k)}\big)$ are pairing operators for $A=\wh R^q_{n-}$.
\end{Prop}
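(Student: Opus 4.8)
The plan is to prove the invariance identity~\eqref{ussu} by induction on $k$ and then deduce the ``Hence'' statement from Theorem~\ref{ThUkrhosk}. I treat the ``$+$'' case; the ``$-$'' case is identical after replacing $\varepsilon^+$, $q^{-1}$ by $\varepsilon^-$, $-q$ and adjusting signs in the recursion for $t^-_k$. The normalisation $\varepsilon^+\big(s^+_{(k)}\big)=1$ is immediate by induction: $\varepsilon^+\big(s^+_{(1)}\big)=1$ and, by~\eqref{spmHecke} and~\eqref{gammat}, $\varepsilon^+\big(s^+_{(k)}\big)=k_q^{-1}\varepsilon^+\big(t^+_k\big)\varepsilon^+\big(s^+_{(k-1)}\big)=k_q^{-1}k_q=1$. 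Next I note that $\He^q_k$ admits an anti-automorphism $\alpha$ fixing every $h_a$, since the relations~\eqref{DefHe1}--\eqref{DefHe3} are invariant under reversal of products, and $\varepsilon^+\alpha=\varepsilon^+$. Hence, by Proposition~\ref{propInv}, it suffices to establish one-sided (say right) invariance $s^+_{(k)}u=\varepsilon^+(u)s^+_{(k)}$ for all $u$: the element $\alpha\big(s^+_{(k)}\big)$ is then left invariant and normalised, so by the first part of Proposition~\ref{propInv} it coincides with $s^+_{(k)}$, giving two-sidedness. Idempotency is then automatic, $\big(s^+_{(k)}\big)^2=\varepsilon^+\big(s^+_{(k)}\big)s^+_{(k)}=s^+_{(k)}$.

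Since invariance is multiplicative in $u$ and $\He^q_k$ is generated by $h_1,\dots,h_{k-1}$, right invariance reduces to $s^+_{(k)}h_a=q^{-1}s^+_{(k)}$ for each $a$. For $a\le k-2$ this is immediate: writing $s^+_{(k)}=k_q^{-1}t^+_k s^+_{(k-1)}$ from~\eqref{spmHecke} and using that $h_a\in\He^q_{k-1}$ together with the inductive right invariance of $s^+_{(k-1)}$ gives $s^+_{(k)}h_a=k_q^{-1}t^+_k\big(s^+_{(k-1)}h_a\big)=q^{-1}s^+_{(k)}$. The only remaining case is $a=k-1$. Here a useful lever, obtained from the recursion $t^+_k=q^{k-1}+t^+_{k-1}h_{k-1}$ and the inductive invariance of $s^+_{(k-1)}$ applied to $t^+_{k-1}\in\He^q_{k-1}$ (with $\varepsilon^+\big(t^+_{k-1}\big)=(k-1)_q$ by~\eqref{gammat}), is the identity $s^+_{(k-1)}t^+_k=q^{k-1}s^+_{(k-1)}+(k-1)_q\,s^+_{(k-1)}h_{k-1}$.

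The hard part will be the case $a=k-1$, i.e.\ $t^+_k s^+_{(k-1)}h_{k-1}=q^{-1}t^+_k s^+_{(k-1)}$. After multiplying on the left by $s^+_{(k-1)}$ and inserting the lever above, this collapses to a scalar relation for the single element $e:=s^+_{(k-1)}h_{k-1}s^+_{(k-1)}$, the $q$-analogue of the Jones--Wenzl/Gurevich recursion; concretely one shows, using only the quadratic relation $h_{k-1}^2=(q^{-1}-q)h_{k-1}+1$ and the braid relations~\eqref{DefHe3}, that $e\,h_{k-1}e$ is a prescribed scalar multiple of $e$, the scalar being a ratio of the $q$-numbers $k_q,(k-1)_q$ (all nonzero as $q$ is not a root of unity). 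Feeding this back produces $s^+_{(k)}h_{k-1}=q^{-1}s^+_{(k)}$ and completes the induction. This is precisely the computation carried out on the representation level in~\cite{Gur}, and the point of the present argument is that it goes through verbatim inside the abstract algebra $\He^q_k$.

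Finally, the ``Hence'' statement follows from Theorem~\ref{ThUkrhosk} applied to $\rho$ from~\eqref{piHecke}, the augmentations~\eqref{gammapm}, and the invariant idempotents $s^\pm_{(k)}$. For $S_{(k)}=\rho\big(s^+_{(k)}\big)$ take $u_a=\frac{q^{-1}-h_a}{2_q}$, so that $\rho(u_a)=\big(\wh R^q_{n-}\big)^{(a,a+1)}=A^{(a,a+1)}$ by~\eqref{whRqm} and $\varepsilon^+(u_a)=0$; conditions~\eqref{Xpi},~\eqref{piX} hold because they are multiplicative in $u$ and on the generators $h_a$ reduce, via $\wh R^q_n-q^{-1}=-2_q A$, to $(X^*\otimes X^*)A=0$ and $A(X\otimes X)=0$, which are the defining relations~\eqref{Axx0_dual},~\eqref{Axx0}. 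For $A_{(k)}=\rho\big(s^-_{(k)}\big)$ take $u_a=\frac{q+h_a}{2_q}$, giving $\rho(u_a)=\big(\wh R^q_{n+}\big)^{(a,a+1)}=S^{(a,a+1)}$ by~\eqref{whRqp} and $\varepsilon^-(u_a)=0$; conditions~\eqref{Psipi},~\eqref{piPsi} reduce on generators, via $\wh R^q_n+q=2_q S=2_q(1-A)$, to $(\Psi\otimes\Psi)(1-A)=0$ and $(1-A)(\Psi^*\otimes\Psi^*)=0$, the defining relations of $\Xi_A(\CC)$ and $\Xi^*_A(\CC)$. Thus $S_{(k)}$ and $A_{(k)}$ are the $k$-th $S$- and $A$-operators for $A=\wh R^q_{n-}$.
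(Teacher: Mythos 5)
Your overall architecture is sound and mirrors the paper's own proof up to a left--right reflection: normalisation by induction, reduction of invariance to the generators $h_a$ by multiplicativity, the anti-automorphism $h_a\mapsto h_a$ together with Proposition~\ref{propInv} to upgrade one-sided invariance to two-sided invariance, idempotency as a formal consequence, and the application of Theorem~\ref{ThUkrhosk} with $u_a^\pm=\frac{q^{\mp1}\mp h_a}{2_q}$ and verification of~\eqref{Xpi}--\eqref{piPsi} on generators for the ``Hence'' part. The easy case $a\le k-2$, where $h_a$ is absorbed by $s^+_{(k-1)}$ on the right, is also correct.

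The gap is the case $a=k-1$, which carries essentially all the content of the proposition, and your treatment of it fails on two counts. First, the reduction is lossy: from $s^+_{(k-1)}t^+_k s^+_{(k-1)}h_{k-1}=q^{-1}s^+_{(k-1)}t^+_k s^+_{(k-1)}$ one cannot recover $t^+_k s^+_{(k-1)}h_{k-1}=q^{-1}t^+_k s^+_{(k-1)}$, because left multiplication by the non-invertible idempotent $s^+_{(k-1)}$ has a large kernel; undoing it would require $s^+_{(k-1)}s^+_{(k)}=s^+_{(k)}$, i.e.\ left invariance of $s^+_{(k)}$ under $\He^q_{k-1}$, which is exactly what is not yet available in a right-invariance-first induction (it only follows from the full statement via the anti-automorphism, and the symmetric recursion~\eqref{shs} is derived in the paper only \emph{after} the proposition). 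Second, the claimed key identity is false: for $e=s^+_{(k-1)}h_{k-1}s^+_{(k-1)}$ the element $e\,h_{k-1}e$ is \emph{not} a scalar multiple of $e$. Already for $k=2$ one has $e=h_1$ and $h_1^3=\big((q^{-1}-q)^2+1\big)h_1+(q^{-1}-q)$, and for $k\ge3$ the product $e\,h_{k-1}e$ has a non-zero component along $s^+_{(k-1)}$ itself (this can be checked in the two-dimensional irreducible representation of $\He^q_3$, using that $q$ is not a root of unity); the true relation has the Jones--Wenzl form $e\,h_{k-1}e=\alpha e+\beta s^+_{(k-1)}$ with $\beta\ne0$, and even granting it, an identity ending in a bare $h_{k-1}$ (not sandwiched between projectors) is still needed to reach $s^+_{(k)}h_{k-1}=q^{-1}s^+_{(k)}$. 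Finally, deferring the computation to~\cite{Gur} ``verbatim'' is not a proof: the paper notes that Gurevich's argument lives on the representation level and supplies, precisely at this point, a self-contained alternative --- a nested induction establishing the ideal membership $h_{k-l}t^\pm_k\in\pm q^{\mp1}t^\pm_k+I^\pm_{k-l,k}$, where $I^\pm_{b,k}$ is the left ideal of $\He^q_k$ generated by the elements $u-\varepsilon^\pm(u)$, $u\in\He^q_b$, which kills $s^\pm_{(k-1)}$ on the right by the outer induction hypothesis. Some argument of this kind is what your induction is missing.
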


\begin{proof} The first formula follows from~\eqref{gammat} and the equality $\varepsilon^\pm\big(s^\pm_{(k-1)}\big)=1$ which can be supposed by induction. Further we prove the left invariance. Suppose $us^\pm_{(k-1)}=\varepsilon^\pm(u)s^\pm_{(k-1)}$ $\forall\,u\in\He^q_{k-1}$.
We need to prove $h_as^\pm_{(k)}=\pm q^{\mp1}s^\pm_{(k)}$ for $a=1,\dots,k-1$. Denote by $I_{b,k}^\pm$ the left ideal of $\He^q_k$ generated by the elements $u-\varepsilon^{\pm}(u)$, $u\in\He^q_b$. Due to the induction assumption we~have $I^\pm_{b,k}s^\pm_{(k-1)}=0$ for $b=1,\dots,k-1$. Then the relations we need to prove follows from the formulae $h_{k-l}t^\pm_k\in\pm q^{\mp1}t^\pm_k+I^\pm_{k-l,k}$ which we prove by induction in $l\ge1$. For $l=1$ we have
\[
h_{k-1}t^\pm_k=q^{\pm(k-1)}h_{k-1}\pm q^{\pm(k-2)}h_{k-1}^2+h_{k-1}\sum_{a=2}^{k-1}(\pm1)^a q^{\mp a}h_{k-a}\cdots h_{k-3}h_{k-2}h_{k-1}.
\]
 By using the relations~\eqref{DefHe1}--\eqref{DefHe3} we obtain
\begin{gather*}
h_{k-1} t^\pm_k=q^{\pm(k-1)}h_{k-1}\pm q^{\pm(k-2)}\pm q^{\pm(k-2)}\big(q^{-1}-q\big)h_{k-1}
\\ \hphantom{h_{k-1} t^\pm_k=}
{}+\sum_{a=2}^{k-1}(\pm1)^a q^{\mp a}h_{k-a}\cdots h_{k-3}h_{k-2}h_{k-1}h_{k-2}.
\end{gather*}
By taking into account $q^{k-1}+q^{k-2}\big(q^{-1}-q\big)=q^{-1}q^{k-2}$ and $q^{1-k}-q^{2-k}\big(q^{-1}-q\big)=qq^{2-k}$ we derive $h_{k-1} t^\pm_k\in\pm q^{\mp1} t^\pm_k+I^\pm_{k-1,k}$. Suppose that $h_{k-l}t^\pm_k\in\pm q^{\mp1}t^\pm_k+I^\pm_{k-l,k}$ for some $l\ge1$ and all~$k>l$ then for any $k>l+1$ we obtain
\begin{gather*}
h_{k-l-1}t^\pm_k=q^{\pm(k-1)}h_{k-l-1}\pm h_{k-l-1}t^\pm_{k-1}h_{k-1}
\\ \hphantom{h_{k-l-1}t^\pm_k}
\in q^{\pm(k-1)}h_{k-l-1}\pm\big({\pm} q^{\mp1}\big) t^\pm_{k-1}h_{k-1}+I^\pm_{k-l-1,k-1}h_{k-1}.
\end{gather*}
 Since $uh_{k-1}=h_{k-1}u$ for any $u\in \He^q_{k-l-1}$ we have the inclusion $I^\pm_{k-l-1,k-1}h_{k-1}\subset I^\pm_{k-l-1,k}$, so that $h_{k-l-1}t^\pm_k\in\pm q^{\mp1}t^\pm_k+I^\pm_{k-l-1,k}$. The right invariance of $s^\pm_{(k)}$ now follows from Proposition~\ref{propInv}, since the formula $\rho(h_a)=h_a$ defines an anti-automorphism of $\He^q_k$. \end{proof}

{\samepage\begin{Cor}
We have the following symmetric recurrent formulae:
\begin{align}
 &s^\pm_{(k)}=\frac{q^{\pm(k-1)}}{k_q}s^\pm_{(k-1)}\pm \frac{(k-1)_q}{k_q}s^\pm_{(k-1)}h_{k-1}s^\pm_{(k-1)}, \label{shs} \\
 &S_{(k)}=\frac{q^{k-1}}{k_q}S_{(k-1)}+\frac{(k-1)_q}{k_q}S_{(k-1)}\big(\wh R^q_n \big)^{(k-1,k)}S_{(k-1)}, \label{SRS} \\
 &A_{(k)}=\frac{q^{1-k}}{k_q}A_{(k-1)}-\frac{(k-1)_q}{k_q}A_{(k-1)}\big(\wh R^q_n \big)^{(k-1,k)}A_{(k-1)}. \label{ARA}
\end{align}
\end{Cor}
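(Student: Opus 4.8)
The plan is to establish the abstract Hecke-algebra identity \eqref{shs} first, and then to obtain \eqref{SRS} and \eqref{ARA} simply by applying the representation $\rho$ of \eqref{piHecke}, using $S_{(k)}=\rho\big(s^+_{(k)}\big)$, $A_{(k)}=\rho\big(s^-_{(k)}\big)$ and $\rho(h_{k-1})=\big(\wh R^q_n\big)^{(k-1,k)}$; the two operator formulae are then literally the images of the $+$ and $-$ cases of \eqref{shs} (note $q^{+(k-1)}=q^{k-1}$ and $q^{-(k-1)}=q^{1-k}$, with sign $+$ for $S_{(k)}$ and $-$ for $A_{(k)}$). So the entire content reduces to proving \eqref{shs} inside $\He^q_k$.

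To prove \eqref{shs} I would start from the iterative definition $s^\pm_{(k)}=k_q^{-1}t^\pm_k s^\pm_{(k-1)}$ in \eqref{spmHecke} together with the iterative formula $t^\pm_k=q^{\pm(k-1)}\pm t^\pm_{k-1}h_{k-1}$. Substituting, the scalar term $q^{\pm(k-1)}$ immediately produces $\frac{q^{\pm(k-1)}}{k_q}s^\pm_{(k-1)}$, which is exactly the first summand of \eqref{shs}. Everything then reduces to showing that the remaining term $\pm k_q^{-1}t^\pm_{k-1}h_{k-1}s^\pm_{(k-1)}$ equals $\pm\frac{(k-1)_q}{k_q}s^\pm_{(k-1)}h_{k-1}s^\pm_{(k-1)}$, that is, to the auxiliary identity
\begin{align*}
 t^\pm_{k-1}h_{k-1}s^\pm_{(k-1)}=(k-1)_q\,s^\pm_{(k-1)}h_{k-1}s^\pm_{(k-1)}.
\end{align*}

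For this I would expand the left-hand factor $s^\pm_{(k-1)}$ on the right side using \eqref{spmHecke}, writing $s^\pm_{(k-1)}=(k-1)_q^{-1}t^\pm_{k-1}s^\pm_{(k-2)}$, so that the right-hand side becomes $(k-1)_q^{-1}t^\pm_{k-1}s^\pm_{(k-2)}h_{k-1}s^\pm_{(k-1)}$. Since $\He^q_{k-2}$ is generated by $h_1,\dots,h_{k-3}$, all of which commute with $h_{k-1}$ by \eqref{DefHe2}, the factor $s^\pm_{(k-2)}$ commutes past $h_{k-1}$; and since $s^\pm_{(k-2)}\in\He^q_{k-2}\subset\He^q_{k-1}$, the left invariance \eqref{ussu} together with the normalisation $\varepsilon^\pm\big(s^\pm_{(k-2)}\big)=1$ gives $s^\pm_{(k-2)}s^\pm_{(k-1)}=s^\pm_{(k-1)}$. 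Collapsing these two factors turns the right side into $(k-1)_q^{-1}t^\pm_{k-1}h_{k-1}s^\pm_{(k-1)}$ times $(k-1)_q$, which is the auxiliary identity, and hence \eqref{shs}.

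I do not expect a serious obstacle: the computation is short once the invariance \eqref{ussu} from the preceding proposition is available. The only points requiring care are, first, the degenerate case $k=2$, where $\He^q_{k-2}=\He^q_0=\CC$ makes the commuting argument vacuous and one instead verifies \eqref{shs} directly from $t^\pm_1=1$, $s^\pm_{(1)}=1$, $1_q=1$, recovering $s^\pm_{(2)}=\frac{q^{\pm1}\pm h_1}{2_q}$ and $\rho\big(s^\pm_{(2)}\big)=\wh R^q_{n\pm}$; and second, invoking \eqref{ussu} at the correct level $\He^q_{k-1}$ rather than $\He^q_k$, which is precisely why it is $s^\pm_{(k-2)}$, and not a larger idempotent, that gets absorbed into $s^\pm_{(k-1)}$.
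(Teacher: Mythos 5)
Your proof is correct, and it shares the paper's overall frame: establish \eqref{shs} abstractly in $\He^q_k$, then apply the homomorphism $\rho$ of \eqref{piHecke} to obtain \eqref{SRS} and \eqref{ARA}. The key computation behind \eqref{shs}, however, is genuinely different. The paper first writes $s^\pm_{(k)}=s^\pm_{(k-1)}s^\pm_{(k)}=\frac1{k_q}s^\pm_{(k-1)}t^\pm_k s^\pm_{(k-1)}$ (invariance at level $k$) and then collapses the sandwiched factor using the full expansions \eqref{tpk}, \eqref{tmk}: by \eqref{ussu} at level $k-1$, each word $h_{k-1-a}\cdots h_{k-2}$ adjacent to $s^\pm_{(k-1)}$ becomes the scalar $\varepsilon^\pm(h_{k-1-a}\cdots h_{k-2})=\big({\pm} q^{\mp1}\big)^a$, and the resulting sum $\sum_{a=0}^{k-2}q^{\pm(k-2-2a)}$ is recognised as $(k-1)_q$, giving $s^\pm_{(k-1)}t^\pm_k=q^{\pm(k-1)}s^\pm_{(k-1)}\pm(k-1)_q\,s^\pm_{(k-1)}h_{k-1}$. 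You instead keep the two-term recursion $t^\pm_k=q^{\pm(k-1)}\pm t^\pm_{k-1}h_{k-1}$ and reduce \eqref{shs} to the auxiliary identity
\begin{align*}
t^\pm_{k-1}h_{k-1}s^\pm_{(k-1)}=(k-1)_q\,s^\pm_{(k-1)}h_{k-1}s^\pm_{(k-1)},
\end{align*}
which you prove by re-expanding $s^\pm_{(k-1)}=(k-1)_q^{-1}t^\pm_{k-1}s^\pm_{(k-2)}$, commuting $s^\pm_{(k-2)}$ past $h_{k-1}$ via \eqref{DefHe2}, and absorbing it into $s^\pm_{(k-1)}$ via \eqref{ussu} at level $k-1$. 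What your route buys is that no counit evaluation on words and no summation producing $(k-1)_q$ are needed --- that factor appears for free as the normalisation constant in \eqref{spmHecke}; the price is the two extra (easy) inputs that $s^\pm_{(k-2)}\in\He^q_{k-2}$ and that $\He^q_{k-2}$ commutes with $h_{k-1}$, both of which the paper itself also uses when proving the preceding proposition. Your handling of the boundary case $k=2$ is sound as well; in fact the auxiliary identity there reads $h_1=h_1$, so it holds trivially and the separate direct verification is a safety check rather than a necessity.
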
}

\begin{proof} By multiplying $t^\pm_k=q^{\pm(k-1)}\pm\sum_{a=0}^{k-2}(\pm1)^a q^{\pm(k-2-a)}h_{k-1-a}\cdots h_{k-1}$ by $s^\pm_{(k-1)}$ from the right and taking into account~\eqref{ussu} for $k-1$ we obtain
\begin{align*}
 s^\pm_{(k-1)}t^\pm_k&=q^{\pm(k-1)}s^\pm_{(k-1)}\pm\sum_{a=0}^{k-2}(\pm1)^a q^{\pm(k-2-a)}\varepsilon^\pm(h_{k-1-a}\cdots h_{k-2})s^\pm_{(k-1)} h_{k-1}
 \\
 &= q^{\pm(k-1)}s^\pm_{(k-1)}\pm\sum_{a=0}^{k-2}q^{\pm(k-2-a)}q^{\mp a}s^\pm_{(k-1)} h_{k-1}
 \\
 &= q^{\pm(k-1)}s^\pm_{(k-1)}\pm(k-1)_qs^\pm_{(k-1)} h_{k-1}.
\end{align*}
Substituting it to $s^\pm_{(k)}=s^\pm_{(k-1)}s^\pm_{(k)}=\frac1{k_q}s^\pm_{(k-1)}t^\pm_ks^\pm_{(k-1)}$ we obtain~\eqref{shs}. Application of the homomorphism $\rho$ to~\eqref{shs} gives the formulae~\eqref{SRS} and~\eqref{ARA}. \end{proof}

Let us calculate the entries of the $A$-operators for the idempotent $A=R^q_{n-}$. Since we have $\gX_A(\CC)=\gX_{A_{\wh q}}(\CC)$ and $\gX^*_A(\CC)=\gX^*_{A_{\wh p}}(\CC)$ for $q_{ij}=q^{\sgn(j-i)}$ and $p_{ij}=q^{\sgn(i-j)}$ the formulae~\eqref{Lempsisigma2k}, \eqref{Lempsisigma2kDual} take the form
\begin{gather*}
 \psi_{i_{\sigma(1)}}\cdots\psi_{i_{\sigma(k)}}
 =(-1)^\sigma q^{-\inv(\sigma)}\psi_{i_1}\cdots\psi_{i_k},\\
 \psi^{i_{\sigma(1)}}\cdots\psi^{i_{\sigma(k)}}=(-1)^\sigma q^{-\inv(\sigma)}\psi^{i_1}\cdots\psi^{i_k},
\end{gather*}
where $i_1<\dots<i_k$. Hence
\begin{align}
 A^{i_{\sigma(1)} \dots i_{\sigma(k)}}_{j_{\tau(1)}\dots j_{\tau(k)}}
=(-1)^\sigma(-1)^\tau q^{-\inv(\sigma)-\inv(\tau)}A^{i_1 \dots i_k}_{j_1 \dots j_k}, \label{Asitau}
\end{align}
for $i_1<\dots<i_k$, $j_1<\dots<j_k$ and the other entries of $A_{(k)}$ vanish.

To find $A^{i_1 \dots i_k}_{j_1 \dots j_k}$ we first write the formula~\eqref{Asitau} for the longest permutation $\sigma=\tau$:
\begin{align}
A^{i_k \dots i_1}_{j_k \dots j_1}= q^{-k(k-1)}A^{i_1 \dots i_k}_{j_1 \dots j_k}. \label{AsitauLong}
\end{align}
Let us prove that there are numbers $\lambda_k\in\CC$ such that
\begin{align}
 A^{i_k \dots i_1}_{j_k \dots j_1}= \lambda_k\delta^{i_k}_{j_k}\cdots\delta^{i_1}_{j_1} \label{Alambdak}
\end{align}
for any $i_k>i_{k-1}>\dots>i_1$ and $j_k>j_{k-1}>\dots>j_1$. This holds for $k=1$ with $\lambda_1=1$. Suppose it holds for $k-1$, then by taking the corresponding entries in~\eqref{ARA} we obtain
\begin{align*}
A^{i_k \dots i_1}_{j_k \dots j_1}=\frac{q^{1-k}}{k_q}A^{i_k \dots i_2}_{j_k \dots j_2}\delta^{i_1}_{j_1}-\frac{(k-1)_q}{k_q}\sum_{l_k \dots l_2 l'_2}A^{i_k \dots i_2}_{l_k \dots l_2}\big(\wh R^q_n \big)^{l_2i_1}_{l'_2j_1}A^{l_k \dots l_3 l'_2}_{j_k \dots j_3 j_2}. %\label{ARAentr}
\end{align*}
If the term in the sum does not vanish then $l_2=i_s$ for some $s=k,\dots,2$ and $l'_2=j_t$ for some $t=k,\dots,2$, so we have $l_2>i_1$ and $l'_2>j_1$. The expression~\eqref{whRq} implies that $\big(\wh R^q_n\big)^{ii'}_{jj'}=0$ for~$i>i'$ and $j>j'$. Hence the sum vanishes and we obtain~\eqref{Alambdak} with $\lambda_k=\frac{q^{1-k}}{k_q}\lambda_{k-1}$. Iteratively we derive $\lambda_k=\frac{q^{-k(k-1)/2}}{k_q!}$, where $k_q!=k_q(k-1)_q\cdots2_q1_q$. Thus due to~\eqref{Asitau}, \eqref{AsitauLong} and \eqref{Alambdak} we obtain
\begin{align*}
 A^{i_{\sigma(1)} \dots i_{\sigma(k)}}_{j_{\tau(1)} \dots j_{\tau(k)}}
=\frac{q^{k(k-1)/2}}{k_q!}(-1)^\sigma(-1)^\tau q^{-\inv(\sigma)-\inv(\tau)}\delta^{i_1}_{j_1}\cdots\delta^{i_k}_{j_k} %\label{Aqk}
\end{align*}
for $i_1<\dots<i_k$ and $j_1<\dots<j_k$.

Since $r_k=\dim\Xi_{\wh R^q_{n-}}(\CC)_k=\dim\Xi_{A^q_n}(\CC)_k={k\choose n}$ we have $\tr A_{(k)}={k\choose n}$. Explicit calculation of the trace (cf.~\eqref{trAwtq}) gives us the formula
\begin{align} \label{invGF}
 \sum_{\sigma\in\SSS_k}q^{-2\inv(\sigma)}=q^{-k(k-1)/2}\,k_q!
\end{align}
(it can be also checked by induction).

Denote by $A'_{(k)}$ the $k$-th $A$-operator for $A'=A^q_n$. Its non-zero entries are given by the formula~\eqref{Awhq} for $\wh q=q^{[n]}$, that is
\begin{gather*}
 (A')^{i_{\sigma(1)} \dots i_{\sigma(k)}}_{j_{\tau(1)} \dots j_{\tau(k)}}=\frac1{k!}(-1)^\tau(-1)^\sigma q^{\inv(\sigma)-\inv(\tau)}\delta^I_J,
\end{gather*}
where $I=(i_1<\dots<i_k\le n)$, $J=(j_1<\dots<j_k\le n)$. Due Proposition~\ref{propSAequiv} the idempotents~$A'_{(k)}$ and $A_{(k)}$ are left-equivalent, so that $A'_{(k)}=G_{[k]}A_{(k)}$ for some invertible matrix $G_{[k]}$. The latter can be chosen as a diagonal matrix with entries
\begin{gather}
 \big(G_{[k]}\big)^{i_{\sigma(1)} \dots i_{\sigma(k)}}_{i_{\sigma(1)} \dots i_{\sigma(k)}}=\frac{k_q!}{k!}q^{2\inv(\sigma)-k(k-1)/2}, \qquad\text{for}\quad i_1<\dots<i_k,\quad \sigma\in\SSS_k, \nonumber
 \\
\big(G_{[k]}\big)^{l_1 \dots l_k}_{l_1 \dots l_k}=1,
\qquad\text{if}\quad l_s=l_t\qquad \text{for some}\quad s\ne t.\label{Gksigma}
\end{gather}

Let $M$ be an $\big(\wh R^q_{n-},A_{\wh p}\big)$-Manin matrix, i.e., a~$\big(q^{[n]},\wh p\big)$-Manin matrix. Its $A$-minors are related by the formula~\eqref{MinAGA}. Substitution $\wh q=q^{[n]}$ to~\eqref{AMinqp} yields
\[
 \big(\Min^{A'_{(k)}}M\big)^{i_{\sigma(1)} \dots i_{\sigma(k)}}_{j_{\tau(1)} \dots j_{\tau(k)}}=\frac1{k!}(-1)^\tau(-1)^\sigma \frac{q^{\inv(\sigma)}}{\mu(\wh p_{JJ},\tau)}\det_q(M_{IJ}).
\]
By dividing this by~\eqref{Gksigma} we obtain the $A$-minors
\[
 \big(\Min^{A_{(k)}}M\big)^{i_{\sigma(1)} \dots i_{\sigma(k)}}_{j_{\tau(1)} \dots j_{\tau(k)}}=\frac{q^{k(k-1)/2}}{k_q!}(-1)^\tau(-1)^\sigma \frac{q^{-\inv(\sigma)}}{\mu(\wh p_{JJ},\tau)}\det_q(M_{IJ}).
\]

Let $w'_I=A'_{(k)}e_I$ be the basis~\eqref{wI} for $\wh q=q^{[n]}$. Then by the formula~\eqref{walphaG} we obtain the corresponding basis of $A_{(k)}\big((\CC^n)^{\otimes k}\big)$:
\begin{align}
 w_I=G_{[n]}^{-1}w'_I=G_{[n]}^{-1}A'_{(k)}e_I=A_{(k)}e_I=\frac{q^{k(k-1)/2}}{k_q!} \sum_{\sigma\in\SSS_k}(-1)^\sigma q^{-\inv(\sigma)}e_{i_{\sigma(1)} \dots i_{\sigma(k)}}. \label{wIHecke}
\end{align}
One can directly check that the basis $\psi^{(k)}_I$ defined by the bases $w_I$ and $w'_I$ coincide (see~\eqref{psialphaLE}): by using the formulae~\eqref{wIHecke}, \eqref{Lempsisigma2k} and~\eqref{invGF} we derive
\begin{align*}
 \psi^{(k)}_I=\frac{q^{k(k-1)/2}}{k_q!} \sum_{\sigma\in\SSS_k}(-1)^\sigma q^{-\inv(\sigma)}\psi_{i_{\sigma(1)}}\cdots\psi_{i_{\sigma(k)}}=\psi_{i_1}\cdots\psi_{i_k}.
\end{align*}
Due to Proposition~\ref{propMinabLE} the entries of the corresponding $A$-minor operators coincide for the corresponding bases:
\[
 w^I\big(\Min^{A'_{(k)}}\big)\wt w_J=w^I\big(\Min^{A_{(k)}}\big)\wt w_J,
\]
where $\wt w_J$ is the same as in Section~\ref{secMinqp}.

\subsection{Pairing operators for the 4-parametric case}\label{secPO4p}

Here we consider the case described in Section~\ref{sec4p}. The idempotent $A=A^{a,b,c}_\kappa$ is parametrised by $a,b,c\in\CC\backslash\{0\}$ and $\kappa\in\CC$. The commutation relations for the quadratic algebra $\Xi_A(\CC)$ are $\psi_i\psi_j+\sum_{k,l=1}^3\psi_k\psi_lP^{kl}_{ij}=0$. They can be written in the form $\psi_i\psi_j+a_{ij}^2\psi_j\psi_i=0$, $i\ne j$, and $2\psi_i^2=-\kappa\sum_{k,l=1}^3\varepsilon_{ikl}a_{lk}\psi_k\psi_l$ or
\begin{align}
\psi_i^2=\kappa a_{jk}\psi_k\psi_j=-\kappa a_{kj}\psi_j\psi_k,\qquad
(i,j,k)\in C_3, \label{XiAabc}
\end{align}
where $C_3$ is the set of cyclic permutations $(1,2,3),(2,3,1)$ and $(3,1,2)$.

The algebra $\Xi^*_A(\CC)$ is defined by the commutation relations $\psi^i\psi^j+\sum_{k,l=1}^3P^{ij}_{kl}\psi^k\psi^l=0$, these are $\psi^i\psi^j+a_{ji}^2\psi^j\psi^i+\kappa a_{ji}\sum_{k=1}^3\varepsilon_{ijk}\psi^k\psi^k=0$. For $i=j$ we obtain $\psi^i\psi^i=0$, so we have $\Xi^*_{A^{a,b,c}_\kappa}(\CC)=\Xi^*_{A_{\wh q}}(\CC)$, where $q_{ij}=a_{ij}^2$. This implies that the idempotents $A^{a,b,c}_\kappa$ and $A_{\wh q}$ are right-equivalent. In particular, $\Xi^*_{A^{a,b,c}_\kappa}(\CC)=\bigoplus_{k=0}^3\Xi^*_{A^{a,b,c}_\kappa}(\CC)_k$ and the dimensions of the components are $\dim\Xi^*_{A^{a,b,c}_\kappa}(\CC)_0=1$, $\dim\Xi^*_{A^{a,b,c}_\kappa}(\CC)_1=3$, $\dim\Xi^*_{A^{a,b,c}_\kappa}(\CC)_2=3$, \mbox{$\dim\Xi^*_{A^{a,b,c}_\kappa}(\CC)_3=1$}, $\dim\Xi^*_{A^{a,b,c}_\kappa}(\CC)_k=0$ for $k\ge4$.

From the relations~\eqref{XiAabc} we see that $\dim\Xi_{A^{a,b,c}_\kappa}(\CC)_2=3$. However, the dimension of $\Xi_{A^{a,b,c}_\kappa}(\CC)_3$ is not always equal to $1$, it depends on the parameters. The following theorem describes these dependence and gives the necessary and sufficient condition for existence of the corresponding pairing operator. Note that it is enough to consider the case $\kappa\ne0$ since the case of the idempotent $A^{a,b,c}_0=A_{\wh q}$ was considered in Section~\ref{secMinqp}.

\begin{Th}
Assume $\kappa\ne0$. Consider the conditions
\begin{gather}
(i)\phantom{ii}\quad a^2=b^2=c^2; \notag \\
(ii)\phantom{i} \quad a^4b^2=b^4c^2=c^4a^2=-1, \quad\kappa^3=-a^3b^{-1}c; \notag\\
(iii)\quad a^4c^2=b^4a^2=c^4b^2=-1, \quad\kappa^3=a^{-3}b^{-1}c. \label{condThabckappa}
\end{gather}
Any two of these conditions implies the third one. We have $\Xi_{A^{a,b,c}_\kappa}(\CC)=\bigoplus_{k=0}^3\Xi_{A^{a,b,c}_\kappa}(\CC)_k$ with the dimensions of the components
\begin{gather*}
\dim\Xi_{A^{a,b,c}_\kappa}(\CC)_0=1, \qquad
 \dim\Xi_{A^{a,b,c}_\kappa}(\CC)_1=3, \qquad
 \dim\Xi_{A^{a,b,c}_\kappa}(\CC)_2=3,
 \\
\dim\Xi_{A^{a,b,c}_\kappa}(\CC)_3=
\begin{cases}
3 & \text{iff all three conditions~\eqref{condThabckappa} hold,} \\
1 & \text{iff one and only one of three conditions~\eqref{condThabckappa} holds,} \\
0 & \text{iff no one of three conditions~\eqref{condThabckappa} holds,}
\end{cases}
\end{gather*}
$\dim\Xi_{A^{a,b,c}_\kappa}(\CC)_k=0$ for $k\ge4$.

The third $A$-operator exists iff the condition $(i)$ holds and $(ii)$, $(iii)$ do not. It equals
\begin{gather}
A_{(3)}=w_1w^1,\qquad \text{where} \quad w_1=\frac16(e_{123}+e_{231}+e_{312})-\frac{a^2}6(e_{132}+e_{213}+e_{321}), \label{Aw1}
 \\[1ex]
w^1=e^{123}+e^{231}+e^{312}-a^{-2}\big(e^{132}+e^{213}+e^{321}\big)-\kappa\big(b^{-1}e^{111}+c^{-1}e^{222}+a^{-1}e^{333}\big). \label{w1}
\end{gather}
In this case the elements $\psi_i\psi_j\psi_k\in\Xi_{A^{a,b,c}_\kappa}(\CC)_3$ have the form
\begin{gather}
\psi_i\psi_j\psi_k=\psi_1^{(3)}, \qquad
\psi_i\psi_k\psi_j=-a^{-2}\psi_1^{(3)}, \qquad(i,j,k)\in C_3, \label{psi3a1}
\\
\psi_i^2\psi_j=\psi_i\psi_j\psi_i=\psi_j\psi_i^2=0, \qquad i\ne j,
\\
\psi_1^3=-\kappa b^{-1}\psi_1^{(3)}, \qquad
\psi_2^3=-\kappa c^{-1}\psi_1^{(3)}, \qquad
\psi_3^3=-\kappa a^{-1}\psi_1^{(3)}, \label{psi3a3}
\end{gather}
where $\psi_1^{(3)}=(\Psi\otimes\Psi\otimes\Psi)w_1=\psi_1\psi_2\psi_3$.
\end{Th}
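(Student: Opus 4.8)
The plan is to reduce the whole statement to graded-dimension counts for $\Xi_{A^{a,b,c}_\kappa}(\CC)$ together with one application of Theorem~\ref{ThPOviaDB}. By that theorem $\dim\Xi_A(\CC)_k=\dim\overline W_k$ and $\dim\Xi^*_A(\CC)_k=\dim W_k$, where $W_k=\big\{\pi\mid S^{(a,a+1)}\pi=0\big\}$ and $\overline W_k=\big\{\xi\mid\xi S^{(a,a+1)}=0\big\}$ for $A=A^{a,b,c}_\kappa$, $S=1-A$. The right-equivalence $\Xi^*_{A^{a,b,c}_\kappa}(\CC)=\Xi^*_{A_{\wh q}}(\CC)$ with $q_{ij}=a_{ij}^2$, noted just before the theorem, gives $\dim W_k=\dim\Xi^*_{A_{\wh q}}(\CC)_k=\binom{3}{k}$; in particular $\dim W_3=1$ and $\dim W_k=0$ for $k\ge4$. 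Degrees $0$ and $1$ are immediate, and for degree $2$ Proposition~\ref{propUnique} gives $A_{(2)}=A$, so by~\eqref{dkrk} $\dim\Xi_A(\CC)_2=\rk A=\tr A=\tfrac12\big(9-\tr P\big)=3$, since $\tr P=\sum_i a_{ii}^2=3$ (the $\kappa$-part of $P$ is traceless). Thus everything except the degree-$3$ dimension and the existence of $A_{(3)}$ reduces to the study of $\overline W_3$.

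That any two of $(i),(ii),(iii)$ imply the third is elementary: writing $(i)$ as $a^2=b^2=c^2=:t$ turns the product conditions of $(ii)$ and $(iii)$ into $t^3=-1$ and the two $\kappa$-conditions into $\kappa^3=-a^3b^{-1}c$ and $\kappa^3=a^{-3}b^{-1}c$, which agree because $t^3=-1$ forces $a^6=-1$, i.e.\ $-a^3=a^{-3}$; conversely $a^4b^2=a^4c^2$ and $b^4c^2=b^4a^2$ yield $b^2=c^2=a^2$, so $(ii)\wedge(iii)\Rightarrow(i)$. For the degree-$3$ dimension I would compute $\overline W_3$ directly as the common left $(-1)$-eigenspace of $P^{(12)}$ and $P^{(23)}$ on $\big(\CC^3\big)^{\otimes3}$, equivalently by listing the relations in $\Xi_A(\CC)_3$ produced from~\eqref{XiAabc}. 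The loci $(i),(ii),(iii)$ appear as consistency constraints of the rewriting: reducing $\psi_i^3$ in the two orders $\psi_i\cdot\psi_i^2$ and $\psi_i^2\cdot\psi_i$ gives, for each cyclic $(i,j,k)$, the identity $a_{ji}^2a_{ki}^2=1$ --- one of the equalities in $(i)$ --- unless the relevant degree-$3$ vector is forced to vanish, while the $\kappa^3$-conditions arise from closing the three-fold loop $\psi_i^2\mapsto\psi_j\psi_k$. A finite case analysis of these constraints then gives $\dim\overline W_3=3,1,0$ according as three, exactly one, or none of the conditions hold (by the first part, two cannot hold without the third).

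With the dimension table in hand, Theorem~\ref{ThPOviaDB} says $A_{(3)}$ exists iff $\dim W_3=\dim\overline W_3$ and the natural pairing $\overline W_3\times W_3\to\CC$ is non-degenerate. Since $\dim W_3=1$, existence forces $\dim\overline W_3=1$, i.e.\ that exactly one condition holds, so it remains to settle non-degeneracy in the subcases ``only $(i)$'', ``only $(ii)$'', ``only $(iii)$''. In the case $(i)$ I would verify the explicit generators: $w_1$ of~\eqref{Aw1} lies in $W_3$ and $w^1$ of~\eqref{w1} lies in $\overline W_3$ (both checks use $a^2=b^2=c^2$), and contraction gives $w^1w_1=6\cdot\tfrac16=1\ne0$, so $A_{(3)}=w_1w^1$ by Theorem~\ref{ThPOviaDB}. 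In the two remaining subcases the one-dimensional $\overline W_3$ and $W_3$ pair to $0$, so $A_{(3)}$ does not exist; this is the delicate point, since here dimension $1$ does not yield a pairing operator. The identities~\eqref{psi3a1}--\eqref{psi3a3} are then read off directly: using~\eqref{AkXXXXXX1}, $\psi_{i_1}\psi_{i_2}\psi_{i_3}=(\Psi\otimes\Psi\otimes\Psi)e_{i_1i_2i_3}=(\Psi\otimes\Psi\otimes\Psi)w_1\,\big(w^1e_{i_1i_2i_3}\big)=w^1_{i_1i_2i_3}\,\psi_1^{(3)}$, so each coefficient is the matching entry of $w^1$ (for instance $w^1_{123}=1$, $w^1_{132}=-a^{-2}$, $w^1_{111}=-\kappa b^{-1}$, and $w^1_{112}=0$).

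Finally, $\Xi_A(\CC)_k=0$ for $k\ge4$ follows from the inclusion $\overline W_k\subseteq\big(\overline W_{k-1}\otimes V^*\big)\cap\big(V^*\otimes\overline W_{k-1}\big)$ with $V=\CC^3$: when $\dim\overline W_3=0$ this is automatic, and when $\dim\overline W_3=1$ or $3$ a short rank computation shows the intersection already vanishes at $k=4$; then $\overline W_k\subseteq\overline W_{k-1}\otimes V^*=0$ for $k\ge5$ by induction. I expect the main obstacle to be the degree-$3$ analysis: extracting the exact loci $(i),(ii),(iii)$ from the rewriting constraints and, above all, verifying that the surviving one-dimensional pairing degenerates in the $(ii)$-only and $(iii)$-only cases while remaining non-degenerate for $(i)$.
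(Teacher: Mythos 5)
Your proposal is correct and follows essentially the same route as the paper's own proof: reduction via Theorem~\ref{ThPOviaDB}, identification of $W_k$ with the multi-parametric case to get $\dim W_3=1$ and $\dim W_k=0$ for $k\ge4$, a coefficient analysis of $\overline W_3$ producing the loci $(i)$--$(iii)$ and the trichotomy $3/1/0$, non-degeneracy of the pairing (via $w^1w_1=1$) precisely in the case $(i)$, and reading the identities~\eqref{psi3a1}--\eqref{psi3a3} off the entries of $w^1$. The computations you defer --- the explicit linear system for $\overline W_3$ with its three families of proportional coefficients, the degeneracy $w^{\alpha}w_1=0$ ($\alpha=2,3$) in the $(ii)$-only and $(iii)$-only cases, and the vanishing of $\overline W_4$ when all three conditions hold --- are exactly the steps the paper carries out in detail, and they go through as you predict.
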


\begin{proof} Under the condition $(i)$ both conditions $(ii)$ and $(iii)$ gives $a^6=-1$, which in turn implies $-a^3b^{-1}c=a^{-3}b^{-1}c$. Hence $(i)$ implies equivalence of $(ii)$ and $(iii)$. Further, by comparing the conditions $(ii)$ and $(iii)$ we see that they imply $(i)$.

Now let us use Theorem~\ref{ThPOviaDB}. Since the idempotents $S^{a,b,c}_\kappa=1-A^{a,b,c}_\kappa$ and $S_{\wh q}=1-A_{\wh q}$ are left-equivalent the space $W_k$ for the case $A=A^{a,b,c}_\kappa$ coincide with the space $W_k$ for $A=A_{\wh q}$. In~particular, $W_3=\CC w_1$. The space $\overline W_3$ consists of the covectors $\xi=\sum_{i,j,k=1}^3\xi_{ijk}e^{ijk}$ satisfying $\xi P^{(12)}=\xi P^{(23)}=-\xi$. This gives us the system of equations
\begin{gather}
 \xi_{ijl}=-a_{ij}^2\xi_{jil},\qquad \xi_{lij}=-a_{ij}^2\xi_{lji}, \qquad i\ne j \label{relxi1},
 \\
 \xi_{iil}=\kappa a_{jk}\xi_{kjl}, \qquad \xi_{lii}=\kappa a_{jk}\xi_{lkj}, \qquad (i,j,k)\in C_3. \label{relxi2}
\end{gather}
The coefficients $\xi_{ijk}$ can be divided into three sets:
\begin{gather}
 \{\xi_{iii}\mid i=1,2,3\}\cup\{\xi_{ijk}\mid i\ne j\ne k\ne i\}, \label{xi1} \\
 \{\xi_{iij},\xi_{iji},\xi_{jii}\mid(i,j,k)\in C_3\}, \label{xi2} \\
 \{\xi_{ijj},\xi_{jij},\xi_{jji}\mid(i,j,k)\in C_3\}. \label{xi3}
\end{gather}
The relations~\eqref{relxi1}, \eqref{relxi2} imply that any two coefficients from the same set are proportional to each other (with a non-zero coefficient of proportionality), so that $\dim\overline W_3\le3$. The isomorphism $\Xi_A(\CC)_3\cong\overline W_3^*$ implies that $\dim\Xi_A(\CC)_3=\dim\overline W_3$. Let us prove that non-vanishing of the coefficients from the sets~\eqref{xi1}, \eqref{xi2}, \eqref{xi3} corresponds exactly to the conditions~$(i)$, $(ii)$, $(iii)$ respectively.

Note that there are two types of symmetries of the system of equations~\eqref{relxi1}, \eqref{relxi2}. First, it is invariant under the cyclic permutations of indices $1\mapsto2\mapsto3\mapsto1$ and $1\mapsto3\mapsto2\mapsto1$. Second, it is invariant under other permutations $i\leftrightarrow j$, $i\ne j$, with the sign change $\kappa\mapsto-\kappa$. This system implies that $\xi_{111}=\kappa a_{23}\xi_{132}=\kappa a_{23}a_{13}^2a_{12}^2\xi_{321}=a_{13}^2a_{12}^2\xi_{111}$. A cyclic permutation gives $\xi_{222}=a_{21}^2a_{23}^2\xi_{222}$. If the coefficients from the set~\eqref{xi1} do not vanish then $a_{31}^2=a_{12}^2=a_{23}^2$. This is the condition $(i)$. Conversely, if the condition $(i)$ holds then there exists a solution with non-vanishing coefficients~\eqref{xi1}, namely $\xi=w^1$ (the coefficients from~\eqref{xi2} and \eqref{xi3} vanish in this solution).

Write down some relations between the coefficients~\eqref{xi2} that follows from the system~\eqref{relxi1}, \eqref{relxi2}:
\begin{gather}
 \xi_{112}=-a_{12}^2\xi_{121}=a_{12}^4\xi_{211}=\kappa a_{23}a_{12}^4\xi_{232}, \label{xi2rel1} \\
 \xi_{112}=\kappa a_{23}\xi_{322}=-\kappa a_{23}a_{32}^2\xi_{232}, \label{xi2rel2} \\
 \xi_{322}=-\kappa a_{13}\xi_{331}=\kappa^2 a_{13}a_{21}\xi_{121}. \label{xi2rel3}
\end{gather}
All other relations is obtained by the symmetry of the first type. By equating the same coef\-ficients we obtain the condition of existence of non-vanishing solution of~\eqref{xi2rel1}--\eqref{xi2rel3}, they are $a_{12}^4a_{23}^2=-1$, $\kappa^3=-a_{12}^3a_{23}^{-1}a_{31}$. The symmetry gives the whole condition~$(ii)$. Thus there is a~non-zero solution $\xi=w^2$ with vanishing coefficients from~\eqref{xi1} and \eqref{xi3}. By applying the symmetry of the second type we obtain the relations between the coefficients~\eqref{xi3} and the condition~$(iii)$. This means that $(iii)$ is necessary and sufficient condition for existence of a~non-zero solution $\xi=w^3$ with vanishing coefficients~\eqref{xi1} and \eqref{xi2}.

The isomorphism $\Xi_A(\CC)_3\cong\overline W_3^*$ identifies the elements $\psi_i\psi_j\psi_k$ with linear functions on $\overline W_3$. By substituting $\pi=e_{ijk}$ to the formula~\eqref{pairWkD} we obtain $\psi_i\psi_j\psi_k(\xi)=\xi_{ijk}$, where $i,j,k=1,2,3$ and $\xi=\sum_{i,j,k=1}^3\xi_{ijk}e^{ijk}\in\overline W_k$. Thus the elements $\psi_i\psi_j\psi_k$ for $i\ne j\ne k\ne i$ and $\psi_i^3$ do not vanish iff $(i)$ holds. Similarly, $(ii)$ and $(iii)$ are conditions of non-vanishing of the elements $\psi_i^2\psi_j,\psi_i\psi_j\psi_i,\psi_j\psi_i^2$, where $(i,j)=(1,2),(2,3),(3,1)$ and $(i,j)=(2,1),(3,2),(1,3)$ respectively.

To prove that $\dim\Xi_A(\CC)_k=0$ for $k\ge0$ it is enough to check it for $k=4$. Note that the relations~\eqref{XiAabc} implies that all the elements $\psi_{i_1}\psi_{i_2}\psi_{i_3}\psi_{i_4}$ are proportional to each other, that is $\dim\Xi_A(\CC)_4\le1$. If $\dim\Xi_A(\CC)_3<3$ then $\psi_{i_1}\psi_{i_2}\psi_{i_3}$ vanishes for some $i_1$, $i_2$, $i_3$ and hence $\Xi_A(\CC)_4=0$. In~the case $\dim\Xi_A(\CC)_3=3$ all the conditions~\eqref{condThabckappa} hold. In particular, $a^6=-1$. Suppose that $\dim\Xi_A(\CC)_4\ne0$ and hence all the elements $\psi_{i_1}\psi_{i_2}\psi_{i_3}\psi_{i_4}$ do not vanish. From the chain of relations $\psi_3\psi_1^2\psi_2=\kappa a_{23}\psi_3^2\psi_2^2=\kappa^3 a_{23}a_{12}a_{31}\psi_2\psi_1^2\psi_3=\kappa^3 a_{23}a_{12}a_{31}\psi_2\psi_1^2\psi_3=-\kappa^3 a_{23}a_{12}a_{31}a_{21}^4a_{23}^2a_{13}^4\psi_3\psi_1^2\psi_2$ we obtain $\kappa^3 a_{21}^3a_{23}^3a_{13}^3=-1$. By substituting $(i)$ and $(ii)$ we obtain $-1=-a^3b^{-1}ca^{-1}b^3c^{-3}=-a^2b^2c^{-2}=-a^2$, which implies $a^6=1$. This contradicts with $a^6=-1$, hence $\Xi_A(\CC)_4=0$.

The existence of $A_{(3)}$ implies $\dim\overline W_3=\dim W_3=1$, so that one and only one of the conditions~\eqref{condThabckappa} holds. Namely $\overline W_3$ has the form $\CC w^1$, $\CC w^2$ or $\CC w^3$ under the condition $(i)$, $(ii)$ or $(iii)$ respectively. But $w^\alpha w_1=0$ for $\alpha=2,3$, so that only the condition $(i)$ is relevant. Since $w^1w_1=1$ we have $A_{(3)}=w_1w^1$. Note that $\psi_i\psi_j\psi_k=(X\otimes X\otimes X)e_{ijk}=(X\otimes X\otimes X)A_{(3)}e_{ijk}=\psi_1^{(3)}w^1e_{ijk}$, $i,j,k=1,2,3$. By substituting the explicit expression for~$w^1$ we derive the formulae~\eqref{psi3a1}--\eqref{psi3a3}. \end{proof}

\begin{Rem} \label{Rem4pCond}
 The three cases when $\dim\Xi_{A^{a,b,c}_\kappa}(\CC)_3=1$ correspond to the conditions
\begin{gather*}
(I)\phantom{II}\quad a^2=b^2=c^2\qquad \text{and}\qquad \big(a^6\ne-1\text{ or }\kappa^3\ne-abc\big);
 \\
(II)\phantom{I}\quad b^2=a^{14},\qquad c^2=a^{26},\qquad a^{18}=-1,\qquad a^6\ne-1,\qquad \kappa^3=a^7bc;
 \\
(III)\quad b^2=a^{26},\qquad c^2=a^{14},\qquad a^{18}=-1,\qquad a^6\ne-1,\qquad \kappa^3=a^7bc
\end{gather*}
(cf.~\cite[conditions~(1.4.1) and~(1.4.5)]{IS}). The condition~$(I)$ is exactly the case when $A_{(3)}$ exists.
\end{Rem}

\begin{Rem} \label{Rem4p}
 The dimension of the space $\gX_A(\CC)_3$ also depends on the values of the parame\-ters~$a$, $b$, $c$, $\kappa$. By using the relations~\eqref{abcxyzkappa} one can relate $xyz$ with $zyx$ in two different ways. As~a~re\-sult we obtain two different expressions for $xyz-a^{-2}b^{-2}c^2zyx$, namely,
\begin{align*}
 \kappa b^{-1}x^3-\kappa cb^{-2}y^3+\kappa a^{-1}b^{-2}c^2z^3
=\kappa a^{-1}z^3-\kappa c a^{-2}y^3+\kappa b^{-1}a^{-2}c^2x^3.
\end{align*}
Assume $\kappa\ne0$. Then we see that the elements $x^3$, $y^3$, $z^3$ are linearly independent iff the condition~$(i)$ holds. One can also obtain the relation
\begin{gather*}
 \big(\kappa a^{-1}bc^{-1} +a^2\big)x^2y+\big(\kappa a^{-1}b^3c^{-1} -a^{-2}\big)yx^2\\
 \qquad{} =\kappa\big(a+a^{-1}c^{-4}\big)xz^2+\kappa a^{-1}\big(c^{-3}+b^4c^{-1}\big)y^2z.
\end{gather*}
One can deduce that the dimension of the subspace spanned by the elements $x_i^2x_j$, $x_ix_jx_i$, $x_jx_i^2$, $(i,j,k)\in C_3$, equals $4$ if the condition $(ii)$ holds and it equals $3$ if this condition is false. The dimension of the subspace spanned by the elements $x_j^2x_i$, $x_jx_ix_j$, $x_ix_j^2$, $(i,j,k)\in C_3$, depends on the condition $(iii)$ in the same way (see~\cite[Theorem~1.4]{IS}). Thus the difference $\dim\gX_A(\CC)_3-\dim\Xi_A(\CC)_3$ does not depend on the values of the parameters and equals $9$. Moreover, the third $S$-operator $S_{(3)}$ for the idempotent $A^{a,b,c}_\kappa$ exists iff the $A$-operator $A_{(3)}$ exists.
\end{Rem}

Finally we write the $A$-Minors for an $\big(A^{a,b,c}_\kappa,B\big)$-Manin matrix $M$. By substituting the expression $A^{ij}_{kl}=\frac12(\delta^i_k\delta^j_l-a_{ji}^2\delta^i_l\delta^j_k-\kappa a_{ji}\delta_{kl}\varepsilon_{ijk})$ to $\big(\Min^{A_{(2)}}M\big)^{ij}_{j_1j_2}=\sum_{k,l=1}^3A^{ij}_{kl}M^{k}_{j_1}M^{l}_{j_2}$ we obtain
\begin{gather*}
 \big(\Min^{A_{(2)}}M\big)^{ij}_{j_1j_2}=\frac12\big(M^{i}_{j_1}M^j_{j_2}-a_{ji}^2M^j_{j_1}M^i_{j_2}-\kappa a_{ji}M^k_{j_1}M^k_{j_2}\big),
 \\
 \big(\Min^{A_{(2)}}M\big)^{ji}_{j_1j_2}=-a_{ij}^2\big(\Min^{A_{(2)}}M\big)^{ij}_{j_1j_2}, \qquad \big(\Min^{A_{(2)}}M\big)^{ii}_{j_1j_2}=0,
\end{gather*}
where $(i,j,k)\in C_3$.

Let $a^2=b^2=c^2$ and the condition~$(ii)$ do not hold ($a^6\ne-1$ or $\kappa\ne-abc$). The components of the pairing operator $A_{(3)}=w_1w^1$ have the form $A^{i_1i_2i_3}_{k_1k_2k_3}=w_1^{i_1i_2i_3} w^1_{k_1k_2k_3}$. Hence
 $\big(\Min^{A_{(3)}}M\big)^{i_1i_2i_3}_{j_1j_2j_3}=w_1^{i_1i_2i_3}\sum_{k_1,k_2,k_3=1}^3 w^1_{k_1k_2k_3}M^{k_1}_{j_1}M^{k_2}_{j_2}M^{k_3}_{j_3}$.
By using~\eqref{Aw1}, \eqref{w1} we obtain
\begin{gather*}
 \big(\Min^{A_{(3)}}M\big)^{123}_{j_1j_2j_3}=\frac16\sum_{(i,j,k)\in C_3}\big(M^{i}_{j_1}M^{j}_{j_2}M^{k}_{j_3}-a^{-2}M^{j}_{j_1}M^{i}_{j_2}M^{k}_{j_3}\big)
 \\ \hphantom{ \big(\Min^{A_{(3)}}M\big)^{123}_{j_1j_2j_3}=}
{}-\frac\kappa6\big(b^{-1}M^{1}_{j_1}M^{1}_{j_2}M^{1}_{j_3}+c^{-1}M^{2}_{j_1}M^{2}_{j_2}M^{2}_{j_3}+a^{-1}M^{3}_{j_1}M^{3}_{j_2}M^{3}_{j_3}\big), \\
\big(\Min^{A_{(3)}}M\big)^{i_1i_2i_3}_{j_1j_2j_3}=
\begin{cases}
 \big(\Min^{A_{(3)}}M\big)^{123}_{j_1j_2j_3} & \text{if }(i_1,i_2,i_3)\in C_3,
 \\[1ex]
 -a^2\big(\Min^{A_{(3)}}M\big)^{123}_{j_1j_2j_3} & \text{if }(i_2,i_1,i_3)\in C_3,
 \\
 0 & \text{if }i_k=i_l\text{ for some }k\ne l.
\end{cases}
\end{gather*}

\section[Manin matrices of types B, C and D]
{Manin matrices of types $\boldsymbol B$, $\boldsymbol C$ and $\boldsymbol D$}\label{secBCD}

{\sloppy
Remind that the $A_n$-Manin matrices and $A_n^q$-Manin matrices are related with the Yan\-gi\-ans~$Y(\mathfrak{gl}_n)$ and quantum affine algebras $U_q\big(\wh{\mathfrak{gl}}_n\big)$ respectively. The Lie algebra $\mathfrak{gl}_n=\mathfrak{gl}(n,\CC)$ is usually considered as the case ``type $A$'', since $\mathfrak{gl}_n\cong\CC\oplus\mathfrak{sl}_n$, where $\mathfrak{sl}_n=\mathfrak{sl}(n,\CC)$ is the simple algebra of the type $A_{n-1}$. Hence these Manin matrices can be referred to the type~$A$. Moreover, the minor operators for more general $(\wh q,\wh p)$-Manin matrices are described by using the symmetric groups~$\SSS_k$, which are the Weyl groups of the type $A_{k-1}$ and participate in Schur--Weyl duality with the Lie algebras $\mathfrak{gl}_n$.

}

A generalisation of the $A_n$-Manin matrices to the types $B$, $C$ and $D$ was introduced by A.~Molev. Remind that $\mathfrak{so}_{2r+1}=\mathfrak{so}(2r+1,\CC)$, $\mathfrak{sp}_{2r}=\mathfrak{sp}(2r,\CC)$ and $\mathfrak{so}_{2r}=\mathfrak{so}(2r,\CC)$ are simple Lie algebras of types $B_r$, $C_r$ and $D_r$ respectively (where $r\ge2$ for $D_r$). In this section we assume that $n=2r+1$ for the $B_r$ case and $n=2r$ for the $C_r$ and $D_r$ cases.

\subsection{Molev's definition and corresponding quadratic algebras}
\label{secMolev}

By starting with the definition of Manin matrices of types $B$, $C$ and $D$ we interpret them as Manin matrices for quadratic algebras. To do it we use the notations $i'=n-i+1$ and
\begin{align*}
 \varepsilon_i=
\begin{cases}
\phantom{-}1, &i=1,\dots,r, \\
-1, &i=r+1,\dots,n;
\end{cases}
\end{align*}
(the notation $\varepsilon_i$ is used for the case $C_r$ only). Introduce the operators $Q_n\in\End(\CC^n\otimes\CC^n)$ for the $B$ and $D$ cases and $\wt Q_n\in\End(\CC^n\otimes\CC^n)$ for the $C$ case:
\begin{align*}
 Q_n=\sum_{i,j=1}^nE_{i}{}^{j}\otimes E_{i'}{}^{j'},\qquad
 \wt Q_n=\sum_{i,j=1}^n\varepsilon_i\varepsilon_jE_{i}{}^{j}\otimes E_{i'}{}^{j'}.
\end{align*}
One can check that these operators satisfies the following relations:
\begin{gather}
(Q_n)^2=nQ_n, \qquad P_nQ_n=Q_nP_n=Q_n, \label{Qproper}
\\
(\wt Q_n)^2=n\wt Q_n, \qquad P_n\wt Q_n=\wt Q_nP_n=-\wt Q_n, \label{wtQproper}
\end{gather}
where $P_n$ is the permutation operator defined in Section~\ref{secMm}.

% the operators $\dfrac{Q_n}n$ and $-\dfrac{\wt Q_n}n$ are idempotents.

\begin{Def}[{\cite{MolevSO}}]
 A matrix $M\in\gR\otimes\End(\CC^n)$ is a Manin matrix of the type $B$ (for odd~$n$) or $D$ (for even $n$) if it satisfies
\begin{align}
 (1-P_n)M^{(1)}M^{(2)}\bigg(\frac{1+P_n}2-\frac{Q_n}n\bigg)=0. \label{MMson}
\end{align}
A matrix $M\in\gR\otimes\End(\CC^n)$ for even $n$ is a Manin matrix of the type $C$ if it satisfies
\begin{align}
 \bigg(\frac{1-P_n}2-\frac{\wt Q_n}n\bigg)M^{(1)}M^{(2)}(1+P_n)=0. \label{MMsp}
\end{align}
\end{Def}

Introduce operators $B_n\in\End(\CC^n\otimes\CC^n)$ for the $B$ and $D$ cases and $\wt B_n\in\End(\CC^n\otimes\CC^n)$ for the $C$ case as
\[
 B_n=\frac{1-P_n}2+\frac{Q_n}n=A_n+\frac{Q_n}n, \qquad
 \wt B_n=\frac{1-P_n}2-\frac{\wt Q_n}n=A_n-\frac{\wt Q_n}n.
\]
The formulae~\eqref{Qproper}, \eqref{wtQproper} implies that these operators are idempotents. We see that the relations~\eqref{MMson} and \eqref{MMsp} can be written as the definition~\eqref{AMMB} by means of the idempotents $A_n=\frac{1-P_n}2$, $B_n$ and $\wt B_n$.

\begin{Prop} \label{PropMMBCD}
A matrix $M\in\gR\otimes\End(\CC^n)$ is a Manin matrix of type $B$ or $D$ iff it is an~$(A_n,B_n)$-Manin matrix. A matrix $M\in\gR\otimes\End(\CC^n)$ is a Manin matrix of type $C$ iff it is an~$\big(\wt B_n,A_n\big)$-Manin matrix $($for even $n)$.
\end{Prop}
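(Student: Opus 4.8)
The plan is to reduce both equivalences to elementary rewriting of the defining relations, after first confirming that $B_n$ and $\wt B_n$ are genuine idempotents, so that the notions of $(A_n,B_n)$- and $(\wt B_n,A_n)$-Manin matrix are meaningful.

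First I would check idempotency directly. Expanding $B_n^2=\bigl(\tfrac{1-P_n}2+\tfrac{Q_n}n\bigr)^2$ and using $P_n^2=1$ (so that $(1-P_n)^2=2(1-P_n)$), the relations $P_nQ_n=Q_nP_n=Q_n$ of~\eqref{Qproper} (so that the cross terms $(1-P_n)Q_n$ and $Q_n(1-P_n)$ vanish), and $Q_n^2=nQ_n$, everything collapses to $B_n^2=B_n$. For $\wt B_n=\tfrac{1-P_n}2-\tfrac{\wt Q_n}n$ the same computation applies, except that now $P_n\wt Q_n=\wt Q_nP_n=-\wt Q_n$ from~\eqref{wtQproper} makes each cross term equal to $-\tfrac{\wt Q_n}n$ rather than zero; together with the contribution $+\tfrac{\wt Q_n}n$ coming from $\wt Q_n^2=n\wt Q_n$, these again sum to $\wt B_n^2=\wt B_n$. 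This sign asymmetry is the only place where the $C$ case differs computationally from the $B$/$D$ case, and it is worth carrying out carefully.

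For types $B$ and $D$, I would observe that $A_n=\tfrac{1-P_n}2$ gives $1-P_n=2A_n$ for the left factor of~\eqref{MMson}, while the right factor is exactly $1-B_n=\tfrac{1+P_n}2-\tfrac{Q_n}n$. Hence the left-hand side of~\eqref{MMson} equals $2\,A_nM^{(1)}M^{(2)}(1-B_n)$, and since $2\neq0$ the relation~\eqref{MMson} is equivalent to $A_nM^{(1)}M^{(2)}(1-B_n)=0$, which is precisely the defining relation~\eqref{AMMB} for an $(A_n,B_n)$-Manin matrix.

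For type $C$ the argument is symmetric: the left factor of~\eqref{MMsp} is exactly $\wt B_n$, and the right factor is $1+P_n=2(1-A_n)$ because $1-A_n=\tfrac{1+P_n}2$. Thus the left-hand side of~\eqref{MMsp} equals $2\,\wt B_nM^{(1)}M^{(2)}(1-A_n)$, so~\eqref{MMsp} holds iff $\wt B_nM^{(1)}M^{(2)}(1-A_n)=0$, i.e.\ iff $M$ is a $(\wt B_n,A_n)$-Manin matrix. No genuine obstacle arises; the only points requiring attention are the sign bookkeeping in the idempotency check for $\wt B_n$ and keeping straight which idempotent occupies the left and which the right slot in~\eqref{AMMB}.
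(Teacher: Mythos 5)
Your proof is correct and follows essentially the same route as the paper: the paper likewise notes that the relations $P_nQ_n=Q_nP_n=Q_n$, $Q_n^2=nQ_n$ (and their $\wt Q_n$ analogues with the sign change) make $B_n$ and $\wt B_n$ idempotents, and then observes that~\eqref{MMson} and~\eqref{MMsp} are just $2\,A_nM^{(1)}M^{(2)}(1-B_n)=0$ and $2\,\wt B_nM^{(1)}M^{(2)}(1-A_n)=0$, i.e.\ the defining relation~\eqref{AMMB} for the respective pairs of idempotents. Your explicit sign bookkeeping in the $\wt B_n$ idempotency check is exactly the detail the paper leaves implicit.
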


Now let us consider the quadratic algebras related with the idempotents $B_n$ and $\wt B_n$.

\begin{Prop} \label{PropBnXX}
 The commutation relations for the algebras $\gX_{B_n}(\CC)$ and $\Xi_{\wt B_n}(\CC)$ are
\begin{gather}\label{BnXX}
 P_n(X\otimes X)=X\otimes X,\qquad Q_n(X\otimes X)=0
 \end{gather}
 and
 \begin{gather}
 (\Psi\otimes\Psi)P_n=-\Psi\otimes\Psi, \qquad (\Psi\otimes \Psi)\wt Q_n=0 \label{PsiwtBn}
\end{gather}
respectively.
\end{Prop}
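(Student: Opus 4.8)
The plan is to read off the defining relations directly from the idempotents $B_n$ and $\wt B_n$, exploiting the fact that each decomposes as a sum of two \emph{orthogonal} idempotents. Recall that $\gX_{B_n}(\CC)$ is defined by $B_n(X\otimes X)=0$ (equation \eqref{Axx0}) and $\Xi_{\wt B_n}(\CC)$ by $(\Psi\otimes\Psi)(1-\wt B_n)=0$ (equation \eqref{PsiPsiA}), so it suffices to rewrite each of these single relations as the stated pair.

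First I would treat the $B$/$D$ case. Writing $B_n=A_n+\frac1n Q_n$, I would use \eqref{Qproper} to check that $A_n$ and $\frac1n Q_n$ are orthogonal idempotents: indeed $A_nQ_n=\frac12(1-P_n)Q_n=\frac12(Q_n-Q_n)=0$ since $P_nQ_n=Q_n$, and likewise $Q_nA_n=0$, while $\big(\frac1n Q_n\big)^2=\frac1{n^2}Q_n^2=\frac1n Q_n$. Then the relation $B_n(X\otimes X)=0$ splits: multiplying on the left by $A_n$ and using $A_nB_n=A_n$ gives $A_n(X\otimes X)=0$, and subtracting this from $B_n(X\otimes X)=0$ gives $\frac1n Q_n(X\otimes X)=0$; conversely these two relations together imply $B_n(X\otimes X)=0$. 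Finally $A_n(X\otimes X)=0$ is the same as $P_n(X\otimes X)=X\otimes X$ because $A_n=\frac12(1-P_n)$, which yields exactly \eqref{BnXX}.

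The $C$ case is entirely parallel on the dual side. Here $1-\wt B_n=S_n+\frac1n\wt Q_n$ with $S_n=\frac12(1+P_n)$, and I would use \eqref{wtQproper} (in particular $P_n\wt Q_n=-\wt Q_n$) to verify that $S_n$ and $\frac1n\wt Q_n$ are orthogonal idempotents: $S_n\wt Q_n=\frac1{2n}(\wt Q_n+P_n\wt Q_n)=0$ and $\big(\frac1n\wt Q_n\big)^2=\frac1n\wt Q_n$. Multiplying the defining relation $(\Psi\otimes\Psi)(1-\wt B_n)=0$ on the right by $S_n$ (using $(1-\wt B_n)S_n=S_n$) then isolates $(\Psi\otimes\Psi)S_n=0$, and the remainder gives $(\Psi\otimes\Psi)\wt Q_n=0$; the first of these is \eqref{PsiwtBn} since $(\Psi\otimes\Psi)S_n=0$ means $(\Psi\otimes\Psi)P_n=-\Psi\otimes\Psi$.

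Since the computations are all immediate consequences of \eqref{Qproper} and \eqref{wtQproper}, there is no real obstacle; the only point requiring care is the orthogonality of the two idempotent summands, which is exactly what makes the single defining relation decouple into the two separate relations. All operator manipulations are legitimate because $A_n,S_n,P_n,Q_n,\wt Q_n$ are honest complex matrices acting on the $\CC^n\otimes\CC^n$ tensor factor and commute with the algebra-valued entries of $X\otimes X$ and $\Psi\otimes\Psi$.
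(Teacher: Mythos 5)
Your proof is correct and follows essentially the same route as the paper: both exploit the orthogonal decomposition $B_n=A_n+\tfrac1n Q_n$ (resp.\ $1-\wt B_n=S_n+\tfrac1n\wt Q_n$) guaranteed by \eqref{Qproper}, \eqref{wtQproper} and multiply the single defining relation by one of the summands to decouple it — the paper projects with $Q_n$ (resp.\ $\wt Q_n$) while you project with $A_n$ (resp.\ $S_n$), which is the same argument. The only blemish is the harmless typo $S_n\wt Q_n=\tfrac1{2n}(\wt Q_n+P_n\wt Q_n)$, where the coefficient should be $\tfrac12$; the conclusion $S_n\wt Q_n=0$ is unaffected.
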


\begin{proof} The relation $B_n(X\otimes X)=0$ has the form
\[
 \bigg(\frac{1-P_n}2+\frac{Q_n}n\bigg)(X\otimes X)=0. %\label{BnXX0}
\]
Multiplication by $Q_n$ from the left gives $Q_n(X\otimes X)=0$, so we derive~\eqref{BnXX}. Analogously, \eqref{PsiwtBn} can obtained from $(\Psi\otimes\Psi)\big(1-\wt B_n\big)=0$ by multiplication by $\wt Q_n$ from the right. The converse implications are obvious. \end{proof}

The algebra $\gX_{B_n}(\CC)$ is the quotient of the polynomial algebra $\CC[x^1,\dots,x^n]=\gX_{A_n}(\CC)$ by the relation
\[
 \sum_{i=1}^n x^ix^{i'}=0.
\]
The group of matrices $G\in {\rm GL}(n,\CC)$ preserving the symmetric bilinear form $g_n(x,y)=\!\sum_{i=1}^n x^iy^{i'}$ is isomorphic to $O(n,\CC)$.

The algebra $\Xi_{B_n}(\CC)$ is generated by $\lambda,\psi_1,\dots,\psi_n$ with the relations
\[
 \psi_i\psi_j+\psi_j\psi_i=\lambda\delta_{i,j'}.
\]
Note that $\lambda=\psi_1\psi_n+\psi_n\psi_1=\frac2n\sum_{i=1}^n\psi_i\psi_{i'}$ is a central element and the Grassmann algebra $\Xi_{A_n}(\CC)$ is the quotient of $\Xi_{B_n}(\CC)$ by the relation $\lambda=0$ (by fixing a non-zero value of $\lambda$ we obtain the Clifford algebra $\mathrm{Cl}_n(\CC)$).

The algebra $\gX_{\wt B_n}(\CC)$ is generated by $\wt\lambda,x^1,\dots,x^n$ with the relations
\[
 x^ix^j-x^jx^i=\wt\lambda\varepsilon_i\delta_{i,j'},
\]
where $n=2r$. If $n\ge4$ then the element $\wt\lambda=x^1x^n-x^nx^1=\frac1r\sum_{i=1}^r(x^ix^{i'}-x^{i'}x^i)$ is central and~$\gX_{\wt B_n}(\CC)$ is the universal enveloping of the $(n+1)$-dimensional Heisenberg Lie algebra (by fixing a non-zero value of $\wt\lambda$ we obtain the Weyl algebra $A_r(\CC)$).

The algebra $\Xi_{\wt B_n}(\CC)$ the quotient of the Grassmann algebra $\Xi_{A_n}(\CC)$ with the Grassmann variables $\psi_1,\dots,\psi_n$ by the relation
\[
 \sum_{i=1}^r\psi_i\psi_{i'}=0.
\]
Note that $\sum_{i=1}^r\psi_i\psi_{i'}=\frac12\sum_{i=1}^r(\psi_i\psi_{i'}-\psi_{i'}\psi_i)$.
The group of matrices $G\in {\rm GL}(n,\CC)$ preserving the antisymmetric bilinear form $\omega_r(x,y)=\sum_{i=1}^r (x_iy_{i'}-y_{i'}x_i)$ is isomorphic to ${\rm Sp}(2r,\CC)$.

These forms of the quadratic algebras $\gX_{B_n}(\CC)$ and $\Xi_{\wt B_n}(\CC)$ give us the relation between the Manin matrices of the types $B$, $C$, $D$ and the Manin matrices (of type $A$) considered in~Section~\ref{secMm}.

\begin{Prop} \label{PropMAAAB}
Let $M\in\gR\otimes\Hom(\CC^m,\CC^n)$.
\begin{itemize}
\itemsep=0pt
 \item {\sloppy If $M$ is an $(A_n,A_m)$-Manin matrix, then $M$ is an $(A_n,B_m)$-Manin matrix and an $\big(\wt B_n,A_m\big)$-Manin matrix in the same time.

 }
 \item If $M$ is a $(B_n,B_m)$-Manin matrix, then $M$ is an $(A_n,B_m)$-Manin matrix.
 \item If $M$ is a $\big(\wt B_n,\wt B_m\big)$-Manin matrix, then $M$ is a $\big(\wt B_n,A_m\big)$-Manin matrix.
\end{itemize}
\end{Prop}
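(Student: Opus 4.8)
The plan is to reduce all three implications to short identities among the idempotents and the operators $Q_n$, $\wt Q_n$, after which each Manin condition \eqref{AMMB} follows by multiplying the hypothesis by a single idempotent on the appropriate side. The first thing I would establish is that \eqref{Qproper} and \eqref{wtQproper} organize these operators into orthogonal idempotent decompositions. From $P_nQ_n=Q_n$ one gets $A_nQ_n=Q_nA_n=0$ and $(Q_n/n)^2=Q_n/n$, so that $A_n$ and $Q_n/n$ are orthogonal idempotents with $B_n=A_n+Q_n/n$; consequently $A_nB_n=B_nA_n=A_n$. Dually, from $P_n\wt Q_n=-\wt Q_n$ one gets $A_n\wt Q_n=\wt Q_nA_n=\wt Q_n$ and $(\wt Q_n/n)^2=\wt Q_n/n$, so that $\wt B_n$ and $\wt Q_n/n$ are orthogonal idempotents with $A_n=\wt B_n+\wt Q_n/n$; consequently $\wt B_nA_n=A_n\wt B_n=\wt B_n$. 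These computations are the only real content, and I expect them to be routine given the stated relations (they also re-derive the idempotency of $B_n$ and $\wt B_n$).

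With the four identities $A_nB_n=A_n$, $A_mB_m=A_m$, $\wt B_nA_n=\wt B_n$, $\wt B_mA_m=\wt B_m$ in hand, each bullet becomes a one-line multiplication of the defining relation. For the second bullet I would left-multiply the hypothesis $B_nM^{(1)}M^{(2)}(1-B_m)=0$ by $A_n$ and use $A_nB_n=A_n$ to get exactly $A_nM^{(1)}M^{(2)}(1-B_m)=0$, the $(A_n,B_m)$-condition. For the third bullet I would right-multiply the hypothesis $\wt B_nM^{(1)}M^{(2)}(1-\wt B_m)=0$ by $(1-A_m)$; since $\wt B_mA_m=\wt B_m$ gives $(1-\wt B_m)(1-A_m)=1-A_m$, the relation collapses to the $(\wt B_n,A_m)$-condition $\wt B_nM^{(1)}M^{(2)}(1-A_m)=0$.

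For the first bullet I would derive the two conclusions from $A_nM^{(1)}M^{(2)}(1-A_m)=0$ by the same mechanism but on opposite sides. To obtain the $(A_n,B_m)$-condition I right-multiply by $(1-B_m)$ and use $(1-A_m)(1-B_m)=1-B_m$, which is equivalent to $A_mB_m=A_m$. To obtain the $(\wt B_n,A_m)$-condition I left-multiply by $\wt B_n$ and use $\wt B_nA_n=\wt B_n$. I do not anticipate a genuine obstacle here; the only point needing care is keeping track of signs and sides, since $Q_n$ sits in the symmetric part of $\CC^n\otimes\CC^n$ while $\wt Q_n$ sits in the antisymmetric part. It is precisely this sign difference, coming from $P_nQ_n=Q_n$ versus $P_n\wt Q_n=-\wt Q_n$, that makes $B_n$ the sum $A_n+Q_n/n$ in the $B$, $D$ cases but makes $A_n$ the sum $\wt B_n+\wt Q_n/n$ in the $C$ case, and hence dictates which projector is absorbed on which side.
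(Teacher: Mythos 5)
Your proof is correct: the identities $A_nB_n=B_nA_n=A_n$ and $\wt B_nA_m=A_m\wt B_m=\wt B_m$ do follow from \eqref{Qproper} and \eqref{wtQproper} via the orthogonal decompositions $B_n=A_n+\tfrac{Q_n}{n}$ and $A_n=\wt B_n+\tfrac{\wt Q_n}{n}$, and each bullet then reduces to multiplying the defining relation \eqref{AMMB} by a single idempotent on one side, exactly as you describe. This coincides with the direct argument the paper itself sketches (its multiplication by $1-\tfrac{Q_m}{m}$ from the right is the same computation as your $(1-A_m)(1-B_m)=1-B_m$ step); the paper's primary stated route is instead to invoke Propositions~\ref{PropBnXX} and~\ref{propAB}, i.e.\ the fact that $\gX_{B_m}(\CC)$ and $\Xi_{\wt B_n}(\CC)$ are quotients of $\gX_{A_m}(\CC)$ and $\Xi_{A_n}(\CC)$ by one extra relation.
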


This proposition follows from Propositions~\ref{PropBnXX} and~\ref{propAB}. Alternatively one can use the algebras $\gX_{\wt B_n}(\CC)$ and $\Xi_{B_n}(\CC)$. One can also prove Proposition~\ref{PropMAAAB} directly by multiplying the definition~\eqref{AMMB} from the left or from the right by an appropriate matrix: for example, multiplication of $\frac{1-P_n}2M^{(1)}M^{(2)}\frac{1+P_m}2=0$ by the operator $1-\frac{Q_m}m$ from the right gives $\frac{1-P_n}2M^{(1)}M^{(2)}\big(\frac{1+P_m}2-\frac{Q_m}m\big)=0$.

Consider a $2\times2$ matrix $M=\left(\begin{smallmatrix} a & b \\ c & d \end{smallmatrix}\right)$. It is an $(A_2,B_2)$-Manin matrix (a Manin matrix of type~$D$) iff $[a,c]=[b,d]=0$. Note that $\wt B_2=0$, so the algebra $\gX_{\wt B_2}(\CC)$ is a free non-commutative algebra with 2 generators (without relations). This implies that any $2\times2$ matrix is an $\big(\wt B_2,A_2\big)$-Manin matrix (a Manin matrix of type $C$).

Let us generalise the relations~\eqref{Mikjk} and \eqref{Mikjl} to the case of Manin matrices of the ty\-pes~$B$,~$C$,~$D$.

\begin{Prop}
 A matrix $M\in\gR\otimes\Hom(\CC^m,\CC^n)$ is an $(A_n,B_m)$-Manin matrix iff
\begin{align} \label{AnBm}
 \big[M^i_k,M^j_l\big]+\big[M^i_l,M^j_k\big]=\Lambda^{ij}\delta_{k+l,m+1}, \qquad i<j,\quad k\le l,
\end{align}
where $\Lambda^{ij}=\big[M^i_1,M^j_m\big]+\big[M^i_m,M^j_1\big]$.
The matrix $M$ is a $\big(\wt B_n,A_m\big)$-Manin matrix iff
\begin{align} \label{wtBnAm}
\big[M^i_k,M^j_l\big]+\big[M^i_l,M^j_k\big]=\wt\Lambda_{kl}\delta_{i+j,n+1}, \qquad i<j,\quad k\le l,
\end{align}
where $\wt\Lambda_{kl}=\big[M^1_k,M^n_l\big]+\big[M^1_l,M^n_k\big]$.
\end{Prop}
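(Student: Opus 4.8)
The plan is to prove both equivalences by expanding the defining relation \eqref{AMMB} entrywise for the two relevant idempotent pairs and then reorganising the resulting scalar identities into the stated commutator form. Throughout I would use that $A_n$ acting on the left antisymmetrises the upper index pair while $\tfrac{1+P_m}2$ acting on the right symmetrises the lower index pair.

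First I would treat the $(A_n,B_m)$ case. Writing $1-B_m=\tfrac{1+P_m}2-\tfrac{Q_m}m$ and setting $T=A_nM^{(1)}M^{(2)}$, whose entries are $T^{ij}_{kl}=\tfrac12(M^i_kM^j_l-M^j_kM^i_l)$, I would compute the two contributions separately. Acting by $\tfrac{1+P_m}2$ gives $\tfrac14([M^i_k,M^j_l]+[M^i_l,M^j_k])$, while acting by $Q_m$, whose entries are $(Q_m)^{ab}_{kl}=\delta_{l,k'}\delta_{b,a'}$, contracts the lower indices to produce $\delta_{l,k'}\sum_aT^{ij}_{aa'}$. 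Using $\sum_a(M^i_aM^j_{a'}-M^j_aM^i_{a'})=\sum_a[M^i_a,M^j_{a'}]=:C^{ij}$ (after relabelling $a\mapsto a'$ in one sum), the relation \eqref{AMMB} becomes, after multiplying by $4$, the family of identities $[M^i_k,M^j_l]+[M^i_l,M^j_k]=\tfrac2m\,\delta_{l,k'}\,C^{ij}$. Both sides are symmetric in $(k,l)$ and antisymmetric in $(i,j)$, and vacuous for $i=j$, so it suffices to impose them for $i<j$, $k\le l$.

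The substance is then to identify the contracted constant $\tfrac2mC^{ij}$ with $\Lambda^{ij}$. Since $\delta_{l,k'}$ is nonzero exactly when $k+l=m+1$, for $k+l\ne m+1$ the identity reduces to the cross-relation \eqref{Mikjl}, and for each $k$ with $l=k'$ it reads $\Lambda^{ij}_k:=[M^i_k,M^j_{k'}]+[M^i_{k'},M^j_k]=\tfrac2mC^{ij}$. The right-hand side is manifestly independent of $k$, so all $\Lambda^{ij}_k$ coincide; taking $k=1$ (so $k'=m$) identifies this common value with $\Lambda^{ij}=[M^i_1,M^j_m]+[M^i_m,M^j_1]$, which is exactly \eqref{AnBm}. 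For the converse I would start from \eqref{AnBm}, sum the relations $\Lambda^{ij}_k=\Lambda^{ij}$ over $k=1,\dots,m$ to get $2C^{ij}=m\Lambda^{ij}$, hence $\tfrac2mC^{ij}=\Lambda^{ij}$, which is precisely the normalisation needed to run the entry computation backwards and recover \eqref{AMMB}. The $(\wt B_n,A_m)$ case I would handle by the parallel computation with $\wt B_n=A_n-\tfrac{\wt Q_n}n$ acting from the left on $U=M^{(1)}M^{(2)}\tfrac{1+P_m}2$; using $(\wt Q_n)^{ij}_{ab}=\varepsilon_i\varepsilon_a\delta_{j,i'}\delta_{b,a'}$ one gets $[M^i_k,M^j_l]+[M^i_l,M^j_k]=\tfrac2n\varepsilon_i\delta_{j,i'}\sum_a\varepsilon_a(M^a_kM^{a'}_l+M^a_lM^{a'}_k)$, where $\varepsilon_i=1$ whenever $i<j=i'$. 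Splitting the contraction over $a\le r$ and $a>r$ (relabelling $a\mapsto a'$ and using $\varepsilon_{a'}=-\varepsilon_a$) rewrites the right-hand side as $\tfrac1r\sum_{b\le r}\bigl([M^b_k,M^{b'}_l]+[M^b_l,M^{b'}_k]\bigr)$, which is independent of $i$; the same ``independence plus averaging'' argument identifies the common value with $\wt\Lambda_{kl}=[M^1_k,M^n_l]+[M^1_l,M^n_k]$ and yields \eqref{wtBnAm} with its converse.

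The main obstacle is not the index bookkeeping but the step that converts the contracted, $k$- (respectively $i$-)independent right-hand side into the single distinguished combination $\Lambda^{ij}$ (respectively $\wt\Lambda_{kl}$): in the forward direction one must observe that all the ``diagonal'' commutator combinations $\Lambda^{ij}_k$ (respectively the $\varepsilon$-weighted sums) are forced equal, and in the converse one must supply the correct constant via the summation identity $\tfrac2mC^{ij}=\Lambda^{ij}$ and its $\wt B$ analogue. Verifying this normalisation in both directions is what closes the two equivalences.
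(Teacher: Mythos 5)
Your proof is correct, but it takes a genuinely different route from the paper's. You expand the defining relation $A_nM^{(1)}M^{(2)}(1-B_m)=0$ entrywise, using the explicit entries $(Q_m)^{ab}_{kl}=\delta_{b,a'}\delta_{l,k'}$ and $(\wt Q_n)^{ij}_{ab}=\varepsilon_i\varepsilon_a\delta_{j,i'}\delta_{b,a'}$, and then close the equivalence by the averaging identity $\tfrac2m C^{ij}=\Lambda^{ij}$ (and its $C$-type analogue); I checked the index bookkeeping, including the antisymmetry $C^{ji}=-C^{ij}$, $C^{ii}=0$ needed to extend the relations from $i<j$, $k\le l$ to all indices in the converse direction, and it all goes through. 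The paper instead invokes Proposition~\ref{propAB}: it forms $\phi_k=\sum_i\psi_iM^i_k$ in $\Xi_{A_n}(\gR)$ and demands that these satisfy the presentation of $\Xi_{B_m}(\CC)$, namely $\phi_k\phi_l+\phi_l\phi_k=\lambda\,\delta_{k+l,m+1}$ with the single central element $\lambda=\phi_1\phi_m+\phi_m\phi_1$, then pairs with $\psi^i\psi^j$ to read off~\eqref{AnBm}; the $C$ case is dual, using $y^i=\sum_kM^i_kx^k$ in $\gX_{A_m}(\gR)$ subject to the relations of $\gX_{\wt B_n}(\CC)$. The observation you must make explicitly --- that all the ``diagonal'' combinations $\Lambda^{ij}_k$ are forced equal and that the contracted constant has the right normalisation --- is absorbed in the paper's approach into the fact that $\Xi_{B_m}(\CC)$ has one distinguished degree-two central generator $\lambda$. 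What your argument buys is self-containedness and an explicit verification of the normalisation $\tfrac2mC^{ij}=\Lambda^{ij}$; what the paper's buys is brevity and a conceptual explanation of where the right-hand sides $\Lambda^{ij}$, $\wt\Lambda_{kl}$ come from (they are the images of the central elements $\lambda$, $\wt\lambda$ of the quadratic algebras).
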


\begin{proof} We use Proposition~\ref{propAB}. Let $\psi_1,\dots,\psi_n\in\Xi_{A_n}(\CC)$ be the Grassmann variables. We need to subject the new variables $\phi_k=\sum_{i=1}^n\psi_iM^i_k\in\Xi_{A_n}(\gR)$ to the relations of the algebra~$\Xi_{B_m}(\CC)$, we derive $\phi_k\phi_l+\phi_l\phi_k=\lambda\delta_{k+l,m+1}$, where $k\le l$ and $\lambda=\phi_1\phi_m+\phi_m\phi_1$. Substitution $\phi_k=\sum_{i=1}^n\psi_iM^i_k$ and pairing with $\psi^i\psi^j=-\psi^j\psi^i\in\Xi^*_{A_n}(\CC)$ gives the formula~\eqref{AnBm}. The~rela\-tions~\eqref{wtBnAm} is obtained similarly by considering the elements $y_i=\sum_{k=1}M^i_kx^k\in\gX_{A_m}(\gR)$, where~$x^k$ are the generators of $\gX_{A_m}(\CC)=\CC[x^1,\dots,x^m]$. \end{proof}

\begin{Cor}
The algebra $\gU_{A_n,B_m}$ is generated by $M^i_k$, $i=1,\dots,n$, $k=1,\dots,m$, and $\Lambda^{ij}$, $i,j=1,\dots,n$, $i<j$, with the relations~\eqref{AnBm}.
The algebra $\gU_{\wt B_n,A_m}$ is generated by $M^i_k$, $i=1,\dots,n$, $k=1,\dots,m$, and $\wt\Lambda_{kl}$, $k,l=1,\dots,m$, $k\le l$, with the relations~\eqref{wtBnAm}.
\end{Cor}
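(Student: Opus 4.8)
The plan is to read the presentation of $\gU_{A_n,B_m}$ straight off its universal definition and reconcile it with the entrywise form of the defining relation supplied by the preceding Proposition. Recall from Section~\ref{secCommCat} that the right quantum algebra $\gU_{A_n,B_m}$ is by construction generated by the entries $\M^i_k$ of the universal $(A_n,B_m)$-Manin matrix $\M$, subject to the single matrix relation $A_n\M^{(1)}\M^{(2)}(1-B_m)=0$. By the preceding Proposition this matrix relation is equivalent, entry by entry, to the system \eqref{AnBm}, in which $\Lambda^{ij}$ abbreviates the commutator combination $[\M^i_1,\M^j_m]+[\M^i_m,\M^j_1]$. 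Thus the task is to show that adjoining the $\Lambda^{ij}$ as formal generators and imposing \eqref{AnBm} reproduces $\gU_{A_n,B_m}$ and neither enlarges nor collapses it.

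First I would set up two comparison homomorphisms. Let $\gU'$ denote the algebra presented by the generators $M^i_k$ and $\Lambda^{ij}$ (for $i<j$) modulo the relations \eqref{AnBm}. Define $\phi\colon\gU'\to\gU_{A_n,B_m}$ by $M^i_k\mapsto\M^i_k$ and $\Lambda^{ij}\mapsto[\M^i_1,\M^j_m]+[\M^i_m,\M^j_1]$; this is well defined because, by the Proposition, the universal matrix $\M$ satisfies \eqref{AnBm} in $\gU_{A_n,B_m}$ with exactly this interpretation of $\Lambda^{ij}$. Conversely define $\psi\colon\gU_{A_n,B_m}\to\gU'$ by $\M^i_k\mapsto M^i_k$. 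To see that $\psi$ is well defined one must check that the image matrix $M=(M^i_k)$ satisfies $A_n M^{(1)}M^{(2)}(1-B_m)=0$ in $\gU'$; since by the Proposition this matrix relation is equivalent to $M$ being subject to \eqref{AnBm} with $\Lambda^{ij}$ literally equal to $[M^i_1,M^j_m]+[M^i_m,M^j_1]$, I would invoke \eqref{AnBm} at the distinguished indices $(k,l)=(1,m)$, where $k+l=m+1$, to obtain precisely $[M^i_1,M^j_m]+[M^i_m,M^j_1]=\Lambda^{ij}$ in $\gU'$, which identifies $M$ as an $(A_n,B_m)$-Manin matrix.

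The key step is then to verify that $\phi$ and $\psi$ are mutually inverse, and here the relation \eqref{AnBm} at $(1,m)$ is again decisive. On the generators $M^i_k$ both composites are visibly the identity, and the composite $\psi\circ\phi$ returns $\Lambda^{ij}\mapsto[M^i_1,M^j_m]+[M^i_m,M^j_1]=\Lambda^{ij}$ by the same specialization of \eqref{AnBm}; since $\phi\circ\psi$ fixes each $\M^i_k$, it is the identity of $\gU_{A_n,B_m}$. This yields $\gU'\cong\gU_{A_n,B_m}$, the first assertion. The $C$-case is structurally identical: $\gU_{\wt B_n,A_m}$ is generated by the entries of the universal $(\wt B_n,A_m)$-Manin matrix subject to $\wt B_n\M^{(1)}\M^{(2)}(1-A_m)=0$, which by the Proposition is equivalent to \eqref{wtBnAm} with $\wt\Lambda_{kl}=[\M^1_k,\M^n_l]+[\M^1_l,\M^n_k]$, and the redundancy of $\wt\Lambda_{kl}$ is recovered by specializing \eqref{wtBnAm} to $(i,j)=(1,n)$, where $i+j=n+1$.

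I do not expect a serious obstacle, since the real content is already carried by the preceding Proposition; the only point requiring care—the mild friction in both cases—is the bookkeeping of the index symmetries. One must observe that $\Lambda^{ij}=-\Lambda^{ji}$ and $\wt\Lambda_{kl}=\wt\Lambda_{lk}$ (immediate from antisymmetry of the commutator), which is why the independent formal generators are indexed by $i<j$ and by $k\le l$ respectively, so that they match exactly the commutator expressions they abbreviate. Once this matching is confirmed the two presentations coincide and the Corollary follows.
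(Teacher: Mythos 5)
Your proof is correct and follows the same route the paper intends: the Corollary is stated without proof as an immediate consequence of the preceding Proposition together with the universal presentation of $\gU_{A,B}$ from Section~\ref{secCommCat}, and your pair of mutually inverse homomorphisms, with the specialization of \eqref{AnBm} at $(k,l)=(1,m)$ (resp.\ of \eqref{wtBnAm} at $(i,j)=(1,n)$) showing the formal generators $\Lambda^{ij}$ (resp.\ $\wt\Lambda_{kl}$) are redundant, is exactly the formalization of that argument.
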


A Manin matrix of type $B$, $C$ or $D$ does not always keep so under such operations as taking a submatrix, permutation of rows or columns, doubling of a row or column, a composition of them, but sometimes it does.

\begin{Cor} Let $I=(i_1,\dots,i_k)$ and $J=(j_1,\dots,j_l)$, where $1\le i_s\le n$ and $1\le j_t\le m$ for any $s=1,\dots,k$ and $t=1,\dots,l$. Let $M=\big(M^i_j\big)$ be an $n\times m$ matrix over $\gR$ and $M_{IJ}$ be $k\times l$ matrix with entries $(M_{IJ})_{\rm st}=M^{i_s}_{j_t}$.
\begin{itemize}
\itemsep=0pt
 \item Let $M$ be an $(A_n,B_m)$-Manin matrix and $j_s+j_t\ne m+1$ for any $s,t=1,\dots,l$, then $M_{IJ}$ is an $(A_k,A_l)$-Manin matrix.
 \item Let $M$ be an $(A_n,B_m)$-Manin matrix and $j_s+j_t=m+1$ iff $s+t=l+1$ $($this condition implies that $j_1,\dots,j_l$ are pairwise different and hence $l\le m)$, then the matrix $M_{IJ}$ is an~$(A_k,B_l)$-Manin matrix.
 \item Let $M$ be a $\big(\wt B_n,A_m\big)$-Manin matrix and $i_s+i_t\ne n+1$ for any $s,t=1,\dots,k$, then $M_{IJ}$ is an $(A_k,A_l)$-Manin matrix.
 \item Let $M$ be a $\big(\wt B_n,A_m\big)$-Manin matrix and $i_s+i_t=n+1$ iff $s+t=k+1$ $($this condition implies that $i_1,\dots,i_k$ are pairwise different and hence $k\le n)$, then $M_{IJ}$ is a $\big(\wt B_k,A_l\big)$-Manin matrix.
\end{itemize}
\end{Cor}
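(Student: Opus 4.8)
The plan is to reduce the defining relation of each submatrix $M_{IJ}$ to the entry-wise characterisations~\eqref{AnBm} and~\eqref{wtBnAm} already established for $M$. Write $E_M(i,j;k,l):=\big[M^i_k,M^j_l\big]+\big[M^i_l,M^j_k\big]$; this is symmetric under $k\leftrightarrow l$, antisymmetric under $i\leftrightarrow j$, and vanishes for $i=j$. First I would upgrade~\eqref{AnBm},~\eqref{wtBnAm} (stated for $i<j$, $k\le l$) to formulas valid for all index pairs. Extending $\Lambda^{ij}$ antisymmetrically by $\Lambda^{ji}:=-\Lambda^{ij}$, $\Lambda^{ii}:=0$, an $(A_n,B_m)$-Manin matrix satisfies $E_M(i,j;k,l)=\Lambda^{ij}\delta_{k+l,m+1}$ for all $i,j,k,l$; a $\big(\wt B_n,A_m\big)$-Manin matrix satisfies $E_M(i,j;k,l)=\wt\Lambda_{kl}\,\varepsilon_i\,\delta_{i+j,n+1}$ for all $i,j,k,l$. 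In the latter the factor $\varepsilon_i$ is forced: when $i<j$ and $i+j=n+1$ one has $i\le r$, so $\varepsilon_i=+1$, and antisymmetry then fixes the remaining orderings. Since $(M_{IJ})^s_t=M^{i_s}_{j_t}$ gives $E_{M_{IJ}}(s,u;t,v)=E_M(i_s,i_u;j_t,j_v)$, every claim becomes the substitution $i\to i_s$, $j\to i_u$, $k\to j_t$, $l\to j_v$ followed by reading off the result; recall that being an $(A_k,A_l)$-, $(A_k,B_l)$- or $\big(\wt B_k,A_l\big)$-Manin matrix means exactly~\eqref{Mikjl} (with~\eqref{Mikjk} its $t=v$ case), resp.\ the analogues of~\eqref{AnBm},~\eqref{wtBnAm} written for $M_{IJ}$.

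For items~1 and~3 the target is $E_{M_{IJ}}(s,u;t,v)=0$ for $s<u$, $t\le v$. In item~1 the hypothesis $j_s+j_t\ne m+1$ forces $\delta_{j_t+j_v,m+1}=0$, and in item~3 the hypothesis $i_s+i_t\ne n+1$ forces $\delta_{i_s+i_u,n+1}=0$; in both cases $E_{M_{IJ}}$ vanishes outright, so these items are routine.

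For item~2 the iff-hypothesis $j_s+j_t=m+1\Leftrightarrow s+t=l+1$ turns $\delta_{j_t+j_v,m+1}$ into $\delta_{t+v,l+1}$, giving $E_{M_{IJ}}(s,u;t,v)=\Lambda^{i_si_u}\delta_{t+v,l+1}$. It remains to match $\Lambda^{i_si_u}$ with the quantity $(\Lambda_{IJ})^{su}$ of~\eqref{AnBm} for $M_{IJ}$. Since $\Lambda^{ij}$ is indexed by the \emph{rows} and evaluated at the outer columns $1,m$, one has $(\Lambda_{IJ})^{su}=E_M(i_s,i_u;j_1,j_l)=\Lambda^{i_si_u}\delta_{j_1+j_l,m+1}$, and the hypothesis gives $\delta_{j_1+j_l,m+1}=\delta_{l+1,l+1}=1$. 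Hence $(\Lambda_{IJ})^{su}=\Lambda^{i_si_u}$ and $M_{IJ}$ satisfies~\eqref{AnBm}. This closes cleanly precisely because $\Lambda_{IJ}$ carries the same rows $s,u$ as the left-hand side, so the antisymmetric row-dependence is transported automatically; no signs intervene because the orthogonal operator $Q_m$ is sign-free.

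Item~4 follows the same template but is where the genuine difficulty lies, and I expect it to be the main obstacle. After the iff-hypothesis one gets $E_{M_{IJ}}(s,u;t,v)=\wt\Lambda_{j_tj_v}\,\varepsilon_{i_s}\,\delta_{s+u,k+1}$, whereas the target quantity is, by~\eqref{wtBnAm}, anchored at the \emph{fixed} outer rows $1,k$: $(\wt\Lambda_{IJ})_{tv}=E_M(i_1,i_k;j_t,j_v)=\wt\Lambda_{j_tj_v}\,\varepsilon_{i_1}$. Matching the two (the required relation carries the positional sign, which equals $+1$ here since $s<u$, $s+u=k+1$ force $s\le k/2$) demands $\varepsilon_{i_s}=\varepsilon_{i_1}$ for all relevant $s$ — that is, the row reindexing must respect the $\varepsilon$-grading and not merely the pairing $i_s+i_u=n+1$. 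Unlike item~2, here the row-dependence sits in the \emph{fixed}-row factor $\varepsilon_{i}$ rather than travelling with the positions, so the sign cannot be absorbed for free and one must verify that the symplectic form is actually preserved by the selection. Writing $M_{IJ}=P_IMQ_J$ with $P_I=\sum_se_se^{i_s}$, $Q_J=\sum_te_{j_t}e^t$, this is the intertwining of $\wt Q_n$ with $\wt Q_k$ through $P_I\otimes P_I$ (up to normalisation); since the antisymmetriser always commutes with such coordinate maps, the entire content of the symplectic case concentrates in this $\varepsilon$-bookkeeping, which is the step that genuinely uses the symplectic rather than the merely orthogonal structure.
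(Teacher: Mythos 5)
Your reduction to the entry-wise relations~\eqref{AnBm} and~\eqref{wtBnAm} is exactly the route the paper takes (the Corollary is stated there as an immediate consequence of the preceding Proposition, with no separate argument), and your treatment of the first three items is correct and complete: the antisymmetric/symmetric extension of $E_M$ to all index pairs, the identity $E_{M_{IJ}}(s,u;t,v)=E_M(i_s,i_u;j_t,j_v)$, and the verification $(\Lambda_{IJ})^{su}=\Lambda^{i_si_u}$ in item~2 via $\delta_{j_1+j_l,m+1}=1$ are all as they should be. Your observation that in item~2 the row-dependent datum $\Lambda^{ij}$ travels with the selected rows, while in item~4 the row dependence sits in the fixed sign $\varepsilon_i$, correctly locates the asymmetry between the orthogonal and symplectic cases.

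However, item~4 is not proved: you reduce it to the claim $\varepsilon_{i_s}=\varepsilon_{i_1}$ for all $s<u$ with $s+u=k+1$ and then only say that ``one must verify'' it. This cannot be verified, because it does not follow from the stated hypothesis: the condition $i_s+i_t=n+1\Leftrightarrow s+t=k+1$ constrains only the pairing $i\mapsto i'$, not the sign function $\varepsilon$. Concretely, take $n=k=4$ and $I=(1,3,2,4)$; the pairing hypothesis holds, yet $\varepsilon_{i_1}=\varepsilon_1=+1$ while $\varepsilon_{i_2}=\varepsilon_3=-1$. Let $\gR$ be generated by $x^1,\dots,x^4$ with $\big[x^1,x^4\big]=\big[x^2,x^3\big]=1$ and all other commutators zero (the Weyl algebra $A_2(\CC)$), and let $M$ be the $4\times1$ column with $M^i_1=x^i$. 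One checks that $\wt B_4(X\otimes X)=0$, so $M$ is a $\big(\wt B_4,A_1\big)$-Manin matrix; but $M_{IJ}=\big(x^1,x^3,x^2,x^4\big)^\top$ has $\big[x^3,x^2\big]=-1\ne+1=\big[x^1,x^4\big]$, violating~\eqref{wtBnAm} for the pair $(s,u)=(2,3)$. Thus item~4 as stated requires the additional hypothesis that $s\mapsto\varepsilon_{i_s}$ agrees with $s\mapsto\varepsilon_s$ up to a global sign (equivalently, $\varepsilon_{i_s}$ is constant for $s\le k/2$); under that extra assumption your template closes the argument, but without it no proof exists. Your proposal correctly identifies the obstruction but neither resolves it nor names the missing hypothesis, so the fourth item remains open in your write-up.
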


Finally we give the relation of the Manin matrices of types $B$, $C$ and $D$ with the Yangians~\cite{AACFR,AMR,MolevSO}. Let $\g_n=\mathfrak{so}_n$ for the $B$ and $D$ cases and $\g_n=\mathfrak{sp}_n$ for the $C$ case. Consider the $R$-matrices
\[
 R_{\mathfrak{so}_n}(u)=1-\frac{P_n}{u}+\frac{Q_n}{u-n/2+1},\qquad R_{\mathfrak{sp}_n}(u)=1-\frac{P_n}{u}+\frac{\wt Q_n}{u-n/2-1}.
\]
The Yangian $Y(\g_n)$ is an algebra generated by $t^{(r)}_{ij}$, $i,j=1,\dots,n$, $r\in\ZZ_{\ge1}$, with the commutation relations
\begin{align}
 R_{\g_n}(u-v)T^{(1)}(u)T^{(2)}(v)&=T^{(2)}(v)T^{(1)}(u)R_{\g_n}(u-v), \label{RTTB}
\end{align}
and $T'(u)T(u)=1$, where $T(u)=1+\sum_{r\ge1}\sum_{i,j=1}^n t^{(r)}_{ij}E_{i}{}^{j}\,u^{-r}$,
\begin{gather*}
T'(u)=1+\sum_{r\ge1}\sum_{i,j=1}^nt^{(r)}_{j'i'}E_{i}{}^{j}\,(u+n/2-1)^{-r} \qquad
\text{for} \quad \g_n=\mathfrak{so}_n,
\\
T'(u)=1+\sum_{r\ge1}\sum_{i,j=1}^n\varepsilon_i\varepsilon_jt^{(r)}_{j'i'}E_{i}{}^{j}\,(u+n/2+1)^{-r} \qquad\text{for}\quad \g_n=\mathfrak{sp}_n.
\end{gather*}
Note that $R_{\mathfrak{so}_n}(-1)=2(1-B_n)$ and $R_{\mathfrak{sp}_n}(1)=2\wt B_n$, hence by substituting $u=v-1$ and $v=u-1$ to~\eqref{RTTB} we obtain
\begin{gather}
 (1-B_n)T^{(1)}(v-1)T^{(2)}(v)=T^{(2)}(v)T^{(1)}(v-1)(1-B_n),\qquad \text{for}\quad \g_n=\mathfrak{so}_n, \label{BTTso}
 \\
 \wt B_nT^{(1)}(u)T^{(2)}(u-1)=T^{(2)}(u-1)T^{(1)}(u)\wt B_n, \qquad\text{for}\quad \g_n=\mathfrak{sp}_n. \label{BTTsp}
\end{gather}

By multiplying~\eqref{BTTso} by $B_n$ from the left and by exchanging tensor factors we derive that $M=T(v)e^{-\frac{\partial}{\partial v}}$ is a $B_n$-Manin matrix. Due to Propositions~\ref{PropMMBCD} and~\ref{PropMAAAB} this is a Manin matrix of type $B$ or $D$.

By multiplying~\eqref{BTTsp} by $1-\wt B_n$ from the right we see that $M=T(u)e^{-\frac{\partial}{\partial u}}$ is a $\wt B_n$-Manin matrix and hence is a Manin matrix of the type $C$.

\subsection[Minor operators for B, C, D cases and Brauer algebras]
{Minor operators for $\boldsymbol B$, $\boldsymbol C$, $\boldsymbol D$ cases and Brauer algebras}\label{secBrauer}

Remind that the pairing operators $S_{(k)}$ and $A_{(k)}$ for the idempotent $A_n$ are the symmetrizers and anti-symmetrizers of the $k$-th tensor power. We will denote them as $S^{\mathfrak{gl}_n}_{(k)}$ and $A^{\mathfrak{gl}_n}_{(k)}$ to differ them from other pairing operators. The $A$-minors for a Manin matrix of type $B$ or $D$ (or,~more generally, of an $(A_n,B_m)$-Manin matrix) is the column determinants of submatrices. The $S$-minors for a Manin matrix $M$ of type $C$ (or, more generally, of a $\big(\wt B_n,A_m\big)$-Manin matrix $M$) are the normalised row permanents $\frac1{\nu_J}\perm(M_{IJ})$ (see the formulae~\eqref{rowperm} and~\eqref{nuI}).

The $S$-minors for the Manin matrices of types $B$, $D$ and the $A$-minors for the Manin matrices of type $C$ are given by $k$-th $S$-operators for $B_n$ and by $k$-th $A$-operators for $\wt B_n$ respectively. They can be constructed by the method described in Section~\ref{secConstr}. In this case the role of the algebra~$\gU_k$ is played by the Brauer algebra~\cite{Br}. Invariant idempotents in this algebra were constructed in~\cite{I,HX,IM,IMO}.

\begin{Def}
 Let $\omega\in\CC$. The Brauer algebra $\B_k(\omega)$ is an algebra generated by the elements $\sigma_1,\dots,\sigma_{k-1}$ and $\epsilon_1,\dots,\epsilon_{k-1}$ with the relations
\begin{gather}
 \sigma_a^2=1, \qquad \epsilon_a^2=\omega\epsilon_a, \qquad \sigma_a\epsilon_a=\epsilon_a\sigma_a=\epsilon_a, \qquad a=1,\dots,k-1, \label{DefBr1} \\
 \sigma_a\sigma_b=\sigma_b\sigma_a, \qquad \epsilon_a\epsilon_b=\epsilon_b\epsilon_a, \qquad \sigma_a\epsilon_b=\epsilon_b\sigma_a, \qquad|a-b|>1, \label{DefBr2} \\
 \sigma_a\sigma_{a+1}\sigma_a=\sigma_{a+1}\sigma_a\sigma_{a+1}, \qquad
 \epsilon_a\epsilon_{a+1}\epsilon_a=\epsilon_a, \qquad
 \epsilon_{a+1}\epsilon_a\epsilon_{a+1}=\epsilon_{a+1},\qquad \notag \\
 \sigma_a\epsilon_{a+1}\epsilon_a=\sigma_{a+1}\epsilon_a, \qquad
 \epsilon_{a+1}\epsilon_a\sigma_{a+1}=\epsilon_{a+1}\sigma_a, \qquad a=1,\dots,k-2. \label{DefBr3}
\end{gather}
\end{Def}

The subalgebra of $\B_k(\omega)$ generated by $\sigma_1,\dots,\sigma_{k-1}$ is naturally identified with the group algebra $\CC[\SSS_k]$. Also, for $b\le k$ the subalgebra of $\B_k(\omega)$ generated by $\sigma_1,\dots,\sigma_{b-1},\epsilon_1,\dots,\epsilon_{b-1}$ is naturally identified with $\B_b(\omega)$.

If $\omega$ is a positive integer or an even negative integer then the Brauer algebra $\B_k(\omega)$ has the following representation on a tensor power of a finite-dimensional vector space, which extends the representation $\rho^+$ or $\rho^-$ of the symmetric group (see, e.g.,~\cite{Br,IMO,Nazarov,MolevSO}).

\begin{Prop}
The formulae
\begin{gather*}
\rho(\sigma_a)=P_n^{(a,a+1)}, \qquad \rho(\epsilon_a)=Q_n^{(a,a+1)},
\\
\wt\rho(\sigma_a)=-P_n^{(a,a+1)}, \qquad \wt\rho(\epsilon_a)=-\wt Q_n^{(a,a+1)}.
\end{gather*}
define algebra homomorphisms $\rho\colon\B_k(n)\to\End\big((\CC^n)^{\otimes k}\big)$ and $\wt\rho\colon\B_k(-n)\to\End\big((\CC^n)^{\otimes k}\big)$.
\end{Prop}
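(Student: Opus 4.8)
The plan is to verify directly that the proposed assignments respect all the defining relations \eqref{DefBr1}--\eqref{DefBr3} of the Brauer algebra, so that $\rho$ and $\wt\rho$ extend to algebra homomorphisms on $\B_k(n)$ and $\B_k(-n)$ respectively. Since each generator is sent to an operator acting nontrivially only on two consecutive tensor factors, every relation involving a single index $a$ reduces to an identity of operators on $\CC^n\otimes\CC^n$, every relation with $|a-b|>1$ is automatic because the two operators act on disjoint factors and hence commute, and every relation involving a consecutive pair $a,a+1$ reduces to an identity of operators on $\CC^n\otimes\CC^n\otimes\CC^n$. Thus it suffices to check the cases $k=2$ and $k=3$.

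First I would dispose of the easy relations. For $\rho$ the relations in \eqref{DefBr1} are immediate: $(P_n)^2=1$ gives $\rho(\sigma_a)^2=1$, while \eqref{Qproper} gives $\rho(\epsilon_a)^2=(nQ_n)^{(a,a+1)}=n\,\rho(\epsilon_a)$ and $\rho(\sigma_a)\rho(\epsilon_a)=\rho(\epsilon_a)\rho(\sigma_a)=(Q_n)^{(a,a+1)}=\rho(\epsilon_a)$. For $\wt\rho$ the extra signs are arranged precisely so that the same computations work with $\omega=-n$: here $\wt\rho(\epsilon_a)^2=(\wt Q_n^2)^{(a,a+1)}=n\,(\wt Q_n)^{(a,a+1)}=-n\,\wt\rho(\epsilon_a)$ and $\wt\rho(\sigma_a)\wt\rho(\epsilon_a)=(P_n\wt Q_n)^{(a,a+1)}=-(\wt Q_n)^{(a,a+1)}=\wt\rho(\epsilon_a)$ by \eqref{wtQproper}. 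The braid relations for $\sigma$ in \eqref{DefBr3} follow from \eqref{PBR} (in the $\wt\rho$ case the overall sign cancels since three factors of $-P_n$ appear on each side), and the distant relations \eqref{DefBr2} hold by locality.

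The main work, and the step I expect to be the obstacle, is the remaining tangle relations in \eqref{DefBr3}, namely $\epsilon_a\epsilon_{a+1}\epsilon_a=\epsilon_a$, $\epsilon_{a+1}\epsilon_a\epsilon_{a+1}=\epsilon_{a+1}$, $\sigma_a\epsilon_{a+1}\epsilon_a=\sigma_{a+1}\epsilon_a$ and $\epsilon_{a+1}\epsilon_a\sigma_{a+1}=\epsilon_{a+1}\sigma_a$. I would handle these by factoring $Q_n=\Omega\,\Omega^*$, where $\Omega=\sum_i e_i\otimes e_{i'}\in\CC^n\otimes\CC^n$ and $\Omega^*=\sum_k e^k\otimes e^{k'}\in(\CC^n\otimes\CC^n)^*$, and similarly $\wt Q_n=\wt\Omega\,\wt\Omega^*$ with $\wt\Omega=\sum_i\varepsilon_i\,e_i\otimes e_{i'}$, $\wt\Omega^*=\sum_k\varepsilon_k\,e^k\otimes e^{k'}$. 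The elementary identities $\Omega^*\Omega=n$, $P_n\Omega=\Omega$, $\Omega^* P_n=\Omega^*$ (and their signed analogues $\wt\Omega^*\wt\Omega=n$, $P_n\wt\Omega=-\wt\Omega$, $\wt\Omega^* P_n=-\wt\Omega^*$, which use $\varepsilon_{i'}=-\varepsilon_i$), together with the zigzag contraction on the middle factor, reduce each tangle relation to a short evaluation on three consecutive factors. For instance a direct computation on $e_a\otimes e_b\otimes e_c$ yields $Q_n^{(12)}Q_n^{(23)}Q_n^{(12)}=Q_n^{(12)}$ and $P_n^{(12)}Q_n^{(23)}Q_n^{(12)}=P_n^{(23)}Q_n^{(12)}$, which are exactly the $a=1$, $k=3$ instances of the third and first relations above; the $\wt\rho$ versions follow identically once the signs $\varepsilon_i\varepsilon_{i'}=-1$ are tracked.

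Finally, I would note that the two relations $\epsilon_{a+1}\epsilon_a\sigma_{a+1}=\epsilon_{a+1}\sigma_a$ and $\epsilon_{a+1}\epsilon_a\epsilon_{a+1}=\epsilon_{a+1}$ need not be checked independently: since $P_n^\top=P_n$, $Q_n^\top=Q_n$ and $\wt Q_n^\top=\wt Q_n$, matrix transposition is an anti-automorphism of $\End((\CC^n)^{\otimes k})$ fixing every generator image, so it interchanges each tangle relation with its reversed partner. The only genuine obstacle is therefore the careful bookkeeping of the index reflection $i\mapsto i'$ and, in the symplectic case, the signs $\varepsilon_i$, in the handful of three-factor evaluations; everything else is formal.
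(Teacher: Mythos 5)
Your proposal is correct in substance and follows essentially the same route as the paper: everything reduces to identities on two and three consecutive tensor factors, the relations \eqref{DefBr1}, \eqref{DefBr2} and the braid relation are dispatched exactly as in the paper via \eqref{Qproper}, \eqref{wtQproper}, locality and \eqref{PBR}, and the real content is the pair of three-factor identities $Q_n^{(12)}Q_n^{(23)}Q_n^{(12)}=Q_n^{(12)}$, $P_n^{(12)}Q_n^{(23)}Q_n^{(12)}=P_n^{(23)}Q_n^{(12)}$ together with their signed analogues, which the paper obtains by ``direct calculation''. Your factorization $Q_n=\Omega\,\Omega^*$, $\wt Q_n=\wt\Omega\,\wt\Omega^*$ with the zigzag identity $(\Omega^*\otimes\id)(\id\otimes\Omega)=\id$ is a cleaner, more systematic way to carry out that calculation, and it makes the sign bookkeeping via $\varepsilon_{i'}=-\varepsilon_i$ transparent; that is a modest but genuine improvement in presentation over the paper's unexplained computation.

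One step is stated imprecisely. Transposition reverses words and fixes the generator images, so it converts an already verified identity into its word reversal; but the reversal of $\sigma_a\epsilon_{a+1}\epsilon_a=\sigma_{a+1}\epsilon_a$ is $\epsilon_a\epsilon_{a+1}\sigma_a=\epsilon_a\sigma_{a+1}$, not $\epsilon_{a+1}\epsilon_a\sigma_{a+1}=\epsilon_{a+1}\sigma_a$, and the relations $\epsilon_a\epsilon_{a+1}\epsilon_a=\epsilon_a$ and $\epsilon_{a+1}\epsilon_a\epsilon_{a+1}=\epsilon_{a+1}$ are each palindromic, so transposition does not relate them to one another. What actually exchanges the roles of the indices $a$ and $a+1$ is conjugation by the permutation reversing the order of the three tensor factors, which is legitimate here because $P_n^{(21)}=P_n^{(12)}$, $Q_n^{(21)}=Q_n^{(12)}$ and $\wt Q_n^{(21)}=\wt Q_n^{(12)}$ --- precisely the symmetry the paper invokes via \eqref{P1Def}. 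Combining that conjugation with your transposition (or simply performing one more zigzag contraction of the same type) closes the gap at no real cost, but as written the claim that the last two relations of \eqref{DefBr3} need not be checked independently is not justified by transposition alone.
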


\begin{proof} The relations for the generators $\sigma_a$ are valid since $P_n$ satisfies the braid relation. The relations~\eqref{DefBr1} follow from~\eqref{Qproper}, \eqref{wtQproper}. The relations~\eqref{DefBr2} are obvious. Further, by direct calculation we obtain
\begin{gather}
 Q_n^{(12)}Q_n^{(23)}Q_n^{(12)}=Q_n^{(12)}, \qquad
 P_n^{(12)}Q_n^{(23)}Q_n^{(12)}=P_n^{(23)}Q_n^{(12)}, \label{QQPQ}
 \\
 \wt Q_n^{(12)}\wt Q_n^{(23)}\wt Q_n^{(12)}=\wt Q_n^{(12)}, \qquad
 P_n^{(12)}\wt Q_n^{(23)}\wt Q_n^{(12)}=-P_n^{(23)}\wt Q_n^{(12)}. \label{QQPQwh}
\end{gather}
They give some of the relations~\eqref{DefBr3}. The rest of~\eqref{DefBr3} is a consequence of the formulae~\eqref{QQPQ}, \eqref{QQPQwh}, \eqref{P1Def}, $Q_n^{(21)}=Q_n^{(12)}$ and $\wt Q_n^{(21)}=\wt Q_n^{(12)}$. \end{proof}

Define an augmentation $\varepsilon\colon\B_k(\omega)\to\CC$ by
\[
\varepsilon(\sigma_a)=1,\qquad \varepsilon(\epsilon_a)=0.
\]
The subalgebras $\U_k$ and $\wt\U_k$ of $\End\big((\CC^n)^{\otimes k}\big)$ generated by $B_n^{(a,a+1)}$ and $\wt B_n^{(a,a+1)}$ are contained in the images of $\rho$ and $\wt\rho$ respectively, but they do not coincide with these images, so we need to check the conditions~\eqref{Xpi}--\eqref{piPsi} of Theorem~\ref{ThUkrhosk}. They follow from Proposition~\ref{PropBnXX}. Moreover, we have $\varepsilon(u_a)=0$, $\rho(u_a)=B_n^{(a,a+1)}$ and $\wt\rho(\wt u_a)=1-\wt B_n^{(a,a+1)}$ for the elements $u_a=\frac{1-\sigma_a}2+\frac{\epsilon_a}{\omega}$ $\in\B_k(\omega)$, where $\omega=n$ and $\omega=-n$ for $\rho$ and $\wt\rho$ respectively. Hence, if we construct an element $s_{(k)}\in\B_k(\omega)$ such that
\begin{align}
\sigma_as_{(k)}=s_{(k)}\sigma_a=s_{(k)}, \qquad \epsilon_as_{(k)}=s_{(k)}\epsilon_a=0, \qquad \varepsilon(s_{(k)})=1 \label{skCond}
\end{align}
for any $a=1,\dots,k-1$ we obtain the corresponding pairing operators, namely a $k$-th $S$-operator $S_{(k)}=\rho(s_{(k)})$ for $A=B_n$ and a $k$-th $A$-operator $A_{(k)}=\wt\rho(s_{(k)})$ for $A=\wt B_n$.

Let $\tau_{ab}=\sigma_a\sigma_{a+1}\cdots\sigma_{b-1}$, $a<b$. This is the cycle $(a,a+1,\dots,b)\in\SSS_k$. Note that the transpositions have the form $\sigma_{ab}=\sigma_{ba}=\tau_{ab-1}\sigma_{b-1}\tau_{ab-1}^{-1}$. Define $\epsilon_{ab}=\epsilon_{ba}=\tau_{ab-1}\epsilon_{b-1}\tau_{ab-1}^{-1}$. We have $\sigma\sigma_{ab}\sigma^{-1} =\sigma_{\sigma(a)\sigma(b)}$ and $\sigma\epsilon_{ab}\sigma^{-1} =\epsilon_{\sigma(a)\sigma(b)}$ for any $\sigma\in\SSS_k$.
These elements can be considered graphically in the following way~\cite{Br,MolevSO}. Remind that an element $\sigma\in\SSS_k$ is presented by a diagram where each number $i$ from the top row is connected with $\sigma(i)$ from the bottom row. In particular, the diagram corresponding to the element $\sigma_{ab}$ is
\begin{align*}
 \xymatrix@C=0.5em{ 1\ar@{-}[dd] & \cdots & a-1\ar@{-}[dd] & a\ar@{-}[ddrrrr] & a+1\ar@{-}[dd] & \cdots & b-1\ar@{-}[dd]& b\ar@{-}[ddllll] & b+1\ar@{-}[dd] & \cdots & k\ar@{-}[dd] \\
\\
 1 & \cdots & a-1 & a & a+1 & \cdots & b-1 & b & b+1 & \cdots & k
}
\end{align*}
The element $\epsilon_{ab}$ is presented by the diagram
\begin{align*}
 \xymatrix@C=0.5em{ 1\ar@{-}[dd] & \cdots & a-1\ar@{-}[dd] & a\ar@/_2pc/@{-}[rrrr] & a+1\ar@{-}[dd] & \cdots & b-1\ar@{-}[dd] & b & b+1\ar@{-}[dd] & \cdots & k\ar@{-}[dd] \\
\\
 1 & \cdots & a-1 & a\ar@/^2pc/@{-}[rrrr] & a+1 & \cdots & b-1 & b & b+1 & \cdots & k
}
\end{align*}
To multiply two elements one needs to put the corresponding diagrams one over the other and substitute each arising loop by a factor $\omega$. This allows to simplify calculations in the Brauer algebra.

Consider the elements
\[
 y_b=\sum_{a=1}^{b-1}(\sigma_{ab}-\epsilon_{ab})\in\B_b(\omega).
\]
These elements (up to a constant term) were introduced in~\cite{Nazarov} as analogues of Jucys--Murphy elements for the Brauer algebra. One can check that each element $y_b$ commute with the subalgebra $\B_{b-1}(\omega)$. This implies, in particular, commutativity $[y_a,y_b]=0$ (see~\cite{Nazarov} for details). The~images of the elements $y_b\in\B_k(n)$ and $y_b\in\B_k(-n)$ under the homomorphisms $\rho$ and $\wt\rho$ are the matrices
\[
 Y_b=\rho(y_b)=\sum_{a=1}^{b-1}\big(P_n^{(ab)}-Q_n^{(ab)}\big), \qquad
 \wt Y_b=\wt\rho(y_b)=-\sum_{a=1}^{b-1}\big(P_n^{(ab)}-\wt Q_n^{(ab)}\big),
\]
respectively.

The following analogue of the symmetrizer can be written in terms of the Jucys--Murphy elements (see~\cite{I,IR,MolevSO}):
\begin{align}
s_{(k)}=\frac1{k!}\prod_{b=2}^k\frac{(y_b+1)(y_b+\omega+b-3)}{2b+\omega-4}. \label{sk}
\end{align}

\begin{Prop} The elements~\eqref{sk} satisfy the conditions~\eqref{skCond}. Hence the matrices
\begin{gather*}
 S_{(k)}=S_{(k)}^{\mathfrak{so}_n}=\rho(s_{(k)})=\frac1{k!}\prod_{b=2}^k\frac{(Y_b+1)(Y_b+n+b-3)}{2b+n-4} \qquad (2\le k), %\label{Skso}
 \\
 A_{(k)}=A_{(k)}^{\mathfrak{sp}_n}=\wt\rho(s_{(k)})=\frac1{k!}\prod_{b=2}^k\frac{(\wt Y_b+1)(\wt Y_b-n+b-3)}{2b-n-4}\qquad (2\le k\le r+1). \label{Aksp}
\end{gather*}
are the $k$-th $S$-operator for $B_n$ and the $k$-th $A$-operator for $\wt B_n$ respectively.
\end{Prop}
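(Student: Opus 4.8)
The statement splits into verifying that $s_{(k)}$ satisfies \eqref{skCond} and then deducing that $\rho(s_{(k)})$, $\wt\rho(s_{(k)})$ are the claimed pairing operators. The second part is immediate from Theorem~\ref{ThUkrhosk}: the elements $u_a=\frac{1-\sigma_a}2+\frac{\epsilon_a}\omega$ satisfy $\rho(u_a)=B_n^{(a,a+1)}$ for $\omega=n$ and $\wt\rho(u_a)=1-\wt B_n^{(a,a+1)}$ for $\omega=-n$, with $\varepsilon(u_a)=0$, while conditions \eqref{Xpi}--\eqref{piX} (resp. \eqref{Psipi}--\eqref{piPsi}) follow from \eqref{BnXX} (resp. \eqref{PsiwtBn}) and their transposes, as explained before the statement; moreover \eqref{skCond} on generators propagates to the full invariance \eqref{usk} by induction on word length, since $\varepsilon$ is multiplicative. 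So the whole content is \eqref{skCond}. The product in \eqref{sk} is well defined because $y_2,\dots,y_k$ commute pairwise (see \cite{Nazarov}), and the normalisation is checked at once: from $\varepsilon(\sigma_{ab})=1$, $\varepsilon(\epsilon_{ab})=0$ we get $\varepsilon(y_b)=b-1$, so each factor has augmentation $b\,(2b+\omega-4)/(2b+\omega-4)=b$ and $\varepsilon(s_{(k)})=\frac1{k!}\prod_{b=2}^k b=1$.

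The invariance $\sigma_as_{(k)}=s_{(k)}$, $\epsilon_as_{(k)}=0$ I would prove by induction on $k$, first reducing to left invariance: the map $\sigma_a\mapsto\sigma_a$, $\epsilon_a\mapsto\epsilon_a$ extends to an anti-automorphism of $\B_k(\omega)$ fixing $\varepsilon$, so Proposition~\ref{propInv} applied to $\B_k(\omega)$ turns left invariance into the two-sided statement. The base case $k=2$ is the direct computation $s_{(2)}=\frac{1+\sigma_1}2-\frac{\epsilon_1}\omega$, which satisfies \eqref{skCond}. For the inductive step write $s_{(k)}=\frac1k\,s_{(k-1)}f_k(y_k)$ with $f_k(t)=\frac{(t+1)(t+\omega+k-3)}{2k+\omega-4}$ and $s_{(k-1)}\in\B_{k-1}(\omega)$. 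For $a\le k-2$ the generators $\sigma_a,\epsilon_a$ lie in $\B_{k-1}(\omega)$, so using the inductive hypothesis $\sigma_as_{(k)}=\frac1k(\sigma_as_{(k-1)})f_k(y_k)=\frac1k s_{(k-1)}f_k(y_k)=s_{(k)}$ and likewise $\epsilon_as_{(k)}=\frac1k(\epsilon_as_{(k-1)})f_k(y_k)=0$.

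The remaining case $a=k-1$ is the crux, and here I would use the recursion $y_k=\sigma_{k-1}y_{k-1}\sigma_{k-1}+\sigma_{k-1}-\epsilon_{k-1}$ (from $\sigma_{k-1}\sigma_{a,k-1}\sigma_{k-1}=\sigma_{ak}$ and $\sigma_{k-1}\epsilon_{a,k-1}\sigma_{k-1}=\epsilon_{ak}$). Multiplying the relevant identities by $s_{(k-1)}$ and using $s_{(k-1)}u=\varepsilon(u)s_{(k-1)}$ for $u\in\B_{k-1}(\omega)$ together with \eqref{DefBr1}--\eqref{DefBr3}, one shows that on the right ideal $s_{(k-1)}\B_k(\omega)$ the element $y_k$ satisfies the cubic relation with roots $k-1$, $-1$ and $-(\omega+k-3)$: on the $y_k=-1$ part $\sigma_{k-1}$ acts by $-1$, and the $y_k=-(\omega+k-3)$ part is supported on $\epsilon_{k-1}$. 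Since $f_k$ is exactly the interpolation polynomial selecting the eigenvalue $k-1$ (with $f_k(k-1)=k$, explaining the factor $\frac1k$), the product $s_{(k-1)}f_k(y_k)$ annihilates the other two eigenspaces, giving $\sigma_{k-1}s_{(k)}=s_{(k)}$ and $\epsilon_{k-1}s_{(k)}=0$. The delicate point — where the shifts $+1$, $+\omega+b-3$ and the denominator $2b+\omega-4$ are forced — is precisely this identification of the minimal polynomial of $y_k$ on the ideal, which I would carry out by the explicit Brauer-algebra manipulations of \cite{I,IR,MolevSO}; the ranges $2\le k$ (for $B$, $D$) and $2\le k\le r+1$ (for $C$) ensure $2b+\omega-4\ne0$ throughout.

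Finally, applying $\rho\colon\B_k(n)\to\End\big((\CC^n)^{\otimes k}\big)$ to the verified identities \eqref{usk} gives, by the first part of Theorem~\ref{ThUkrhosk}, that $S_{(k)}^{\mathfrak{so}_n}=\rho(s_{(k)})$ is the $k$-th $S$-operator for $B_n$; applying $\wt\rho\colon\B_k(-n)\to\End\big((\CC^n)^{\otimes k}\big)$ gives, by the second part, that $A_{(k)}^{\mathfrak{sp}_n}=\wt\rho(s_{(k)})$ is the $k$-th $A$-operator for $\wt B_n$.
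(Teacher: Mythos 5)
Your overall architecture matches the paper's: the same normalisation check via $\varepsilon(y_b)=b-1$, the same reduction of two-sided invariance to left invariance through the anti-automorphism fixing the generators and Proposition~\ref{propInv}, the same induction on $k$ with the case $a\le k-2$ disposed of by the commutativity of $y_k$ with $\B_{k-1}(\omega)$, and the same final appeal to Theorem~\ref{ThUkrhosk} together with Proposition~\ref{PropBnXX}. The gap is the case $a=k-1$, which is the entire substance of the proposition and which you do not actually prove: you recast it as the claim that $y_k$ satisfies a cubic with roots $k-1$, $-1$, $-(\omega+k-3)$ on a suitable module and that your $f_k$ is the interpolation polynomial selecting the root $k-1$, and then state that you ``would carry out'' the verification ``by the explicit Brauer-algebra manipulations'' of the cited references. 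That verification is precisely what the proof must contain. The paper does it by establishing two concrete congruences modulo the left ideal $I_{k-1,k}$ generated by the elements $u-\varepsilon(u)$, $u\in\B_{k-1}(\omega)$ (which annihilates $s_{(k-1)}$ on the left): namely $\epsilon_{k-1}(y_k+\omega+k-3)\in I_{k-1,k}$, giving $\epsilon_{k-1}s_{(k)}=0$, and $\sigma_{k-1}(1+y_k)(y_k+\omega+k-3)\in(1+y_k)(y_k+\omega+k-3)+I_{k-1,k}$, giving $\sigma_{k-1}s_{(k)}=s_{(k)}$; both follow from expanding $y_k=\sum_{a<k}(\sigma_{ak}-\epsilon_{ak})$ and pushing everything into $\B_{k-1}(\omega)$ with the Brauer relations.

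Beyond the deferral, the spectral framing as stated has loose ends. Left multiplication by $\sigma_{k-1}$ does not preserve the right ideal $s_{(k-1)}\B_k(\omega)$, so the phrase ``on the $y_k=-1$ part $\sigma_{k-1}$ acts by $-1$'' needs a module on which it makes sense (the left module $\B_k(\omega)/I_{k-1,k}$ is the right object). More importantly, even granting the cubic, your two auxiliary statements concern the complementary eigenspaces and do not by themselves yield $\sigma_{k-1}s_{(k)}=s_{(k)}$ and $\epsilon_{k-1}s_{(k)}=0$ on the $y_k=k-1$ component; closing the argument requires exactly the two congruences above (or equivalents), so the interpolation picture is a repackaging of, not a substitute for, the computation. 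Everything else in your write-up --- the identification of the $u_a$, the base case $s_{(2)}=\frac{1+\sigma_1}2-\frac{\epsilon_1}{\omega}$, the recursion $y_k=\sigma_{k-1}y_{k-1}\sigma_{k-1}+\sigma_{k-1}-\epsilon_{k-1}$, and the remark that $2b+\omega-4\ne0$ in the stated ranges --- is correct.
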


\begin{proof} By substituting $\varepsilon(y_b)=b-1$ we obtain $\varepsilon(s_{(k)})=1$. Suppose by induction that $\sigma_as_{(k-1)}=s_{(k-1)}$ and $\epsilon_as_{(k-1)}=0$ for all $a=1,\dots,k-2$. Let $I_{k-1,k}$ be the left ideal of $\B_k(\omega)$ generated by the elements $u-\varepsilon(u)$, $u\in\B_{k-1}(\omega)$, then $us_{(k-1)}=0$ for any $u\in I_{k-1,k}$. Since $y_k$ commutes with elements of $\B_{k-1}(\omega)$ and $s_{(k)}$ is proportional to $(y_k+\omega+k-3)(y_k+1)s_{(k-1)}$ we have $\sigma_as_{(k)}=s_{(k)}$ and $\epsilon_as_{(k)}=0$ for all $a=1,\dots,k-2$. Further, we obtain
\begin{align} \label{epsy}
 \epsilon_{k-1}(y_k+\omega+k-3)=\epsilon_{k-1}\sigma_{k-1}-\epsilon_{k-1}^2+\epsilon_{k-1}\sum_{a=1}^{k-2}(\sigma_{ak}-\epsilon_{ak})+(\omega+k-3)\epsilon_{k-1}.
\end{align}
Note that $\epsilon_{k-1}\sigma_{k-1}=\epsilon_{k-1}$, $\epsilon_{k-1}^2=\omega\epsilon_{k-1}$, $\epsilon_{k-1}\sigma_{ak}=\sigma_{ak}\epsilon_{k-1,a}\in I_{k-1,k}$ and $\epsilon_{k-1}\epsilon_{ak}=\epsilon_{k-1}\sigma_{a,k-1}\in\epsilon_{k-1}+I_{k-1,k}$ for any $a\le k-2$, so the element~\eqref{epsy} belongs to $I_{k-1,k}$. Hence $\epsilon_{k-1} s_{(k)}$ is proportional to the element $\epsilon_{k-1}(y_k+\omega+k-3)(y_k+1)s_{(k-1)}\in I_{k-1,k}(y_k+1)s_{(k-1)}=0$. Analogously, we derive
\begin{align*}
 \sigma_{k-1}(1+y_k)&=
 \sigma_{k-1}+1-\epsilon_{k-1}+\sum_{a=1}^{k-2}(\sigma_{ak} \sigma_{k-1,a} -\sigma_{k-1}\epsilon_{ak})
 \\
 &\in 1+y_k+I_{k-1,k}\sum_{a=1}^{k-2}\B_k(\omega)\epsilon_{ak}.
\end{align*}
By using the formulae $\epsilon_{ak}=\sigma_{a,k-1}\epsilon_{k-1}\sigma_{a,k-1}$, $(\sigma_{a,k-1}-1)y_k=y_k(\sigma_{a,k-1}-1)\in I_{k-1,k}$ and the fact that~\eqref{epsy} belongs to $I_{k-1,k}$ one yields $\epsilon_{ak}(y_k+\omega+k-3)\in I_{k-1,k}$. This implies the inclusion $\sigma_{k-1}(1+y_k)(y_k+\omega+k-3)\in(1+y_k)(y_k+\omega+k-3)+I_{k-1,k}$, so that $\sigma_{k-1}s_{(k)}=s_{(k)}$. Thus~$s_{(k)}$ is left invariant with respect to $\varepsilon$. Due to Proposition~\ref{propInv} we obtain the right invariance of~$s_{(k)}$.
The rest follows from Theorem~\ref{ThUkrhosk} and Proposition~\ref{PropBnXX} (see the reasoning above). \end{proof}

The element~\eqref{sk} can be rewritten in the following form~\cite{IM,IMO}:
\[
 s_{(k)}=\frac1{k!}\prod_{1\le a<b\le k}\bigg(1+\frac{\sigma_{ab}}{b-a}-\frac{\epsilon_{ab}}{\omega/2+b-a-1}\bigg),
\]
where the product is calculated in the lexicographic order on the pairs $(a,b)$. By taking representations $\rho\colon\B_k(n)\to\End\big((\CC^n)^{\otimes k}\big)$ and $\wt\rho\colon\B_k(-n)\to\End\big((\CC^n)^{\otimes k}\big)$ we obtain the corresponding pairing operators:
\[
 S_{(k)}^{\mathfrak{so}_n}=\frac1{k!}\prod_{1\le a<b\le k}R_{\mathfrak{so}_n}(a-b), \qquad
 A_{(k)}^{\mathfrak{sp}_n}=\frac1{k!}\prod_{1\le a<b\le k}R_{\mathfrak{sp}_n}(b-a).
\]
In particular, $S_{(2)}^{\mathfrak{so}_n}=1-B_n$ and $A_{(2)}^{\mathfrak{sp}_n}=\wt B_n$.

\begin{Rem}
One can define another augmentation $\varepsilon'\colon\B_k(\omega)\to\CC$ by $\varepsilon'(\sigma_a)=-1$, $\varepsilon'(\epsilon_a)=0$. The corresponding idempotent coincides with the usual anti-symmetrizer $a_{(k)}=\frac1{k!}\prod_{\sigma\in\SSS_k}(-1)^\sigma\sigma$ since $\sigma_b a_{(k)}=-a_{(k)}$, $\epsilon_b a_{(k)}=\epsilon_b\sigma_b a_{(k)}=-\epsilon_b a_{(k)}$ for all $b=1,\dots,k-1$ and $\varepsilon'(a_{(k)})=1$. Hence the operators $A^{\mathfrak{gl}_n}_{(k)}=\rho(a_{(k)})$ and $S^{\mathfrak{gl}_n}_{(k)}=\wt\rho(a_{(k)})$ define the pairings $\Xi^*_{B_n}(\CC)\times\Xi_{B_n}(\CC)\to\CC$ and $\gX_{\wt B_n}(\CC)\times\gX_{\wt B_n}^*(\CC)\to\CC$ respectively. But these pairings are degenerate since the conditions~\eqref{Xpi}--\eqref{piPsi} are not valid for $\varepsilon'$.
\end{Rem}

Let us calculate the ranks of the pairing operators $S_{(k)}^{\mathfrak{so}_n}$ and $A_{(k)}^{\mathfrak{sp}_n}$. They are equal to their traces. According to~\cite[formulae (1.52), (1.72), (1.77)]{MolevSO} the (full) traces of these pairing opera\-tors~are
\begin{gather*}
 \tr S_{(k)}^{\mathfrak{so}_n}=\frac{n+2k-2}{n+k-2}\;{n+k-2\choose k}
=\frac{n+2k-2}{k}\;{n+k-3\choose k-1}\qquad(2\le k), \\[1ex] % \label{trSkso} \\
 \tr A_{(k)}^{\mathfrak{sp}_n}=(-1)^k\;\frac{-n+2k-2}{-n+k-2}\;{-n+k-2\choose k}
=\frac{n-2k+2}{k}\;{n+1\choose k-1} \qquad (2\le k\le r+1). % \label{trAksp}
\end{gather*}
In particular, by substituting $n=2r$ and $k=r+1$ to the expression for $\tr A^{\mathfrak{sp}_n}_{(k)}$ we derive $\rk A^{\mathfrak{sp}_n}_{(k)}=\tr A^{\mathfrak{sp}_n}_{(k)}=0$. Hence
\[
 A^{\mathfrak{sp}_n}_{(k)}=0\qquad \text{and}\qquad \Xi_{\wt B_{2r}}(\CC)_k=0\qquad
 \text{for all}\quad k\ge r+1.
\]
This explains the inequality for $k$ in the formula~\eqref{Aksp}: for other $k$ the operator $A_{(k)}^{\mathfrak{sp}_n}$ vanishes (it vanishes even for the last value $k=r+1$). Let us conclude.

\begin{Prop}
 The dimension of the component $\gX_{B_n}(\CC)_k$ is not zero for any $k\ge0$. The dimension of $\Xi_{B_n}(\CC)_k$ is not zero iff $0\le k\le r$. Namely, we have
\begin{gather*}
\dim\gX_{B_n}(\CC)_k=\frac{n+2k-2}{k}\;{n+k-3\choose k-1} \qquad (1\le k),
\\
\dim\Xi_{\wt B_n}(\CC)_k=\frac{n-2k+2}{k}\;{n+1\choose k-1} \qquad (1\le k\le r).
\end{gather*}
\end{Prop}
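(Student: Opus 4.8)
The plan is to identify each graded dimension with the rank, and hence the trace, of the relevant pairing operator, and then to read off the values from the trace formulae already recorded above. First I would apply the isomorphisms \eqref{dkrk}, which give $\dim\gX_{B_n}(\CC)_k=\rk S^{\mathfrak{so}_n}_{(k)}$ and $\dim\Xi_{\wt B_n}(\CC)_k=\rk A^{\mathfrak{sp}_n}_{(k)}$; this is legitimate precisely because the pairing operators $S^{\mathfrak{so}_n}_{(k)}$ for $B_n$ and $A^{\mathfrak{sp}_n}_{(k)}$ for $\wt B_n$ were constructed above by means of the Brauer algebra. Since each pairing operator is an idempotent, the proposition equating the rank and the trace of an idempotent turns every rank into the corresponding trace. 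The degrees $k=0$ and $k=1$ are disposed of directly ($\dim=1$ and $\dim=n$), in agreement with the formulae.

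For the algebra $\gX_{B_n}(\CC)$ I would substitute the trace $\tr S^{\mathfrak{so}_n}_{(k)}=\frac{n+2k-2}{k}{n+k-3\choose k-1}$ recorded above to obtain the asserted dimension for $k\ge1$. Non-vanishing is then immediate: the factor $n+2k-2$ is strictly positive, and the coefficient ${n+k-3\choose k-1}$ is positive for all $k\ge1$ since $n\ge3$ in every case; thus $\dim\gX_{B_n}(\CC)_k>0$ for each $k\ge0$.

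For the algebra $\Xi_{\wt B_n}(\CC)$, with $n=2r$, I would use $\tr A^{\mathfrak{sp}_n}_{(k)}=\frac{n-2k+2}{k}{n+1\choose k-1}$, which is available in the range $2\le k\le r+1$ where $A^{\mathfrak{sp}_n}_{(k)}$ exists, yielding the stated formula. Writing $n-2k+2=2(r-k+1)$ shows this quantity is positive exactly for $k\le r$ and vanishes at $k=r+1$; hence $\dim\Xi_{\wt B_n}(\CC)_k>0$ for $1\le k\le r$ while $\dim\Xi_{\wt B_n}(\CC)_{r+1}=0$.

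The only step going beyond the cited traces, and the one I expect to require a short separate argument, is to propagate the vanishing from degree $r+1$ to all higher degrees. Here I would use that $\Xi_{\wt B_n}(\CC)$ is generated in degree one: for $k>r+1$ any monomial $\psi_{i_1}\cdots\psi_{i_k}$ factors as $(\psi_{i_1}\cdots\psi_{i_{r+1}})(\psi_{i_{r+2}}\cdots\psi_{i_k})$, whose first factor lies in $\Xi_{\wt B_n}(\CC)_{r+1}=0$, so $\Xi_{\wt B_n}(\CC)_k=0$ for all $k\ge r+1$. This matches the already noted vanishing $A^{\mathfrak{sp}_n}_{(k)}=0$ for $k\ge r+1$ and establishes the stated range $0\le k\le r$ for the non-vanishing of $\Xi_{\wt B_n}(\CC)_k$, completing the proof.
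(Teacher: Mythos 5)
Your proposal is correct and follows essentially the same route as the paper: identify each graded dimension with the rank, hence the trace, of the corresponding pairing operator via the isomorphisms~\eqref{dkrk} and the rank-equals-trace property of idempotents, then substitute the cited trace formulae for $S^{\mathfrak{so}_n}_{(k)}$ and $A^{\mathfrak{sp}_n}_{(k)}$. Your explicit degree-one-generation argument for propagating the vanishing from degree $r+1$ to all higher degrees is a point the paper leaves implicit, but it is the intended justification.
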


The minor operators for Manin matrices $M$ of the types $B$ and $D$ have the form
\[
 \Min_{S_{(k)}}(M)=M^{(1)}\cdots M^{(k)}S^{\mathfrak{so}_n}_{(k)}, \qquad
 \Min^{A_{(k)}}(M)=A^{\mathfrak{gl}_n}_{(k)}M^{(1)}\cdots M^{(k)},
\]
$\big(A^{\mathfrak{gl}_n}_{(k)}=0$ for $k\ge n+1\big)$.
For Manin matrices $M$ of the type $C$ we have
\[
 \Min_{S_{(k)}}(M)=M^{(1)}\cdots M^{(k)}S^{\mathfrak{gl}_n}_{(k)}, \qquad
 \Min^{A_{(k)}}(M)=A^{\mathfrak{sp}_n}_{(k)}M^{(1)}\cdots M^{(k)},
\]
\big($A^{\mathfrak{sp}_{2r}}_{(k)}=0$ for $k\ge r+1$\big).

Let us give an example. The bases of the spaces $\gX_{B_2}(\CC)_2$ and $\gX_{B_2}^*(\CC)_2$ consist of the elements $x^i_{(2)}=(x^i)^2$ and $x_i^{(2)}=(x_i)^2$, $i=1,2$, respectively. The operator $S^{\mathfrak{so}_2}_{(2)}=1-B_2$ defines the pairing $\big\la x^j_{(2)},x_i^{(2)}\big\ra=\delta_i^j$. Let $y^i=\sum_{j=1,2}M^i_jx^j$, then $y^i_{(2)}=(y^i)^2$. Since $x^1x^2=x^2x^1=0$ we have $y^i_{(2)}=\sum_{j=1,2}\big(M^i_j\big)^2x^j_{(2)}$. Thus the second $S$-minor of a $(B_2,A_2)$-Manin matrix $M=\left(\begin{smallmatrix} a & b \\ c & d \end{smallmatrix}\right)$ in the basis $\big(x^1_{(2)},x^2_{(2)}\big)$ has the form
\[
 \Min_{S_{(2)}}(M)=\begin{pmatrix} a^2 & b^2 \\ c^2 & d^2 \end{pmatrix}\!.
\]

\begin{Rem}
 The existence of the pairing operators for the idempotents $A=B_n$ and $A=\wt B_n$ can be deduced from Theorem~\ref{ThPOviaDB} (in the case, when $\CC$ is exactly the field of complex numbers). Since $A^\top=A$ we have isomorphisms $V_k\cong\overline V_k$ and $W_k\cong\overline W_k$ given by restriction of the map $(\CC^n)^{\otimes k}\to\big((\CC^n)^{\otimes k}\big)^*$, $e_{i_1\dots i_k}\mapsto e^{i_1\dots i_k}$. Under these isomorphisms the natural pairings $V_k\times\overline V_k\to\CC$ and $W_k\times\overline W_k\to\CC$ are identified with restrictions of the standard bilinear form $\la e_{i_1\dots i_k},e_{j_1\dots j_k}\ra=\delta_{i_1j_1}\cdots\delta_{i_kj_k}$ to the spaces $V_k$ and $W_k$ respectively. Since the subspaces $V_k$ and $W_k$ are given by linear equations with real coefficients the restrictions of this bilinear form are non-degenerate, so the pairing operators $S_{(k)}$ and $A_{(k)}$ exist for any $k$. In particular, the pairing $A$-operators for $B_n$ and the pairing $S$-operators for $\wt B_n$ also exist.
\end{Rem}

\section*{Conclusion}
\addcontentsline{toc}{section}{Conclusion}

It was demonstrated in the works~\cite{CF,CFR,qManin,CM,MR,RST} as well as in the current paper that the Manin matrices have a lot of applications to integrable systems, representation theory and other fields of mathematics and physics. These results show importance of non-commutative geometry developed by Manin in~\cite{Manin87,Manin88,Manin89,ManinBook91,Manin92} for many questions of mathematics and mathematical physics.

By switching the consideration from $A$-Manin matrices to more general $(A,B)$-Manin matrices we obtain a larger class of useful examples such as the $(\wh q,\wh p)$-Manin matrices and the Manin matrices of types $B$, $C$ and $D$. In particular, the theory of the $(\wh q,\wh p)$-Manin matrices gives more complete picture for the $q$-Manin matrices.

It was shown that the tensor notations and usage of idempotents gives a convenient approach to the Manin matrices. In particular, it allows to generalise the notion of minors to the general case. This general theory of minors relates Manin matrices with the representation theory of the symmetric groups and their generalisations such as Hecke and Brauer algebras. This alludes to a possible relation between the theory of Manin matrices and the Schur--Weyl duality.

The left equivalence of idempotents gives us relationship between different considerations of~Manin matrices. For example, the minors of $q$-Manin matrices can be considered by means of the $q$-antisymmetrizer arisen from a representation of the symmetric group as well as by using higher idempotents of the Hecke algebra; the left equivalence implies a simple relation between the minor operators constructed in these two different ways. Moreover, left and right equivalences can be used to investigate the corresponding quadratic algebras as it was done in~Section~\ref{secPO4p}.

It is also discovered that some Lax matrices with spectral parameter is a type of Manin matrices generalised to the infinite-dimensional case. Namely, we defined an idempotent $\wh A_n$ acting on a completed tensor power $\big(\CC^n\big[u,u^{-1}\big]\big)^{\otimes2}$ and proved that these Lax matrices are exactly Manin operators that acts on the tensor factor $\CC\big[u,u^{-1}\big]$ by a function multiplication.

\section*{Appendices}
\addcontentsline{toc}{section}{Appendices}

\appendix

%\refstepcounter{section}
%\section*{\large Appendix \thesection. Reduced expression and set of inversions}
\section{Reduced expression and set of inversions}
\label{appRw}
%\addcontentsline{toc}{subsection}{\thesection. Reduced expression and set of inversions}

Let $R$ be a finite root system and $W$ be the corresponding reflection group in the sense of~\cite[Chapter~1]{H}. Let $R^+\subset R$ be a subset of positive roots. Denote the corresponding simple roots by $\alpha_1,\dots,\alpha_r$, where $r$ is the rank of the root system $R$. The simple reflections $s_i=s_{\alpha_i}$ generate the group $W$. The {\it length} of $w\in W$, denoted by $\ell(w)$, is the minimal $k\in\NN_0$ such that $w$ can be presented as a product of $k$ simple reflections. If $\ell=\ell(w)$, then any expression $w=s_{i_1}\cdots s_{i_l}$ is called a {\it reduced expression} for the element $w\in W$.

Note that $\ell(w^{-1})=\ell(w)$. Indeed, since $s_i^2=1$ we have $(s_{i_1}\cdots s_{i_k})^{-1}=s_{i_k}\cdots s_{i_1}$, so the element $w$ can be presented as a product of $k$ simple reflections iff the element $w^{-1}$ can be presented as a product of $k$ simple reflections.

Let $R^-=-R^+$ be the set of negative roots and let $R_w^+:=R^+\cap w^{-1}R^-$. The latter is the set of positive roots $\alpha\in R^+$ such that $w\alpha\in R^-$. As it is proved in~\cite[Section~1.7]{H} the number of such roots is equal to the length of $w$, that is $\ell(w)=|R_w^+|$. Note also that from $-wR_w^+=(-wR^+)\cap(-R^-)=R^+\cap wR^-$ we obtain\vspace{-1ex}
\begin{align}
 R^+_{w^{-1}}=-wR^+_w. \label{wRw}
\end{align}

The symmetric group $\SSS_n$ is a reflection group with the root system $R=\{e_i-e_j\mid i\ne j\}$ of rank $r=n-1$. In this case we have $R^+=\{e_i-e_j\mid i<j\}$, $R^-=\{e_i-e_j\mid i>j\}$, $\alpha_i=e_i-e_{i+1}$, $s_i=\sigma_i$. For a permutation $\sigma\in \SSS_n$ the set $R^+_\sigma$ is identified with the set of inversions of $\sigma$:\vspace{-1ex}
\begin{align}
 R^+_\sigma=\{e_i-e_j\mid i<j,\,\sigma(i)>\sigma(j)\}. \label{Rsigma}
\end{align}
In particular, the number of inversions is equal to the length: $\inv(\sigma)=\ell(\sigma)$. Note that the number of elements $\sigma\in\SSS_k$ with a fixed length $\ell(\sigma)$ can be calculated from the generating function~\eqref{invGF}.

\begin{Prop}
 Let $\ell(w)=\ell$ and $w=s_{i_1}\cdots s_{i_\ell}$ be a reduced expression. Then\vspace{-1ex}
\begin{align}
 &R^+_w=\{\alpha_{i_\ell},s_{i_\ell}\alpha_{i_{\ell-1}},\dots,s_{i_\ell}\cdots s_{i_2}\alpha_{i_1}\}=\{s_{i_\ell}\cdots s_{i_{k+1}}\alpha_{i_k}\mid k=1,\dots,\ell\}, \label{PropRw1} \\
 &R^+_{w^{-1}}=\{\alpha_{i_1},s_{i_1}\alpha_{i_2},\dots,s_{i_1}\cdots s_{i_{\ell-1}}\alpha_{i_\ell}\}=\{s_{i_1}\cdots s_{i_{k-1}}\alpha_{i_k}\mid k=1,\dots,\ell\}. \label{PropRw2}
\end{align}
\end{Prop}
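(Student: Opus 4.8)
The plan is to establish \eqref{PropRw1} by induction on $\ell=\ell(w)$ and then to obtain \eqref{PropRw2} for free. For the second formula, I would apply \eqref{PropRw1} to $w^{-1}$, using that $w^{-1}=s_{i_\ell}\cdots s_{i_1}$ is again a reduced expression (recall $\ell(w^{-1})=\ell(w)$). Writing $w^{-1}=s_{j_1}\cdots s_{j_\ell}$ with $j_m=i_{\ell-m+1}$, the formula \eqref{PropRw1} gives $R^+_{w^{-1}}=\{s_{j_\ell}\cdots s_{j_{m+1}}\alpha_{j_m}\}$; substituting $j_\ell=i_1,\dots,j_{m+1}=i_{\ell-m}$ together with $\alpha_{j_m}=\alpha_{i_{\ell-m+1}}$ and re-indexing by $k=\ell-m+1$ turns this expression into $s_{i_1}\cdots s_{i_{k-1}}\alpha_{i_k}$, which is exactly \eqref{PropRw2}. (Alternatively one could run the same induction peeling simple reflections from the left, or invoke \eqref{wRw}.) Thus only \eqref{PropRw1} needs a genuine argument.

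The cornerstone is the recursion
\[
 R^+_{ws_i}=s_i\big(R^+_w\big)\sqcup\{\alpha_i\}\qquad\text{whenever}\quad \ell(ws_i)=\ell(w)+1 .
\]
I would prove it from two standard facts about reflection groups found in \cite{H}: the simple reflection $s_i$ fixes $R^+\setminus\{\alpha_i\}$ as a set and sends $\alpha_i\mapsto-\alpha_i$; and $\ell(ws_i)>\ell(w)$ holds iff $w\alpha_i\in R^+$. The hypothesis $\ell(ws_i)=\ell(w)+1$ gives $w\alpha_i>0$, hence $\alpha_i\notin R^+_w$ and $ws_i\alpha_i=-w\alpha_i<0$, so $\alpha_i\in R^+_{ws_i}$. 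For a positive root $\alpha\neq\alpha_i$ one has $s_i\alpha\in R^+\setminus\{\alpha_i\}$, and $ws_i\alpha<0$ iff $s_i\alpha\in R^+_w$; since $s_i$ permutes $R^+\setminus\{\alpha_i\}$, these $\alpha$ are exactly the elements of $s_i(R^+_w)$. This yields the displayed disjoint union.

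With the recursion in hand, I set $w_k=s_{i_1}\cdots s_{i_k}$. Every prefix of a reduced expression is reduced, so $\ell(w_k)=k$ and $\ell(w_{k-1}s_{i_k})=\ell(w_{k-1})+1$; applying the recursion at each step and unwinding gives
\[
 R^+_w=\{\alpha_{i_\ell}\}\sqcup\{s_{i_\ell}\alpha_{i_{\ell-1}}\}\sqcup\cdots\sqcup\{s_{i_\ell}\cdots s_{i_2}\alpha_{i_1}\},
\]
which is \eqref{PropRw1}; the disjointness of these unions simultaneously certifies that the $\ell$ listed roots are distinct, in agreement with $|R^+_w|=\ell(w)$.

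The argument is essentially routine once the recursion is set up, so the only real point requiring care is the verification of that recursion — specifically checking that $\alpha_i\notin R^+_w$ under the length hypothesis and that $s_i$ restricts to a bijection of $R^+\setminus\{\alpha_i\}$, which together guarantee the union is genuinely disjoint rather than a mere containment. I expect this bookkeeping, rather than any deep difficulty, to be the main thing to get right.
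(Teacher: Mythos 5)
Your argument is correct, but it is organised differently from the paper's. The paper proves \eqref{PropRw1} in two separate stages: first it checks \emph{membership}, showing each $\beta_k=s_{i_\ell}\cdots s_{i_{k+1}}\alpha_{i_k}$ lies in $R^+_w$ by applying the criterion ``$\ell(us_i)=\ell(u)+1\Rightarrow u\alpha_i\in R^+$'' to the prefixes of the reduced expressions of both $w$ and $w^{-1}$; then, invoking $|R^+_w|=\ell(w)$ from~\cite{H} as a black box, it reduces the claim to the pairwise distinctness of the $\beta_k$, which it establishes by a separate induction on $\ell$. You instead prove the single exact recursion $R^+_{ws_i}=s_i\big(R^+_w\big)\sqcup\{\alpha_i\}$ for $\ell(ws_i)=\ell(w)+1$ and unwind it along the prefixes $w_k=s_{i_1}\cdots s_{i_k}$; the disjointness built into the recursion delivers membership, equality and distinctness simultaneously, and as a by-product re-derives $|R^+_w|=\ell(w)$ rather than assuming it. Your verification of the recursion is sound: $w\alpha_i\in R^+$ gives $\alpha_i\in R^+_{ws_i}$ and $\alpha_i\notin R^+_w$, so $R^+_w\subseteq R^+\setminus\{\alpha_i\}$, and since $s_i$ is an involution permuting $R^+\setminus\{\alpha_i\}$ the remaining elements of $R^+_{ws_i}$ are exactly $s_i\big(R^+_w\big)$. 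For \eqref{PropRw2} the two routes are essentially equivalent: the paper applies \eqref{wRw} to \eqref{PropRw1}, while you re-index the reduced expression $w^{-1}=s_{i_\ell}\cdots s_{i_1}$ (and you correctly note the \eqref{wRw} alternative). On balance your recursion is the more self-contained argument — it is the standard textbook proof — while the paper's is shorter on the page because it outsources the counting formula to~\cite{H}.
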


\begin{proof} First we prove that all the roots $\beta_k:=s_{i_\ell}\cdots s_{i_{k+1}}\alpha_{i_k}$ belongs of $R^+_w$. Note that $s_{i_1}\cdots s_{i_k}$ is a reduced expression for any $k=1,\dots,\ell$, since otherwise $\ell(w)<\ell$. By virtue of~\cite[Section~1.6]{H} the equality $\ell(s_{i_1}\cdots s_{i_{k-1}}s_{i_k})=\ell(s_{i_1}\cdots s_{i_{k-1}})+1$ implies\vspace{-1ex}
\begin{gather*}
 s_{i_1}\cdots s_{i_{k-1}}\alpha_{i_k}\in R^+, \qquad k=1,\dots,\ell.
\end{gather*}
In the same way for the reduced expression $w^{-1}=s_{i_\ell}\cdots s_{i_1}$ we obtain\vspace{-1ex}
\begin{gather*}
 \beta_k=s_{i_\ell}\cdots s_{i_{k+1}}\alpha_{i_k}\in R^+, \qquad k=1,\dots,\ell.
\end{gather*}
Since $w\beta_k=s_{i_1}\cdots s_{i_{k-1}}s_{i_k}\alpha_{i_k}=-s_{i_1}\cdots s_{i_{k-1}}\alpha_{i_k}\in R^-$ we derive $\beta_k\in R^+_w$. Due to the formula $\ell(w)=|R^+_w|$ we only need to prove that the roots $\beta_1,\dots,\beta_\ell$ are pairwise different. We prove it by induction on $\ell$. For $\ell=0$ and $\ell=1$ there is nothing to check. Let $w'=s_{i_1}\cdots s_{i_{\ell-1}}$ and $\beta'_k=s_{i_{\ell-1}}\cdots s_{i_{k+1}}\alpha_{i_k}\in R^+_{w'}$. Due to the induction assumption the roots $\beta'_1,\dots,\beta'_{\ell-1}$ are pairwise different. Hence $\beta_k=s_{i_\ell}\beta'_k$, $k=1,\dots,\ell-1$ are also pairwise different. Now suppose that $\beta_\ell=\beta_k$ for some $k=1,\dots,\ell-1$. By acting by $s_{i_\ell}$ to the both hand sides we obtain $-\alpha_{i_\ell}=\beta'_k$, but the roots $\beta'_k\in R^+_{w'}$ are positive. This contradiction ends the proof of the formula~\eqref{PropRw1}. The formula~\eqref{PropRw2} is obtained from~\eqref{wRw} and~\eqref{PropRw1}. \end{proof}

\section{Lie algebras as quadratic algebras}\label{appLie}

Here we present a quadratic algebra closely related with a finite dimensional Lie algebra $\g$.

Let $n=\dim\g+1$ and $\big(x^1,\dots,x^{n-1}\big)$ be a basis of the Lie algebra $\g$ with the brackets $[x^i,x^j]=\sum_{k=1}^{n-1}C^{ij}_kx^k$, $C^{ij}_k\in\CC$. Consider the quadratic algebra with generators $x^1,\dots,x^{n-1},x^n$ and relations
\begin{gather}
 x^ix^j-x^jx^i=\sum_{k=1}^{n-1}C^{ij}_kx^kx^n, \qquad i,j=1,\dots,n-1, \label{xxC} \\
 x^ix^n-x^nx^i=0, \qquad i=1,\dots,n-1. \label{xxn}
\end{gather}
Let $C_\g\in\End(\CC^n\otimes\CC^n)$ be a matrix with entries
\begin{align}
 (C_\g)^{ij}_{kl}=C^{ij}_k\delta_{ln}+C^{ij}_l\delta_{kn}, \qquad i,j,k,l=1,\dots,n, \label{Cijkl}
\end{align}
where we set $C^{ij}_n=C^{in}_k=C^{nj}_k=0$. Since~\eqref{Cijkl} is antisymmetric in $i,j$ and symmetric in $k,l$ we have the formulae
\[
 C_\g^2=0, \qquad C_\g A_n=0 \qquad A_nC_\g=C_\g.
\]
They imply that the operator $A_\g=A_n-\tfrac14C_\g$ is an idempotent. The relations~\eqref{xxC} and \eqref{xxn} define the algebra $\gX_{A_\g}(\CC)$.

\begin{Rem}
By fixing a non-zero value of the central element $x^n$ we obtain the universal enveloping algebra $\mathcal U(\g)$. In other words, the algebra $\gX_{A_\g}(\CC)$ is the quantisation of the algebra $S\g=\CC[\g^*]$ with the Lie--Poisson brackets $\{x^i,x^j\}=\sum_{k=1}^{n-1}C^{ij}_kx^k$. The central element $x^n$ plays the role of the quantisation parameter.
\end{Rem}

\subsection*{Acknowledgements}
The author is grateful to A.~Chervov, V.~Rubtsov, An.~Kirillov, A.~Isaev and A.~Molev for useful discussions and advice.
The author also would like to thank anonymous referees for their attentive reading and numerous useful advice and remarks.

\pdfbookmark[1]{References}{ref}
\LastPageEnding


\begin{thebibliography}{99}
\footnotesize\itemsep=0pt

\bibitem{AACFR}
Arnaudon D., Avan J., Cramp\'e N., Frappat L., Ragoucy E., {$R$}-matrix
 presentation for super-{Y}angians {$Y({\rm osp}(m|2n))$}, \href{https://doi.org/10.1063/1.1525406}{\textit{J.~Math.
 Phys.}} \textbf{44} (2003), 302--308, \href{https://arxiv.org/abs/math.QA/0111325}{arXiv:math.QA/0111325}.

\bibitem{AMR}
Arnaudon D., Molev A., Ragoucy E., On the {$R$}-matrix realization of
 {Y}angians and their representations, \href{https://doi.org/10.1007/s00023-006-0281-9}{\textit{Ann. Henri Poincar\'e}}
 \textbf{7} (2006), 1269--1325, \href{https://arxiv.org/abs/math.QA/0511481}{arXiv:math.QA/0511481}.

\bibitem{Br}
Brauer R., On algebras which are connected with the semisimple continuous
 groups, \href{https://doi.org/10.2307/1968843}{\textit{Ann. of Math.}} \textbf{38} (1937), 857--872.

\bibitem{CF}
Chervov A., Falqui G., Manin matrices and {T}alalaev's formula,
 \href{https://doi.org/10.1088/1751-8113/41/19/194006}{\textit{J.~Phys.~A: Math. Theor.}} \textbf{41} (2008), 194006, 28~pages,
 \href{https://arxiv.org/abs/0711.2236}{arXiv:0711.2236}.

\bibitem{CFR}
Chervov A., Falqui G., Rubtsov V., Algebraic properties of {M}anin
 matrices.~{I}, \href{https://doi.org/10.1016/j.aam.2009.02.003}{\textit{Adv. in Appl. Math.}} \textbf{43} (2009), 239--315,
 \href{https://arxiv.org/abs/0901.0235}{arXiv:0901.0235}.

\bibitem{qManin}
Chervov A., Falqui G., Rubtsov V., Silantyev A., Algebraic properties of
 {M}anin matrices~{II}: {$q$}-analogues and integrable systems, \href{https://doi.org/10.1016/j.aam.2014.06.001}{\textit{Adv.
 in Appl. Math.}} \textbf{60} (2014), 25--89, \href{https://arxiv.org/abs/1210.3529}{arXiv:1210.3529}.

\bibitem{CM}
Chervov A.V., Molev A.I., On higher-order {S}ugawara operators, \href{https://doi.org/10.1093/imrn/rnn168}{\textit{Int.
 Math. Res. Not.}} \textbf{2009} (2009), 1612--1635, \href{https://arxiv.org/abs/0808.1947}{arXiv:0808.1947}.

\bibitem{I}
de~Vega H.J., Yang--{B}axter algebras, conformal invariant models and quantum
 groups, in Integrable Systems in Quantum Field Theory and Statistical
 Mechanics, \textit{Adv. Stud. Pure Math.}, Vol.~19, \href{https://doi.org/10.2969/aspm/01910567}{Academic Press}, Boston,
 MA, 1989, 567--639.

\bibitem{FRT88}
Faddeev L.D., Reshetikhin N.Yu., Takhtajan L.A., Quantization of {L}ie groups
 and {L}ie algebras, in Algebraic {A}nalysis, {V}ol.~{I}, Academic Press,
 Boston, MA, 1988, 129--139.

\bibitem{GLZ}
Garoufalidis S., L\^e T.T.Q., Zeilberger D., The quantum {M}ac{M}ahon master
 theorem, \href{https://doi.org/10.1073/pnas.0606003103}{\textit{Proc. Natl. Acad. Sci. USA}} \textbf{103} (2006),
 13928--13931, \href{https://arxiv.org/abs/math.QA/0303319}{arXiv:math.QA/0303319}.

\bibitem{Gur}
Gurevich D.I., Algebraic aspects of the quantum {Y}ang--{B}axter equation,
 \textit{Leningrad Math.~J.} \textbf{2} (1991), 801--828.

\bibitem{GS}
Gurevich D.I., Saponov P.A., Determinants in quantum matrix algebras and
 integrable systems, \href{https://doi.org/10.1134/S004057792105007X}{\textit{Theoret. and Math. Phys.}} \textbf{207} (2021),
 626--639, \href{https://arxiv.org/abs/1906.07287}{arXiv:1906.07287}.

\bibitem{HX}
Hu J., Xiao Z., On tensor spaces for {B}irman--{M}urakami--{W}enzl algebras,
 \href{https://doi.org/10.1016/j.jalgebra.2010.08.017}{\textit{J.~Algebra}} \textbf{324} (2010), 2893--2922.

\bibitem{H}
Humphreys J.E., Reflection groups and {C}oxeter groups, \textit{Cambridge
 Studies in Advanced Mathematics}, Vol.~29, \href{https://doi.org/10.1017/CBO9780511623646}{Cambridge University Press},
 Cambridge, 1990.

\bibitem{IO}
Isaev A., Ogievetsky O., Half-quantum linear algebra, in Symmetries and Groups
 in Contemporary Physics, \textit{Nankai Ser. Pure Appl. Math. Theoret.
 Phys.}, Vol.~11, \href{https://doi.org/10.1142/9789814518550_0066}{World Sci. Publ.}, Hackensack, NJ, 2013, 479--486,
 \href{https://arxiv.org/abs/1303.3991}{arXiv:1303.3991}.

\bibitem{IOP98}
Isaev A., Ogievetsky O., Pyatov P., Generalized {C}ayley--{H}amilton--{N}ewton
 identities, \href{https://doi.org/10.1023/A:1021649021069}{\textit{Czechoslovak~J. Phys.}} \textbf{48} (1998), 1369--1374,
 \href{https://arxiv.org/abs/math.QA/9809047}{arXiv:math.QA/9809047}.

\bibitem{IOP99}
Isaev A., Ogievetsky O., Pyatov P., On quantum matrix algebras satisfying the
 {C}ayley--{H}amilton--{N}ewton identities, \href{https://doi.org/10.1088/0305-4470/32/9/002}{\textit{J.~Phys.~A: Math. Gen.}}
 \textbf{32} (1999), L115--L121, \href{https://arxiv.org/abs/math.QA/9809170}{arXiv:math.QA/9809170}.

\bibitem{IM}
Isaev A.P., Molev A.I., Fusion procedure for the {B}rauer algebra, \href{https://doi.org/10.1090/S1061-0022-2011-01150-1}{\textit{St.
 Petersburg Math.~J.}} \textbf{22} (2011), 437--446, \href{https://arxiv.org/abs/0812.4113}{arXiv:0812.4113}.

\bibitem{IMO}
Isaev A.P., Molev A.I., Ogievetsky O.V., A new fusion procedure for the
 {B}rauer algebra and evaluation homomorphisms, \href{https://doi.org/10.1093/imrn/rnr126}{\textit{Int. Math. Res. Not.}}
 \textbf{2012} (2012), 2571--2606, \href{https://arxiv.org/abs/1101.1336}{arXiv:1101.1336}.

\bibitem{IR}
Isaev A.P., Rubakov V.A., Theory of groups and symmetries. Representations of
 groups and Lie algebras, applications, \href{https://doi.org/10.1142/11749}{World Sci. Publ. Co. Pte. Ltd.},
 Hackensack, NJ, 2021.

\bibitem{IS}
Iyudu N., Shkarin S., Three dimensional {S}klyanin algebras and {G}r\"{o}bner
 bases, \href{https://doi.org/10.1016/j.jalgebra.2016.08.023}{\textit{J.~Algebra}} \textbf{470} (2017), 379--419,
 \href{https://arxiv.org/abs/1601.00564}{arXiv:1601.00564}.

\bibitem{Mcl}
MacLane S., Categories for the working mathematician, \textit{Graduate Texts in
 Mathematics}, Vol.~5, Springer-Verlag, New York~-- Berlin, 1971.

\bibitem{Manin87}
Manin Yu.I., Some remarks on {K}oszul algebras and quantum groups, \href{ttps://doi.org/10.5802/aif.1117}{\textit{Ann.
 Inst. Fourier (Grenoble)}} \textbf{37} (1987), 191--205.

\bibitem{Manin88}
Manin Yu.I., Quantum groups and noncommutative geometry, Universit\'e de
 Montr\'eal, Centre de Recherches Math\'ematiques, Montreal, QC, 1988.

\bibitem{Manin89}
Manin Yu.I., Multiparametric quantum deformation of the general linear
 supergroup, \href{https://doi.org/10.1007/BF01244022}{\textit{Comm. Math. Phys.}} \textbf{123} (1989), 163--175.

\bibitem{ManinBook91}
Manin Yu.I., Topics in noncommutative geometry, M.B.~Porter Lectures, \href{https://doi.org/10.1515/9781400862511}{Princeton
 University Press}, Princeton, NJ, 1991.

\bibitem{Manin92}
Manin Yu.I., Notes on quantum groups and quantum de {R}ham complexes,
 \href{https://doi.org/10.1007/BF01017077}{\textit{Theoret. and Math. Phys.}} \textbf{92} (1992), 997--1019.

\bibitem{MolevSO}
Molev A., Sugawara operators for classical {L}ie algebras, \textit{Mathematical
 Surveys and Monographs}, Vol.~229, \href{https://doi.org/10.1090/surv/229}{Amer. Math. Soc.}, Providence, RI, 2018.

\bibitem{MNO}
Molev A., Nazarov M., Ol'shanskii G., Yangians and classical {L}ie algebras,
 \href{https://doi.org/10.1070/RM1996v051n02ABEH002772}{\textit{Russian Math. Surveys}} \textbf{51} (1996), 205--282,
 \href{https://arxiv.org/abs/hep-th/9409025}{arXiv:hep-th/9409025}.

\bibitem{MR}
Molev A., Ragoucy E., The {M}ac{M}ahon master theorem for right quantum
 superalgebras and higher {S}ugawara operators for
 {$\widehat{\mathfrak{gl}}_{m|n}$}, \href{https://doi.org/10.17323/1609-4514-2014-14-1-83-119}{\textit{Mosc. Math.~J.}} \textbf{14}
 (2014), 83--119, \href{https://arxiv.org/abs/0911.3447}{arXiv:0911.3447}.

\bibitem{Nazarov}
Nazarov M., Young's orthogonal form for {B}rauer's centralizer algebra,
 \href{https://doi.org/10.1006/jabr.1996.0195}{\textit{J.~Algebra}} \textbf{182} (1996), 664--693.

\bibitem{PS}
Pyatov P.N., Saponov P.A., Characteristic relations for quantum matrices,
 \href{https://doi.org/10.1088/0305-4470/28/15/020}{\textit{J.~Phys.~A: Math. Gen.}} \textbf{28} (1995), 4415--4421,
 \href{https://arxiv.org/abs/q-alg/9502012}{arXiv:q-alg/9502012}.

\bibitem{FRT89}
Reshetikhin N.Yu., Takhtadzhyan L.A., Faddeev L.D., Quantization of {L}ie groups
 and {L}ie algebras, \textit{Leningrad Math.~J.} \textbf{1} (1990), 193--225.

\bibitem{RST}
Rubtsov V., Silantyev A., Talalaev D., Manin matrices, quantum elliptic
 commutative families and characteristic polynomial of elliptic {G}audin
 model, \href{https://doi.org/10.3842/SIGMA.2009.110}{\textit{SIGMA}} \textbf{5} (2009), 110, 22~pages, \href{https://arxiv.org/abs/0908.4064}{arXiv:0908.4064}.

\bibitem{T}
Talalaev D.V., The quantum {G}audin system, \href{https://doi.org/10.1007/s10688-006-0012-5}{\textit{Funct. Anal. Appl.}}
 \textbf{40} (2006), 73--77, \href{https://arxiv.org/abs/hep-th/0404153}{arXiv:hep-th/0404153}.

\end{thebibliography}
\end{document}